\numberwithin{equation}{section}
\def\basic{\mathrm{pri}}
\def\Hilb{\mathrm{Hilb}}
\def\dual{^{\vee}}
\def\reg{\mathrm{reg}}
\def\sta{^\ast}
\def\st{^{\mathrm{st}}}
\def\supp{\mathrm{Supp}}
\def\upmo{^{-1}}
\def\sta{^{\ast}}
\def\dpri{^{\prime\prime}}
\def\mm{{\mathfrak m}}
\def\sta{^*}
\def\inc{\mathrm{inc}}
\def\ind{{\mathrm{ind}}}
\def\lbe{_{\beta}}
\def\lra{\longrightarrow}
\def\lsta{_{\ast}}
\def\beq{\begin{equation}}
\def\eeq{\end{equation}}
\def\and{\quad\text{and}\quad}
\def\bl{\bigl(}
\def\br{\bigr)}
\def\defeq{:=}
\def\sub{\subset}
\def\lab#1{\label{#1}[{#1}]\  }
\def\Ao{{{\mathbb A}^{\!1}}}
\def\mapright#1{\,\smash{\mathop{\lra}\limits^{#1}}\,}
\def\Chow{\text{Chow}\,}
\def\lam{\lambda}
\def\Lam{\Lambda}
\def\nlc{{\rm {n.l.c.}}}
\def\dpri{^{\prime\prime}}
\def\lra{\longrightarrow}
\def\upmo{^{-1}}
\def\ti{\tilde}
\def\lalp{_\alpha}
\def\lbe{_\beta}
\newcommand{\CC}{\mathbb{C}}
\newcommand{\II}{\mathbb{I}}
\newcommand{\JJ}{\mathbb{J}}
\newcommand{\kk}{\mathbb{k}}
\newcommand{\NN}{\mathbb{N}}
\newcommand{\PP}{\mathbb{P}}
\newcommand{\QQ}{\mathbb{Q}}
\newcommand{\RR}{\mathbb{R}}
\newcommand{\TT}{\mathbb{T}}
\newcommand{\ZZ}{\mathbb{Z}}
\def\cC{\mathcal{C}}
\def\cH{\mathcal H}
\def\bH{\mathcal H}
\def\cK{\mathcal K}
\def\bM{\mathbf M}
\DeclareMathOperator{\diag}{diag} \DeclareMathOperator{\Aut}{Aut}
 \DeclareMathOperator{\Br}{Br}
\newtheorem{prop}{Proposition}[section]
\newtheorem{theo}[prop]{Theorem}
\newtheorem{clai}[prop]{Claim}
\newtheorem{lemm}[prop]{Lemma}
\newtheorem{coro}[prop]{Corollary}
\newtheorem{rema}[prop]{Remark}
\newtheorem{exam}[prop]{Example}
\newtheorem{defi}[prop]{Definition}
\newtheorem{subl}[prop]{Sublemma}
\newtheorem{pf-thm}[prop]{proof of theorem}
\def\fx{\mathfrak x}
\def\sO{{\mathscr O}}
\def\sL{{\mathscr L}}
\def\sI{{\mathscr I}}
\def\sO{\mathscr{O}}
\def\sI{\mathscr{I}}
\def\sE{\mathscr{E}}
\def\sF{\mathscr{F}}
\def\cM{\mathcal{M}}
\def\sR{\mathscr{R}}
\def\sX{\mathscr{X}}
\def\bs{{\mathbf s}}
\def\underx{\underline{x}}
\def\undera{\underline{a}}
\def\primary{^{\mathrm{prim}}}
\def\kk{\Bbbk}
\def\len{\mathrm{length}}
\def\bax{\ba\cdot\bx}
\def\xAo{{X\times\Ao}}
\def\Ycomp{Y^\complement}
\def\vsp{\vskip3pt}
\let\lab=\label
\def\Bl{\Bigl(}
\def\Br{\Bigr)}
\def\codeg{\mathrm{codeg}}
\def\comp{^{\complement}}
\def\prim{^{\mathrm{pri}}}
\let\primary=\prim
\begin{document}
\title{Hilbert-Mumford criterion for nodal curves}

\author{Jun Li}
\address{Stanford University, Stanford, CA94305, USA}
\email{jli@stanford.edu}

\author{Xiaowei Wang}
\address{Rutgers University, Newark, NJ07102, USA}
\email{xiaowwan@rutgers.edu}

\date{\today}

\maketitle
\begin{abstract}
We prove by Hilbert-Mumford criterion that a slope stable polarized weighted pointed nodal curve is Chow asymptotic
stable. This generalizes the result of Caporaso on stability of polarized nodal curves, and of Hasset on weighted pointed
stable curves polarized by the weighted dualizing sheaves. It also solved a question raised by Mumford and Gieseker to
prove the Chow asymptotic stability of stable nodal curves by Hilbert-Mumford criterion.
\end{abstract}

\def\bmg{\overline{\mathcal{M}}_{g}}
\def\CC{\mathbb{C}}

\section{Introduction and summary of main result}\label{intro}

\def\bx{\mathbf x}
\def\ba{\mathbf a}
\let\underx=\bx
\let\undera=\ba
\def\txAo{{\ti X\times\Ao}}
\def\tiX{{\ti X}}
\def\lra{\longrightarrow}
\def\bolds{{\mathbf s}}
\def\ordq{w(\ti \sI,q)}

In late seventy, Mumford \cite{Mum} and Gieseker \cite{Gie} constructed
the coarse moduli space $\overline{\mathcal{M}}_{g}$ of stable curves
using Mumford's Geometric Invariant Theory (GIT).
{They proved the stability of smooth curves by verifying Hilbert-Mumford stability criterion;
for nodal curves, they proved the stability indirectly by using semi-stable replacement
and the numerical criterion to
rule out curves with worse than nodal singularities.}
This construction has been  very successful and  is widely adopted subsequently for studying related to stability of curves, for instance,
Caporaso's proof of asymptotic stablility of nodal curves \cite{Cap}.

In this paper,
we will prove the Chow asymptotic stability of weighted pointed nodal curves by verifying
Hilbert-Mumford criterion directly. As an application, we provide a GIT construction of the moduli of
weighted pointed stable curves. An interesting consequence of this construction is that the
GIT closure of the moduli of weighted pointed smooth curves, using Chow asymptotic stability,
is identical to Hassett's coarse moduli of  weighted pointed stable curves; nevertheless, its universal
family includes strictly semistable weighted pointed nodal curves.

Another application of our stability study is showing that a polarized nodal curve is
$K$-stable (c.f. Section \ref{K-stable}) if and only if the polarization is numerically equivalent to a multiple of its
dualizing sheaf. This generalizes a theorem of Odaka that a stable nodal curve polarized with dualizing sheaf is
$K$-stable.

The primary goal of this work is to understand the GIT compactification of moduli of
canonically polarized varieties. The recent work on the relation between various notions of $K$-stabilities and
the existence of constant scalar metrics suggests that some deep and interesting geometry
are yet to be uncovered in this area. This work is a first step toward this direction.
We hope this study will help us understand the stability
of high dimensional singular varieties.

\vsp
We now outline the results proved in this paper.

\begin{defi}[Hassett \cite{Hass}]\label{weight-pt-curve}
A {\em weighted pointed nodal curve} $(X, \bx, \ba)$
consists of a reduced, connected curve $X$, $n$ ordered (not necessarily distinct) smooth points
$\bx=(x_1,\cdots,x_n)$ of $X$, and weights
$\ba=(a_1,\cdots,a_n)$, $a_i\in \QQ_{\ge 0}$, of $\bx$, such that the total weight at any point is no more than one
(i.e. for any $p\in X$, $\sum_{x_i=p} a_i\leq 1$). A polarized weighted pointed curve is a weighted pointed
curve together with a polarization $\sO_X(1)$.
\end{defi}

In this paper, we will use $(X,\sO_X(1), \bx, \ba)$ to denote such a polarized weighted pointed curve.
In case $\sO_X(1)$ is {\em very ample}, we let
\beq\lab{emb-X}
\imath: X\mapright{\sub} \PP W,\quad W=H^0(\sO_X(1))\dual,
\eeq
be the tautological embedding; let 
$$
\Chow(X)\in \mathrm{Div}^{d,d}[ ( \PP W\dual ) ^2] := \PP H^0\bl 
 \sO_{(\PP W\dual)^2}(d,d)\br
$$
be the Chow point of $X$, which is the bi-degree $(d,d)$ hypersurface in $(\PP W\dual)^2$  consisting
of points $(V_1,V_2)\in (\PP W\dual)^2$ such that $V_i\sub \PP W$ are hyperplanes satisfying $V_1\cap V_2\cap \imath(X)\ne \emptyset$.
We abbreviate
\beq\label{Xi}
\Xi:= \mathrm{Div}^{d,d}[ ( \PP W\dual ) ^2]\times (\PP W)^n,
\eeq
and let the Chow point of $(X,\sO_X(1),\bx)$ be
$$\Chow(X,\bx)=(\Chow(X),\bx)\in \Xi . 
$$


The stability of this Chow point is tested by the positivity of the $\ba$-weight of any one parameter subgroup
$\lam:\CC^\times \to SL(W)$. (A one parameter subgroup, abbreviated to 1-PS, is always non-trivial.)
Given a 1-PS $\lam$, its action on $W$ induces an action on $\Xi$. Since $\mathrm{Div}^{d,d}[ ( \PP W\dual ) ^2]$ is a projective space, it
has a canonical polarization $\sO(1)$. We let
$$\sO_\Xi(1,\ba)
$$
be the $\QQ$-ample line bundle on $\Xi$ that has degree $1$ on
$\mathrm{Div}^{d,d}[ ( \PP W\dual ) ^2]$ and has degree $a_i$ on the
$i$-th copy of the $\PP W$ in $(\PP W)^n$.  Integral multiple of this line bundle is canonically linearized by
$SL(W)$.

\begin{defi}With $(X,\sO_X(1), \bx,\ba)$ understood, we define the $\ba$-$\lam$-weight of $\Chow(X,\bx)\in \Xi$ be
the weight of the $\lam$-action on the fiber $ \sO_{\Xi}(1,\ba)
\vert_{\zeta }$, where $ \zeta=\lim_{t\rightarrow 0}
\lam(t)\cdot\Chow(X,\bx)\in \Xi$; we denote this weight to be $\omega_\ba(\lam)$.

We define $\omega(\lam)$ be the $\lam$-weight of $\Chow(X)\in \mathrm{Div}^{d,d}[ ( \PP W\dual ) ^2]$
defined with $\Chow(X,\bx)$ (resp. $\sO_{\Xi}(1,\ba)$) replaced by $\Chow(X)$ (resp. $\sO(1)$).
\end{defi}

\begin{defi}\label{chow-ss}
Given $(X,\sO_X(1),\underx,\undera)$, we say that it is {\em stable} (resp. {\em semistable}) if for any 1-PS $\lam$ of
$SL (W)$, the $\ba$-$\lam$-weight of $\Chow(X,\underx)$ is positive (resp. non-negative).
\end{defi}
\black
To make an analogy with the slope stability of vector bundle, we introduce the notion of slope stable
by testing on proper closed subcurves $Y\sub X$. First, with $\sO_X(1)$ understood, we
denote $\deg X=\deg\sO_X(1)$, and for any subcurve $Y\sub X$, we denote $\deg Y=\deg\sO_X(1)|_Y$.
For any proper subcurve $Y\sub X$, we define the number of {\em linking nodes} of $Y$ to be
\beq\label{ell-Y}
\ell_Y=\bigl\vert Y\cap Y\comp\bigr\vert,\quad Y\comp=\overline{X\setminus Y}\,.
\eeq

\begin{defi} Given $(X,\sO_X(1),\bx,\ba)$,
We say that it is {\em slope (semi-)stable} if $X$ is nodal and if for any proper subcurve $Y\subsetneq X$ we have
\beq\label{basic1}
\frac{\deg Y+\frac{\ell_Y}{2}+\sum_{x_j\in Y}\frac{a_j}{2}}{h^0(\sO_X(1)|_Y)}< \frac{\deg X+\sum_{j=1}^n \frac{a_j}{2}}{h^0(\sO_X(1))}\, ,
\quad (resp. \leq)\ .
\eeq
\end{defi}

In this paper, we will prove by verifying the Hilbert-Mumford criterion the following theorem. For the weight $\ba$ and
$g(X)=g$, we denote
\beq\label{aa}
\chi_{\ba}(X):=g-1+(a_1+\cdots+a_n).
\eeq

\begin{theo}\label{main}
Given $g$ and $\ba$ such that $\chi_\ba(X)>0$, there is an $N$ so that a genus $g$
polarized weighed pointed curve $( X,\sO_X(1),\underx,\undera)$
such that
$\deg X\geq N$  is (semi-)stable if
and only if it is slope (semi-)stable.
\end{theo}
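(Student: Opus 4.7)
The plan is to apply the Hilbert--Mumford numerical criterion, which identifies Chow (semi-)stability of $(X,\sO_X(1),\bx,\ba)$ with positivity (resp.\ non-negativity) of $\omega_\ba(\lam)$ for every 1-PS $\lam\mh\CCx\to SL(W)$. After $SL(W)$-conjugation any $\lam$ is diagonal with integer weights $r_0\le\cdots\le r_d$ on a chosen basis of $W$, equivalently the data of a weighted flag $0\subsetneq W_0\subsetneq\cdots\subsetneq W_k=W\dual$ with multiplicities $r_{j+1}-r_j>0$. The two sides of the theorem will be matched by translating each subspace $W_j\sub W\dual$ into a subcurve $Y_j\sub X$ (the support of the common zero locus of the sections in $W_j$), writing $\omega_\ba(\lam)$ as a positive combination of the elementary weights of the two-step 1-PS $\lam_j$ attached to each $W_j$, and identifying each elementary weight with the slope-difference appearing in \eqref{basic1} for the corresponding $Y_j$.

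For the \emph{only if} direction, one must rule out worse-than-nodal singularities and obtain the slope inequality on every subcurve. At a non-nodal singular point $p\in X$, one exhibits, as in Mumford \cite{Mum} and Gieseker \cite{Gie}, a 1-PS $\lam_p$ supported at $p$ whose flat limit contracts $X$ onto a curve of larger multiplicity at $p$ and whose leading $\deg X$-order weight $\omega_\ba(\lam_p)$ is negative, dominating any $O(1)$-contribution from $\ba$ and forcing Chow instability. Given a proper subcurve $Y\subsetneq X$, consider the two-step flag $0\sub H^0(\sO_X(1)\otimes\sI_Y)\sub H^0(\sO_X(1))=W\dual$ and the associated trace-normalized 1-PS $\lam_Y$. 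For $\deg X\ge N$ large, the exact sequence
\[
0\to \sO_X(1)\otimes\sI_Y\to \sO_X(1)\to \sO_X(1)|_Y\to 0
\]
has vanishing higher cohomology, so Riemann--Roch on $Y$ together with the analysis of $H^0(\sO_{Y\comp}(1)(-Y\cap Y\comp))$ produces the boundary correction $\ell_Y/2$. A direct calculation of the two-step Chow weight via Mumford's intersection-theoretic formula, combined with the point contribution $\sum_{x_j\in Y}a_j$, identifies $\omega_\ba(\lam_Y)$ with a positive multiple of the slope-difference in \eqref{basic1} for $Y$.

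For the \emph{if} direction, given an arbitrary 1-PS $\lam\leftrightarrow\{W_j\}$, decompose $\omega_\ba(\lam)$ along the filtration $\{W_j\}$ into a positive combination of two-step elementary weights $\omega_\ba(\lam_j)$, identify each $W_j$ with $H^0(\sO_X(1)\otimes\sI_{Y_j})$ for its base-locus subcurve $Y_j$ up to an $O(1)$-error, and apply the previous paragraph term-by-term. The main obstacle is the validity of this identification and of the additive decomposition of $\omega_\ba$ along the filtration: the flat limit $\lim_{t\to 0}\lam(t)\cdot X\sub \PP W$ need not be reduced, can acquire embedded components at the nodes of $X$, and the $W_j$ need not be base-point-free, so the Chow weight is genuinely non-linear in $\lam$. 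The resolution is the asymptotic hypothesis $\deg X\ge N$: for $N$ large, each $\sO_X(1)\otimes\sI_{Y_j}$ is globally generated, the discrepancy $\dim W_j-h^0(\sO_X(1)\otimes\sI_{Y_j})$ is bounded uniformly in $\deg X$, and the leading term of the slope-difference grows linearly in $\deg X$, dominating the $O(1)$-error and yielding strict positivity of $\omega_\ba(\lam)$.
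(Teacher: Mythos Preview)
Your \emph{only if} direction is essentially correct and matches the paper: testing against the two-step 1-PS $\lam_Y$ attached to $H^0(\sO_X(1)\otimes\sI_Y)$ recovers the slope inequality \eqref{basic1}, and the exclusion of worse-than-nodal singularities is the classical Mumford argument.

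The \emph{if} direction has a genuine gap. Your claimed additive decomposition of $\omega_\ba(\lam)$ into a positive combination of two-step weights $\omega_\ba(\lam_j)$ does not exist. By Mumford's formula (Proposition~\ref{chow-wt}) the only nonlinear term is $e(\sI(\lam))$, the normalized leading coefficient of $\chi(\sO_{X\times\Ao}(k)/\sI(\lam)^k)$; this is an area of Newton polygons (Corollary~\ref{sum-Delta}) and is genuinely convex, not linear, in the weights $\rho_i$. There is no mechanism to write it as a sum of the $e(\sI(\lam_j))$. Moreover the subspaces $W_j$ of an \emph{arbitrary} 1-PS need not be of the form $H^0(\sO_X(1)\otimes\sI_{Y_j})$ for any subcurve $Y_j$: a generic $W_j$ cuts out a zero-dimensional scheme on $X$, not a subcurve, and the base-locus subcurve may be empty for every step. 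Finally, even granting an $O(1)$ error per step, the number of steps in the filtration is $m+1=h^0(\sO_X(1))$, which grows linearly in $\deg X$; accumulated errors are then of the same order as the main term, so the asymptotic $\deg X\ge N$ does not rescue the argument.

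What the paper actually does is entirely different. It first replaces an arbitrary $\lam$ by a \emph{staircase} 1-PS (Proposition~\ref{stair-case}) without increasing $\omega_\ba$; this forces the filtration $\{\sE_i\}$ to be strictly decreasing with controlled jumps (Propositions~\ref{vir-ind} and \ref{vir-ind-2}). It then lifts $e(\sI(\lam))$ to the normalization (Proposition~\ref{lift}), localizes it as a sum of Newton-polygon areas $\sum_q|\Delta_q|$, and proves a sharp \emph{linear} upper bound $E\lalp(\rho)$ on each irreducible component (Theorem~\ref{main-est}, via the delicate Lemma~\ref{eps}). Because the resulting bound $E_X(\lam,\rho)$ is linear in $\rho$, the inequality $\omega_\ba(\lam)\ge\hat\omega_\ba(\lam)>0$ can be checked by linear programming on the edges \eqref{10} of the weight cone (Proposition~\ref{new-bdd}, Theorem~\ref{lower-bdd}); on each edge the computation does reduce to a single subcurve $Y^\complement$, and \emph{there} the slope inequality \eqref{basic1} is invoked. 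The linearization of the a priori nonlinear $e(\sI(\lam))$ is the heart of the matter, and your proposal contains no substitute for it.
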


For $(X,\bx)$, we abbreviate the $\QQ$-line bundle $\omega_X(\sum a_i x_i)$ to
$\omega_X(\bax)$. For integer $k$ so that $k\cdot a_i\in\ZZ$
for all $i$, then $\omega_X(\bax)^{\otimes k}=\omega_X^{\otimes k}(\sum ka_i x_i)$ is a line bundle.
In Section 5, we will show that in case $\deg X$ is sufficiently large, the slope stability is equivalent to the criterion:

\begin{prop}
\lab{main-c}
Given $g$ and $\ba$ such that $\chi_{\ba}(X)>0$, there is an $N$ so that a genus $g$ polarized weighed pointed
curve $( X,\sO_X(1),\underx,\undera)$ such that
$\deg X\geq N$  is slope (semi-)stable if and only if for any
proper subcurve $Y\subsetneq X$ satisfying $h^0(\sO_X(1)|_Y)<h^0(\sO_X(1))$, we have
\beq\label{basic}
 \Bigl\vert \Bl\deg Y+\sum_{x_{j}\in Y}\frac{a_{j}}{2}\Br-\frac{\deg_Y\omega_X(
\bax)}{\deg\omega_X(\bax)}\Bl \deg X+\sum_{j=1}^n\frac{a_{j}}{2}\Br
\Bigr\vert
<\frac{\ell_Y}{2}\, ,\ (\text{resp.} \leq)\, .
\eeq
\end{prop}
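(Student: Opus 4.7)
The plan is to rewrite both sides of the asserted equivalence in a common form via Riemann--Roch, and then observe that the criterion \eqref{basic} packages together the slope conditions for $Y$ and its complement $Y^c$ simultaneously.

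First, I would choose $N$ large enough that for $\deg X\ge N$ the bundle $\sO_X(1)$ is nonspecial on $X$ and on every proper subcurve, so that Riemann--Roch gives $h^0(\sO_X(1))=d+1-g$ and $h^0(\sO_X(1)|_Y)=d_Y+1-g_Y$, writing $d=\deg X$, $d_Y=\deg Y$, and $g_Y$ for the arithmetic genus of $Y$. Since $\omega_X|_Y=\omega_Y(Y\cap Y^c)$ we have $\deg_Y\omega_X(\bax)=2g_Y-2+\ell_Y+A_Y$ with $A_Y=\sum_{x_j\in Y}a_j$, and a direct arithmetic check yields the key identities
\[
d+\tfrac12 A=h^0(\sO_X(1))+\tfrac12\deg\omega_X(\bax),
\]
\[
d_Y+\tfrac{\ell_Y}{2}+\tfrac{A_Y}{2}=h^0(\sO_X(1)|_Y)+\tfrac12\deg_Y\omega_X(\bax).
\]

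Substituting into the slope inequality \eqref{basic1} and simplifying, slope (semi-)stability at $Y$ is equivalent to $h^0(\sO_X(1)|_Y) > r_Y\cdot h^0(\sO_X(1))$ (resp.\ $\ge$), where I set $r_Y:=\deg_Y\omega_X(\bax)/\deg\omega_X(\bax)$. On the other hand, using the same identities,
\[
\bigl(d_Y+\tfrac{A_Y}{2}\bigr)-r_Y\bigl(d+\tfrac{A}{2}\bigr)=h^0(\sO_X(1)|_Y)-r_Y\cdot h^0(\sO_X(1))-\tfrac{\ell_Y}{2},
\]
so the criterion \eqref{basic} is equivalent to the two-sided bound $0<h^0(\sO_X(1)|_Y)-r_Y\cdot h^0(\sO_X(1))<\ell_Y$. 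The lower half is precisely slope stability for $Y$.

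For the upper half I would invoke the Mayer--Vietoris sequence attached to $0\to\sO_X\to\sO_Y\oplus\sO_{Y^c}\to\sO_{Y\cap Y^c}\to 0$ twisted by $\sO_X(1)$. Since $H^1(\sO_X(1))=0$ for $N$ large, this yields
\[
h^0(\sO_X(1)|_Y)+h^0(\sO_X(1)|_{Y^c})=h^0(\sO_X(1))+\ell_Y,
\]
which, combined with the complementary identity $r_Y+r_{Y^c}=1$, shows that the upper bound for $Y$ is exactly the slope inequality for $Y^c$. Thus the two-sided bound at $Y$ is simultaneous slope stability at $Y$ and $Y^c$.

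From here $(\Rightarrow)$ is immediate: slope stability applied to both $Y$ and $Y^c$ yields the two-sided estimate, hence \eqref{basic}. For $(\Leftarrow)$, the assumption \eqref{basic} at $Y$ (with $h^0(\sO_X(1)|_Y)<h^0(\sO_X(1))$) gives slope for $Y$ directly; for subcurves with $h^0(\sO_X(1)|_Y)=h^0(\sO_X(1))$, Mayer--Vietoris forces $h^0(\sO_X(1)|_{Y^c})=\ell_Y<h^0(\sO_X(1))$ (for $N$ large), so \eqref{basic} applies to $Y^c$ and the $Y\leftrightarrow Y^c$ symmetry built into the two-sided bound recovers slope for $Y$. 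The main obstacle I anticipate is the uniform vanishing of $h^1(\sO_X(1)|_Y)$ required in the first step: components of $Y$ can carry small $\sO_X(1)$-degree even when $\deg X$ is large, so one must either control the component-wise degree uniformly (using very ampleness of $\sO_X(1)$) or handle subcurves on which $\sO_X(1)$ is special by a separate direct estimate.
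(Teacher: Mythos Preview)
Your algebraic reduction is correct and is precisely the computation the paper defers to the references \cite{Cap} and \cite{Gie}: once $h^1(\sO_X(1)|_Y)=0$ for every subcurve $Y$, Riemann--Roch and the Mayer--Vietoris identity you wrote turn \eqref{basic1} for $Y$ and for $Y^c$ into the two halves of the two-sided bound \eqref{basic}. So the equivalence step is fine.

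The obstacle you flag at the end is, however, the entire content of the paper's own argument (Corollary~\ref{basic=basic1}), and neither of your proposed fixes works as stated. Very ampleness of $\sO_X(1)$ only forces $\deg_{X_\alpha}\sO_X(1)\ge 1$ on each irreducible component, which is far from enough to kill $h^1$ on a subcurve of positive genus; and there is no ``separate direct estimate'' that avoids the issue, because without some hypothesis a subcurve $Y$ can genuinely have $\deg Y$ bounded independently of $\deg X$ while $g_Y\ge 2$.

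The resolution is that the vanishing does \emph{not} follow from $\deg X\gg 0$ alone: it follows from either stability hypothesis. Concretely, if $h^1(\sO_X(1)|_Y)\ne 0$ for some connected $Y$, then $g_Y\ge 2$ and $\deg Y\le 2g_Y-2\le 2g-2$, a bound independent of $\deg X$. Now:
\begin{itemize}
\item If \eqref{basic} holds for all proper subcurves, Lemma~\ref{deg-X-a} gives $\deg Y\ge C\cdot \deg X$ for every connected proper subcurve (the exceptional case there has $g_Y=0$, hence automatic vanishing), contradicting $\deg Y\le 2g-2$ once $\deg X>(2g-2)/C$.
\item If \eqref{basic1} holds for all proper subcurves, the analogous degree lower bound is Gieseker's argument \cite[Prop.~1.0.7]{Gie}.
\end{itemize}
Thus the correct logical structure is: assume one of the two conditions, deduce $h^1(\sO_X(1)|_Y)=0$ for all $Y$, and only then run your Riemann--Roch/Mayer--Vietoris computation to pass to the other condition. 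Your write-up should make this dependence explicit rather than positing nonspeciality at the outset.
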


The case $\bx=\emptyset$ is a theorem of Caporaso \cite{Cap} on the
stability of polarized nodal curves.  The case of  the
asymptotic Hilbert stability of smooth\footnote{Notice if $X$
is smooth then \eqref{basic} becomes vacuous. } weighted pointed
curves is a theorem of David Swinarski \cite{Swin} (see also
\cite{Mor}).

We now sketch the main ingredients of our proof.
Our starting point is a theorem of Mumford that expresses the
$\ba$-$\lam$-weight of $\Chow(X,\bx)$ in terms of the leading coefficient of the Hilbert-Samuel polynomial of
an ideal $\sI\sub\sO_{X\times\Ao}(1)$ (cf. Prop. \ref{chow-wt}). Our first observation is that this leading coefficient
can be evaluated by the leading coefficient of the Hilber-Samuel polynomial of the pull back $\ti\sI$ of $\sI$ to the normalization
$\ti X$ of $X$.  This transforms the evaluation of the $\ba$-$\lam$-weight to the calculation of the areas of
a class of Newton polygons
associated to the pull back sheaf $\ti\sI$.
By dividing the Newton polygons into two kinds and studying them seperately, we obtain
an effective  bound of the areas, thus a bound of the $\ba$-$\lam$-weight of $\Chow(X,\bx)$.
This bound is linear in the weights of $\lam$.  We then apply linear
programing to complete our proof of Theorem \ref{main}.

\vsp

Our GIT construction of the moduli of  weighted pointed stable curves goes as follows. We form
the Hilbert scheme $\cH$ of pointed $1$-dimensional subscheme of $\PP^m$ of fixed degree. Let
$\psi: \cH\to \cC$ be the Hilbert-Chow morphism (map) to the Chow variety of pointed 1-dimensional
cycles in $\PP^m$ of the same degree, equivariant under $SL(m+1)$. Applying our main theorem, we conclude that
in case the degree is sufficiently large, the preimage under $\psi$ of the set $\cC^{ss}\sub \cC$ of
GIT-semistable points is the set of semistable polarized weighted pointed nodal curves. Let $\cK\sub \cH$ be the
subset of canoncially polarized weighted pointed smooth curves. We prove that
the GIT-quotient of the closure $\overline\cK$ is isomorphic to the Hassett's moduli of
weighted pointed stable curves. An interesting observation is that the complement $\overline\cK-\cK$
contains polarized semistable but not canonically polarized weighted pointed curves. Thus though
GIT gives the same compactification as that of Hassett of the moduli of canonically polarized weighted pointed
smooth curves, the geometric objects added to obtain the compactification in the mentioned two
constructions are markedly different. It is worth pursuing to see how this extends in the high dimensional case.

\vsp
In the end, using that the Donaldson-Futaki invariants can be expressed as the limit of certain Chow weights
under a 1-PS, we apply our main theorem to prove that a polarized nodal curve $(X,\sO_X(1))$ is $K$-stable if and only if
$\sO_X(1)$ is numerically equivalent to a multiple of $\omega_X$.
This implies that GIT compactification is same as the compactification of smooth curve using $K$-stability. This
is analogous to that the Uhlenbeck compactification coincides with the GIT compactification of the moduli of vector bundles over curves.

\vsp
The paper is organized as follows. In Section two, we show that the weights can be evaluated via
the leading coefficients of the Hilbert-Samuel polynomial of a sheaf on the normalization $\ti X$.
In Section three, we reduce our study to a particular class of 1-PS: the staircase 1-PS.
We will derive a sharp bound for each irreducible component in Section four. We complete the proof
of our main theorems in Section five. The last two sections include the application of our stability study
to constructing moduli of weighted pointed curves and to study the $K$-stability of polarized curves.

\vsp
\noindent
{\bf Acknowledgement}. The first author is partially supported by an NSF grant NSF0601002. The second author is partially supported
by RGC grants CUHK 403106 and 403709 from the Hong Kong government.  Part of the writing was completed when the second author was visiting the
Mathematics Department of Stanford University in summer 2009 and the Mathematics Department of Harvard University in fall 2009.
He thanks the faculty members there for the hospitality and the fine research environment they provided.

\vsp
\noindent
{\bf List of notations}

\vsp

\begin{tabular}{lll}
$\sI(\lam)$; $\ti\sI(\lam)$ &  $(t^{\rho _0}s_0,\cdots)\sub \sO_{X\times\Ao}(1)$; similarly defined on $\ti X$ & \eqref{I}\\

$e(\sI(\lam))$; $e(\ti \sI)$ & $ \nlc\, \chi ( \sO_{X\times \Ao}(k)/\sI(\lam)^k)$; similarly defined on $\ti X$ & \eqref{e-def}; after \eqref{ti-I} \\

$e(\ti \sI)_q$; $e(\ti \sI\lalp)$ & contribution of $e(\ti \sI)$ at $q\in X$; along $\ti X\lalp$ &\eqref{e-q}\\

$\omega(\lam)$ & $\lam$-Chow weight & Prop.\,\ref{chow-wt}\\

$v(\ti s_i,q)$&  the vanishing order of $\ti s_i$ at $q$ &\eqref{v}\\

$\hbar(q)$& $\max\{i\mid v(\ti s_i,q)\ne \infty\}$ &\eqref{hbar}\\

$\hbar\lalp$& $\min_i\{i\mid \ti s_j|_{\ti X\lalp}=0,\ \text{for}\ j \ge i+1\}.$&\eqref{hbar-alp}\\

$\Delta_q$ & Newton polygon supported at  $q\in \ti X$ & Def.\,\ref{Delta-p}\\


$\sE_i=\sE(\lam)_i$ &$ ( s_i, s_{i+1},\ldots, s_m)\sub \sO_{X}(1)$ & \eqref{E}\\


$\Lam\lalp(\lam)$; $\Lam(\lam)$ &$\{ q\in X\lalp\mid s_{\hbar\lalp}(q)=0\}$; $\Lam(\lam)=\cup_{\alpha=1}^r \Lam\lalp(\lam)$
&Def.\,\ref{index}\\
%
%
%

$\delta(\ti s_i,p)$ &$\len(\ti\sE_{i}/\ti\sE_{i+1})_p \text{ or}=0 $ & Def.\,\ref{del-p}\\

$\inc\lalp(\ti s_i)$&$\sum_{p\in\ti X\lalp} \delta(\ti s_i,p) p\quad\text{and}\quad
\inc(\ti s_i)=\sum_\alpha \inc\lalp(\ti s_i);$ & Def.\,\ref{E}\\

$\delta\lalp(\ti s_i)$; $\delta(\ti s_i)$ &$\sum_{p\in\ti X\lalp} \delta(\ti s_i,p)$; $\delta(\ti s_i)=
\sum\lalp\delta\lalp(\ti s_i)$ & Def.\,\ref{E}\\


$w(\ti\sE_i,p)$; $ w\lalp(\ti\sE_i)$ &$\len(\sO_\tiX(1)/\ti \sE_i)_p $; $w\lalp(\ti\sE_i)=\sum_{p\in \ti X\lalp}w(\ti\sE_i,p)$
& Def.\,\ref{w}\\


$\II\lalp=\II\lalp(\lam)$ &$\{i\in\II\mid \inc(\ti s_i)\cap \ti X\lalp\ne\emptyset\ \text{or}\ i=\hbar\lalp\};$
&\eqref{I-lalp}\\

$L_Y$; $L\lalp$; $\ti L_Y$; $\ti L\lalp$ &$Y\cap Y\comp$; $L_{X\lalp}$; $\pi\upmo(L_Y)\cap\ti Y$;
$\ti L_{X\lalp}$ &\eqref{link-node} and \eqref{Z-p}\\

$\ti N_Y$; $N\lalp$; $\ti N\lalp$ & $\pi\upmo(N_Y)\cap \ti Y$; $N_{X\lalp}$; $\ti N_{X\lalp}$ &\eqref{N-Y}\\

%

$ \ell\lalp$; $\ell_{\alpha,\beta}$; $\ell_{\alpha,\alpha}$ & $|L\lalp|$; $ |X\lalp\cap X\lbe|$; $-|L\lalp|$ &\eqref{Z-p}; \eqref {ell_ab}\\

$\II_{\alpha}\prim$ &$\{i\in \II\lalp\mid w\lalp(\ti \sE_{i+1})\le \deg X\lalp-2g(X\lalp)-\ell\lalp -1 \} $& Def.
\ref{prim-ind}\\

$E\lalp(\rho)$ &upper bound of $e(\sI)\lalp$ & \eqref{E-alp}\\

$ W_i=W_i(\lam)$ &$\{v\in W\mid s_i(v)=\cdots=s_m(v)=0\}\subset W$& \eqref{W-i}\\

$\omega_\ba(\lam)$ &$ \omega(\lam)+\mu_{\mathbf{a}}(\lam)\ . $&\eqref{omega-mu}\\

%

$\Phi: \cH\to \cC$ & Hilbert-Chow map & before Lem.\,\ref{weight-nodal}\\

$\cK, \bar\cK\sub \cH$ & slice polarized by $\omega_{\mathcal{X}/\cH}(\ba\cdot \bx)$ & before \eqref{bq}\\

$\vec\delta(\sL)$ & degree class for the line bundle $\sL$ & after \eqref{eq6.6}\\

%

\end{tabular}


\section{Chow stability, Chow weight and Newton polygon} \lab{newton-poly}

In this section, we  first recall some basic facts from \cite{Mum} on
stability of a polarized curve; we then
 localize the calculation of the weight of $\Chow(X)$ to a divisor on the normalization of $X$, and interpret the contribution from each point of the divisor as the area of a generalized Newton polytope.


Throughout the paper, we fix a polarized (connected) curve
$(X,\sO_X(1))$, its associated embedding $\imath: X\to \PP W$ (cf. \eqref{emb-X}),
and denote by $\Chow(X)$ the Chow point of $\imath$
once and for all. We also assume that $X$ is nodal unless otherwise is mentioned.

We will reserve the symbol $\lam$ for a 1-PS of $SL(W)$;
for such $\lam$, we diagonalize its action by choosing
\beq\lab{basis-s}
\bolds=\left\{ s_0,\cdots,s_m\right\}\quad \text{a basis of}\quad W\dual 
\eeq
so that under its dual bases the action $\lambda$ is given by
\beq
\lambda(t):=\mathrm{diag}[t^{\rho _0},\cdots,t^{\rho_m}]\cdot t^{-\rho_{\text{ave}}}, 
\quad
\rho_0\geq\rho_1\ge\cdots\ge \rho_m=0,
\eeq
and $\rho_{\text{ave}}=\frac{1}{m+1}\sum\rho_i$.
We will call $\bolds$ a diagonalizing basis of $\lam$.

In \cite{Mum},  Mumford introduced a subsheaf
\begin{equation}\label{I}
\sI(\lam)=(t^{\rho _0}s_0,\cdots,t^{\rho_m}s_m) \sub \sO_{X\times \Ao}(1):=p_X\sta \sO_X(1)
\end{equation}%
generated by sections in the paranthesis, where
$p_X:\xAo\to X$ is the projection.
Let $e( \sI(\lam)) $ be the normalized leading coefficient
(abbreviate to n.l.c.) of the Hilbert-Samuel polynomial:
\begin{equation}\label{e-def}
e( \sI(\lam))= \nlc\, \chi ( \sO_{X\times \Ao}(k)/\sI(\lam)^k)
 \end{equation}

\begin{prop}[Mumford]
\lab{chow-wt}The $\lam$-weight  $\Chow(X)$ is
\begin{equation*}
\omega(\lam)=\frac{2\deg X}{m+1}\sum_{i=0}^m\rho _{i}-e( \sI(\lam)).
\end{equation*}
\end{prop}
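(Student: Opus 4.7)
This formula is due to Mumford, and my plan follows his derivation in \cite{Mum}. First, split $\lam(t)=\lam_0(t)\cdot t^{-\rho_{\mathrm{ave}}}$ into its unnormalized diagonal part $\lam_0(t)=\mathrm{diag}[t^{\rho_0},\ldots,t^{\rho_m}]\in GL(W)$ and a central scalar. The scalar $t^{-\rho_{\mathrm{ave}}}\cdot\mathrm{id}_W$ acts trivially on $(\PP W\dual)^2$ but with weight $2d\rho_{\mathrm{ave}}=\frac{2\deg X}{m+1}\sum_i\rho_i$ on $\sO_{\mathrm{Div}^{d,d}}(1)|_\zeta$, since sections of $\sO_{(\PP W\dual)^2}(d,d)$ have total $W$-degree $2d$ (tracking signs: $\mu\in\CCx$ acts on $W$ by $\mu$, hence on $H^0(\sO(d,d))$ by $\mu^{2d}$, hence on the fiber $\sO_{\mathrm{Div}^{d,d}}(1)_{[F]}$ by $\mu^{-2d}$). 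This recovers the first term of the formula, and it remains to prove that the $\lam_0$-weight on $\Chow(X)$ at $\zeta$ equals $-e(\sI(\lam))$.

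To compute this weight, I would set up the $\CCx$-equivariant flat family $X\times\Ao$ with $\CCx$ acting on $X\sub\PP W$ via $\lam_0$ and on $\Ao$ by standard scaling. The sheaf $\sI(\lam)\sub\sO_{X\times\Ao}(1)$ generated by the $\CCx$-invariant sections $t^{\rho_i}s_i$ coincides with $\sO_{X\times\Ao}(1)$ off $X\times\{0\}$, because the $s_i$ have no common zero on $X$; hence $\sO_{X\times\Ao}(k)/\sI(\lam)^k$ is coherent of finite total length with $\chi=\tfrac{k^2}{2}e(\sI(\lam))+O(k)$. On the other hand, representing $\Chow(X)$ via its Cayley/resultant form (the value at a generic pair $(V_1,V_2)\in(\PP W\dual)^2$ is a product of linear forms evaluated at the points of $V_1\cap V_2\cap\imath(X)$), applying $\lam_0$ rescales each $s_i$ by $t^{\rho_i}$, and the total $t$-weight of a canonical homogeneous lift of $\Chow(X)$ under $\lam_0$ -- which is exactly minus the $\lam_0$-weight on $\sO_{\mathrm{Div}^{d,d}}(1)|_\zeta$ -- can be organized as a monomial sum matching, to leading order in $k$, the Hilbert-Samuel leading coefficient of $\sI(\lam)^k$.

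The main technical point is this final matching: identifying the total $t$-weight arising from the resultant presentation of $\Chow(X)$ with the Hilbert-Samuel leading coefficient $e(\sI(\lam))$. Mumford carries this out by reducing, through localization and additivity, to a local computation at each closed point of $X\times\{0\}$, where the relevant monomial count becomes the area of a local Newton polygon determined by the $\rho$-weights of the generators $t^{\rho_i}s_i$ -- in the same spirit as the Newton-polygon description of $e(\ti\sI)$ that the present paper will use globally. Collecting these local contributions and combining with the scalar correction from the first paragraph then yields the stated formula.
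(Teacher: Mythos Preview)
The paper does not give its own proof of this proposition; it is quoted as Mumford's result with a reference to \cite{Mum} and used as a black box. So there is no ``paper's proof'' to compare against, and your sketch is already more than the paper provides.

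Your decomposition into the scalar $t^{-\rho_{\mathrm{ave}}}$ and the unnormalized $\lam_0$ is the right first step, and your computation that the scalar contributes $2d\rho_{\mathrm{ave}}=\frac{2\deg X}{m+1}\sum_i\rho_i$ is correct (your sign tracking through $\sO_{\mathrm{Div}^{d,d}}(1)$ is fine). The identification of the $\lam_0$-weight with $-e(\sI(\lam))$ is indeed the content of Mumford's Theorem~2.9 in \cite{Mum}.

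One caution: in your third paragraph you describe Mumford's argument as proceeding by ``localization and additivity'' to local Newton polygons. That is not quite how Mumford does it; his proof goes through the Knudsen--Mumford expansion of $\det R\pi_*\sI(\lam)^k$ and a direct comparison with the Chow form, without any Newton-polygon language. The localization to points of $\ti X$ and the area-of-polygon interpretation is precisely what \emph{this paper} develops afterwards (Proposition~\ref{lift}, Lemma~\ref{newton}, Corollary~\ref{sum-Delta}), as a reinterpretation of $e(\sI(\lam))$ suited to nodal curves. So your sketch is a reasonable heuristic, but it conflates Mumford's original global argument with the subsequent local reformulation introduced here. If you want a self-contained account, cite \cite[Thm.~2.9]{Mum} for the identity $\omega(\lam_0)=-e(\sI(\lam))$ and keep the Newton-polygon description as a separate statement about $e(\sI(\lam))$.
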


In the following, when the 1-PS $\lam$ and its diagonalizing basis $\bolds$
are understood, we will drop $\lam$ from $\sI(\lam)$ and abbreviate $\sI(\lam)$ to $\sI$.
Our first step is to lift the calculation of $e(\sI)$ ($=e(\sI(\lam))$) to the normalization of $X$:
$$
\pi: \ti X\lra X.
$$We let
\beq\label{si}\ti s_i=\pi\sta s_i\in \sO_\tiX(1):=\sO_X(1)\otimes_{\sO_X}\sO_\tiX,
\eeq
and let $\ti\sI$ be the pull-back of $\sI$:
\beq\lab{ti-I}
\ti \sI=(t^{\rho_0}\ti s_0,\cdots, t^{\rho_m}\ti s_m)\sub \sO_\txAo(1)=\sO_{\ti X}(1)\otimes_{\sO_{\ti X}}\sO_\txAo\ .
\eeq

Like $e(\sI)$, we define
$e(\ti \sI)=\nlc \, \chi(\sO_\txAo(k)/\ti \sI^k)$. We have the following
proposition whose proof will be given at the end of this section.
\begin{prop}\lab{lift}We have
 $e(\sI)=e(\ti \sI)$.
\end{prop}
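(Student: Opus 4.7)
The plan is to rewrite $e(\sI)$ and $e(\ti\sI)$ as finite sums of local Samuel multiplicities at a common finite set of points, and then to invoke the classical additivity of Samuel multiplicity along minimal primes at each node. Because $s_0,\dots,s_m$ form a basis of $H^0(\sO_X(1))$ and $\sO_X(1)$ is very ample, these sections have no common zero on $X$; hence for $(p,t_0)\in X\times\Ao$ with $t_0\neq 0$ the stalk $\sI_{(p,t_0)}$ contains every $s_i=t_0^{-\rho_i}(t^{\rho_i}s_i)$, so $\sI_{(p,t_0)}=\sO_{X\A^1}(1)_{(p,t_0)}$ and $\sI^k_{(p,t_0)}=\sO_{X\A^1}(k)_{(p,t_0)}$. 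The quotient $\sO_{X\A^1}(k)/\sI^k$ is therefore supported on a finite subset $Z\subset X\times\{0\}$ with finite-length stalks; the same holds for $\sO_{\ti X\A^1}(k)/\ti\sI^k$. Additivity of Euler characteristic gives
\[
e(\sI)=\sum_{p\in Z}e_p(\sI),\qquad e(\ti\sI)=\sum_{\ti p\in\pi\upmo(Z)}e_{\ti p}(\ti\sI),
\]
where $e_p(\sI)$ denotes the Samuel multiplicity of the $\mm$-primary ideal $\sI$ in the completed local ring $R=\widehat{\sO}_{X\A^1,(p,0)}$ (after trivialising $\sO_X(1)$ near $p$, so $\sI_{(p,0)}$ becomes an ideal of $R$). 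At a smooth point $p$ the morphism $\pi$ is an analytic isomorphism, identifying $\sI$ with $\ti\sI$, so $e_p(\sI)=e_{\pi\upmo(p)}(\ti\sI)$ is immediate. It remains to show, for each node $p$ with $\pi\upmo(p)=\{\ti p_1,\ti p_2\}$, that $e_p(\sI)=e_{\ti p_1}(\ti\sI)+e_{\ti p_2}(\ti\sI)$.

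At the node, $R\cong\kk[[x,y,t]]/(xy)$ is a two-dimensional local ring with exactly two minimal primes $\mathfrak{p}_1=(y)$ and $\mathfrak{p}_2=(x)$. Both satisfy $\dim R/\mathfrak{p}_i=2$, and the quotients are $R/(y)\cong\kk[[x,t]]\cong\widehat{\sO}_{\ti X\A^1,\ti p_1}$ and $R/(x)\cong\kk[[y,t]]\cong\widehat{\sO}_{\ti X\A^1,\ti p_2}$, the completed local rings of the two branches of the node. Each localisation $R_{\mathfrak{p}_i}$ is a field (inverting $x$ in $R_{(y)}$ forces $y=x\upmo\cdot xy=0$, so $R_{(y)}$ has zero maximal ideal), and hence has length $1$ as an $R_{\mathfrak{p}_i}$-module. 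I then apply the classical associativity formula for the Samuel multiplicity of an $\mm$-primary ideal in a Noetherian local ring of dimension $d$ (e.g.\ Matsumura, \emph{Commutative Ring Theory}, \S14):
\[
e(I,A)=\sum_{\mathfrak{p}\text{ minimal},\;\dim A/\mathfrak{p}=d}\len(A_{\mathfrak{p}})\cdot e\bl I\cdot(A/\mathfrak{p}),\,A/\mathfrak{p}\br.
\]
With $A=R$ and $I=\sI$, this yields $e_p(\sI)=e(\sI\cdot R/(y),R/(y))+e(\sI\cdot R/(x),R/(x))$. The generators $t^{\rho_i}s_i$ of $\sI$ map under $R\to R/(y)$ (respectively $R\to R/(x)$) to the generators $t^{\rho_i}\ti s_i|_{\ti p_1}$ (respectively $t^{\rho_i}\ti s_i|_{\ti p_2}$) of $\ti\sI$ on the corresponding branch, so the right-hand side equals $e_{\ti p_1}(\ti\sI)+e_{\ti p_2}(\ti\sI)$. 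Summing over all nodes and all smooth points yields $e(\sI)=e(\ti\sI)$.

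The only place the argument could go wrong is the applicability of the associativity formula, which requires $\sI$ to be genuinely $\mm$-primary at each support point $(p,0)\in Z$ so that the Samuel multiplicity is defined and the branch restrictions are also ideals of definition. This is however automatic, since $\sO_{X\A^1}(k)/\sI^k$ having finite-length stalk at $(p,0)$ is equivalent to $\sI$ being $\mm$-primary. Concretely, $\sI$ contains $t^{\rho_{i^\ast}}s_{i^\ast}$ with $s_{i^\ast}$ a unit at $p$ (forcing $t\in\sqrt{\sI}$), while the vanishing at $p$ of all zero-weight sections (which characterises the support points) pushes the spatial variables $x,y$ into $\sqrt{\sI}$ as well. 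The content of the proposition is thus a clean transfer from the singular curve $X$ to its normalisation $\ti X$ via Samuel-multiplicity additivity; the deeper Newton-polygon analysis of $\ti\sI$ carried out in the subsequent sections is entirely independent of this preparatory step.
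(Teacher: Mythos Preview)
Your approach via the associativity formula for Samuel multiplicities is clean when it applies, but there is a genuine gap: the claim that $\sO_{X\times\Ao}(k)/\sI^k$ is supported on a \emph{finite} subset of $X\times\{0\}$ with finite-length stalks is false in general for reducible $X$. If some irreducible component $X_\alpha$ has $\hbar_\alpha<m$ (equivalently, every basis section $s_j$ with $\rho_j=0$ vanishes identically on $X_\alpha$), then the quotient is supported on all of $X_\alpha\times\{0\}$. At a smooth point of $X_\alpha$ with local parameter $z$, every generator $t^{\rho_i}\hat s_i$ of $\sI$ lies in $(t)$ (either $\rho_i>0$, or $\rho_i=0$ and $\hat s_i\equiv 0$ on $X_\alpha$), so $z\notin\sqrt{\sI}$ and $\sI$ is not $\mathfrak m$-primary there. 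At a node lying on such a component the same obstruction appears along the $X_\alpha$-branch, so your local Samuel multiplicity $e_p(\sI)$ is undefined and the associativity formula is inapplicable. Your final paragraph asserts that ``the vanishing at $p$ of all zero-weight sections pushes the spatial variables into $\sqrt{\sI}$'', but vanishing at the point $p$ does not control vanishing order along a branch: a section identically zero on $X_\alpha$ contributes nothing to $\sqrt{\sI}$ in that direction.

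The paper's proof sidesteps this by working with global Euler characteristics rather than pointwise multiplicities: it tensors the normalization sequence $0\to\sO_{X\times\Ao}\to\xi_*\sO_{\ti X\times\Ao}\to\bigoplus_j\sO_{p_j\times\Ao}\to 0$ with $\sO(k)/\sI^k$ and shows directly that both the nodal term $\chi\bl(\sO(k)/\sI^k)|_{p_j\times\Ao}\br$ and $\dim\ker f_k$ are at most linear in $k$, hence invisible to the normalized leading coefficient. Your argument is valid when $X$ is irreducible (then no basis section can vanish identically, the support is the finite set $(s_m=0)\times\{0\}$, and the associativity formula gives a pleasant alternative proof), but the reducible case---precisely the case driving the slope-stability analysis---requires either the paper's global comparison or a separate treatment of the one-dimensional part of the support (this is exactly what the definition of $e(\ti\sI)_q$ in \eqref{e-q} and the proof of Lemma~\ref{sum-q} are built to handle on the normalization side).
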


This Proposition enables us to  lift the evaluation of $e(\sI)$ to $\ti X$.  Our next step is to localize the evaluation of $e(\ti\sI)$ to individual $q\in \ti X$.
In order to do that,  let $z$ be a uniformizing parameter of $\ti X$ at $q$; let  $t$ be the standard coordinates of $\Ao$.
We denote by $\hat \sO_{\tiX,q}$ the formal completion of the local ring $\sO_{\tiX,q}$ at its
maximal ideal. We fix an isomorphism of $\hat\sO_{\tiX, q}$-modules (the first isomorphism below):\beq\lab{phi}
\varphi_q: \sO_{\ti X}(1)\otimes_{\sO_{\ti X}}\hat \sO_{\ti X, q} \,\cong\, \hat\sO_{\tiX, q}\, \cong\, \CC[\![z]\!],
\eeq
where the second isomorphism is
induced by the choice of $z$.

\begin{defi}\lab{def-q}
Let $\ti s_i\in H^0(\sO_{\ti X}(1))$ be as in \eqref{si}. We define
\beq\lab{v}
v(\ti s_i,q)=\text{the vanishing order of $\ti s_i$ at $q$};
\eeq
in case $\ti s_i\equiv0$ near $q$,  we define $v(\ti s_i,q)=\infty$.
We set
\beq\label{hbar}
\hbar(q)=\max\{i\mid v(\ti s_i,q)\ne \infty\}\and
w(\ti\sI,q)= v(\ti s_{\hbar(q)},q).
\eeq
\end{defi}

The quantity $w(\ti\sI,q)$ is the width of the polygon $\Delta_q$ associated to $\ti\sI$ (at $q$)  to be defined later.
\vsp

We now look at  the image of $\ti\sI$ under $\sO_{\ti X\times\Ao}(1)\to \hat \sO_{\ti X\times\Ao, (q,0)}$.
We let
\beq\lab{I_q}
I_q=( z^{v(\ti s_m, q) },z^{v(\ti s_{m-1}, q) }t^{\rho
_{m-1}},\cdots ,z^{v(\ti s_0,q) }t^{\rho _{0}})\sub R=\CC[\![z,t]\!].
\eeq
By construction,  $\varphi_q$ induces an isomorphism
\beq\lab{truncate}
\bl\sO_{\txAo}(k)/\ti\sI^{k}\br\otimes_{\sO_\txAo}\hat{\sO}_{\txAo,(q,0)}\cong R/I_q^{k}.
\eeq

Notice that the right hand side is not a finite module when $\hbar(q)<m$.
Since $t^{\rho_i}\cdot\varphi_q(\ti s_i)\in t^{\rho_{\hbar(q)}} R$ for all $i$, the map
$$ t^{k\cdot\rho_{\hbar(q)}} R/\bl I_q\cap t^{\rho_{\hbar(q)}}R\br^k\lra R/I_q^k
$$
induced by the inclusion $ t^{k\cdot\rho_{\hbar(q)}}R\sub R$ is injective.
This time the $R$-module on the left hand side is a finite module.
We define
\beq\label{e-q}
e(\ti\sI)_q=\nlc\dim  t^{k\cdot\rho_{\hbar(q)}}R/\bl I_q\cap t^{\rho_{\hbar(q)}}R\br^k+2\rho_{\hbar(q)}\cdot\ordq.
\eeq

\begin{lemm}\lab{sum-q}
We have the summation formula\,\footnote{\ We were informed that similar formula was obtained by Swinarski in 2008.}
$e(\ti \sI)=\sum_{q\in\tiX} e(\ti \sI)_q$.
\end{lemm}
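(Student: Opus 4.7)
The plan is to decompose $\chi(\sO_\txAo(k)/\ti\sI^k)$ first over the irreducible components $\ti X_\alpha$ of the smooth curve $\ti X$, and then on each component to split off a Riemann--Roch contribution from a finite-length local contribution supported at the zeros of $\ti s_{\hbar_\alpha}$. A preliminary observation I will need is that at every $q\in\ti X$ the embedding of $X$ in $\PP W$ forces some $\ti s_i$ to be non-vanishing at $q$; setting $\iota(q):=\min\{i\mid \ti s_i(q)\ne 0\}$, the stalk $\ti\sI_{(q,0)}$ then contains the pure $t$-power $t^{\rho_{\iota(q)}}$. This guarantees $\sO_\txAo(k)/\ti\sI^k$ is supported in $\ti X\times\{0\}$ with bounded $t$-thickening, so its Euler characteristic is well defined and additive over the components: $\chi(\sO_\txAo(k)/\ti\sI^k)=\sum_\alpha\chi(\sO_\txAo(k)/\ti\sI^k|_{\ti X_\alpha\times\Ao})$.

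Next, on each $\ti X_\alpha\times\Ao$, since $\ti s_i|_{\ti X_\alpha}\equiv 0$ for $i>\hbar_\alpha$ while $\rho_i\ge\rho_{\hbar_\alpha}$ for $i\le\hbar_\alpha$, every generator of $\ti\sI|_{\ti X_\alpha\times\Ao}$ carries a factor $t^{\rho_i}$ with $\rho_i\ge\rho_{\hbar_\alpha}$; hence $\ti\sI\sub t^{\rho_{\hbar_\alpha}}\sO_\txAo(1)$ on $\ti X_\alpha\times\Ao$. This yields the short exact sequence (all terms restricted to $\ti X_\alpha\times\Ao$)
\[
0\to t^{k\rho_{\hbar_\alpha}}\sO_\txAo(k)/\ti\sI^k \lra \sO_\txAo(k)/\ti\sI^k \lra \sO_{\ti X_\alpha}(k)\otimes_{\CC}\CC[t]/(t^{k\rho_{\hbar_\alpha}})\to 0.
\]
By Riemann--Roch on the smooth proper curve $\ti X_\alpha$, the right-hand term has Euler characteristic $k\rho_{\hbar_\alpha}\bl kd_\alpha+1-g(\ti X_\alpha)\br$ with $d_\alpha=\deg\sO(1)|_{\ti X_\alpha}$, whose normalized leading coefficient is $2\rho_{\hbar_\alpha}d_\alpha$. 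Rewriting $d_\alpha=\sum_{q\in\ti X_\alpha}v(\ti s_{\hbar_\alpha},q)$ (the zero divisor of the non-zero section $\ti s_{\hbar_\alpha}$) and using $\hbar(q)=\hbar_\alpha$, $w(\ti\sI,q)=v(\ti s_{\hbar_\alpha},q)$ on $\ti X_\alpha$, this becomes $\sum_{q\in\ti X_\alpha}2\rho_{\hbar(q)}w(\ti\sI,q)$.

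For the kernel term I will argue that off the finite zero locus of $\ti s_{\hbar_\alpha}$ in $\ti X_\alpha$, the section $\ti s_{\hbar_\alpha}$ locally trivializes $\sO(1)|_{\ti X_\alpha}$ and $\ti\sI|_{\ti X_\alpha\times\Ao}$ equals $t^{\rho_{\hbar_\alpha}}\sO_\txAo(1)$ locally, so the kernel sheaf vanishes there. The kernel is then a coherent sheaf of finite length, whose stalk at each remaining $q$ is identified by \eqref{truncate} with $t^{k\rho_{\hbar(q)}}R/I_q^k$; finite-dimensionality and polynomial dependence on $k$ of this stalk follow because $I_q$ contains both $z^{v(\ti s_{\hbar(q)},q)}t^{\rho_{\hbar(q)}}$ and the pure $t$-power $t^{\rho_{\iota(q)}}$, producing a two-generator monomial ideal of finite colength inside $t^{\rho_{\hbar(q)}}R$. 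Adding the two contributions on each $\ti X_\alpha$ and summing over $\alpha$ yields
\[
e(\ti\sI)=\sum_\alpha\sum_{q\in\ti X_\alpha}\Bl\nlc\dim_{\CC}\bl t^{k\rho_{\hbar(q)}}R/I_q^k\br+2\rho_{\hbar(q)}w(\ti\sI,q)\Br=\sum_{q\in\ti X}e(\ti\sI)_q,
\]
which is the claimed identity. The main obstacle will be justifying the finite support and polynomial growth of the kernel term; once the existence of $\iota(q)$ is extracted from the very ampleness of $\sO_X(1)$, both properties reduce to elementary Hilbert--Samuel theory for the two-generator monomial ideal $(z^{K}t^{\rho_{\hbar(q)}},\,t^{\rho_{\iota(q)}})$ in $R=\CC[\![z,t]\!]$ (where $K=v(\ti s_{\hbar(q)},q)$).
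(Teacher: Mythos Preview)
Your proof is correct and follows essentially the same route as the paper: decompose over the components $\ti X_\alpha$, use the short exact sequence coming from $\ti\sI\subset t^{\rho_{\hbar_\alpha}}\sO_{\txAo}(1)$, compute the quotient term by Riemann--Roch (rewriting $d_\alpha$ as the degree of the zero divisor of $\ti s_{\hbar_\alpha}$), and localize the kernel term at the zeros of $\ti s_{\hbar_\alpha}$. The only cosmetic difference is that the paper names the shifted ideal $\ti\sR_\alpha=t^{-\rho_{\hbar_\alpha}}\ti\sI_\alpha$ and phrases the kernel as $t^{k\rho_{\hbar_\alpha}}\cdot(\sO(k)/\ti\sR_\alpha^k)$, whereas you localize directly; and the paper's definition of $e(\ti\sI)_q$ is written with $(I_q\cap t^{\rho_{\hbar(q)}}R)^k$ rather than your $I_q^k$, but since $I_q\subset t^{\rho_{\hbar(q)}}R$ these coincide, so your displayed formula matches the paper's $e(\ti\sI)_q$ exactly.
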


We need some preparation to prove this Lemma. We begin with a
geometric interpretation of the quantity $e(\ti\sI)_q$.
Let $I\subset \CC[ z_1,z_2] $
be a monomial ideal and let $\Gamma $ be the set of exponents of
monomials in $I$; namely, $I$ is the linear span of the monomials $\{x^\gamma\mid\gamma \in \Gamma\}$,
where $\Gamma$ is a subset of
$(\NN\cup\{0\})^{2}\subset \mathbb{R}_{\ge 0}^{2}$. ($\RR_{\ge 0}^2$ is the first quadrant of $\RR^2$---the
$xy$-plane.)

We then form the {\sl closed convex hull} $\mathrm{Conv}( \mathbb{R}_{\ge 0}^2+\Gamma )$
 of $\mathbb{R}_{\ge 0}^2+\Gamma $. We let
$\bar{\Gamma}=\mathrm{Conv}( \mathbb{R}_{\ge 0}^2+\Gamma )\cap \NN^2$;
 the integral closure $%
\bar{I}$ of $I$ is the ideal generated by $\{x^\gamma\mid \gamma\in \bar{\Gamma}\}$ \cite[Ex4.23]{Eis}.

We let $\Delta(I)$ be the Newton polygon of $I$:
$$\Delta ( I) =\mathbb{R}_{\ge 0}^2-\mathrm{%
Conv}( \mathbb{R}_{\ge 0}^2+\Gamma )\sub \RR^2_{\geq 0} .
$$

\begin{lemm}
\lab{newton} Let $\left\vert \Delta ( I) \right\vert $ be the area of the $\Delta(I)$.
Then%
\begin{equation*}
\dim \CC[z_1,z_2] /I^{k}=\left\vert \Delta
( I) \right\vert \cdot k^2+O( k^{}).
\end{equation*}%
\end{lemm}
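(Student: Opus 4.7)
The plan is to translate the dimension into a lattice-point count and compare it to the area of the dilated Newton polygon $k\Delta(I)$.

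Since $I$ is monomial, so is $I^k$, and $\CC[z_1,z_2]/I^k$ has a $\CC$-basis consisting of monomials whose exponents lie in $\NN^2\setminus \Gamma_k$, where $\Gamma_k:=(k\Gamma)+\NN^2$ is the exponent set of $I^k$ and $k\Gamma$ is the $k$-fold Minkowski sum of $\Gamma$. Hence $\dim_\CC \CC[z_1,z_2]/I^k = \#(\NN^2\setminus \Gamma_k)$. For this count to be finite---the only case in which the asserted formula is meaningful---$I$ must have finite colength, so $\Delta(I)$ is a bounded convex polygon with finite perimeter.

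The key geometric observation is that the convex hull commutes with Minkowski sum and positive scaling, so
\begin{equation*}
\mathrm{Conv}\bl \Gamma_k+\RR^2_{\geq 0}\br \;=\; k\cdot\mathrm{Conv}\bl \Gamma+\RR^2_{\geq 0}\br,
\end{equation*}
which yields $\Delta(I^k)=k\Delta(I)$ as planar regions of area $k^2|\Delta(I)|$. A standard estimate for lattice points in dilates of a bounded convex polygon with piecewise linear boundary then gives
\begin{equation*}
\#\bl k\Delta(I)\cap \NN^2\br \;=\; |\Delta(I)|\,k^2 + O(k),
\end{equation*}
with error proportional to $\mathrm{perimeter}(k\Delta(I))$ and hence linear in $k$.

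It remains to compare $\#(\NN^2\setminus \Gamma_k)$ with $\#(k\Delta(I)\cap \NN^2)$. One inclusion is immediate: if $(a,b)\in k\Delta(I)$ then $(a,b)\notin k\Gamma+\RR^2_{\geq 0}$, whence $(a,b)\notin \Gamma_k$, so $k\Delta(I)\cap \NN^2 \subseteq \NN^2\setminus \Gamma_k$. The main obstacle is the reverse discrepancy, which consists of lattice points sitting in $\mathrm{Conv}(k\Gamma+\RR^2_{\geq 0})\cap \NN^2$ that fail to lie in $\Gamma_k$---essentially the integral-closure gap $\overline{\Gamma_k}\setminus \Gamma_k$. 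I would argue that such points are confined to a neighborhood of $\partial(k\Delta(I))$ of width depending only on the fixed finite set $\bar\Gamma\setminus \Gamma$, since any lattice point sufficiently interior to $\mathrm{Conv}(k\Gamma+\RR^2_{\geq 0})$ admits a decomposition as an element of $k\Gamma$ plus a nonnegative integer vector. Such a boundary strip contains $O(k)$ lattice points, and combining this with the two preceding estimates yields $\dim \CC[z_1,z_2]/I^k = |\Delta(I)|\,k^2 + O(k)$, as claimed.
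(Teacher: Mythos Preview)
Your approach is essentially the same as the paper's: both reduce $\dim \CC[z_1,z_2]/I^k$ to a lattice-point count in $k\Delta(I)$ and must control the discrepancy coming from the gap between $I^k$ and its integral closure. The paper disposes of this gap by invoking the Brian\c{c}on--Skoda theorem, which in dimension two gives $\bar I^{\,k}\subset I^{k-1}$; since $\dim I^{k-1}/I^k$ is $O(k)$, this immediately bounds the discrepancy. Your boundary-strip claim---that points of $\overline{\Gamma_k}\setminus\Gamma_k$ lie within a fixed distance of $\partial(k\Delta(I))$---is true and is in fact the geometric content of Brian\c{c}on--Skoda for monomial ideals, but you assert it without proof. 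The phrase ``I would argue'' flags this: the statement that a lattice point sufficiently interior to $\mathrm{Conv}(k\Gamma+\RR^2_{\ge 0})$ decomposes as an element of $k\Gamma+\NN^2$ is exactly the nontrivial step, and the width bound depending only on $\bar\Gamma\setminus\Gamma$ is not obvious without further argument. Either cite Brian\c{c}on--Skoda as the paper does, or supply a direct combinatorial proof for two-variable monomial ideals; with that in place your argument is complete and matches the paper's.
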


\begin{proof}
Since $\bar{I}$ is the integral closure of $I,$ by Briancon-Skoda theorem
\cite[Thm 9.6.26]{Lar},
$I^{k}\subset \bar{I}^{k}\subset I^{k-1}$ for $k$ sufficiently large. Since $\dim I^{k-1}/I^k$ is bounded from above by
a linear function
in $k$, $\dim \CC[z_1,z_2] /I^{k}=\dim \CC[z_1,z_2] /\bar{I}^{k}+O( k)$.

Further,  $\dim \CC[z_1,z_2] /%
\bar{I}^{k}$ is precisely the number of lattice points in
$k\Delta (
\bar{I}) =k\Delta ( I)$. From the work of Kantor and
Khovanski \cite{KK,Do}, the number of lattice points
inside the polygon is given by $\left\vert \Delta ( I)
\right\vert\cdot k^2+O(1) .$ This proves the Lemma.
\end{proof}

We now come back to the 1-PS $\lam$ and its diagonalizing basis $\bolds=\{ s_i\}$.

\begin{defi}\lab{Delta-p}
For any $q\in\ti X$, we define
$\Gamma_q=\left\{( v(\ti s_i, q) ,\rho_{i}) \right\} _{0\leq i\leq m}\subset (\NN\cup\{0\})^{2}$
and define the {Newton polygon} (of $\ti\sI=\ti \sI(\lam)$) at $q$ to be
$$
\Delta _{q}(\lam):=( \mathbb{R}_{\ge 0}^{2}-\mathrm{Conv}( \mathbb{R}_{\ge 0}^{2}+\Gamma_q ) )
\cap ( [ 0,\ordq ] \times \mathbb{R}_{\ge 0}).
$$
\end{defi}

We will abbreviate $\Delta_q(\lam)$ to $\Delta_q$ when the
choice of the basis $\bolds$ is understood.
Let $|\Delta_q|$ be the area of $\Delta_q$.

\begin{coro}\lab{sum-Delta}
We have $e(\ti\sI)_q=2|\Delta_q|$; henceforth,
$e(\ti\sI)=2\sum_{q\in\tiX} |\Delta_q|$. 
\end{coro}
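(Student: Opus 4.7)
The plan is to prove the local identity $e(\ti\sI)_q=2|\Delta_q|$; the displayed summation formula then follows immediately from Lemma~\ref{sum-q}. Set $h:=\hbar(q)$, $w:=w(\ti\sI,q)=v(\ti s_h,q)$, and $R=\CC[\![z,t]\!]$.

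\emph{Step 1: reduce to a zero-dimensional monomial ideal.} Among the generators of $I_q$ in \eqref{I_q}, those indexed by $i>h$ are zero, while those indexed by $i\le h$ have $t$-exponent $\rho_i\ge\rho_h$. Hence $I_q\subset t^{\rho_h}R$, so $I_q\cap t^{\rho_h}R=I_q$ and $I_q=t^{\rho_h}J_q$ for the monomial ideal
$$
J_q=\bigl(z^{w},\;z^{v(\ti s_{h-1},q)}t^{\rho_{h-1}-\rho_h},\;\ldots,\;z^{v(\ti s_0,q)}t^{\rho_0-\rho_h}\bigr).
$$
Multiplication by $t^{k\rho_h}$ gives a $\CC$-linear isomorphism $R/J_q^{k}\cong t^{k\rho_h}R/I_q^{k}$. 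Since $\sO_\tiX(1)$ is generated at $q$, some $\ti s_i$ is nonzero there, so $v(\ti s_i,q)=0$ for some $i$, which puts a pure $t$-monomial into $J_q$; together with $z^{w}\in J_q$ this shows $R/J_q$, hence $R/J_q^k$, is finite-dimensional. Lemma~\ref{newton} then applies to $J_q$ (its Hilbert function depends only on the set of monomials outside $J_q^k$, identical in $\CC[z,t]$ and $R$) and gives
$$
\nlc\dim\bigl(t^{k\rho_h}R/I_q^{k}\bigr)=\nlc\dim(R/J_q^{k})=2|\Delta(J_q)|.
$$

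\emph{Step 2: the geometric identity $|\Delta_q|=|\Delta(J_q)|+w\rho_h$.} Every finite-coordinate point of $\Gamma_q$ has $y$-coordinate $\rho_i\ge\rho_h$, so $\RR_{\ge 0}^{2}+\Gamma_q$, and therefore $\mathrm{Conv}(\RR_{\ge 0}^{2}+\Gamma_q)$, is contained in $\RR_{\ge 0}\times[\rho_h,\infty)$. Its lower boundary on $[0,w]$ is thus a function $f:[0,w]\to[\rho_h,\infty)$, attaining $\rho_h$ at $x=w$ because $(w,\rho_h)\in\Gamma_q$. Since convex-hull formation commutes with translation and the exponent set of $J_q$ is the vertical translate $\Gamma'_q=\Gamma_q-(0,\rho_h)$, the corresponding boundary for $\Delta(J_q)$ is $g(x)=f(x)-\rho_h$; the inclusion $(w,0)\in\Gamma'_q$ moreover forces $\Delta(J_q)\subset[0,w]\times\RR_{\ge 0}$. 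Integrating,
$$
|\Delta_q|=\int_0^{w}f(x)\,dx=\int_0^{w}\bigl(g(x)+\rho_h\bigr)\,dx=|\Delta(J_q)|+w\rho_h.
$$

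Combining the two steps with the definition \eqref{e-q},
$$
e(\ti\sI)_q=\nlc\dim\bigl(t^{k\rho_h}R/I_q^{k}\bigr)+2\rho_h w=2|\Delta(J_q)|+2\rho_h w=2|\Delta_q|,
$$
as required. The content of the argument lies in Step 2: once one sees that the rectangle $[0,w]\times[0,\rho_h]$ sits entirely inside $\Delta_q$ (which is precisely why \eqref{e-q} needs the correction term $2\rho_h\cdot w$), the translation identity makes $\Delta_q$ decompose cleanly as this rectangle stacked below a translated copy of $\Delta(J_q)$, and the rest is an application of Lemma~\ref{newton} together with Lemma~\ref{sum-q}.
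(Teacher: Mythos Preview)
Your proof is correct and follows essentially the same approach as the paper's: decompose $\Delta_q$ into the rectangle $[0,w]\times[0,\rho_h]$ and the region above it, identify the latter (via translation) with the Newton polygon of a zero-dimensional monomial ideal so that Lemma~\ref{newton} applies, and then add back the rectangle area corresponding to the correction term in \eqref{e-q}. Your write-up is more explicit than the paper's (you name the translated ideal $J_q$, verify it is zero-dimensional, and spell out why Lemma~\ref{newton} transfers from $\CC[z,t]$ to $\CC[\![z,t]\!]$), but the underlying argument is the same.
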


\begin{proof}
Since $\Delta_q$ is the union of $\Delta_q\cap [0,\ordq]\times[\rho_{\hbar(q)},\infty)$ with
$[0,\ordq]\times [0,\rho_{\hbar(q)}]$,
by \eqref{e-def}, \eqref{e-q} and Lemma \ref{newton},
$$e(\ti \sI)_q=2\cdot |\Delta_q\cap [0,\ordq]\times[\rho_{\hbar(q)},\infty)|+2\cdot \rho_{\hbar(q)}\cdot \ordq=2\, |\Delta_q|.
$$
The second identity follows from Lemma \ref{sum-q}.
\end{proof}


This formula will be used to estimate the quantity $e(\sI)=e(\ti \sI)$ in the next section. For now,
 we prove Proposition \ref{lift}.

\begin{proof}[Proof of Proposition \ref{lift}]
Let $p_1,\cdots, p_l$ be the nodes of $X$; let $\xi=\pi\times 1_\Ao:\ti X\times\Ao\to X\times\Ao$ be the
projection. Tensoring the exact sequence
$$0\lra \sO_{X\times\Ao}\lra \xi\lsta\sO_{\txAo}\lra \oplus_{j=1}^l \sO_{p_j\times\Ao}\lra 0
$$
with $\sO_\xAo(k)/\sI^k$, we obtain an exact sequence
$$\sO_\xAo(k)/\sI^k\mapright{f_k} (\sO_\xAo(k)/\sI^k)\otimes_{\sO_{X\times\Ao}}\xi\lsta \sO_{\txAo}
\lra \bigoplus_{\alpha=1}^r (\sO_\xAo(k)/\sI^k)|_{p_j\times\Ao}\lra 0.
$$
By projection formula, we have
\begin{eqnarray*}
\xi\lsta\bl  \sO_\txAo(k)/\ti \sI^k\br
=\xi\lsta\bl  \xi\sta(\sO_\xAo(k)/ \sI^k)\br=
(\sO_\xAo(k)/\sI^k)\otimes_{\sO_{X\times \Ao}}\xi\lsta \sO_{\txAo}.
\end{eqnarray*}
Thus
$$
e(\ti\sI)
=\nlc \chi\bl\xi \lsta\bl  \sO_\txAo(k)/\ti \sI^k\br\br
=\nlc \chi\bl(\sO_\xAo(k)/\sI^k)\otimes_{\sO_{X\times\Ao}}\xi\lsta \sO_{\txAo}\br,
$$
which equals
$$
\nlc\bl \chi\bl\sO_\xAo(k)/\sI^k\br-\dim \ker f_k+ \sum_{i=1}^l \chi\bl(\sO_\xAo(k)/\sI^k)|_{p_j\times\Ao})\br.
$$

We claim that both
\beq\lab{est-1}
\chi\bl(\sO_\xAo(k)/\sI^k)\otimes_{\sO_{X\times\Ao}}\sO_{p_j\times\Ao}\br \and  \dim \ker f_k
\eeq
are linear in $k$. 
This will prove the Proposition.

We begin with the first claim. We let $q$ be one of the node of $X$;
let $q^+$ and $q^-$ be the preimages $\pi\upmo(q)\sub\ti X$,
and let $x$ and $y$ be uniformizing parameters of $\ti X$ at $q^+$ and $q^-$, respectively. Then
after fixing an isomorphism $\sO_X(1)\otimes_{\sO_X}\sO_{X,q}\cong\sO_{X,q}$ near $q$
and denoting $R=\kk[\![x,y]\!]/(xy)$, we have isomorphism
\beq\label{R}(\sO_\xAo(k)/\sI^k)\otimes_{\sO_{X\times\Ao}}\sO_{q{}\times\Ao}
\cong (R[t]/I^k)\otimes_{R[t]} R[t]/(x,y),
\eeq
where $I\sub R[t]$ is the ideal generated by
$t^{\rho_i} \hat s_{i}$, $i=0,\cdots,m$, and $\hat s_i$ are formal germs of $s_i$ at $q$ as elements in $R$.
Since for some $i$ the value $s_i(q)\ne 0$,  $i_q=\max\{i \mid s_i(q)\ne 0\}$
is finite. Thus the right hand side of \eqref{R} is isomorphic to
$R[t]/(I^k,x,y)=\kk[t]/(t^{k\cdot i_q})$ whose dimension is linear in $k$. This proves the first claim.

For the second claim, since the kernel of $f_k$ consists of torsion elements supported on the union of
$p_1\times\Ao,\cdots, p_l\times\Ao$. Hence to prove the claim, we only need to study the kernel of
an analogue homomorphism
$$\bar f_k: R[t]/I^k\lra (R[t]/I^k)\otimes_{R[t]} (\kk[\![x]\!][t]\oplus \kk[\![y]\!][t]),
$$
where $I$ is as in the previous paragraph, and
$R[t]\to \kk[\![x]\!][t]\oplus \kk[\![y]\!][t]$ is the normalization homomorphism $g(x,y,t)\mapsto (g(x,0,t),g(0,y,t))$.
Since the domain and the target of $\bar f_k$ are $t$-graded rings and $\bar f_k$ is a homomorphism
of graded rings, as vector spaces
$$\ker \bar f_k=\bigoplus_{j\geq 0} \ker\bigl\{ (\bar f_k)_j: t^j R/(I^k\cap t^jR)\to (t^j R/(I^k\cap t^jR))\otimes_R (\kk[\![x]\!]\oplus \kk[\![y]\!])\,
\bigr\}.
$$
Because $R=\kk[\![x,y]\!]/(xy)$, as $R$-modules, $t^j R/(I^k\cap t^jR)$ is isomorphic to
$R/J$ for $J$ one of the ideals in the list:
$$R,\ (0), \ (x^e),\ (y^e),\ (x^e,y^{e'}),\ (x^e+y^{e'}), \ \text{where}\ e,e'\in \NN.
$$
One checks that for $J$ of the first five kinds, $\ker (\bar f_k)_j=0$; for $J$ of the last kind, $\ker(\bar f_k)_j\cong \kk$.
Thus we always have $\dim \ker (\bar f_k)_j\leq 1$. On the other hand, since $s_{i_q}(q{})\ne 0$,
$t^{\rho_{i_q}}\in I$ and $t^{k\rho_{i_q}}\in I^k$. Thus $\ker (\bar f_k)_j=0$ for $j\geq ki_q$.
This proves that $\dim \ker f_k$ is at most linear in $k$.
This proves the Proposition.
\end{proof}

Because of this Proposition, we will work over the normalization $\ti X$ of $X$ subsequently. To
avoid possible confusion, we will reserve ``$\ti{\quad}$''
to denote the associated objects lifted to $\ti X$. For instance, we will denote by $X_1,\cdots,X_r$ the
irreducible components of $X$, and denote by $\ti X_1,\cdots,\ti X_r$ their respective normalizations.
For the sections $t^{\rho_i}s_i$
in $\sI$, $t^{\rho_i}\ti s_i$ are their lifts in $\ti\sI=\sI\otimes_{\sO_{X\times\Ao}}\sO_{\ti X\times\Ao}$.
For consistence, we reserve subindex $i$ for the sections $s_i$, and reserve
the greek $\alpha$ for the index of the irreducible components $\{X\lalp\}_{1\leq \alpha \le r}$.


\begin{proof}[Proof of Lemma \ref{sum-q}]
For each irreducible component $X\lalp\sub X$, we let $\ti\sI\lalp=\ti\sI|_{\ti X\lalp\times\Ao}
\sub\sO_{\ti X\lalp\times\Ao}(1)$. Then
$$e(\ti\sI)=\sum_{\alpha=1}^r \nlc \, \chi \bl \sO_{\ti X\lalp\times \Ao}(k)/\ti\sI\lalp^k\br=\sum_{\alpha=1}^r e(\ti\sI\lalp).
$$
Thus to prove the Lemma, we only need to show that for each $X\lalp$,
$$e(\ti \sI\lalp)=\sum_{q\in\tiX\lalp} e(\ti \sI\lalp)_q,
$$
where $e(\ti\sI\lalp)_q=e(\ti\sI)_q$ when $q\in\ti X\lalp$.
To proceed, we notice that $\hbar(q)$(cf.\eqref{hbar}) is a locally constant function on $\tiX\lalp$; we let
$\hbar\lalp$ be the values of $\hbar(q)$ for $q\in\ti X\lalp$. Then we have
\beq\label{hbar-alp}
\hbar\lalp=\min_i\{i\mid \ti s_j|_{\ti X\lalp}=0,\ \text{for}\ j \ge i+1\}.
\eeq
Thus $t^{\rho_{\hbar\lalp}}$ divides $t^{\rho_i}\ti s_i$ for all $i>\hbar\lalp$.
Since $\rho_{i}\ge \rho_{i+1}$, the same division holds for all
$i$. We let $\bar\rho_i=\rho_i-\rho_{\hbar\lalp}$, and introduce ideal
$$\ti\sR\lalp=(t^{\bar\rho_0}\ti s_0, t^{\bar\rho_1}\ti s_1,\cdots, t^{\bar\rho_{\hbar\lalp}}\ti s_{\hbar\lalp})
\sub \sO_{\ti X\lalp\times \Ao}(1).
$$
This way,
$\ti\sI\lalp=t^{\rho_{\hbar\lalp}}\ti\sR\lalp\subset t^{\rho_{\hbar\lalp}} \sO_{\tiX\lalp\times \Ao}(1) $.

We let $(t^{k\rho_{\hbar\lalp}})= t^{k\rho_{\hbar\lalp}}\sO_{\tiX\lalp\times \Ao}(k)$; it belongs to the exact sequence
$$0\lra (t^{k\cdot\rho_{\hbar\lalp}})/\ti \sI^k\lalp \lra \sO_{\ti X\lalp\times\Ao}(k)/\ti\sI^k\lalp\lra
\sO_{\tiX\lalp\times \Ao}(k)/(t^{k\cdot\rho_{\hbar\lalp}})\lra 0.
$$
Since
$(t^{k\cdot\rho_{\hbar\lalp}})/\ti \sI^k\lalp=
t^{k\cdot\rho_{\hbar\lalp}}\cdot\bl\sO_{\ti X\lalp\times\Ao}(k)/\ti\sR\lalp^k\br$
and $\sO_{\ti X\lalp\times\Ao}(k)/\ti\sR\lalp^k$ is a finite module, we have
$$\chi( \sO_\txAo(k)/\ti\sI^k\lalp)=\chi(\sO_{\tiX\lalp\times\Ao}(k)/\ti\sR\lalp^k)+
\chi(\sO_{\ti X\lalp\times\Ao}(k)/(t^{k\cdot\rho_{\hbar\lalp}})).
$$
Taking the $\nlc$ of individual term, and using
$$\chi(\sO_{\ti X\lalp\times\Ao}(k)/(t^{k\cdot \rho_{\hbar\lalp}}))=
k\,\rho_{\hbar\lalp}\cdot \chi(\sO_{\tiX\lalp}(k))=k^2\,\rho_{\hbar\lalp}\cdot \deg X\lalp+O(k),
$$
we obtain
\beq
\lab{sum-2}e(\ti\sI\lalp)=
\nlc\, \chi(\sO_\txAo(k)/\ti\sI^k\lalp)=
\nlc\, \chi(\sO_{\tiX\lalp\times\Ao}(k)/\ti\sR\lalp^k)+2\rho_{\hbar\lalp}\cdot\deg \tiX\lalp.
\eeq

Next let $\{q_1,\cdots,q_l\}$ be the support of $(\ti s_{\hbar\lalp}=0)\cap \tiX\lalp$. Following the convention in \eqref{truncate},
we have an isomorphism
$$\sO_{\tiX\lalp\times\Ao}(k)/\ti\sR\lalp^k\mapright{\cong} \oplus_{a=1}^l t^{k\cdot \rho_{\hbar\lalp}}R/(I_{q_a}\cap t^{\rho_{\hbar\lalp}} R)^k,
$$
induced by restricting to germs at $q_a$ after multiplying $t^{k\cdot \rho_{\hbar\lalp}}$.
Adding that
$$\deg\tiX\lalp=\dim \sO_{\tiX\lalp}(1)/(\ti s_{\hbar\lalp})=\sum_{a=1}^l w(\ti\sI,q_a),
$$
\eqref{sum-2} gives us
$$e(\ti\sI\lalp)=\sum_{a=1}^l \Bl \nlc\, h^0( t^{k\cdot \rho_{\hbar\lalp}}R/(I_{q_a}\cap t^{\rho_{\hbar\lalp}} R)^k)+2\rho_{\hbar\lalp}\cdot w(\ti\sI, q_a)\Br
=\sum_{q\in \ti X\lalp} e(\ti \sI)_{q}.
$$
This proves the Lemma.
\end{proof}

Finally, we give one example that will be used later.

\begin{exam}\label{weighted-pt}
Let $\bs=\{s_i\}$ be a basis of $H^0(\sO_X(1))$; using weights $\rho_0=1>\rho_1=\cdots=\rho_m=0$ we form
a 1-PS with diagonalizing basis $\bs=\{s_i\}$:
$$\lam=\mathrm{diag}[t,1,\cdots,1]\cdot t^{-\frac{1}{m+1}}.
$$
Suppose $p=\{s_1=\cdots=s_m=0\}\in X$ is a single point. Then $e(\sI(\lam))=1$ (resp. $=2$) when $p$ is
a smooth point (resp. nodal point) of $X$. Hence
\begin{displaymath}
   \omega(\lam)= \left\{
     \begin{array}{llr}
      \frac{2\deg X}{m+1}-2 , &q \text{ is a nodal point;}\\
      \frac{2\deg X}{m+1}-1,   &q \text{ is a regular point.}
     \end{array}
   \right.
\end{displaymath}
\end{exam}

\section{Staircase One-parameter subgroups}\label{stair}

We begin with some conventions attached to a fixed 1-PS $\lam$ and its diagonalizing basis
$\{s_0,\cdots, s_m\}$. For simplicity, we denote
$$\II=\{0,1,\ldots,m\}.
$$
For each $i\in \II$, we introduce
subsheaves 
\beq\lab{E}
\sE_i=\sE(\lam)_i:=( s_i, s_{i+1},\ldots, s_m)\sub \sO_{X}(1);
\eeq
they form a decreasing sequence of subsheaves.
Similarly, we introduce
$\sO_\tiX$-submodules
$$\ti\sE_i=\ti\sE(\lam)_i:=(\ti s_i,\ti s_{i+1},\ldots,\ti s_m)\sub \sO_{\ti X}(1).
$$

\begin{defi}\lab{index}
We call $i\in\II$ a {\em base index} if $i=\hbar\lalp$ (cf. \eqref{hbar-alp}) for some irreducible component $X_\alpha$.
For each $X\lalp$, we define $\Lam\lalp(\lam)=\{ q\in X\lalp\mid s_{\hbar\lalp}(q)=0\}$;
define $\Lam(\lam)=\cup_{\alpha=1}^r \Lam\lalp(\lam)$; define $\ti\Lam\lalp(\lam)=\{p\in\ti X\lalp\mid \ti s_{\hbar\lalp}(p)=0\}$,
and define $\ti\Lam=\ti\Lam(\lam)=\cup_{\alpha=1}^m \ti\Lam\lalp(\lam)$.
\end{defi}

In the following, for any sheaf of $\sO_{\ti X}$-modules $\sF$ and $p\in\ti X$, we denote $\sF_p:=
\sF\otimes_{\sO_\tiX}\sO_{\tiX,p}$, the localization of $\sF$ at $p$. We remark that for any $p\in\tiX\lalp$,
$\hbar(p)=\hbar\lalp$ is the largest index $i$ so that $(\ti\sE_i)_p\ne 0$.

\begin{defi}\label{del-p}
For closed point $p\in \ti X\lalp \sub \ti X$, we define
$$\delta(\ti s_i,p)=\len(\ti\sE_{i}/\ti\sE_{i+1})_p\ \text{when}\ i+1\le \hbar\lalp(p); \quad 
\delta(\ti s_i,p)=0 \ \text{otherwise}.
$$
We define the {\em increments} of $\ti s_i$, along $\ti X\lalp$ and $\ti X$,  be (cycles)
$$\inc\lalp(\ti s_i)=\sum_{p\in\ti X\lalp} \delta(\ti s_i,p) p\quad\text{and}\quad
\inc(\ti s_i)=\sum_\alpha \inc\lalp(\ti s_i);
$$
we define their degrees be
$\delta\lalp(\ti s_i)=\sum_{p\in\ti X\lalp} \delta(\ti s_i,p)$ and $\delta(\ti s_i)=\sum\lalp\delta\lalp(\ti s_i)$.
We also define the width of $\ti\sE_i$ at $p\in \ti X\lalp$ and at $\ti X\lalp$ for $i\le\hbar\lalp$ be
\beq\label{w}
w(\ti\sE_i,p):=\len(\sO_\tiX(1)/\ti \sE_i)_p\quad\text{and}\quad w\lalp(\ti\sE_i)
:=\sum_{p\in \ti X\lalp}w(\ti\sE_i,p)\ .
\eeq
\end{defi}

We remark that for $p\in \ti X\lalp$, $i+1\leq \hbar(p)$ is equivalent to $(\ti\sE_{i+1})_p\ne 0$.

\begin{defi}
For  any irreducible component $X\lalp\sub X$ we introduce
\beq\label{I-lalp}
\II\lalp=\II\lalp(\lam)=\{i\in\II\mid \inc(\ti s_i)\cap \ti X\lalp\ne\emptyset\ \text{or}\ i=\hbar\lalp\};
\eeq
for $m\lalp+1=|\II\lalp|$, the order of $\II\lalp$, we introduce a re-indexing map
\beq\label{ind-a}
\ind\lalp: \II\lalp\lra [0,m\lalp]\cap \ZZ, \quad\text{order preserving and bijective}.
\eeq
Similarly, for $p\in\ti X$, we introduce
$$\II_p=\{i\in\II\mid p\in \inc(\ti s_i)\}.
$$
For $m_p+1=|\II_p|$; we define similarly
$$\ind_p: \II_p\lra [0,m_p]\cap \ZZ, \quad\text{order preserving and bijective}.
$$
\end{defi}

To define the {\em staircase} 1-PS, we need the following
\begin{defi}
For each $\sE_i$, we define its codegree
\beq\lab{codeg}
\codeg(\sE_i)=\len \bl \sO_Y(1)/\sE_i|_Y\br+\deg\sO_{Y\comp}(1),\quad Y=\supp (\sE_i),
\eeq
where $\supp(\sE_i)$ is the smallest
closed subscheme $Y\sub X$ so that the tautological $\sE_i\to\sE_i|_Y:= \sE_i\otimes_{\sO_X}\sO_Y$
is injective, and $Y\comp=\overline{X\setminus Y}$. Since
$\sE_i$ is decreasing, $\codeg(\sE_i)$ is increasing.
\end{defi}

\begin{defi}\lab{def-stair}
We say a 1-PS $\lam$ is a {\em semi-staircase after index} $i$ if for any $i< j\leq m$, either
$\codeg(\sE_{j-1}) <\codeg(\sE_{j})$,
or $j=\hbar\lalp+1$ (cf. \eqref{hbar-alp})
 for some irreducible component $X\lalp\sub X$.
We say $\lam$ is a {\em semi-staircase}
when $\lam$ is a semi-staircase after index $1$.
\end{defi}

\begin{prop}\lab{stair-case}
Given a 1-PS $\lam$, there is a semi-staircase 1-PS $\lam'$ so that $\omega(\lam)\geq \omega(\lam')$.
\end{prop}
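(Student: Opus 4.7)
My plan is to prove Proposition~\ref{stair-case} by iteratively modifying $\lam$, at each step either strictly decreasing the Chow weight $\omega(\lam)$ or strictly decreasing the number of indices $j \in \{2, \ldots, m\}$ violating the semi-staircase condition. Iterating until both quantities stabilize yields the desired $\lam'$.

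First I would extract the structural consequence of a violation at a fixed index $j$. Supposing $\codeg(\sE_{j-1}) = \codeg(\sE_j)$ and $j \neq \hbar\lalp+1$ for any component $X\lalp$, the second requirement forces every component to satisfy $\hbar\lalp \geq j$ or $\hbar\lalp < j-1$; hence $\supp(\sE_{j-1}) = \supp(\sE_j) =: Y$. With common support, equality of codegrees forces $\sE_{j-1}|_Y = \sE_j|_Y$, and combined with $\ti s_{j-1}|_{\ti X\lalp}=0$ for $\alpha \notin Y$, we conclude $\sE_{j-1} = \sE_j$ as subsheaves of $\sO_X(1)$. In particular $s_{j-1} \in H^0(\sE_j)$, so at each $p \in \ti X$ the vanishing order satisfies $v(\ti s_{j-1}, p) \geq w(\ti\sE_j, p) = \min_{k \geq j} v(\ti s_k, p)$.

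This estimate lets me lower $\rho_{j-1}$ to $\rho_j$ without changing $e(\sI)$. Indeed, $(v(\ti s_{j-1}, p), \rho_{j-1})$ lies in the upper-right cone of some $(v(\ti s_k, p), \rho_k)$ with $k \geq j$, and sliding it vertically to any height in $[\rho_j, \rho_{j-1}]$ preserves this domination, hence the convex hull defining $\Delta_p$. By Corollary~\ref{sum-Delta}, $e(\sI)$ is invariant while $\sum \rho_i$ strictly decreases, so by Proposition~\ref{chow-wt} the weight $\omega$ strictly decreases. I arrive at a 1-PS $\lam''$ with $\rho''_{j-1} = \rho''_j$. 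Next I would perform a basis rearrangement within the now-common weight eigenspace: set $s''_{j-1} := s_j$ and $s''_j := s_{j-1} - c s_j$, where $c$ is chosen so that the new $\sE''_j = (s_{j-1} - c s_j) + \sE_{j+1}$ is strictly contained in $\sE_j$. Such $c$ exists whenever $\sE_{j+1} \subsetneq \sE_j$: writing $s_{j-1} = \eta s_j + \beta$ locally with $\beta \in \sE_{j+1}$, the choice $c = \eta(p_0)$ at any fixed $p_0 \in \supp(\sE_j/\sE_{j+1})$ makes $\sE''_j = \sE_{j+1}$ locally at $p_0$, hence $\sE''_j \subsetneq \sE_j = \sE''_{j-1}$ globally. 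Since the basis rearrangement does not alter $\lam''$ itself, $\omega$ is unaffected and the defect at $j$ is resolved.

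The main obstacle is verifying that the basis rearrangement does not create new violations at other indices. Since $s''_i = s_i$ for $i \neq j-1, j$, the sheaves $\sE''_k$ match $\sE_k$ for $k \neq j$; however, $\hbar''\lalp$ may shift for some $\alpha$, potentially disturbing the boundary-index status at adjacent $j'$. I would process violating indices in decreasing order of $j$, and handle the delicate case where $\sE_{j+1} = \sE_j$ (itself a redundancy point) by resolving a maximal consecutive run of redundant indices simultaneously via a single coherent basis change over the whole run. A lexicographic induction on the pair (number of distinct weight values, number of violating indices) should then close the argument.
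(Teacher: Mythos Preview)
Your approach differs from the paper's in one key respect: you first lower the weight $\rho_{j-1}$ to $\rho_j$ (arguing via the Newton polygons that $e(\sI)$ is unchanged, so $\omega$ strictly drops), and only then swap basis vectors inside what has become a single eigenspace. The paper instead keeps the weight sequence fixed throughout and performs the swap $s'_{i-1}=s_i$, $s'_i=s_{i-1}-cs_i$ directly; this yields a genuinely new 1-PS $\lam'$ with the \emph{same} weights, and the decisive observation is that $\sE(\lam')_k\subset\sE(\lam)_k$ for every $k$ (strict for $k=i$), whence $\sI(\lam')\subset\sI(\lam)$ and $e(\sI(\lam'))\ge e(\sI(\lam))$ immediately. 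Your Newton-polygon invariance argument is correct and is essentially the geometric shadow of this sheaf containment, but the weight-lowering detour is not needed.

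The genuine gap in your write-up is termination. Your proposed lexicographic measure (number of distinct weight values, number of violating indices) need not strictly decrease: the weight-lowering step leaves the distinct-weight count unchanged whenever $\rho_{j-2}=\rho_{j-1}$, and the basis swap, while resolving the defect at $j$, can create a fresh one at $j+1$ (when the new $\codeg(\sE''_j)$ equals $\codeg(\sE_{j+1})$), so the violation count can stay constant. The paper's termination argument sidesteps this entirely: each basis-swap step strictly increases $\codeg(\sE_j)$ for exactly one $j$ and alters no other $\sE_k$, and since all codegrees are bounded above by $\deg\sO_X(1)$, the process must halt. This argument transplants verbatim to your setting (your basis rearrangement also strictly increases one codegree), so the repair is easy---but your proof as written does not supply it, and the vague appeal to ``resolving a maximal consecutive run'' does not close the gap. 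A secondary advantage of the paper's fixed-weight construction is that it is what the later sections actually need: in Section~5 one must prove the stronger inequality $\omega_\ba(\lam)\ge\omega_\ba(\lam')$, and the comparison of $\mu_\ba(\lam)$ with $\mu_\ba(\lam')$ there relies on $\lam$ and $\lam'$ sharing the same weight sequence.
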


\begin{proof}
Suppose $\lam$ is a semi-staircase at index $i$  but not at $i-1$ then
\beq\lab{ineq-Z}
\codeg(\sE_{i-1})=\codeg(\sE_{i})<\codeg(\sE_{i+1}).
\eeq
We claim that $\sE_{i-1}=\sE_i$. Since $\supp(\sE_i)$ is always a subcurve of $X$
and $i\ne \hbar\lalp +1$ for all
$\alpha$ by the assumption, $Y$ is also the support of $\sE_{i-1}$.  Consequently, $\sE_{i}|_Y\sub \sE_{i-1}|_Y$ and
$\sE_{i-1}|_Y/ \sE_{i}|_Y$ is a finite module. Then $\codeg(\sE_{i-1})=\codeg(\sE_i)$ implies that
$\len( \sE_{i-1}|_Y/ \sE_{i}|_Y)=0$. This proves that $\sE_{i-1}=\sE_i$.

As a consequence, we have $\sE_{i-1}=\sE_i\supsetneq \sE_{i+1}$. Therefore, there is a
point $p\in X$ such that if we denote by $\hat s_j\in \hat\sO_{X,p}(1)$ the formal germ of
$s_j$ at $p$, then as $\hat\sO_{X,p}$-modules
\beq\label{formal}
\hat\sO_{X,p}(1)\supset (\hat s_{i-1},\cdots,\hat s_m)=(\hat s_{i},\cdots,\hat s_m)\supsetneq (\hat s_{i+1},\cdots,\hat s_m).
\eeq
By the middle equality, we can find $\hat c_j\in\hat\sO_{X,p}$ such that $\hat s_{i-1}=\sum_{j=i}^m \hat c_j \hat s_j$.

We now construct a new basis $\bs'$. Let $c=\hat c_i(p)\in\kk$.
We define
\beq\label{new}
s_j'=s_j \quad \text{for}\ j\ne i,i-1;\quad s_i'=s_{i-1}- c s_i;\quad s'_{i-1}=s_i.
\eeq
Clearly, $\bs'=\{s_i'\}$ is a basis of $H^0(\sO_X(1))$. For $j\ne i$,
because the linear span of $\{s_j,\cdots,s_m\}$ equals the linear span of $\{s_j',\cdots,s_m'\}$, we have
$\sE_j=\sE_j'$, where $\sE_j'$ is the $\sE_i$ in \eqref{E} with $s_i$ replaced by $s_i'$.

For $i$, we claim that $\sE_i'\subsetneq\sE_i$. The inclusion $\sE_i'\sub\sE_i$ follows from
$\sE_i'\sub\sE_{i-1}=\sE_i$. For the inequality, we claim that
$$(\hat s_{i-1}-c\hat s_i,\hat s_{i+1},\cdots,\hat s_m)
\ne (\hat s_i,\hat s_{i+1},\cdots,\hat s_m).
$$
Suppose instead the identity holds, then there are constants $a_j\in\kk$ such that
$$\hat s_i= a_i(\hat s_{i-1}-c\hat s_i)+ \sum_{j=i+1}^m a_{j}\hat s_{j}=
\bl a_i(\hat s_{i-1}-\hat c_i \hat s_i)+\sum_{j=i+1}^m a_{j}\hat s_{j} \br+ a_i(\hat c_i-c)\hat s_i.
$$
Combined with $\hat s_{i-1}=\sum_{j=i}^m \hat c_j \hat s_j$, we conclude that
$\hat s_i\in (\hat s_{i+1},\cdots,\hat s_m)+\hat s_i\mm$, where $\mm\sub\hat \sO_{X,p}$ is the maximal ideal.
By Nakayama Lemma, $\hat s_i\in (\hat s_{i+1},\cdots,\hat s_m)$,
contradicting to \eqref{formal}.
This proves the claim.

Finally, we claim that if we define $\lam'$ be the 1-PS with diagonalizing basis $\bs'$ and associated weights $\{\rho\}_{i\in\II}$,
then $\omega(\lam')\leq \omega(\lam)$. By Mumford's formula (cf. Prop.\,\ref{chow-wt}),
this is equivalent to $e(\sI(\lam'))\ge e(\sI(\lam))$. By our construction, $\sE_i'\sub \sE_i$ for all $i\in\II$;
hence since $\rho_{i-1}\geq \rho_i$,
$\sI(\lam')\sub \sI(\lam)$. Thus $\sO_{X\times\Ao}(k)/\sI(\lam')^k$ surjects onto $\sO_{X\times\Ao}(k)/\sI(\lam)^k$.
This proves $e(\sI(\lam'))\ge e(\sI(\lam))$.

So far, for any $\lam$ that is not a semi-staircase, we have constructed a new $\lam'$ so that $\sI(\lam')\sub \sI(\lam)$.
We now claim that  by continuing this process, we eventually arrive at a semi-staircase $\lam'$. Suppose not, then we can
constructed an infinite sequence of 1-PS
$$\lam=\lam_0,\lam_1,\cdots,\lam_l,\cdots
$$
so that $\sE(\lam_{l+1})_i\sub \sE(\lam_{l})_i$ for all $i\in\II$,
 and for some $i$, $\sE(\lam_{l+1})_{i}\ne \sE(\lam_{l})_{i}$.
 (Here $\sE(\lam_l)_i$ is the sheaf $\sE_i$ in \eqref{E} with
$\lam$ replaced by $\lam_l$.) Because $\codeg (\sE(\lam_{l})_i)\leq
\deg\sO_X(1)$, for each $i$, the sequence
$$\sE(\lam_{0})_i\sub \sE(\lam_{1})_i\sub \cdots\sub \sE(\lam_{l})_i\sub\sE(\lam_{l+1})_i\sub\cdots
$$
stabilize at finite places. In particular, after finite place, we will have $\sE(\lam_{l})_i=\sE(\lam_{l+1})_i$ for all $i$;
or equivalently, $\sI(\lam_l)=\sI(\lam_{l+1})$, a contradiction.
This proves that this process eventually provides us a semi-staircase $\lam'$ such that
$\omega(\lam)\geq \omega(\lam')$.
\end{proof}

\begin{rema}
We remark that for a {\em semi-staircase} $\lam$, the inclusions
$\sO_{\ti X}(1)=\ti\sE_0\supsetneq\ti\sE_1\supsetneq\cdots\supsetneq\ti\sE_m\ne 0$ are proper.
\end{rema}

\begin{defi}\label{staircase}
We say a semi-staircase 1-PS $\lam$ is a {\em staircase} if for any
$p\in\ti\Lam$, $v(\ti s_i,p)\le v(\ti s_{i+1},p)$ for all $i$ (cf. Definition \ref{def-q}).
\end{defi}

\begin{coro}
Proposition \ref{stair-case} holds with semi-staircase replaced by staircase.
\end{coro}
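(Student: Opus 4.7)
The plan is to take a semi-staircase $\lam$ provided by Proposition~\ref{stair-case} and perturb only its diagonalizing basis, keeping the weights $\rho_0 \geq \cdots \geq \rho_m = 0$ intact, so as to achieve the extra staircase hypothesis of Definition~\ref{staircase} without disturbing the Chow weight. Concretely, I will consider replacements of the form $s_i' := s_i + \sum_{j>i} c_{ij} s_j$ for constants $c_{ij} \in \CC$; this is an upper-triangular change of basis with unit diagonal, and it preserves each span $\sE_i$ exactly as a subsheaf of $\sO_X(1)$. In particular the base indices $\hbar\lalp$ and the codegrees $\codeg(\sE_i)$ are unchanged, so the new 1-PS $\lam'$ diagonalized by $\{s_i'\}$ with the same weights $\{\rho_i\}$ automatically remains a semi-staircase.

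I would next verify that $\omega(\lam') = \omega(\lam)$. Because the weights are monotone, each generator of $\sI(\lam')$ lies in $\sI(\lam)$:
\[
t^{\rho_i} s_i' \;=\; t^{\rho_i} s_i \;+\; \sum_{j>i} c_{ij}\, t^{\rho_i-\rho_j}\bl t^{\rho_j} s_j\br \;\in\; \sI(\lam),
\]
using $\rho_i \geq \rho_j$ for $i \le j$; inverting the basis change (which is again upper-triangular with unit diagonal) gives the reverse inclusion. Hence $\sI(\lam) = \sI(\lam')$, so $e(\sI(\lam)) = e(\sI(\lam'))$, and since $\sum \rho_i$ is also preserved, Proposition~\ref{chow-wt} delivers $\omega(\lam') = \omega(\lam)$.

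With these invariances secured, I would choose the constants $c_{ij}$ so that the staircase condition holds. Since $\ti\sE_i' = \ti\sE_i$, for each $p \in \ti\Lam\lalp$ and each $i \le \hbar\lalp$ the quantity
\[
v_i^{\ast}(p) \;:=\; \min\bl v(\ti s_j, p) \mid j \geq i \br
\]
is an invariant of $\sE_i$, independent of the $c_{ij}$'s, and is non-decreasing in $i$; the staircase inequality $v(\ti s_i', p) \le v(\ti s_{i+1}', p)$ will follow from the sharper requirement $v(\ti s_i', p) = v_i^\ast(p)$ for every $i < \hbar\lalp$. Picking $j^\ast \geq i$ realizing the minimum, the coefficient of $z^{v_i^\ast(p)}$ in the local expansion of $\ti s_i + \sum_{j > i} c_{ij} \ti s_j$ at $p$ is an affine-linear function of the $c_{ij}$'s with a nonzero coefficient on $c_{i j^\ast}$; its non-vanishing is therefore a nonempty Zariski-open condition on $\CC^{m-i}$. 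Since $\ti\Lam$ is finite, only finitely many such conditions arise, and a generic choice of $(c_{ij})$ satisfies all of them simultaneously.

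The one step that really warrants care is the invariance $\sI(\lam) = \sI(\lam')$ under the basis change: it hinges on the monotonicity $\rho_0 \geq \cdots \geq \rho_m$ so that every power $t^{\rho_i - \rho_j}$ appearing is holomorphic, and without this the reduction would collapse. Once that is in place, the staircase condition is produced by the standard generic-position argument above, and the corollary follows by composing this modification with the semi-staircase reduction of Proposition~\ref{stair-case}.
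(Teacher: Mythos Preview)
Your argument is correct and follows exactly the same route as the paper: start from a semi-staircase given by Proposition~\ref{stair-case}, replace $s_i$ by $s_i'=s_i+\sum_{j>i}c_{ij}s_j$ for generic $c_{ij}$, observe that $\sI(\lam)=\sI(\lam')$ so that $\omega(\lam)=\omega(\lam')$, and use finiteness of $\ti\Lam$ to get the staircase condition. You have simply made explicit the two points the paper leaves implicit (the ideal invariance via monotonicity of the $\rho_i$, and why a generic upper-triangular perturbation achieves $v(\ti s_i',p)=\min_{j\ge i}v(\ti s_j,p)$); one tiny wording quibble is that when the minimum is already attained at $j^\ast=i$ the relevant affine-linear form has nonzero \emph{constant} term rather than a nonzero coefficient on some $c_{ij^\ast}$, but the conclusion (nonempty Zariski-open) is unaffected.
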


\begin{proof}
By Proposition \ref{chow-wt}, the $\lam$-weight $\omega(\lam)$ (of $\Chow(X)$) depends only
the sheaf $\sI(\lam)$ and the weights $\{\rho_i\}$.
Thus, for any 1-PS $\lam'$ with $\sI(\lam)=\sI(\lam')$ and having identical weights $\{\rho'_i\}$
as that of $\lam$, we have $\omega(\lam)=\omega(\lam')$.

Given any 1-PS, we let $\lam$ be the corresponding semi-staircase constructed in Proposition \ref{stair-case}.
Let $\ti\Lam$ and $\{s_i\}$ be the associated objects of $\lam$. Since $\ti \Lam$ is a finite set, if we
replace $s_i$ by $s_i'=s_i+\sum_{j>i} c_{ij}s_j$  for a general choice of
$c_{ij}\in\CC$, the new 1-PS with the same $\{\rho_i\}$ but new
basis $\{s_i'\}$ will be a desired {\em staircase} 1-PS.
\end{proof}

\begin{lemm}\label{w-delta}
Suppose $\lam$ is a  staircase 1-PS, then for $p\in\ti X\lalp$ and $ i< \hbar\lalp$,
$w(\ti\sE_i,p)=v(\ti s_i,p)$, and
$\delta(\ti s_{i-1},p)=v(\ti s_{i},p)-v(\ti s_{i-1},p)$.
\end{lemm}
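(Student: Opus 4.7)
The statement is local at $p$, so the plan is to reduce everything to an explicit ideal computation in the DVR $\sO_{\tilde X,p}=\sO_{\tilde X_\alpha,p}$. Fix a uniformizer $z$ and a local trivialization $\sO_{\tilde X}(1)_p\cong\sO_{\tilde X,p}$ as in \eqref{phi}, so that each section $\tilde s_j$ becomes an element of $\sO_{\tilde X,p}$ which either vanishes identically or equals $z^{v(\tilde s_j,p)}\cdot u_j$ for some local unit $u_j$.

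First I would use the defining property of $\hbar_\alpha$ in \eqref{hbar-alp}: for every $j>\hbar_\alpha$ the section $\tilde s_j$ vanishes identically on $\tilde X_\alpha$, so it contributes nothing to $(\tilde\sE_i)_p$. Combined with the fact that $\tilde\sE_i=(\tilde s_i,\tilde s_{i+1},\ldots,\tilde s_m)$, this gives the stalk as a finitely generated ideal
\[
(\tilde\sE_i)_p=\bl z^{v(\tilde s_i,p)}u_i,\;z^{v(\tilde s_{i+1},p)}u_{i+1},\;\ldots,\;z^{v(\tilde s_{\hbar_\alpha},p)}u_{\hbar_\alpha}\br
\]
inside $\sO_{\tilde X,p}$. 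Any ideal in a DVR is generated by a generator of minimal valuation, so $(\tilde\sE_i)_p=(z^{\mu_i})$ with $\mu_i=\min_{i\le j\le\hbar_\alpha}v(\tilde s_j,p)$.

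Next I invoke the staircase hypothesis. If $p\in\tilde\Lambda_\alpha$, Definition \ref{staircase} gives $v(\tilde s_i,p)\le v(\tilde s_{i+1},p)\le\cdots\le v(\tilde s_{\hbar_\alpha},p)$, hence $\mu_i=v(\tilde s_i,p)$ and the ideal collapses to the principal ideal $(\tilde s_i)_p$. Consequently
\[
w(\tilde\sE_i,p)=\len\bl\sO_{\tilde X,p}/(z^{v(\tilde s_i,p)})\br=v(\tilde s_i,p),
\]
which is the first assertion, and for $i\le\hbar_\alpha$ the exact sequence $0\to(\tilde\sE_i)_p\to(\tilde\sE_{i-1})_p\to(\tilde\sE_{i-1}/\tilde\sE_i)_p\to 0$ reads $(z^{v(\tilde s_i,p)})\hookrightarrow(z^{v(\tilde s_{i-1},p)})$, giving
\[
\delta(\tilde s_{i-1},p)=\len\bl (z^{v(\tilde s_{i-1},p)})/(z^{v(\tilde s_i,p)})\br=v(\tilde s_i,p)-v(\tilde s_{i-1},p),
\]
which is the second assertion.

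The remaining case $p\in\tilde X_\alpha\setminus\tilde\Lambda_\alpha$ is where the staircase condition is silent, and this is the main subtlety. Here $\tilde s_{\hbar_\alpha}(p)\ne 0$, so $u_{\hbar_\alpha}$ is a unit and $\mu_i=0$, forcing $(\tilde\sE_i)_p=\sO_{\tilde X,p}$ and $w(\tilde\sE_i,p)=0=\delta(\tilde s_{i-1},p)$. To match the right-hand sides, I use that the staircase basis was produced from a semi-staircase by the generic substitution $s_i\mapsto s_i+\sum_{j>i}c_{ij}s_j$ in the corollary after Proposition \ref{stair-case}: for general $c_{i,\hbar_\alpha}$, adding the unit $\tilde s_{\hbar_\alpha}$ to $\tilde s_i$ forces $\tilde s_i(p)\ne 0$ at every such $p$ (a finite set, namely the zero locus of the original $\tilde s_i$ on $\tilde X_\alpha$ together with $\tilde\Lambda_\alpha$-adjacent points), so $v(\tilde s_i,p)=0$ on the nose. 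Both sides of each asserted equality are therefore zero, completing the proof.
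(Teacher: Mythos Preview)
Your argument for $p\in\tilde\Lambda_\alpha$ is correct and is exactly what the paper's one-line proof has in mind: the staircase monotonicity $v(\tilde s_i,p)\le v(\tilde s_{i+1},p)$ from Definition~\ref{staircase} forces $(\tilde\sE_i)_p=(z^{v(\tilde s_i,p)})$ in the DVR, and both identities follow immediately.

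The second case, however, has a genuine gap. Your genericity argument only shows that at the finitely many zeros of the \emph{original} $\tilde s_i$ outside $\tilde\Lambda_\alpha$, a general $c_{i,\hbar_\alpha}$ makes the perturbed $\tilde s_i'$ nonvanishing. But the perturbation moves the zero divisor rather than shrinking it: $\tilde s_i'|_{\tilde X_\alpha}$ is still a nonzero section of a line bundle of degree $\deg X_\alpha>0$, so it has $\deg X_\alpha$ zeros somewhere, and nothing in the staircase condition forces them into $\tilde\Lambda_\alpha$. At any such zero $p\notin\tilde\Lambda_\alpha$ you get $w(\tilde\sE_i,p)=0$ but $v(\tilde s_i,p)>0$, and the asserted identity fails. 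More fundamentally, the lemma assumes only that $\lambda$ satisfies Definition~\ref{staircase}, which imposes no condition whatsoever at points outside $\tilde\Lambda$; properties of one particular construction are not available to you. The fix is simply to recognize that the paper's ``$p\in\tilde X_\alpha$'' is imprecise---the lemma is only ever applied with $p\in\tilde\Lambda_\alpha$ (see the proof of Lemma~\ref{trapezoid})---and to drop your second case.
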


\begin{proof}
The proof is a direct consequence of the definition of  staircase 1-PS.
\end{proof}

As we will see, if $\lam$ is a {\em staircase} 1-PS then for most of $i$, $\delta(\ti s_i)=1$. For those $i$
with $\delta(\ti s_i)>1$, we will give a detailed characterization (cf. Prop.\,\ref{vir-ind}). To
this purpose, for any
subcurve $Y\sub X$, we denote by $N_Y$ to be the set of {\em nodes} of $X$ in $Y$; namely,
$N_Y=X_{\text{node}}\cap Y$.  We denote (cf. \eqref{ell-Y})
\beq\label{link-node}
L_Y:=Y\cap Y\comp\ ,
\eeq
and call it the {\em
linking nodes} of $Y$. Moreover, let
\beq\label{N-Y}
\ti N_Y:=\pi\upmo(N_Y)\cap \ti Y\quad\text{ and }\quad\ti L_Y:= \pi\upmo(L_Y)\cap\ti Y
\sub\ti N_Y.
\eeq
Since we reserve $\alpha$ for the index of the components $X\lalp$, we abbreviate
\beq\label{Z-p}
N\lalp:=N_{X\lalp},\quad
\ti N\lalp:=\ti N_{X\lalp}, \quad
L\lalp:=L_{X\lalp}, \quad  \ti
L\lalp:=\ti L_{X\lalp},\quad
\ell\lalp:=|L\lalp|\ .
\eeq
We now state a characterization of those indices with $\delta(\ti s_i)>1$.

\begin{prop}\label{vir-ind}
Suppose $\lam$ is a {\em staircase} 1-PS. Let $i\in\II\lalp$ be a non-base index (cf. Definition \ref{index}) and
let $p\in\inc(\ti s_i)\cap \ti X\lalp$. Suppose
$\delta(\ti s_i)\geq 2$, and suppose further that either $\deg X\lalp=1$ or
\beq\label{prim-ineq}
w\lalp(\ti \sE_i)+1\leq \deg X\lalp-2g(X\lalp)-\ell\lalp,
\eeq
then $q=\pi(p)\in X$ is a node of $X$, $\ind_p(i)=0$ and $\delta(\ti s_i,p)=1$.
When this happens, let $\{p,p'\}=\pi\upmo(q)$ and let $\ti X\lbe$ be the component satisfying $p'\in \inc(\ti s_i)\cap \ti X\lbe$
(possibly $\ti X\lalp=\ti X\lbe$), and suppose further $\deg X\lbe>1$ and
\beq\label{prim-ineq-0}
w\lbe(\ti \sE_i)+1\leq \deg X\lbe-2g(X\lbe)-\ell\lbe\ ,\eeq
then $\inc(\ti s_i)=p+p'$.
\end{prop}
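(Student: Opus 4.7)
The plan is to argue by contradiction. Suppose the conclusion fails in one of three ways: $q=\pi(p)$ is a smooth point, $\ind_p(i)>0$, or $\delta(\ti s_i,p)\ge 2$. Since $p\in\inc(\ti s_i)\cap\ti X\lalp$ forces $\ti s_{\hbar\lalp}(p)=0$ (otherwise $\ti\sE_{\hbar\lalp,p}=\sO_{\ti X,p}(1)$ and all $\delta(\ti s_j,p)=0$), we have $p\in\ti\Lam\lalp$, and by Lemma \ref{w-delta} together with the staircase property the sequence $v(\ti s_0,p)\le v(\ti s_1,p)\le\cdots\le v(\ti s_{\hbar\lalp},p)$ is non-decreasing. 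Using $v(\ti s_0,p)=w(\ti\sE_0,p)=0$, the failures translate as follows: failure of $\ind_p(i)=0$ is equivalent to $v(\ti s_i,p)\ge 1$, and failure of $\delta(\ti s_i,p)=1$ is equivalent to $v(\ti s_{i+1},p)-v(\ti s_i,p)\ge 2$.

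Next I would compute the codegree jump locally. Since $i$ is non-base, $\supp(\sE_i)=\supp(\sE_{i+1})$ and the semi-staircase condition gives $\codeg(\sE_{i+1})-\codeg(\sE_i)=\len_{\sO_X}(\sE_i/\sE_{i+1})\ge 1$. Applying the normalization short exact sequence $0\to\sO_X\to\pi_*\sO_{\ti X}\to\bigoplus_{q'\in X\lnode}\kk_{q'}\to 0$ tensored with the cyclic module $\sE_i/\sE_{i+1}$ (cyclic because generated by the class of $s_i$), one obtains the local identification: at a smooth point $q'=p'$, $\len_{\sO_X}(\sE_i/\sE_{i+1})_{q'}=\delta(\ti s_i,p')$; at a node $q'$ with both preimages $p'_1,p'_2\in\inc(\ti s_i)$, $\delta(\ti s_i,p'_1)+\delta(\ti s_i,p'_2)=\len_{\sO_X}(\sE_i/\sE_{i+1})_{q'}+1$; and at a node with only one preimage contributing, the length equals that single $\delta$. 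This explicitly relates the global $\delta(\ti s_i)$ to the codegree increments.

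The core step is to invoke Riemann-Roch on the smooth curve $\ti X\lalp$. The inequality \eqref{prim-ineq} is equivalent to $\deg\ti\sE_i|_{\ti X\lalp}=\deg X\lalp-w\lalp(\ti\sE_i)\ge 2g(X\lalp)+\ell\lalp+1$, so $h^1(\ti\sE_i|_{\ti X\lalp}(-D))=0$ for every effective $D$ on $\ti X\lalp$ of degree at most $\ell\lalp+1$. Hence $|\ti\sE_i|_{\ti X\lalp}|$ is $(\ell\lalp+1)$-very ample on $\ti X\lalp$, and in each failure mode this lets me exhibit a section $\sigma\in H^0(\ti X\lalp,\ti\sE_i|_{\ti X\lalp})$ with vanishing at $p$ strictly intermediate between $v(\ti s_i,p)$ and $v(\ti s_{i+1},p)$, while also matching the required vanishing at the $\ell\lalp$ linking nodes $\ti L\lalp$ so that $\sigma$ lifts to a global section $s^\ast\in H^0(\sO_X(1))$. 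Substituting $s^\ast$ into the basis and re-diagonalizing produces a new diagonalizing basis for $\lam$ whose associated filtration properly inserts $\sE_i\supsetneq\sE'\supsetneq\sE_{i+1}$, contradicting the semi-staircase normal form. The main obstacle will be executing this substitution while preserving the weights of $\lam$ and verifying the refinement is strict---this is precisely where the extra $+1$ in $\ell\lalp+1$ of \eqref{prim-ineq} is used. The degenerate case $\deg X\lalp=1$ is direct: $\sum_{j<\hbar\lalp}\delta\lalp(\ti s_j)=w\lalp(\ti\sE_{\hbar\lalp})=\deg X\lalp=1$ forces $\delta(\ti s_i,p)=1$ and $v(\ti s_i,p)=0$, hence $\ind_p(i)=0$, and $q$ must be a node since otherwise $\delta(\ti s_i)\ge 2$ would require a second point on another component making $p$ itself insufficient. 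For part (B), applying the same argument symmetrically at $p'\in\ti X\lbe$ under \eqref{prim-ineq-0} rules out any third point contributing to $\inc(\ti s_i)$, forcing $\inc(\ti s_i)=p+p'$.
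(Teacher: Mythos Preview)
Your overall plan—construct a section producing a submodule strictly between $\sE_i$ and $\sE_{i+1}$, via Riemann--Roch on the normalization $\ti X\lalp$—is the same as the paper's. But the way you close the argument has a genuine gap, and your construction of the intermediate section is incomplete in two of the three failure modes.

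\textbf{The contradiction is not ``the semi-staircase normal form''.} Having $\sE_i\supsetneq\sE'\supsetneq\sE_{i+1}$ for some globally generated $\sE'$ does \emph{not} contradict the staircase property: that property only says that for \emph{the chosen basis} the codegrees of $\sE_j$ increase strictly, and it says nothing about the non-existence of further intermediate submodules. You also cannot ``substitute $s^\ast$ into the basis'' and keep a diagonalizing basis for $\lam$: the basis has exactly $m+1$ elements, so its filtration has $m+1$ terms, never $m+2$. The correct contradiction (used in the paper) is dimensional: the chain
\[
\sE_0\supsetneq\cdots\supsetneq\sE_i\supsetneq\sF_{i+1}\supsetneq\sE_{i+1}\supsetneq\cdots\supsetneq\sE_m
\]
has $m+2$ distinct terms, each globally generated, and for any such strict chain the spaces of global sections $H^0(\sE_j)$, $H^0(\sF_{i+1})$ are strictly nested in $H^0(\sO_X(1))$, forcing $h^0(\sO_X(1))\ge m+2$.

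\textbf{``Vanishing at $p$ strictly intermediate'' does not exist in two of your three cases.} If $\delta(\ti s_i,p)=1$ then $v(\ti s_{i+1},p)=v(\ti s_i,p)+1$ and there is no integer strictly between them. So your construction only works directly when $\delta(\ti s_i,p)\ge 2$. In the failure mode ``$q$ smooth'' (with $\delta(\ti s_i,p)=1$) and in ``$\ind_p(i)\ge 1$, $\delta(\ti s_i,p)=1$'', the paper does something different: it builds $\zeta$ by prescribing values on $\ti N\lalp\cup B$ so that $\zeta|_{X\lalp\comp}=s_{i+1}|_{X\lalp\comp}$, and then the hypothesis $\delta(\ti s_i)\ge 2$ forces a strict inclusion $\sE_i|_{X\lalp\comp}\supsetneq\sF_{i+1}|_{X\lalp\comp}$ on the \emph{complement}, while the strictness $\sF_{i+1}\supsetneq\sE_{i+1}$ comes from the $X\lalp$ side. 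Your sketch never uses $\delta(\ti s_i)\ge 2$ in this way, and your $\deg X\lalp=1$ paragraph (``making $p$ itself insufficient'') stops exactly where this mechanism is needed.

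\textbf{A smaller point on gluing.} Matching $\sigma$ only at the linking nodes $\ti L\lalp$ is not enough: to descend from $\ti X\lalp$ to $X\lalp$ you must also impose compatibility at the self-nodes of $X\lalp$, i.e.\ at all of $\ti N\lalp$. Your degree bound (which uses $g(X\lalp)$, not $g(\ti X\lalp)$) is in fact strong enough for this, but the text should say $\ti N\lalp$, not $\ti L\lalp$.
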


Before  its proof, we introduce a few notations. Since $\ti X\lalp$ is smooth, we can view a zero-subscheme
of $\ti X\lalp$ as a divisor as well.
This way, the union of two effective divisors
is the union as zero subschemes, and the sum is as sum of divisors.
For example, $(\sum n_p p)\cup(\sum n'_p p)=\sum \max\{n_p,n'_p\} p$
and $(\sum n_p p)+(\sum n'_p p)=\sum (n_p+ n'_p) p$.

\begin{proof}[Proof of Proposition \ref{vir-ind}.]
We will prove each part of the statement by repeatedly applying the following strategy.
Suppose $i$ satisfies \eqref{prim-ineq} and $\delta(\ti s_i)\geq 2$,
we will construct a section $\zeta\in H^0(\sO_X(1))$ so that the $\sO_X$-modules $\sF_j=(\zeta, s_j,\cdots, s_m)$
fits into a strict filtration
\beq\label{strict-F}
\sF_0\supsetneq 
\cdots\supsetneq\sF_i\supsetneq \sF_{i+1}\supsetneq\sE_{i+1}\supsetneq
\cdots\supsetneq \sE_m\ne 0.
\eeq
Since $\sE_j$ and $\sF_j$ are generated by global sections of $H^0(\sO_X(1))$, this implies
$h^0(\sO_X(1))>m+2$, a contradiction.

Let us assume $\deg X\lalp>1$ first, since for the case $\deg X\lalp=1$ the proof is rather easy. So $w_i(\ti\sE_i)$ satisfies \eqref{prim-ineq}.
We recall an easy consequence of a vanishing result. Let $B \sub \ti X\lalp$ be a closed zero-subscheme
such that
\beq\label{Z-ineq}
\deg B\leq  \deg X\lalp-2g(X\lalp)-\ell\lalp+1.
\eeq
Let $\ti N\lalp$ be as defined in \eqref{Z-p}. We claim that the $\gamma$ in the exact sequence
\beq\label{H1}H^0(\sO_{\ti X\lalp}(1))\mapright{\gamma} H^0(\sO_{\ti N\lalp\cup B}(1))\lra
H^1(\sO_{\ti X\lalp}(1)(-\ti N\lalp\cup B))
\eeq
is surjective.
Indeed, using $\deg \ti N\lalp= 2g(X\lalp)-2g(\ti X\lalp)+\ell\lalp$ and \eqref{Z-ineq}, we obtain
\begin{eqnarray*}
\deg\sO_{\ti X\lalp}(1)(-\ti N\lalp\cup B)
\geq \deg\tiX\lalp-\deg\ti N\lalp-\deg B \ge 2g(\ti X\lalp)-1.
\end{eqnarray*}
Therefore, the last term in \eqref{H1} vanishes, which shows that the $\gamma$ in \eqref{H1} is surjective.

The section $\zeta$ mentioned before \eqref{strict-F} will be chosen by picking an appropriate $B$ and
$v\in H^0(\sO_{\ti N\lalp\cup B}(1))$ so that any element $\ti \zeta\lalp\in
\gamma\upmo(v)$ descends to a section in $H^0(\sO_{X\lalp}(1))$ and the descent
glue with $s_{i+1}|_{X\lalp\comp}$ to form a desired section $\zeta$.

We let
\beq\label{def-Z}
\ti Z_{\alpha,j}:=(\ti s_j=\cdots=\ti s_m=0)\cap \ti X\lalp\sub\ti X\lalp.
\eeq
Since $p\in \inc(\ti s_i)\cap \ti X\lalp$, $\delta\lalp(\ti s_i)\geq 1$. In case $\delta\lalp(\ti s_i)=1$,
we choose $B=\ti Z_{\alpha,i}+p$,
which is a subscheme of $\ti Z_{\alpha,i+1}$.
In case $\delta\lalp(\ti s_i)\geq 2$ and $\delta(\ti s_i,p)=1$, then there exists a
$p'\ne p\in \ti X\lalp$ such that $p+p'\leq \inc(\ti s_i)\cap \ti X\lalp$,
(which is equivalent to $\ti Z_{\alpha,i}+p+p'\sub\ti Z_{\alpha,i+1}$). In case
$\delta(\ti s_i, p)\geq 2$, we choose $p'=p$. Combined, we let
$B=\ti Z_{\alpha,i}+p+p'$.

We then let
$$v_1=\ti s_{i+1}|_{\ti N\lalp}\in H^0(\sO_{\ti N\lalp}(1))
\and v_2\ne 0\in   H^0(\sO_{{B}}(1)) \quad \text{s.t.}\  v_2|_{{B}-p}=0 .
$$
We claim that when $p\not\in \ti N\lalp$, or $\ind_p(i)\geq 1$, or $\delta(\ti s_i,p)\ge 2$,
then both $v_1|_{\ti N\lalp\cap {B}}$ and $v_2|_{\ti N\lalp\cap {B}}$ are zero.


Indeed, since $\ti N\lalp\cap B\sub\ti Z_{\alpha,i+1}$ and $\ti s_{i+1}|_{\ti Z_{\alpha,i+1}}=0$, we have
$v_1|_{\ti N\lalp\cap {B}}=\ti s_{i+1}|_{\ti N\lalp\cap {B}}=0$.
For $v_2$, we prove case by case. Suppose
$p\not\in \ti N\lalp$, then $\ti N\lalp\cap B=\ti N\lalp\cap (B-p)$; therefore since $v_2|_{B-p}=0$,
$v_2|_{\ti N\lalp\cap B}=0$. Now suppose $p\in\ti N\lalp$. Since $v_2|_{B-p}=0$, $v_2(\bar p)=0$ for all
$\bar p\in (\ti N\lalp\cap B)-\{p\}$. We remain to show that $v_2(p)=0$.
We write ${B}=\sum_{k=0}^l n_k p_k$, $p_k$ distinct, as an effective
divisor. Since $p\in B$, we can arrange $p_0=p$. In case
$\ind_p(i)\geq 1$, we have $n_0\ge 2$; in case $\delta(\ti s_i,p)\ge 2$,
since $p'=p$ we still have $n_0\ge 2$. Thus $p\in B-p$ and $v_2(p)=0$.
This proves that $v_1$ and $v_2$ have identical images in $H^0(\sO_{\ti N\lalp\cap{B}}(1))$.
Consequently, $(v_1,v_2)$ lifts to
a section $ v\in H^0(\sO_{\ti N\lalp\cup {B}}(1))$ using the exact sequence
$$H^0(\sO_{\ti N\lalp\cup {B}}(1))\lra
H^0(\sO_{\ti N\lalp}(1))\oplus H^0(\sO_{{B}}(1))\lra
H^0(\sO_{\ti N\lalp\cap{B}}(1))\ .
$$

Since $\deg {B}\leq w\lalp(\ti\sE_i)+2$ and $i$ satisfies \eqref{prim-ineq} (, because we assume $\deg X\lalp>1$),
$\deg B$ satisfies the inequality \eqref{Z-ineq}.
Therefore, the
$\gamma$ in \eqref{H1} is surjective. We let
$\ti\zeta\lalp\in \gamma\upmo(v)\sub H^0(\sO_{\tiX\lalp}(1))$ be any lift.
Because it is a lift of $v_1$, $\ti \zeta\lalp|_{\ti N\lalp}=\ti s_{i+1}|_{\ti N\lalp}$. This implies that $\ti \zeta\lalp$
descends to a section $\zeta\lalp\in H^0(\sO_{X\lalp}(1))$, and the descent $\zeta\lalp$ glues
with $s_{i+1}|_{X\lalp\comp}$ to form a new section $ \zeta\in H^0(\sO_X(1))$.

We now prove the first part of the Proposition. We let $Z_{\alpha,j}\sub X\lalp$ be the subscheme
$Z_{\alpha,j}=(s_j=\cdots=s_m=0)\cap X\lalp$. We decompose $Z_{\alpha,j}$ into disjoint union
$Z_{\alpha,j}=R_j\cup R_j'$ so that
$R_j$ is supported at $q=\pi(p)$ and $R_j'$ is disjoint from $q$.
We let $\bar Z\lalp=(\zeta=s_{i+1}=\cdots=s_m=0)\cap X\lalp$ and decompose
$\bar Z\lalp=\bar R\cup \bar R'$ accordingly.

Suppose $q$ is a smooth point of $X$. Then $R_j$ and $\bar R$ are divisors, and can be written as
$R_j=n_j q$ and $\bar R=\bar n q$. In case $\delta\lalp(\ti s_i)=1$, the choice of $B$
ensures that $n_i=\bar n=n_{i+1}-1$ and $R_i'\sub \bar R'\subsetneq R_{i+1}'$.
Thus
$$(s_i,\cdots,s_m)\otimes_{\sO_X}\sO_{X\lalp}\subseteq
(\zeta,s_{i+1},\cdots,s_m)\otimes_{\sO_X}\sO_{X\lalp}\subsetneq
(s_{i+1},\cdots,s_m)\otimes_{\sO_X}\sO_{X\lalp}.
$$
Further, since $\delta(\ti s_i)\ge 2$ and $\zeta|_{X\lalp\comp}=s_{i+1}|_{X\lalp\comp}$, we have
$$(s_i,\cdots,s_m)\otimes_{\sO_X}\sO_{X\lalp\comp}\subsetneq
(\zeta,s_{i+1},\cdots,s_m)\otimes_{\sO_X}\sO_{X\lalp\comp}\subseteq
(s_{i+1},\cdots,s_m)\otimes_{\sO_X}\sO_{X\lalp\comp}.
$$
Thus we have
\beq\label{not-inc}
\sE_i\subsetneq\sF_{i+1}\subsetneq\sE_{i+1}.
\eeq
In case $\delta\lalp(\ti s_i)\geq 2$, the the choice of
$B$ ensures that $R_i\subsetneq \bar R\subsetneq R_{i+1}$.
Thus
$$(s_i,\cdots,s_m)\otimes_{\sO_X}\sO_{X\lalp}\subsetneq
(\zeta,s_{i+1},\cdots,s_m)\otimes_{\sO_X}\sO_{X\lalp}\subsetneq
(s_{i+1},\cdots,s_m)\otimes_{\sO_X}\sO_{X\lalp}.
$$
This implies \eqref{not-inc} as well.
In summary, by the argument at the beginning of the proof, \eqref{not-inc} leads to a contradiction
which proves that $q$ must be a node of $X$.

It remains to study the case where $q$ is a node of $X$. A careful case by case study shows that
when either $\ind_p(i)\geq 1$ or $\delta(\ti s_i,p)\geq 2$, then $Z_{\alpha,i}\subsetneq \bar Z\lalp\subsetneq Z_{\alpha,i+1}$.
Thus \eqref{not-inc} holds, which leads to a contradiction. This proves that
$q$ is a node, $\ind_p(i)=0$ and $\delta(\ti s_i,p)=1$.

We complete the proof of the first part by looking at the case $\deg X\lalp=1$.
In this case $\ind_p(i)=0$ and $\delta(\ti s_i,p)=1$, since otherwise $\deg X\lalp=1$ implies that $i=\hbar\lalp$,
contradicting to the assumption that $i$ is not a base index. We next show that $p\in L\lalp$.
But this is parallel to the proof of the case $\deg X\lalp>1$ by letting $B=p$ because $\delta\lalp(\ti s_i)=1$.
This completes the proof of the first part.

\vsp
We now prove the further part. Let $\pi\upmo(q)=\{p,p'\}$ with $p'\in\inc(\ti s_i)\cap \ti X\lbe$ so that
\eqref{prim-ineq-0} holds.
Then by the first part of the Proposition, we have $\ind_p(i)=\ind_{p'}(i)=0$; hence $s_i(q)\ne 0$.
Thus for $Z_j=(s_j=\cdots=s_m=0)\sub X$, we have $p\not\in Z_i$ and $Z_{i+1}=p\cup S$,
where $S$ is a zero-subscheme disjoint from $p$. Since $Z_i\subsetneq Z_{i+1}$
and $p\not\in Z_i$, we have $Z_i\sub S$. In case $Z_i=S$, then the further part of the Proposition
holds. Suppose $Z_i\subsetneq S$, then repeating the proof of the first part of the
Proposition, we can find a section $\zeta\in H^0(\sO_X(1))$ so that
$p\not\in (\zeta=0)$ and $S\sub (\zeta=0)$. This way, we will have \eqref{not-inc} again, which
leads to a contradiction. This proves the further part of the Proposition.
\end{proof}

The Proposition above motivates the following

\begin{defi}\label{prim-ind}
For $\deg X\lalp>1$,
we define the {\em primary} indices of $X\lalp$ be
\begin{equation*}
\II_{\alpha}\prim=\{i\in \II\lalp\mid w\lalp(\ti \sE_{i+1})
\le \deg X\lalp-2g(X\lalp)-\ell\lalp -1 \},
\end{equation*}
for $\deg X\lalp=1$, we define $\II\lalp\prim=\ind\lalp\upmo(0) \sub \II\lalp$.
We say $i\in \II\lalp$ is {\em primary} at $p\in \inc(\ti{s}_i)\cap \tiX\lalp$ if
$i\in \II_{\alpha}\primary$; otherwise we say it is {\em secondary}. We define
$ \bar\jmath\lalp:=\max\{i \mid i\in \II\lalp\prim\}\ $.
\end{defi}

Note that in the proof above, the assumption $\delta(\ti s_i)\ge 2$ is used only to show that \eqref{strict-F} is strict. If $i=\hbar\lalp$ for
some $\alpha$, then $\len(\sE_i/\sE_{i+1})=\infty$. This time we  choose
$\zeta$ so that $\sE_{i}/\sF_{i+1}$ is finite. Since $\sE_{i}/\sE_{i+1}$ is infinite, \eqref{strict-F} remains strict. Hence we have

\begin{prop}\label{vir-ind-2}
Let $i=\hbar\lalp$ be a base index for some $X\lalp$, and let $p\in \inc(\ti s_i)\cap \ti X\lalp$.
Suppose $\delta(\ti s_i)\geq 1$ and $\deg X\lalp=1$, or  $w\lalp(\ti \sE_i)$ satisfies the inequality \eqref{prim-ineq}.
Then $\ind_p(i)=0$, $\delta(\ti s_i,p)=1$, and $q=\ti\pi(p)\in X\lalp$ is a
linking node of $X\lalp$. Further, let $\{p,p'\}=\pi\upmo(q)$, then $i$ must be secondary at $p'$ (cf. Definition \ref{prim-ind}),  and  there is a  component $\ti X\lbe$ so that  $p'\in \ti X\lbe$ and $i=\hbar\lbe$.
\end{prop}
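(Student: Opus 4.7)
The plan is to imitate the proof of Proposition \ref{vir-ind}, using the modification indicated in the preceding remark. The crucial new input is that when $i=\hbar_\alpha$, one has $\ti s_{i+1}|_{\ti X_\alpha}=0$ by \eqref{hbar-alp}, so $\ti\sE_{i+1}|_{\ti X_\alpha}=0$ while $\ti\sE_i|_{\ti X_\alpha}\neq 0$; hence the quotient $\sE_i/\sE_{i+1}$ has infinite length. To produce a strict chain of the form \eqref{strict-F} and therefore contradict $h^0(\sO_X(1))=m+1$, it suffices to build $\zeta\in H^0(\sO_X(1))$ with $\sE_i/\sF_{i+1}$ finite, and the auxiliary hypothesis $\delta(\ti s_i)\geq 2$ used in Proposition \ref{vir-ind} may be dropped in favor of $\delta(\ti s_i)\geq 1$.

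First I would argue the first three assertions ($q$ is a linking node of $X_\alpha$, $\ind_p(i)=0$, $\delta(\ti s_i,p)=1$) by contradiction. Assuming some of them fails, I mirror the construction in Proposition \ref{vir-ind}: choose a zero-cycle $B\subset\ti X_\alpha$ (one of $\ti Z_{\alpha,i}+p$ or $\ti Z_{\alpha,i}+p+p'$, as dictated by which case fails) satisfying $\deg B\leq w_\alpha(\ti\sE_i)+2$, so the inequality \eqref{Z-ineq} holds by the hypothesis ``$\deg X_\alpha=1$ or $w_\alpha(\ti\sE_i)$ satisfies \eqref{prim-ineq}'', and hence the restriction map $\gamma$ of \eqref{H1} is surjective. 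Since $\ti s_{i+1}|_{\ti X_\alpha}=0$, the datum $v_1$ on $\ti N_\alpha$ is zero, and pairing with a suitable $v_2\in H^0(\sO_B(1))$ vanishing off $p$ yields $v\in H^0(\sO_{\ti N_\alpha\cup B}(1))$; lift to $\ti\zeta_\alpha\in H^0(\sO_{\ti X_\alpha}(1))$, descend to $\zeta_\alpha\in H^0(\sO_{X_\alpha}(1))$ (since $\ti\zeta_\alpha|_{\ti L_\alpha}=0=s_{i+1}|_{L_\alpha}$), and glue with $s_{i+1}|_{X_\alpha\comp}$ to form $\zeta\in H^0(\sO_X(1))$. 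Setting $\sF_j=(\zeta,s_j,\ldots,s_m)$, one checks $\sE_i\supsetneq \sF_{i+1}\supsetneq \sE_{i+1}$: the first inclusion is strict because $\sF_{i+1}|_{X_\alpha}$ has only finite colength support on $X_\alpha$ by construction (whereas $\sE_i/\sE_{i+1}$ is supported with infinite length on $X_\alpha$, and the choice of $B$ guarantees $\sF_{i+1}$ does not recover the whole $\sE_i$), while the second is strict because $\sE_{i+1}|_{X_\alpha}=0$ but $\zeta_\alpha\not\equiv 0$ on $X_\alpha$. This gives strict chain \eqref{strict-F} with $m+2$ generating sections, contradicting $h^0(\sO_X(1))=m+1$.

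For the ``further'' part, let $\{p,p'\}=\pi\upmo(q)$ with $p'\in\ti X_\beta$ (a priori $\beta$ could equal $\alpha$). Having established $\ind_p(i)=0$ and $\delta(\ti s_i,p)=1$, I would rerun the same construction, now choosing $B\subset\ti X_\beta$ analogously; the only way to avoid another contradiction is that the relevant vanishing fails, which forces $\ti s_{i+1}|_{\ti X_\beta}=0$ as well, i.e.\ $i\geq \hbar_\beta$, and a symmetric argument (using that $\ti s_j|_{\ti X_\beta}=0$ for $j>\hbar_\beta$) forces $i\leq \hbar_\beta$; hence $i=\hbar_\beta$. The statement that $i$ is secondary at $p'$ then follows because if $i$ were primary at $p'$, the hypothesis \eqref{prim-ineq} would also hold for $X_\beta$, and the construction on the $X_\beta$-side would again yield a strict refinement and a contradiction.

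The main obstacle is the bookkeeping in Step 3, namely verifying that the constructed $\sF_{i+1}$ produces strict inequalities on \emph{both} sides of $\sE_i$ and $\sE_{i+1}$ simultaneously, taking proper care of the gluing across the node $q$ when $q$ is not assumed to be a linking node at the outset; the case analysis of $q$ smooth versus $q$ a non-linking self-node of $X_\alpha$ is the most delicate, and relies essentially on $\ti s_{i+1}|_{\ti X_\alpha}=0$ to ensure that the descended $\zeta_\alpha$ patches with $s_{i+1}$ on $X_\alpha\comp$ without losing strictness of the filtration. The ``further'' part is conceptually routine once the first part is in hand, but requires repeating the vanishing/lifting argument over $X_\beta$ with the roles of $p$ and $p'$ interchanged.
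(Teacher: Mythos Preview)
Your approach matches the paper's: the authors' proof is simply ``parallel to the proof of the previous Proposition; we omit it,'' with the preceding remark supplying the one new idea (that $\len(\sE_i/\sE_{i+1})=\infty$ when $i=\hbar_\alpha$, so one only needs $\sE_i/\sF_{i+1}$ finite to force strictness, replacing the hypothesis $\delta(\ti s_i)\ge 2$). Your treatment of the first three conclusions---$q$ a linking node, $\ind_p(i)=0$, $\delta(\ti s_i,p)=1$---correctly implements this.

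There is, however, a gap in your ``further'' part. Your inference ``the only way to avoid another contradiction is that the relevant vanishing fails, which forces $\ti s_{i+1}|_{\ti X_\beta}=0$, i.e.\ $i\ge\hbar_\beta$'' conflates two unrelated vanishings: the vanishing used in the construction is $H^1(\sO_{\ti X_\beta}(1)(-\ti N_\beta\cup B))=0$, a degree inequality, and its failure says nothing about $\ti s_{i+1}|_{\ti X_\beta}$. The correct route is to argue by contradiction that $i$ is \emph{primary} at $p'$: then $i\in\II_\beta\prim$, so in particular $i<\hbar_\beta$ and the bound \eqref{prim-ineq} holds with $\alpha$ replaced by $\beta$; one now reruns the construction of $\zeta$ over $\ti X_\beta$ (gluing with $s_{i+1}|_{X_\beta\comp}$) and obtains a strict refinement $\sE_i\supsetneq\sF_{i+1}\supsetneq\sE_{i+1}$ because $\sE_i/\sE_{i+1}$ still has infinite length along $X_\alpha\subset X_\beta\comp$. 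This yields both that $i$ is secondary at $p'$ and (since the primary case is excluded and $p'$ must still witness a jump) that $i=\hbar_\beta$. Once you reorganize the ``further'' argument along these lines, your sketch is in line with what the paper intends.
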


\begin{proof}
The proof is parallel to the proof of the previous Proposition. We will omit it here.
\end{proof}

\begin{coro}\label{bdd-tail}
Denoting $w\prim\lalp:= w\lalp(\ti \sE_{\bar\jmath\lalp+1})$, suppose $X\lalp \subsetneq X$, then
\beq\label{cor-ineq}
0 \le \deg X\lalp-w\prim\lalp \leq 2(g(X\lalp)+\ell\lalp+1).
\eeq
\end{coro}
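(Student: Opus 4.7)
The plan is to bound $\deg X\lalp-w\prim\lalp$ below by $0$ (which falls straight out of the definition of primary) and above by $2(g(X\lalp)+\ell\lalp+1)$ (which comes from tracking the first index past $\bar\jmath\lalp$ where the primary inequality fails). For the lower bound: by Definition \ref{prim-ind}, primarity of $\bar\jmath\lalp$ gives $w\prim\lalp\le\deg X\lalp-2g(X\lalp)-\ell\lalp-1\le\deg X\lalp$ in the case $\deg X\lalp>1$, while the case $\deg X\lalp=1$ gives $w\prim\lalp\le 1=\deg X\lalp$ automatically. So $\deg X\lalp-w\prim\lalp\ge 0$ in both cases.

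For the upper bound, the case $\deg X\lalp=1$ is trivial since the right-hand side of \eqref{cor-ineq} is then at least $2$. Assume $\deg X\lalp>1$; the hypothesis $X\lalp\subsetneq X$ combined with the connectedness of $X$ forces $\ell\lalp\ge 1$. Let $i^+\in\II\lalp$ be the smallest index greater than $\bar\jmath\lalp$, which exists since $\hbar\lalp\in\II\lalp$. I first show $i^+<\hbar\lalp$: if instead $i^+=\hbar\lalp$, then the skipped intermediate indices lie outside $\II\lalp$ and contribute $0$ to $\delta\lalp$, so $w\prim\lalp=w\lalp(\ti\sE_{\hbar\lalp})$; but $\ti\sE_{\hbar\lalp}|_{\ti X\lalp}$ is cyclic on the single nonzero section $\ti s_{\hbar\lalp}|_{\ti X\lalp}$, whose zero divisor has degree $\deg X\lalp$, forcing $w\lalp(\ti\sE_{\hbar\lalp})=\deg X\lalp$ and contradicting the primary bound on $w\prim\lalp$.

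By the maximality of $\bar\jmath\lalp$ in $\II\lalp\prim$, the index $i^+$ is not primary, so the telescoping identity $w\lalp(\ti\sE_{i^++1})=w\prim\lalp+\delta\lalp(\ti s_{i^+})$ yields
\[
w\prim\lalp>\deg X\lalp-2g(X\lalp)-\ell\lalp-1-\delta\lalp(\ti s_{i^+}).
\]
It therefore suffices to prove $\delta\lalp(\ti s_{i^+})\le\ell\lalp+1$. Since $w\lalp(\ti\sE_{i^+})=w\prim\lalp$ satisfies \eqref{prim-ineq} at $i=i^+$ for $\alpha$, Proposition \ref{vir-ind} applies (in the principal case that $i^+$ is non-base): each $p\in\inc(\ti s_{i^+})\cap\ti X\lalp$ then satisfies $\delta(\ti s_{i^+},p)=1$ and projects to a node of $X$ lying on $X\lalp$. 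If two such branches sit above a common self-node of $X\lalp$, the further part of Proposition \ref{vir-ind} with $\beta=\alpha$ pins $\inc(\ti s_{i^+})$ to $p+p'$, so $\delta\lalp(\ti s_{i^+})\le 2\le\ell\lalp+1$. Otherwise each self-node contributes at most one branch and linking nodes contribute at most $\ell\lalp$; re-running the $\zeta$-construction in the proof of Proposition \ref{vir-ind} with an enlarged auxiliary divisor $B$ accommodating more than $\ell\lalp+1$ such branches would then produce a strict filtration \eqref{strict-F}, forcing $h^0(\sO_X(1))>m+1$, a contradiction.

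The main obstacle is precisely this last sub-case, where the further part of Proposition \ref{vir-ind} does not apply directly because the opposite branch at a single-branch self-node is absent from $\inc(\ti s_{i^+})$. One has to enlarge $B$ carefully in the $\zeta$-construction while keeping $\deg B$ within the range \eqref{Z-ineq} required for the cohomology vanishing \eqref{H1}, exploiting the primary hypothesis for $\alpha$. A parallel delicate argument is needed in the subsidiary case where $i^+$ happens to be a base index for some other component $\alpha'\ne\alpha$, which is not directly covered by either Proposition \ref{vir-ind} or Proposition \ref{vir-ind-2} but which is handled by the same style of $\zeta$-construction.
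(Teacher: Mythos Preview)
Your overall strategy matches the paper's: pass to the successor index $i^+=\bar i\in\II\lalp$ after $\bar\jmath\lalp$, use its non-primarity to get the telescoped inequality, then bound $\delta\lalp(\ti s_{i^+})$. Your verification that $i^+<\hbar\lalp$ is a detail the paper leaves implicit.

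The ``main obstacle'' you identify, however, is illusory. Once Proposition~\ref{vir-ind} gives $\ind_p(i^+)=0$ and $\delta(\ti s_{i^+},p)=1$ for $p\in\inc(\ti s_{i^+})\cap\ti X\lalp$, you have $v(\ti s_{i^+},p)=0$ and $v(\ti s_{i^++1},p)\ge 1$; hence $s_{i^+}(q)\ne 0$ and $s_{i^++1}(q)=0$ at $q=\pi(p)$. If $q$ is a self-node of $X\lalp$ with second branch $p'\in\ti X\lalp$, these same values of $s_{i^+},s_{i^++1}$ at $q$ force $v(\ti s_{i^+},p')=0$ and $v(\ti s_{i^++1},p')\ge 1$, so $p'\in\inc(\ti s_{i^+})$ as well. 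A self-node therefore \emph{always} contributes both branches; your single-branch sub-case never occurs, and no enlarged $\zeta$-construction is needed.

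The paper's argument then becomes short: split on $\delta\lalp(\ti s_{i^+})\le 2$ versus $>2$. The former case is trivial since $\ell\lalp\ge 1$. In the latter, any self-node contribution would (via the further part of Proposition~\ref{vir-ind} with $\beta=\alpha$) force $\inc(\ti s_{i^+})=p+p'$ and $\delta(\ti s_{i^+})=2$, a contradiction; so every $p\in\inc(\ti s_{i^+})\cap\ti X\lalp$ lies in $\ti L\lalp$, giving $\delta\lalp(\ti s_{i^+})\le\ell\lalp$.

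Your worry about $i^+$ being a base index for some $\alpha'\ne\alpha$ is legitimate in that neither Proposition~\ref{vir-ind} nor~\ref{vir-ind-2} is literally stated for that configuration, but the remark preceding Proposition~\ref{vir-ind-2} covers it: when $i^+=\hbar_{\alpha'}$, one has $\len(\sE_{i^+}/\sE_{i^++1})=\infty$, and choosing $\zeta$ (constructed on $X\lalp$) so that $\sE_{i^+}/\sF_{i^++1}$ is finite keeps the filtration~\eqref{strict-F} strict. The conclusions of Proposition~\ref{vir-ind} therefore persist, and no separate delicate argument is required.
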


\begin{proof} The first inequality is trivial. We now prove the second one.
If $\deg X\lalp=1$ we obtain $\deg X\lalp-w\lalp\prim=0$, from which the second inequality trivially follows. So from now on we assume $\deg X\lalp>1$.
We let $\bar i \in \II\lalp$ be the index succeeding $\bar\jmath\lalp$; namely, $\bar i$ is the smallest index
$>\bar\jmath\lalp$ so that  $\delta\lalp(\ti s_{\bar i})\geq 1$.  In particular, this implies that
\beq\label{bar-i}
\delta\lalp(\ti s_{\bar\jmath\lalp})=\cdots=\delta\lalp(\ti s_{\bar i-1})=0.
\eeq
Since $\bar i \not\in \II\lalp\prim$,
\beq\label{esti-3}
w\prim\lalp=w\lalp(\ti\sE_{\bar \jmath\lalp+1})=w\lalp(\ti\sE_{\bar i+1})-\delta\lalp(\ti s_{\bar i})>
\deg X\lalp-2g(X\lalp)-\ell\lalp-1-\delta\lalp(\ti s_{\bar i}).
\eeq
Thus when $\delta\lalp(\ti s_{\bar i})\leq 2 $, the second inequality follows from $\ell\lalp\geq 1$ (,  since $X\lalp\subsetneq X$ ).

Suppose $\delta\lalp(\ti s_{\bar i})>2$. By our assumption $\bar i$ is the index in $\II\lalp$ immediately succeeding $\bar\jmath\lalp$, we have
$w\lalp(\ti\sE_{\bar i})=w\lalp(\ti\sE_{\bar\jmath\lalp+1})$ because of \eqref{bar-i}.
By Definition \ref{prim-ind}, $w\lalp(\ti\sE_{\bar i})$ satisfies \eqref{prim-ineq}. So we can
apply Proposition \ref{vir-ind} to the index $\bar i$ to conclude that every
$p\in \inc(\ti s_{\bar i})\cap \ti X\lalp $
lies in $\ti N\lalp$ and has $\delta(\ti s_{\bar i},p)=1$.

We claim that $ \inc(\ti s_{i})\cap \ti X\lalp\sub \ti L\lalp$. Indeed, let
$p\in\inc(\ti s_{\bar i})\cap(\ti N\lalp\setminus\ti L\lalp)$,
then the second part of Proposition \ref{vir-ind} implies that $\inc(\ti s_{\bar i})=p+p'$ and $\delta(\ti s_{\bar i})=2$, contradicting to the assumption
$\delta\lalp(\ti s_{\bar i})>2$.
This proves that $\inc(\ti s_{\bar i})\cap \ti X\lalp\sub
\ti L\lalp$. Adding that $\delta(\ti s_{\bar i},p)=1$ for $p\in \inc(\ti s_{\bar i})\cap \ti X\lalp$, we conclude that
$\delta\lalp(\ti s_{\bar i})\leq \ell\lalp$. These and \eqref{esti-3} proves the second inequality in  \eqref{cor-ineq}.
\end{proof}

\section{Main estimate for irreducible curves}\label{main-estimate}
Throughout this section, we fix a staircase 1-PS $\lam$, and an irreducible $X\lalp$.  We will
derive a sharp estimate of $e(\ti\sI\lalp(\lam))$ for the  $X\lalp\subset X$.

We let  $ g\lalp$ be the genus of $
X\lalp$; we define the set of {\em special points}
\beq\label{S}
{\ti S\lalp}=(\pi\upmo(\bx)\cap \tiX\lalp))\cup \ti N \lalp \sub\ti X\lalp\ ,
\eeq
where $\bx=(x_1,\cdots,x_n)\sub X$ is the set of weighted points.
We continue to
denote by $\bar\rho_i=\rho_i-\rho_{\hbar\lalp}$. For each $p\in\ti \Lam\lalp$, we define
the {\em initial index}
\beq\label{i0} i_0(p):=\min\{i\ |\ i\in \II_p\}.
\eeq
{For $\deg X\lalp>1$} and a fixed $\epsilon>0$, we define
\beq\label{E-alp}
E\lalp(\rho):=
\Bl2+\frac{2\epsilon}{\deg X_{\alpha}}\Br\sum_{i\in
\II_{\alpha}^{\basic}} \delta\lalp(\ti s_i)\bar{\rho}_{i} -\Bl1
+\frac{2\epsilon}{\deg X\lalp}\Br\sum_{q\in {\ti S\lalp}\cap\ti
\Lam\lalp}\bar{\rho}_{i_0(q)}+2\deg X_{\alpha}\cdot
\rho_{\hbar\lalp};
\eeq
for $\deg X\lalp=1$ satisfying $\bx\cap X\lalp=\emptyset$, we define
\beq\label{E-alp-0}
E\lalp(\rho):=\delta\lalp(\ti s_{i_0})\bar{\rho}_{i_0}+2\cdot \rho_{\hbar\lalp};\quad
i_0=\ind\lalp\upmo(0).
\eeq
It is clear that in both cases $E\lalp(\rho)$ are linear in $\rho\in\RR^{m+1}_+$. Our main result of this section is the following

\begin{theo}\label{main-est}
For any $1\geq\epsilon>0$ there is a constant $M$ depending only on
$g\lalp$, $\ell\lalp$ and $\epsilon$ such that whenever $\deg
X_{\alpha}\geq M$, then
$$
e(\ti\sI\lalp(\lam))\leq E\lalp(\rho).
$$
In case $\deg X\lalp=1$ and $\bx\cap X\lalp=\emptyset$, the same inequality  holds for  $E\lalp(\rho)$ defined in \eqref{E-alp-0}.
\end{theo}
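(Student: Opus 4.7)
The strategy is to localize $e(\ti\sI\lalp(\lam))$ to a sum of Newton polygon areas on $\ti X\lalp$ and then analyze each polygon using the staircase structure. By Corollary \ref{sum-Delta}, $e(\ti\sI\lalp)=2\sum_{q\in\ti X\lalp}|\Delta_q|$; for $q\not\in\ti\Lam\lalp$ the section $\ti s_{\hbar\lalp}$ does not vanish, so $w(\ti\sI,q)=0$ and $|\Delta_q|=0$. At each $q\in\ti\Lam\lalp$ I split $\Delta_q$ into the bottom rectangle $[0,w(\ti\sI,q)]\times[0,\rho_{\hbar\lalp}]$ and the convex region above $y=\rho_{\hbar\lalp}$; summing the bottom rectangles via $\sum_{q\in\ti\Lam\lalp}w(\ti\sI,q)=\deg X\lalp$ produces the term $2\deg X\lalp\cdot\rho_{\hbar\lalp}$ in $E\lalp(\rho)$. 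For the upper region, Lemma \ref{w-delta} identifies $v(\ti s_{i+1},q)-v(\ti s_i,q)$ with $\delta(\ti s_i,q)$ along the staircase, so the lower convex hull of $\{(v(\ti s_i,q),\rho_i):i\in\II_q\}$ gives the boundary and the trapezoidal formula yields
\[
2|\Delta_q|_{\mathrm{upper}}=\sum_{i}\delta(\ti s_i,q)(\bar\rho_i+\bar\rho_{i+1})=2\sum_{i}\delta(\ti s_i,q)\bar\rho_i-\sum_{i}\delta(\ti s_i,q)(\bar\rho_i-\bar\rho_{i+1}).
\]
Summing over $q\in\ti\Lam\lalp$ gives the exact expression $e(\ti\sI\lalp)=2\deg X\lalp\cdot\rho_{\hbar\lalp}+2\sum_{i\in\II\lalp}\delta\lalp(\ti s_i)\bar\rho_i-\sum_{i\in\II\lalp}\delta\lalp(\ti s_i)(\bar\rho_i-\bar\rho_{i+1})$.

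To match $E\lalp(\rho)$ I split $\II\lalp=\II\lalp\prim\sqcup(\II\lalp\setminus\II\lalp\prim)$. The primary piece contributes $2\sum_{i\in\II\lalp\prim}\delta\lalp(\ti s_i)\bar\rho_i$ directly. For secondary indices, Corollary \ref{bdd-tail} gives $\sum_{i\in\II\lalp\setminus\II\lalp\prim}\delta\lalp(\ti s_i)\le 2(g\lalp+\ell\lalp+1)$, and since these indices lie past $\bar\jmath\lalp$ their $\bar\rho_i$'s are controlled by $\bar\rho_{\bar\jmath\lalp+1}$. The shaving term $-\sum_i\delta\lalp(\ti s_i)(\bar\rho_i-\bar\rho_{i+1})$ produces the subtraction in $E\lalp(\rho)$: at a special point $q\in\ti S\lalp\cap\ti\Lam\lalp$ the convex hull cuts off an initial triangle whose contribution reorganizes into $\bar\rho_{i_0(q)}$, while at non-special $q$ Propositions \ref{vir-ind} and \ref{vir-ind-2} force $\delta(\ti s_i,q)=1$ at every primary $i\in\II_q$, so the corresponding shavings redistribute into the primary $\bar\rho$-sum rather than producing a new subtraction.

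The principal obstacle is the absorption step: the residual secondary term $2\sum_{i\in\II\lalp\setminus\II\lalp\prim}\delta\lalp(\ti s_i)\bar\rho_i$ together with the enlargement of the subtraction's coefficient from $1$ to $1+\epsilon'$ must be dominated by $\epsilon'\sum_{i\in\II\lalp\prim}\delta\lalp(\ti s_i)\bar\rho_i$ with $\epsilon'=2\epsilon/\deg X\lalp$. This requires a lower bound $\sum_{i\in\II\lalp\prim}\delta\lalp(\ti s_i)\bar\rho_i\ge \deg X\lalp\cdot\bar\rho_{\bar\jmath\lalp+1}/C(g\lalp,\ell\lalp)$, which I obtain from $\sum_{i\in\II\lalp\prim}\delta\lalp(\ti s_i)\ge\deg X\lalp-2(g\lalp+\ell\lalp+1)$ combined with $\bar\rho_i\ge\bar\rho_{\bar\jmath\lalp+1}$ for primary $i$, provided $\deg X\lalp\ge M(g\lalp,\ell\lalp,\epsilon)$. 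The degenerate case $\deg X\lalp=1$ with $\bx\cap X\lalp=\emptyset$ is handled directly: $\II\lalp\prim$ is a singleton $\{i_0\}$, the Newton polygon at the unique $q\in\ti\Lam\lalp$ reduces to a rectangle of area $\rho_{\hbar\lalp}$ together with a single triangle, and comparison with $E\lalp(\rho)=\delta\lalp(\ti s_{i_0})\bar\rho_{i_0}+2\rho_{\hbar\lalp}$ is immediate.
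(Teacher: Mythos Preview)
Your absorption step contains a genuine gap. You claim that the slack
\[
\frac{2\epsilon}{\deg X\lalp}\sum_{i\in\II\lalp\prim}\delta\lalp(\ti s_i)\bar\rho_i
\]
dominates the residual secondary term $2\sum_{i\in\II\lalp\setminus\II\lalp\prim}\delta\lalp(\ti s_i)\bar\rho_i$, using the lower bound $\sum_{i\in\II\lalp\prim}\delta\lalp(\ti s_i)\bar\rho_i\ge(\deg X\lalp-C_1)\bar\rho_{\bar\jmath\lalp}$ with $C_1=2(g\lalp+\ell\lalp+1)$. But this yields at most $\frac{2\epsilon}{\deg X\lalp}(\deg X\lalp-C_1)\bar\rho_{\bar\jmath\lalp}\le 2\epsilon\,\bar\rho_{\bar\jmath\lalp}$ on the left, while the secondary term can be as large as $2C_1\bar\rho_{\bar\jmath\lalp}$. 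Since $\epsilon\le 1<C_1$, the inequality fails. A concrete bad case is a two–weight $\lam$ with $\bar\rho_i\equiv c$ for all $i<\hbar\lalp$: your trapezoidal bound gives roughly $2c\deg X\lalp$, which exceeds $E\lalp(\rho)\approx(2+\epsilon')c(\deg X\lalp-C_1)$; the theorem is still true here because the \emph{actual} $e(\ti\sI\lalp)=c\deg X\lalp$, but the trapezoidal bound by itself is too loose by a factor of two.

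What you are missing is precisely the mechanism the paper supplies. The proof in the paper splits into two regimes according to the number $\sigma=|\ti\Lam\lalp|$ of Newton polygons. When $\sigma$ is large, there are at least $8(g\lalp+\ell\lalp+1)$ non-special points $p\in\ti\Lam\lalp\setminus\ti S\lalp$ with $i_0(p)\le\bar\jmath\lalp$; the savings $\frac{1}{2}\bar\rho_{i_0(p)}$ from Lemma \ref{trapezoid} at these points absorb the secondary tail (this is exactly the resource you discard when you say the non-special shavings ``redistribute into the primary $\bar\rho$-sum''). When $\sigma$ is small, pigeonhole gives one polygon $\Delta_{p_0}$ of width $\gtrsim\deg X\lalp/\sigma$, and the paper invokes Lemma \ref{eps}, which exploits the \emph{convexity} of $\partial^+\Delta_{p_0}$ (not just the trapezoidal overcount) to show that the single large polygon already swallows $2(\deg X\lalp-w\lalp\prim)\bar\rho_{\bar\jmath\lalp}$. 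Your argument has no analogue of Lemma \ref{eps}, and without either mechanism the absorption cannot go through. The degree-one case and the reduction to $\rho_{\hbar\lalp}=0$ are fine.
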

\black
Note that the theorem implies that we can bounded
$e(\ti\sI(\lam)))$ in terms of the primary $\rho_i$'s only. And for
primary indices, we have a complete understanding of the
multiplicity $\delta\lalp(\ti s_i)$ due to the detailed study in the previous section.

We begin with the following bound on the area of $\Delta_p$ in terms of $\{\rho_i\}$.
\begin{lemm}\label{trapezoid}
Let $\lam$ be a staircase. Then for each $p\in \ti \Lam\lalp$ and
any $0\leq l\leq k\leq \hbar\lalp$,  we have
\beq
|\Delta_p\cap([w(\ti \sE_l,p),w(\ti \sE_k,p)]\times\RR)|
-\rho_{\hbar\lalp}\!\!\!\cdot (w(\ti\sE_k,p)-w(\ti\sE_l,p))\leq
\eeq
$$
\qquad\qquad\leq \sum_{i\in \II_p\cap[l,k-1]}\!\!\!\!\!\delta(\ti
s_i,p)\bar{\rho}_i
-\frac{(\bar{\rho}_{i_{\min}(p)}+\bar{\rho}_{i_{\max}(p)})}{2}\ ,
$$
where $i_{\min}:=\min(\II_p\cap[l,k-1])$ and
$i_{\max}:=\max(\II_p\cap[l,k-1])$.
\end{lemm}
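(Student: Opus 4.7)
The plan is a direct geometric computation on the Newton polygon $\Delta_p$, exploiting the explicit combinatorics forced by the staircase hypothesis. By Lemma \ref{w-delta}, on $\ti X\lalp$ the vanishing orders $v(\ti s_i,p)=w(\ti\sE_i,p)$ are weakly increasing in $i$ and jump by $\delta(\ti s_i,p)$ at each $i\in\II_p$, while staying constant in between. Consequently the upper boundary of $\Delta_p$ is a concave piecewise linear curve whose corners lie at the lower-left extreme points $(v(\ti s_{i_j},p),\rho_{i_j})$ for $i_j\in\II_p$.

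First I would slice the horizontal strip $[w(\ti\sE_l,p),w(\ti\sE_k,p)]$ at the jump locations $X_j:=v(\ti s_{i_j},p)$ indexed by $\II_p\cap[l,k-1]=\{i_1<\cdots<i_N\}$, so the subinterval widths are $X_{j+1}-X_j=\delta(\ti s_{i_j},p)$. On each $[X_j,X_{j+1}]$ the piece of $\Delta_p$ lying above the baseline $y=\rho_{\hbar\lalp}$ is a trapezoid whose parallel sides at $X_j$ and $X_{j+1}$ have lengths $\bar\rho_{i_j}$ and $\bar Y_{j+1}$, where $Y_{j+1}$ is the $\rho$-value at the next extreme corner past $i_j$ (in particular $\bar Y_{N+1}\le \bar\rho_{i_N}$). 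Summing these trapezoidal areas and subtracting the rectangular chunk $\rho_{\hbar\lalp}(w(\ti\sE_k,p)-w(\ti\sE_l,p))$ expresses the left-hand side of the lemma exactly as $\sum_{j=1}^N\delta(\ti s_{i_j},p)(\bar\rho_{i_j}+\bar Y_{j+1})/2$.

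Second I would bound this trapezoidal sum by the step-function sum $\sum_j\delta(\ti s_{i_j},p)\bar\rho_{i_j}$: on each subinterval the trapezoid lies below the rectangle of height $\bar\rho_{i_j}$, missing a right-triangle of legs $\delta(\ti s_{i_j},p)\ge 1$ and $\bar\rho_{i_j}-\bar Y_{j+1}\ge 0$. Telescoping these deficits via an Abel summation, using both $\delta(\ti s_{i_j},p)\ge 1$ and the monotonicity of $\bar\rho$ along $\II_p$, isolates two boundary half-triangles of areas $\tfrac12\bar\rho_{i_{\min}(p)}$ and $\tfrac12\bar\rho_{i_{\max}(p)}$, yielding the claimed $-\tfrac12(\bar\rho_{i_{\min}(p)}+\bar\rho_{i_{\max}(p)})$ correction.

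The main technical obstacle is the endpoint bookkeeping at $X_1$ and $X_{N+1}$: the convex-hull chord meeting the strip at either end may extend outside $[l,k-1]$ before reaching its next vertex, so one must verify that the half-triangles at the strip ends have the right legs to contribute the correct $\bar\rho_{i_{\min}(p)}$ and $\bar\rho_{i_{\max}(p)}$. The staircase hypothesis, which forces the successive chord slopes to be monotone along $\II_p$, is what keeps all interior corners extreme on the hull; any corner the hull happens to skip only decreases the true area and hence preserves the estimate, and the boundary contributions survive the telescoping cleanly.
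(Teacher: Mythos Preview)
Your overall strategy coincides with the paper's: dominate the convex boundary $\partial^{+}\Delta_{p}$ by the piecewise-linear interpolant $\TT$ through the lattice points $(w(\ti\sE_{i_j},p),\rho_{i_j})$, slice the strip at the jump abscissae $X_j$, and compare the resulting trapezoidal area with the Riemann step sum $\sum_{j}\delta(\ti s_{i_j},p)\bar\rho_{i_j}$. The paper carries out exactly this, only more tersely.

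The genuine gap is in your telescoping. The deficit between the step sum and the trapezoidal sum equals
\[
\sum_{j=1}^{N}\tfrac12\bigl(\bar\rho_{i_j}-\bar Y_{j+1}\bigr)\,\delta(\ti s_{i_j},p);
\]
using $\delta(\ti s_{i_j},p)\ge 1$ and telescoping gives only $\tfrac12\bigl(\bar\rho_{i_{\min}}-\bar Y_{N+1}\bigr)$, not $\tfrac12\bigl(\bar\rho_{i_{\min}}+\bar\rho_{i_{\max}}\bigr)$. There is no ``second boundary half-triangle'' of area $\tfrac12\bar\rho_{i_{\max}}$: the rightmost deficit triangle has height $\bar\rho_{i_N}-\bar Y_{N+1}$, not $\bar\rho_{i_N}$. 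In fact the inequality with the extra $-\tfrac12\bar\rho_{i_{\max}}$ term is false in general. Take $l=0$, $k=\hbar\lalp$, $\II_p=\{i_0(p)\}$ with $\delta(\ti s_{i_0(p)},p)=1$ and $\bar\rho_{i_0(p)}>0$ (for instance $X=\PP^1$, $\sO_X(1)=\sO_{\PP^1}(1)$, weights $\rho_0=1,\rho_1=0$): the left side is $\tfrac12\bar\rho_{i_0(p)}$ while the right side is $\bar\rho_{i_0(p)}-\bar\rho_{i_0(p)}=0$. The paper's own proof shares this oversight---it simply asserts the final displayed inequality without justification. What the trapezoid--telescope argument \emph{does} establish is the bound with the single correction $-\tfrac12\bar\rho_{i_{\min}(p)}$ in the case $k=\hbar\lalp$ (since then $\bar Y_{N+1}=\bar\rho_{\hbar\lalp}=0$); that is precisely the form~\eqref{half-rho}, and it is the only form actually invoked in the subsequent estimates.
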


Note that by letting  $l=0$ and $k=\hbar\lalp$, we obtain
\beq\label{half-rho}
|\Delta_p|
-\rho_{\hbar\lalp}\!\!\!\cdot w(\ti\sE_{\hbar\lalp},p)\leq
\sum_{i\in \II_p}\delta(\ti s_i,p)\bar{\rho}_i
-\frac{\bar{\rho}_{i_0(p)}}{2}.
\eeq

\begin{proof}
First, we notice that the above inequality is invariant when varying
$\rho_{\hbar\lalp}$, thus  to prove the Lemma we can and do assume from
now on that $\rho_{\hbar\lalp}=0$; hence $\bar{\rho_{i}}=\rho_{i}$.

Let $\Gamma_p:=\{(w(\ti \sE_i,p),\rho_i)\}_{0\leq i\leq m}$; it
follows from Definition  \ref{Delta-p} and \ref{staircase} that
\beq\label{Delta-p-1}
\Delta _{p}=(
\mathbb{R}_{+}^{2}-\mathrm{Conv}( \mathbb{R} _{+}^{2}+\Gamma_p ) )
\cap ( [ 0,w(\ti\sI,p) ] \times \mathbb{R})\ . \eeq
Fixing an indexing
\beq\label{index-p} \II_p=\{i_0(p),\cdots,i_d(p)\}\sub \II,\quad
i_j(p)\ \text{increasing and}\ d+1=|\II_p|, \eeq we let $\TT$ be the
continuous piecewise linear function on $[0,w(\ti \sI,p)]$ defined
by linear interpolating the points
$$\{(0,\rho_{i_0}),\cdots,(w(\ti \sE_{i_k},p),\rho_{i_k}),
\cdots, 
(w(\ti \sE_{i_{d}},p),\rho_{\hbar\lalp})
 \}\sub \RR^2\ ,$$
and let $\Delta_\TT$ be the polygon bounded on two sides by $x=0$ and
$x=w(\ti \sE_k,p)$, from below by $y=0$ and from above by the graph of $y=\TT$.
By the convexity of $\Delta_p$, we have
$$\Delta_p\cap([w(\ti \sE_l,p),w(\ti \sE_k,p)]\times\RR)\subset\Delta_\TT\cap([w(\ti \sE_l,p),w(\ti\sE_k,p)]\times\RR)\subset \RR^2\ .$$
By Lemma \ref{w-delta}, $w(\ti \sE_i,p)=\sum_{j=0}^{ i-1}\delta(\ti
s_j,p)$; hence
\begin{eqnarray*}
&&|\Delta_p\cap([w(\ti \sE_l,p),w(\ti \sE_k,p)]\times\RR)| \leq
|\Delta_\TT\cap([w(\ti \sE_l,p),w(\ti \sE_k,p)]\times\RR)|
\\
& \leq& \displaystyle\sum_{i\in \II_p\cap[l,k-1]}\delta(\ti
s_i,p)\rho_{i}- \frac{1}{2}(\rho_{i_{\min}(p)}+\rho_{i_{\max}(p)}).
\end{eqnarray*}
This proves the Lemma.
\end{proof}

With this lemma in hand, we now explain the key ingredient in the
proof of the theorem. We will divide our estimates into two cases
according to the size of $|\ti\Lam\lalp|$ (cf. Definition \ref{index}). When $|\ti\Lam\lalp|$ is large,
applying Lemma \ref{trapezoid}, we will
gain a sizable multiple of $\frac{1}{2}\rho_{i_0(p)}$'s (cf. \eqref{half-rho}) in the estimate of $\Delta_p$;
these extra gains will take care of the
contributions from non-primary $\rho_i$'s. When $|\ti\Lam\lalp|$ is
small, one large $\Delta_p$ is sufficient to cancel the contribution
from the non-primary $\rho_i$'s.

We need a few more notions. For any $p\in
\ti \Lam\lalp$, we let $ \II\prim_p:=\II\prim\lalp\cap \II_p, $ and
define
\beq\label{Wp}
 \bar\jmath_p:=\max\{i\in \II_p\prim\},\quad
w\prim(p):=w(\ti\sE_{\bar\jmath\lalp+1},p),\quad\text{and}\quad
 w(p):=w(\ti\sI,p) \text{ (cf. (\ref{hbar}))} .
\eeq
Note that $w(p)$ is the base-width of the Newton polygon $\Delta_p$.
Using $\bar\jmath_p$, we truncate the Newton polygon $\Delta_p$ by intersecting
it with the strip $[0,w^{\basic}(p)]\times\RR$:
$$ \Delta^{\basic}_{p}:=\Delta_p\cap [0,w^{\basic}(p)]\times\RR.
$$

Our next Lemma says that if one $\Delta_p$ is big enough, the contribution from non-primary
$\rho_i$'s will be absorbed by the difference between
$E\lalp(\rho)$ and $e(\ti\sI\lalp(\rho))$.
\begin{lemm}\label{eps}
For any $1>\epsilon>0$, there is an $M$ depending only on $\epsilon$, $g\lalp$ and $\ell\lalp$  such that
whenever $w(p)\geq M$ (cf.\eqref{Wp}),
\begin{equation*}\label{one-tower}
|\Delta_{p}^{\basic}|+2(\deg X\lalp-w\lalp\prim)\bar{\rho}_{\bar\jmath_{\alpha}}
\leq
\Bigl(1+\frac{\epsilon}{w(p)}\Bigr)\!\!\sum_{i\in \II^{\basic}_{p}} \delta(\ti s_i,p)\bar{\rho}_{i}
+\rho_{\hbar\lalp}\cdot w\prim(p)-\Bl\frac{1}{2}+\frac{\epsilon}{w(p)}\Br\bar\rho_{i_0(p)},
\end{equation*}
where $w\lalp\prim$ is defined in Corollary \ref{bdd-tail}.
\end{lemm}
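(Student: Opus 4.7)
The plan is to reduce Lemma \ref{eps} to a concrete numerical inequality by applying Lemma \ref{trapezoid} to the truncated polygon $\Delta_p^{\basic}$, and then close the estimate by combining the topological bound of Corollary \ref{bdd-tail} with the slack factor $\epsilon/w(p)$ that comes from the hypothesis $w(p)\ge M$.

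Applying Lemma \ref{trapezoid} with $l=0$ and $k=\bar\jmath\lalp+1$ (so that $w(\ti\sE_l,p)=0$ and $w(\ti\sE_k,p)=w\prim(p)$), and noting that $\II_p\cap[0,\bar\jmath\lalp]=\II_p^{\basic}$ with minimum $i_0(p)$ and maximum $\bar\jmath_p$, yields
\[
|\Delta_p^{\basic}|\;\le\;\rho_{\hbar\lalp}\,w\prim(p)+\sum_{i\in\II_p^{\basic}}\delta(\ti s_i,p)\bar\rho_i-\tfrac12\bar\rho_{i_0(p)}-\tfrac12\bar\rho_{\bar\jmath_p}.
\]
Substituting this into the target inequality and cancelling the common term $\rho_{\hbar\lalp}\,w\prim(p)$ reduces Lemma \ref{eps} to the cleaner inequality
\[
2(\deg X\lalp-w\lalp\prim)\,\bar\rho_{\bar\jmath\lalp}\;\le\;\tfrac{\epsilon}{w(p)}\Bigl(\sum_{i\in\II_p^{\basic}}\delta(\ti s_i,p)\bar\rho_i-\bar\rho_{i_0(p)}\Bigr)+\tfrac12\bar\rho_{\bar\jmath_p}.
\]

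Next I would bound both sides quantitatively. Corollary \ref{bdd-tail} gives $\deg X\lalp-w\lalp\prim\le 2(g\lalp+\ell\lalp+1)$, so the left-hand side is bounded by $4(g\lalp+\ell\lalp+1)\bar\rho_{\bar\jmath\lalp}$. For the right-hand side, the telescoping identity $\sum_{i\in\II_p^{\basic}}\delta(\ti s_i,p)=w\prim(p)$ from Lemma \ref{w-delta}, together with the monotonicity $\bar\rho_{i_0(p)}\ge\bar\rho_i\ge\bar\rho_{\bar\jmath_p}$ on $\II_p^{\basic}$ and the fact $\delta(\ti s_{i_0(p)},p)\ge 1$, produces
\[
\sum_{i\in\II_p^{\basic}}\delta(\ti s_i,p)\bar\rho_i-\bar\rho_{i_0(p)}\;\ge\;(w\prim(p)-1)\,\bar\rho_{\bar\jmath_p}.
\]
Applying Corollary \ref{bdd-tail} pointwise via $\sum_q(w(q)-w\prim(q))=\deg X\lalp-w\lalp\prim$ also gives $w\prim(p)\ge w(p)-2(g\lalp+\ell\lalp+1)$, so $(w\prim(p)-1)/w(p)\to 1$ uniformly as $w(p)\to\infty$.

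Finally, one chooses $M=M(\epsilon,g\lalp,\ell\lalp)$ large enough so that the resulting inequality
\[
4(g\lalp+\ell\lalp+1)\,\bar\rho_{\bar\jmath\lalp}\;\le\;\Bigl(\epsilon\,\tfrac{w\prim(p)-1}{w(p)}+\tfrac12\Bigr)\bar\rho_{\bar\jmath_p}
\]
holds. The case $\bar\rho_{\bar\jmath\lalp}=0$ is immediate, and when $\bar\jmath_p<\bar\jmath\lalp$ the strict gap $\bar\rho_{\bar\jmath_p}>\bar\rho_{\bar\jmath\lalp}$ coming from strict monotonicity of the $\rho_i$ at primary indices is enough to absorb the topological constants. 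The main obstacle — where the hypothesis $w(p)\ge M$ is really used — is the borderline case $\bar\jmath_p=\bar\jmath\lalp$ with $\bar\rho_{\bar\jmath\lalp}>0$: here the single application of Lemma \ref{trapezoid} must be refined into a finer trapezoidal decomposition of $\Delta_p^{\basic}$, splitting at additional vertices of the convex upper envelope to extract the extra $O(g\lalp+\ell\lalp)$-factor of slack, and the value of $M$ is then dictated by this refinement.
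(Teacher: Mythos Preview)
Your reduction via Lemma \ref{trapezoid} is correct: the lemma does reduce to
\[
2(\deg X\lalp-w\lalp\prim)\,\bar\rho_{\bar\jmath\lalp}\;\le\;\frac{\epsilon}{w(p)}\Bigl(\sum_{i\in\II_p^{\basic}}\delta(\ti s_i,p)\bar\rho_i-\bar\rho_{i_0(p)}\Bigr)+\frac12\bar\rho_{\bar\jmath_p}.
\]
The gap is in how you try to close this. Your lower bound on the right-hand side is essentially $(\epsilon+\tfrac12)\bar\rho_{\bar\jmath_p}$, while the left-hand side can be as large as $4(g\lalp+\ell\lalp+1)\bar\rho_{\bar\jmath\lalp}$. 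In the critical case $\bar\jmath_p=\bar\jmath\lalp$ these two weights coincide, and you are asking for $4(g\lalp+\ell\lalp+1)\le \epsilon+\tfrac12<\tfrac32$, which is simply false. Your appeal to ``strict monotonicity of the $\rho_i$'' when $\bar\jmath_p<\bar\jmath\lalp$ does not help either: the $\rho_i$ are only weakly decreasing, and even a strict gap would have no controlled size relative to the topological constant $g\lalp+\ell\lalp+1$. The sentence about a ``finer trapezoidal decomposition'' is not an argument; no amount of subdividing the trapezoid bound will manufacture a factor of $8(g\lalp+\ell\lalp+1)$ out of the bare inequality $\sum\delta\bar\rho_i\ge|\Delta_p^{\basic}|$.

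What is missing is a genuine use of the \emph{convexity} of $\partial^+\Delta_p$ beyond the trapezoid lemma. The paper introduces the index $c=\max\{i\in\II_p^{\basic}:(w(\ti\sE_i,p),\rho_i/2)\in\Delta_p\}$ and runs a dichotomy on $w(p)-w^c(p)\lessgtr\sqrt{w(p)}$. If $w^c(p)$ is close to $w(p)$, the convex boundary must be steep near the right edge, forcing $|\Delta_p|\gtrsim w(p)^{3/2}\rho_c$; this makes the $\epsilon/w(p)$ multiple of $|\Delta_p^{\basic}|$ large enough to swallow $4(g\lalp+\ell\lalp+1)\rho_{\bar\jmath\lalp}$. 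If $w^c(p)$ is far from $w(p)$, then for all indices $i>c$ the lattice point lies strictly above the convex boundary, giving $\sum_{i>c}\delta\rho_i-|\Delta_p\setminus\Delta_p^{\le c}|\ge\frac12\sum_{i>c}\delta\rho_i$, and this excess (at least $\sqrt{w(p)}$ terms, each $\ge\rho_{\bar\jmath\lalp}/2$) absorbs the topological constant directly without touching the $\epsilon$ term. This convexity dichotomy is the actual content of the lemma; your proposal never reaches it.
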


\begin{proof}
By the same reason as in the proof of Lemma \ref{trapezoid}, we only
need to treat the case that $\rho_{\hbar\lalp}=0$; hence
$\bar{\rho_{i}}=\rho_{i}$.

Our proof relies on the proximity of $\partial^+\Delta_{p}$ ($\partial^+\Delta_p$ is the
 boundary component of $\Delta_{p}$ lying in the (open) first quadrant)
with the lattice points $(w(\ti\sE_i,{p}), \rho_i)$ (cf. \eqref{w}). In case they differ slightly, then
the term $\frac{\epsilon}{w({p})}\sum_{i\in \II^{\basic}_{{p}}} \delta(\ti s_i,{p})\bar{\rho}_{i}$ is sufficient
to absorb the term $2(\deg X\lalp-w\lalp\prim)\rho_{\bar\jmath\lalp}$
in the inequality (note $\bar\rho_i=\rho_i$ by assumption).
Otherwise, the difference between $\sum_{i\in \II\prim_p\cap[c,\bar\jmath_p]} \delta(\ti s_i,{p})\bar{\rho}_{i}$
(for some $c$ that will be specified below )
and $|\Delta_{p}|$ is sufficient to imply the desired estimate.

Let us assume $M>4$, then $w(p)-\sqrt{w(p)}\geq2 $ whenever $w(p)\geq M$. We introduce
$$c=\max\{i\in \II\prim_{p}\mid (w(\ti\sE_i,{p}),\frac{\rho_i}{2})\in \Delta_p\sub\RR^2\}
\quad\text{and}\quad w^c(p):=w(\ti\sE_c,p)\ ,$$ and let
$$\Delta_{p}^{\le c}=\Delta_{p}\cap [0,w^c(p)]\times\RR\ .
$$

We divide our study into two cases. The first is when
$w(p)-w^c(p)\leq \sqrt{w({p})}$. We introduce trapezoid $\Theta$ to be the region
between $x$-axis and the line passing through the points
$(w(p),0)$ and $(w^c(p),\frac{\rho_c}{2})$ intersecting with the strip  $[1,w^c(p)]\times \RR$.
Then by our assumption
$$w^c(p)-1\geq w(p)-\sqrt{w(p)}-1\geq \frac{w(p)-\sqrt{w(p)}}{2}\ .$$
Since the length of the two vertical edges of $\Theta$ are of  $\frac{\rho_c}{2}$ and
$\frac{w(p)-1}{w(p)-w^c(p)}\cdot \frac{\rho_c}{2}$, we deduce
\begin{eqnarray*}
|\Theta|&=&\Bl\frac{w(p)-1}{w(p)-w^c(p)}+1\Br(w^c(p)-1)\frac{\rho_c}{4}\\
&\geq&\Bl\frac{\sqrt{w(p)}}{2}+1\Br
\cdot \frac{w(p)-\sqrt{w(p)}}{2}\cdot\frac{\rho_c}{4}
\geq\frac{w(p)^{3/2}\cdot \rho_c}{32}\ .
\end{eqnarray*}
Since the piecewise linear $\partial^+\Delta_{p}$ is
convex, $\Theta$ lies inside $\Delta_{p}$, hence
$$|\Delta_p|-\frac{\rho_{i_0(p)}}{2}>|\Theta|> \frac{w(p)^{3/2}}{32}\rho_{c}\ .$$
By the definition of $\Delta_p^{\basic}$, the difference between the base-width of $\Delta_p^{\basic}$ and of $\Delta_p$ is bounded
by $w(p)-w^{\basic}(p)$; therefore by Lemma \ref{trapezoid} we have
\begin{eqnarray*}
|\Delta_p^{\basic}|-\frac{\rho_{i_0(p)}}{2}+2(\deg X\lalp-w\lalp\prim)\rho_{\bar\jmath\lalp}
&\geq&|\Delta_p^{\basic}|-\frac{\rho_{i_0(p)}}{2}+(w(p)-w^{\basic}(p))\rho_{\bar\jmath\lalp}\\
&\geq&|\Delta_p|-\frac{\rho_{i_0(p)}}{2}\geq\frac{w(p)^{3/2}}{32}\rho_{c}\ .
\end{eqnarray*}
Since $\rho_{\bar\jmath\lalp}\leq\rho_{c}$, this implies
\beq\label{64}
|\Delta_p^{\basic}|-\frac{\rho_{i_0(p)}}{2}>\Bl\frac{w(p)^{3/2}}{32}-2(\deg X\lalp-w\lalp\prim)\Br\rho_{c}
 \ .
\eeq

We now choose $M$ so that
$M^{3/2}\geq 2^8(g\lalp+\ell\lalp+1)$. By Corollary \ref{bdd-tail}, we have $\deg X\lalp -w\lalp\prim\leq2(g\lalp+\ell\lalp+1)$. Therefore when
$w(p)\geq M$, we have
$$2(\deg X\lalp-w\lalp\prim)\leq 4(g\lalp+\ell\lalp+1)\leq\frac{w(p)^{3/2}}{64} \ .$$
Pluging this into  \eqref{64}, we obtain
$$|\Delta_p^{\basic}|-\frac{\rho_{i_0(p)}}{2}>\frac{w(p)^{3/2}}{64}\rho_c
\quad\text{, equivalently, }\quad \rho_c\leq\frac{2^6}{w(p)^{3/2}}\bl|\Delta_p^{\basic}|-\frac{\rho_{i_0(p)}}{2}\br \ ,$$
hence
$$2(\deg X\lalp-w\lalp\prim)\rho_{\bar\jmath\lalp}\leq2(\deg X\lalp-w\lalp\prim)\rho_{c}
\leq\frac{2^6(\deg X\lalp-w\lalp\prim)}{w(p)^{3/2}}\Bl|\Delta_p^{\basic}|-\frac{\rho_{i_0(p)}}{2}\Br.
$$
So if we assume further $M\geq 2^{14}(g\lalp+\ell\lalp+1)^2/\epsilon^2$, then whenever $w(p)\geq M$ we have
$2^6(\deg X\lalp-w\lalp\prim)w(p)^{-3/2}\leq \epsilon/w(p)$, thus
\begin{eqnarray*}
|\Delta_p^{\basic}|-\frac{1}{2}\rho_{i_0(p)}+2(\deg X\lalp-w\lalp\prim)\rho_{\bar\jmath\lalp}
&\leq&  \Bl1+\frac{\epsilon}{w(p)}\Br\Bl|\Delta_p^{\basic}|-\frac{\rho_{i_0(p)}}{2}\Br\\
&\leq& \displaystyle\Bl1+\frac{\epsilon}{w(p)}\Br
\bl\sum_{i\in \II^{\basic}_p}\delta(\ti s_i,p)\rho_{i}-\rho_{i_0(p)}\br,
\end{eqnarray*}
where the last inequality follows from Lemma \ref{trapezoid}. 
This proves the Lemma in this case.

The second case is when
$w(p)-w^c(p)>\sqrt{w(p)}$.
By the definition of $c$,  for $\ j\in \JJ:=\II_p\cap(c,\bar\jmath\lalp]$,
$(w(\ti\sE_j , p),\rho_j/2)\notin\Delta_p $.
Since $\partial^+\Delta_p$ is convex, by Lemma \ref{trapezoid}, we have
$$\sum_{i \in \JJ }\delta(\ti s_i,p)\rho_{i}-|\Delta_p^{\basic}\setminus\Delta_p^{\leq c}|
\geq\sum_{i \in \JJ}\delta(\ti s_i,p)\rho_{i}/2.$$
Notice that
\begin{eqnarray*}
\sum_{i\in\JJ}\delta(\ti s_i,p)
&=&w^{\basic}(p)-w^c(p)=w(p)-w^c(p)-(w(p)-w\prim (p))\\
&>&\sqrt{w(p)}-(\deg X\lalp-w\lalp\prim)\ ,
\end{eqnarray*}
since $\deg X\lalp-w\lalp\prim\geq w(p)-w\prim (p)$ and $w(p)-w^c(p)>\sqrt{w(p)}$ by our assumption. If we choose
$$M\geq 10^2(g\lalp+\ell\lalp+1)^2\geq 5^2(\deg X\lalp-w\lalp\prim)^2$$
and require $w(p)\geq M$, then
$\sum_{i\in\JJ}\delta(\ti s_i,p)\geq  4(\deg X\lalp-w\lalp\prim)$.
This implies
$$\sum_{i \in \JJ }\delta(\ti s_i,p)\rho_{i}-|\Delta_p^{\basic}\setminus\Delta_p^{\leq c}|\geq
\sum_{i \in \JJ}\delta(\ti s_i,p)\rho_{i}/2\geq 2(\deg X\lalp-w\lalp\prim)\rho_{\bar\jmath\lalp},
$$
and combined with Lemma \ref{trapezoid}, we obtain
\begin{eqnarray*}
|\Delta_p^{\basic}|
&+&2(\deg X\lalp-w\lalp\prim)\rho_{\bar\jmath\lalp}\\
&\leq& |\Delta_p^{\leq c}|+|\Delta_p^{\basic}\setminus \Delta_p^{\leq c}|+2(\deg X\lalp-w\lalp\prim)\rho_{\bar\jmath\lalp}\\
&\leq&  |\Delta_p^{\leq c}|+|\Delta_p^{\basic}\setminus \Delta_p^{\leq c}|-\sum_{i \in \JJ }\delta(\ti s_i,p)\rho_{i}+
\sum_{i \in \JJ }\delta(\ti s_i,p)\rho_{i}+2(\deg X\lalp-w\lalp\prim)\rho_{\bar\jmath\lalp}\\
&\leq&  |\Delta_p^{\leq c}|+\sum_{i \in \JJ}\delta(\ti s_i,p)\rho_{i}
\leq  \sum_{i\in \II^{\basic}_p}\delta(\ti s_i,p)\rho_{i}-\frac{\rho_{i_0(p)}}{2}\\
&< & \bigl(1+\frac{\epsilon}{w(p)}\bigr)\bl\sum_{i\in \II^{\basic}_p}\delta(\ti s_i,p)\rho_{i}-\frac{\rho_{i_0(p)}}{2}\br
+\frac{\rho_{i_0(p)}}{2}.
\end{eqnarray*}


In the end, since $\epsilon<1$ we choose
$M:=  2^{14}(g\lalp+\ell\lalp+1)^2/\epsilon^2$. Then for $w(p)>M$, (\ref{one-tower}) holds.
This proves the Lemma.
\end{proof}

\begin{proof}[Proof of Theorem \ref{main-est}]
First, by the same reason as in the proof of Lemma \ref{one-tower} we only need to deal with the case
$\rho_{\hbar\lalp}=0$ and
$\bar{\rho}_{i}=\rho_{i}$.
Also, when $\deg X\lalp=1$, then the statement is a direct consequence of Lemma \ref{trapezoid}. So from now on
we assume that $\deg X\lalp>1$. Let $1>\epsilon>0$ be any constant.  
Since $\epsilon< 1$, we have $\frac{\epsilon}{\deg X\lalp}\leq 1/2$ whenever $\deg X\geq M\geq 2$.
We define $\sigma$ to be the number of Newton polytopes supported on $\ti X\lalp$.
 We divide our study into two cases.

The first case is when $\sigma>10(g\lalp+\ell\lalp+1)+|{\ti S\lalp}|$.
Since Corollary \ref{bdd-tail} implies
$$
|\{p\in \ti \Lam\lalp\cap \ti S\lalp\mid i_0(p)>\bar\jmath\lalp\}|\leq
\sum_{i\in \II\lalp\setminus \II\lalp^{\basic}}\delta\lalp(\ti s_i)\leq(\deg X\lalp-w\prim\lalp)\leq 2( g\lalp+\ell\lalp+1)
\ ,$$
the number of $p\in \ti \Lam\lalp\setminus {\ti S\lalp}$ satisfying $\rho_{i_0(p)}\geq \rho_{\bar\jmath\lalp}$ is
at least $8(\ti g\lalp+\ell\lalp+1)$. By
Lemma \ref{trapezoid}, for each $p\in \ti \Lam\lalp$,  we gain an extra
$\rho_{i_0(p)}/2$ on the right hand side in  the estimate $\Delta_p$ in terms of $\{\rho_i\}_{i=0}^m$.
This implies
\begin{eqnarray}\label{rho-tail}
&&\sum_{i\in \II\lalp\setminus \II\lalp^{\basic}}\delta\lalp(\ti s_i)\rho_{i}\leq(\deg X\lalp-w\prim\lalp)\rho_{\bar\jmath\lalp}
\leq2(g\lalp+\ell\lalp+1)\rho_{\bar\jmath\lalp}\leq
\frac{1}{4}\sum_{p\in \ti \Lam\lalp\setminus {\ti S\lalp}}\rho_{i_0(p)}\ .
\end{eqnarray}
So we obtain, via using Lemma \ref{trapezoid} and summing over $p\in \ti \Lam\lalp$,
\begin{eqnarray}
\sum_{p\in \ti \Lam\lalp}|\Delta_p|
&\leq &\sum_{i\in \II\lalp^{\basic}}\delta\lalp(\ti s_i)\rho_{i}
+\sum_{i\in \II_{\alpha}\setminus\II\lalp^{\basic}}\delta\lalp(\ti s_i)\rho_{i}
-\frac{1}{2}\sum_{p\in \ti \Lam\lalp}\rho_{i_0(p)}\nonumber\\
&=&\Bl\sum_{i\in \II\lalp^{\basic}}\delta\lalp(\ti s_i)\rho_{i}
-\frac{\epsilon}{\deg X\lalp}
\sum_{\substack{p\in \ti \Lam\lalp\cap {\ti S\lalp}\nonumber\\
i_0(p)\in \II\lalp\setminus\II\lalp\prim}}\rho_{i_0(p)}
-\frac{1}{2}\sum_{p\in \ti \Lam\lalp\cap {\ti S\lalp}}\rho_{i_0(p)}\Br+\
\nonumber\\
&&+\,\Bl\sum_{i\in \II_{\alpha}\setminus\II\lalp^{\basic}}\delta\lalp(\ti s_i)\rho_{i}
+\frac{\epsilon}{\deg X\lalp}\sum_{\substack{p\in \ti \Lam\lalp\cap {\ti S\lalp}\\ i_0(p)\in \II\lalp\setminus\II\lalp\prim}}\rho_{i_0(p)}
-\frac{1}{2}\sum_{p\in \ti \Lam\lalp\setminus\ti S\lalp}\rho_{i_0(p)}\label{last}\Br.
\end{eqnarray}
Using \eqref{rho-tail} and
$$
\frac{\epsilon}{\deg X\lalp}
\sum_{\substack{p\in \ti \Lam\lalp\cap {\ti S\lalp}\\ i_0(p)\in \II\lalp\setminus\II\lalp\prim}}\rho_{i_0(p)}
\leq\sum_{i\in \II\lalp\setminus\II\lalp^{\basic}}\delta\lalp(\ti s_i)\rho_i
\leq\frac{1}{4}\sum_{p\in \ti \Lam\lalp\setminus {\ti S\lalp}}\rho_{i_0(p)}\ ,
$$
the sum in the line  of \eqref{last} is non-positive. Therefore, for any $0<\epsilon <1$ we have
$$\sum_{p\in \ti \Lam\lalp}|\Delta_p|\ \leq \
\sum_{i\in \II\lalp^{\basic}}\delta\lalp(\ti s_i)\rho_i
-\frac{\epsilon}{\deg X\lalp}
\sum_{\substack{p\in \ti \Lam\lalp\cap {\ti S\lalp}\\ i_0(p)\in \II\lalp\setminus\II\lalp\prim}}\rho_{i_0(p)}
-\frac{1}{2}\sum_{p\in \ti \Lam\lalp\cap {\ti S\lalp}}\rho_{i_0(p)}\qquad\qquad
$$
$$\qquad\qquad\qquad
\leq
\Bl1+\frac{\epsilon}{\deg X\lalp}\Br\sum_{i\in \II\lalp^{\basic}}\delta\lalp(\ti s_i)\rho_i
-\Bl\frac{1}{2}
+\frac{\epsilon}{\deg X\lalp}\Br\sum_{p\in \ti \Lam\lalp\cap\ti S\lalp}\rho_{i_0(p)}=\frac{E\lalp(\rho)}{2}\ ,
$$
since
$$
\frac{\epsilon}{\deg X\lalp}\sum_{\substack{p\in \ti \Lam\lalp\cap {\ti S\lalp}\\ i_0(p)\in \II\lalp\prim}}\rho_{i_0(p)}
\leq
\frac{\epsilon}{\deg X\lalp}
\sum_{i\in \II\lalp^{\basic}}\delta\lalp(\ti s_i)\rho_i\ .
$$
This verifies the Theorem in this case.

The other case is when $\sigma\leq 10(g\lalp+\ell\lalp+1)+|{\ti S\lalp}|$. By the pigeon hole principle,
there exists  at least one $p_0\in \ti \Lam\lalp$ such that
\beq\label{w-deg-X}
w(\ti \sI,p_0)\geq \frac{\deg X_{\alpha}}{\sigma}\geq \frac{\deg X_{\alpha}}
{10( g\lalp+\ell\lalp+1)+|{\ti S\lalp}|}\ .
\eeq
By Corollary \ref{sum-Delta}, we have
$$\frac{e_{X\lalp}(\sI(\lam))}{2}=\sum_{p\in\ti \Lam\lalp}|\Delta_p|.
$$
Our assumption $\epsilon\leq 1,\ 1/\deg X\leq 1/2$ and Corollary \ref{bdd-tail} imply
\beq\label{tail-rho0}
\bl\frac{1}{2}+\frac{\epsilon}{\deg X\lalp}\br
\sum_{\substack{p\in {\ti S\lalp}\cap\ti \Lam\lalp\\ i_0(p)\in\II\lalp\setminus\II\lalp\prim}}\rho_{i_0(p)}
\leq\sum_{\substack{p\in {\ti S\lalp}\cap\ti \Lam\lalp\\ i_0(p)\in\II\lalp\setminus\II\lalp\prim}}\rho_{i_0(p)}
\leq(\deg X\lalp-w\lalp\prim)\rho_{\bar\jmath\lalp}\ .
\eeq
So we obtain
\begin{eqnarray*}
&&\frac{e_{X\lalp}(\sI(\lam))}{2}
-\sum_{p\in {\ti S\lalp}\cap\ti \Lam\lalp}\frac{\rho_{i_0(p)}}{2}\nonumber\\
&=&|\Delta\prim_{p_0}|+|\Delta_{p_0}\setminus\Delta\prim_{p_0}|
+\sum_{p_0\ne p\in \ti \Lam\lalp}(|\Delta\prim_p|+|\Delta_p\setminus\Delta\prim_p|)
-\sum_{p\in {\ti S\lalp}\cap\ti \Lam\lalp}\frac{\rho_{i_0(p)}}{2}\ .\nonumber\\
\end{eqnarray*}
By Lemma \ref{trapezoid} and the first inequality of \eqref{rho-tail}, we have
$$
|\Delta_{p_0}\setminus\Delta\prim_{p_0}|
+\sum_{p_0\ne p\in \ti \Lam\lalp}|\Delta_p\setminus\Delta\prim_p|
=\sum_{p\in \ti \Lam\lalp}|\Delta_p\setminus\Delta\prim_p|\leq
(\deg X\lalp-w\prim\lalp)\rho_{\bar\jmath\lalp}\ .
$$
So
\begin{eqnarray}
&&\frac{e_{X\lalp}(\sI(\lam))}{2}
-\sum_{p\in {\ti S\lalp}\cap\ti \Lam\lalp}\frac{\rho_{i_0(p)}}{2}\nonumber\\
&\leq&|\Delta\prim_{p_0}|+(\deg X\lalp-w\prim\lalp)\rho_{\bar\jmath\lalp}+\sum_{p_0\ne p\in \ti \Lam\lalp}|\Delta\prim_p|
-\sum_{p\in {\ti S\lalp}\cap\ti \Lam\lalp}\frac{\rho_{i_0(p)}}{2}\nonumber
\end{eqnarray}

\begin{eqnarray}\label{last-term}
\qquad
&\leq&|\Delta\prim_{p_0}|+2(\deg
X\lalp-w\prim\lalp)\rho_{\bar\jmath\lalp}-\frac{\rho_{i_0(p_0)}}{2}|\{p_0\}\cap
{\ti S\lalp}|
+\sum_{p_0\ne p\in \ti \Lam\lalp}|\Delta\prim_p|\\
&&-\sum_{p_0\ne p\in {\ti S\lalp}\cap\ti \Lam\lalp}\frac{\rho_{i_0(p)}}{2}
-\bl\frac{1}{2}+\frac{\epsilon}{\deg X\lalp}\br\sum_{\substack{p\in {\ti S\lalp}\cap\ti \Lam\lalp\\ i_0(p)\in\II\lalp\setminus \II\lalp\prim}}
\rho_{i_0(p)}\nonumber\\
\nonumber
\end{eqnarray}
where we have used \eqref{tail-rho0} in \eqref{last-term}.
By definition, $|{\ti S\lalp}|\leq n+\ell\lalp+g\lalp$. Let
$$\epsilon_0= \frac{\epsilon}{11(g\lalp+\ell\lalp+1)+n} \leq\frac{\epsilon}{10(g\lalp+\ell\lalp+1)+|\ti S\lalp|}\ ,$$
by \eqref{w-deg-X} we obtain
\beq\label{ep-0}
\frac{\epsilon_0}{w(\ti\sI,p_0)}\leq
\frac{\epsilon}{w(\ti\sI,p_0)(10(g\lalp+\ell\lalp+1)+|\ti S\lalp|)}\leq
\frac{\epsilon}{\deg X\lalp}
\eeq
If we let $M=M(\epsilon_0)$ be the constant fixed in
Lemma \ref{one-tower} for $\epsilon_0$ and choose
$$
M' \ge (11( g\lalp+\ell\lalp+1)+n)M\ge (10( g\lalp+\ell\lalp+1)+|{\ti S\lalp}|)M,
$$
then $\deg X\lalp\geq M'$ implies $w(\ti \sI,p_0)> M$.  In particular, we have
$i_0(p_0)\in \II\lalp\prim$. The whole term after \eqref{last-term} is equal to
\begin{eqnarray*}
&=&|\Delta\prim_{p_0}|+2(\deg
X\lalp-w\prim\lalp)\rho_{\bar\jmath\lalp}-\frac{\rho_{i_0(p_0)}}{2}|\{p_0\}\cap
{\ti S\lalp}|
+\sum_{p_0\ne p\in \ti \Lam\lalp}|\Delta\prim_p|\\
&&-\sum_{\substack{p_0\ne p\in {\ti S\lalp}\cap\ti \Lam\lalp\\ i_0(p)\in \II\lalp\prim}}\frac{\rho_{i_0(p)}}{2}
-\bl 1+\frac{\epsilon}{\deg X\lalp}\br\sum_{\substack{p\in {\ti S\lalp}\cap\ti \Lam\lalp\\ i_0(p)\in\II\lalp\setminus \II\lalp\prim}}
\rho_{i_0(p)}\nonumber.\\
\end{eqnarray*}
 Applying Lemma \ref{trapezoid} to the term
 $|\Delta\prim_{p_0}|+2(\deg
X\lalp-w\prim\lalp)\rho_{\bar\jmath\lalp}$,  Lemma \ref{one-tower} to the term
$\sum_{p_0\ne p\in \ti \Lam\lalp}|\Delta\prim_p|
-\sum_{p_0\ne p\in {\ti S\lalp}\cap\ti \Lam\lalp, i_0(p)\in \II\lalp\prim}\frac{\rho_{i_0(p)}}{2}$ in
the above identity and using  \eqref{ep-0} we obtain
\begin{eqnarray*}
\qquad\qquad\nonumber
&&\frac{e_{X\lalp}(\sI(\lam))}{2}
-\sum_{p\in {\ti S\lalp}\cap\ti \Lam\lalp}\frac{\rho_{i_0(p)}}{2}\nonumber\\
&\leq&\Bl1+\frac{\epsilon_0}{w(\ti\sI,p_0)}\Br(\sum_{i\in \II_{p_0}\prim} \delta(\ti
s_i,p_0)\rho_{i}-\rho_{i_0(p_0)})+\frac{\rho_{i_0(p_0)}}{2}(1-|\{p_0\}\cap
{\ti S\lalp}|)
+\\
&&+\bl\sum_{p_0\ne p\in\ti \Lam\lalp}\sum_{i\in \II_{p}^{\basic}} \delta(\ti s_i,p)\rho_{i}
-\!\!\!\!\!\!\sum_{\substack{p_0\ne p\in {\ti S\lalp}\cap\ti \Lam\lalp\\ i_0(p)\in \II\lalp\prim}}\rho_{i_0(p)}\br
-\bl1+\frac{\epsilon}{\deg X\lalp}\br\!\!\!\!\!\!\sum_{\substack{p\in {\ti S\lalp}\cap\ti \Lam\lalp\\ i_0(p)\in \II\lalp\setminus\II\lalp\prim}}
\!\!\!\!\!\!\frac{\rho_{i_0(p)}}{2}\nonumber\\
&\leq&\Bl1+\frac{\epsilon}{\deg X_{\alpha}}\Br\bl\sum_{ p\in \ti \Lam\lalp}\sum_{i\in \II_{p}^{\basic}} \delta(\ti s_i,p)\rho_{i}
-\sum_{p\in {\ti S\lalp}\cap\ti \Lam\lalp}\rho_{i_0(p)}\br\\
&=&\Bl1+\frac{\epsilon}{\deg X_{\alpha}}\Br\bl\sum_{i\in \II\lalp^{\basic}} \delta\lalp(\ti s_i)\rho_{i}
-\sum_{p\in {\ti S\lalp}\cap\ti \Lam\lalp}\rho_{i_0(p)}\br\\
&=&\frac{E\lalp(\rho)}{2}-\sum_{p\in {\ti S\lalp}\cap\ti \Lam\lalp}\frac{\rho_{i_0(p)}}{2}\,.
\end{eqnarray*}
This completes the proof the theorem.
\end{proof}

\section{Stability of weighted pointed nodal curve}\label{proof}

We prove Theorem \ref{main} in this section. By Proposition \ref{stair-case},
it suffices to verify the positivity  of the $\ba$-$\lam$-weight $\omega_\ba(\lam)$ of
$\mathrm{Chow}( X,\mathbf{x})\in \Xi$ for any staircase
$\lam$.   Let $\bs$ be a diagonalizing basis of $\lam$:
$$
\lam( t) :=\mathrm{diag}[ t^{\rho _{0}},\cdots ,t^{\rho
_{m}}] \cdot t^{-\rho_{ave}}\ ,\quad\text{with}\quad \rho _{0}\geq \rho _{1}\geq \cdots \geq \rho _{m}=0.
$$
The $\ba$-$\lam$-weight of $\mathrm{Chow}( X,\mathbf{x})$ is the sum of the contributions
from $\mathrm{Div}^{d,d}[(\PP W\dual) ^{2}]$ and $(\PP W) ^{n}$.
By Proposition \ref{chow-wt}, the contribution from $\mathrm{Div}^{d,d}[(\PP W\dual) ^{2}]$
is $\omega(\lam)$.

For the contribution from $(\PP W) ^{n}$, we introduce subspaces
\beq\label{W-i}
W_i=W_i(\lam):=\{v\in W\mid s_i(v)=\cdots=s_m(v)=0\}\subset W=H^0(\sO_X(1))^\vee.
\eeq
They form a strictly increasing filtration of $W$. Also, for any closed subscheme $\Sigma\sub X$, we denote by
\beq\label{W-sigma}
W_\Sigma:=\{v\in W\mid s(v)=0\text{ for all } s\in H^0(\sO_X(1)\otimes \sI_\Sigma)\}\sub W
\eeq
the {\em linear subspace spanned by  $\Sigma\sub X$}.
For instance, for a marked point $x_i$, $W_{x_i}$ is the line in $W$ spanned by $x_i\in\PP W$; for any $i$ and
\beq\label{defZ}
Z_i=\{s_i=\cdots=s_m=0\}\sub X,
\eeq
we let $W_i=W_{Z_i}$.

By \cite[Prop 4.3]{MFK}, the $\ba$-$\lambda $-weight of
$\mathbf{x}=( x_{1},\cdots ,x_{n}) \in (\PP W) ^{n}$ is
\begin{equation}\label{M_A}
\mu_{\mathbf{a}}(\lam) :=\sum_{j=1}^{n}a_{j}\Bigl( \frac{\sum_{i=0}^{m}\rho _{i}}{%
m+1}+\sum_{i=0}^{m-1}( \rho _{i+1}-\rho _{i})\dim (W_{x_{j}}\cap W_{i+1}(\lam))\Bigr ).
\end{equation}%
($\mu_\ba(\lam)$ implicitly depends on $\rho_i$, which we fix for the moment.)
Therefore, the $\ba$-$\lambda $-weight $\omega_\ba(\lam)$
of $\mathrm{Chow}( X,\bx)\in \Xi$
is
\beq\label{omega-mu}
\omega_\ba(\lam)= \omega(\lam)+\mu_{\mathbf{a}}(\lam)\ .
\eeq

We now argue that for the staircase $\lam'$ constructed from $\lam$ by applying
Proposition \ref{stair-case}, we have
\beq\label{om-om}
\omega_\ba(\lam)\geq \omega_\ba(\lam').
\eeq
Indeed, since $\omega(\lam)\geq\omega(\lam')$, if suffices to show that
$\mu_{\ba}(\lam)\geq \mu_{\ba}(\lam')$.  To see that, we first notice that
\beq\label{qqq}
\dim (W_{x_j}\cap W_{i+1}(\lam))=\#\bl x_i\cap \supp (\sO_X(1)/\sE(\lam)_{i+1})\br.
\eeq
(Here $\sE(\lam)_i=( s_i, s_{i+1},\ldots, s_m)\sub \sO_{X}(1)$.)
On the other hand, by the proof of Proposition \ref{stair-case}, we conclude
$$\supp(\sO_X(1)/\sE(\lam)_i)\sub \supp(\sO_X(1)/\sE(\lam')_i).
$$
This together with \eqref{qqq} proves
$$\dim (W_{x_j}\cap W_{i+1}(\lam))
\le \dim (W_{x_j}\cap W_{i+1}(\lam')).
$$
The inequality $\mu_{\ba}(\lam)\geq \mu_{\ba}(\lam')$ then follows from $\rho_i\geq \rho_{i+1}$.
Therefore, to prove Theorem \ref{main}, we suffices to show $\omega_\ba(\lam)>0$ for all staircase 1-PS's $\lam$.
From now on we assume $\lam$ is a {\em staircase}.

Before we proceed, we collect a few boundness results that are needed to pass from the estimates on single component in Section \ref{main-estimate} to the entire curve.

\begin{lemm}\label{deg-X-a}
Suppose  $(X,\sO_X(1),\bx,\ba)$ is  slope {\em stable} and  $\chi_{\ba}(X)>0$ (cf. Theorem \ref{main}).
Then there are positive constants
$M$ and $C$ depending only on $g$,$n$ and $\ba\in \QQ^n$ such that whenever $\deg X> M$ we have
$\deg Y\geq C \deg X$ for any connected proper subcurve $Y\sub X$.

In case $(X,\sO_X(1),\bx,\ba)$ is  slope semi-stable, then the same conclusion $\deg Y\ge C\deg X$ holds except when
$Y$ is a  line, $Y\cap \bx=\emptyset$, $|Y\cap Y^\complement|=2$, and the inequality \eqref{basic} is an equality.
\end{lemm}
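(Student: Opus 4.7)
The strategy is to invoke Proposition~\ref{main-c}: for $\deg X\geq N$ large, slope (semi-)stability is equivalent to the numerical inequality \eqref{basic} for every proper subcurve $Y$ with $h^0(\sO_X(1)|_Y)<h^0(\sO_X(1))$. Write $d=\deg X$, $d_Y=\deg Y$, $A=\tfrac12\sum_j a_j$, $A_Y=\tfrac12\sum_{x_j\in Y}a_j$, and
\[
  \kappa_Y:=\deg_Y\omega_X(\bax)=2g(Y)-2+\ell_Y+2A_Y,\qquad \deg\omega_X(\bax)=2\chi_{\ba}(X).
\]
I analyze connected proper $Y\subsetneq X$ by the sign of $\kappa_Y$. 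When $h^0(\sO_X(1)|_Y)=h^0(\sO_X(1))$ the restriction map is injective, forcing $d_Y\geq d-g$ and hence $d_Y\geq C\deg X$ for any $C<1$ once $d$ is large; so from now on I assume $h^0(\sO_X(1)|_Y)<h^0(\sO_X(1))$, and \eqref{basic} applies.

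If $\kappa_Y<0$, then \eqref{basic} yields $d_Y+A_Y\leq\frac{\kappa_Y}{2\chi_{\ba}(X)}(d+A)+\frac{\ell_Y}{2}$, whose right side tends to $-\infty$ as $d\to\infty$, contradicting $d_Y+A_Y\geq 0$. Hence $\kappa_Y\geq 0$. When $\kappa_Y=0$, the identity $2g(Y)-2+\ell_Y+2A_Y=0$ leaves only $(g(Y),\ell_Y,2A_Y)\in\{(0,1,1),(0,2,0)\}$. The first alternative gives $d_Y<0$ from \eqref{basic}, impossible. The second (rational bridge) gives $d_Y<\ell_Y/2=1$ under strict \eqref{basic}, again impossible since $d_Y\geq 1$ by ampleness; so strict slope stability excludes $\kappa_Y=0$ entirely. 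Under only slope semi-stability the weak inequality $d_Y\leq 1$ combined with $d_Y\geq 1$ forces $d_Y=1$ and equality in \eqref{basic}, yielding precisely the exceptional ``line'' case described in the lemma.

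In the main case $\kappa_Y>0$, the inequality \eqref{basic} gives
\[
  d_Y \;\geq\; \frac{\kappa_Y}{2\chi_{\ba}(X)}(d+A) - \frac{\ell_Y}{2} - A_Y.
\]
Applying the preceding case analysis to each irreducible component $X_\alpha$ (passing to the connected components of $X_\alpha^\complement$ when $h^0(\sO_X(1)|_{X_\alpha})=h^0(\sO_X(1))$) forces $\kappa_{X_\alpha}\geq 0$ for every $\alpha$, so $(X,\bx,\ba)$ is Hassett-semi-stable. With $g,n,\ba$ fixed, Hassett-semi-stability bounds the number of components, the number of nodes, and the combinatorial type of $X$, so the triples $(g(Y),\ell_Y,\{a_j:x_j\in Y\})$ range over a finite set. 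Consequently $\kappa_Y$ takes only finitely many positive values, with a uniform lower bound $\epsilon_0=\epsilon_0(g,n,\ba)>0$, while $\ell_Y/2+A_Y$ is bounded above by some $K=K(g,n,\ba)$. Setting $C=\epsilon_0/(4\chi_{\ba}(X))$ and choosing $M$ large enough to absorb $K$, one obtains $d_Y\geq C\deg X$ for all $\deg X\geq M$. The main delicacy is the borderline $\kappa_Y=0$: one must keep strict and weak inequality in \eqref{basic} carefully distinct in order to isolate the exceptional rational-bridge configuration listed in the statement.
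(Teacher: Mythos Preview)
Your argument works in outline but differs from the paper's, and one step needs correction.

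The paper splits on $2g_Y+\ell_Y$ rather than on the sign of $\kappa_Y$. When $2g_Y+\ell_Y\geq 3$ one has both $\kappa_Y\geq 2g_Y-2+\ell_Y\geq 1$ and $\ell_Y\leq 3\kappa_Y$; dividing \eqref{basic} through by $\kappa_Y/2$ and then using $\kappa_Y\geq 1$ yields $d_Y\geq d/(2\chi_{\ba})-O(1)$ directly, with no need to bound $\ell_Y$ or the number of irreducible components of $X$ beforehand. The complementary case $2g_Y+\ell_Y\leq 2$ forces $g_Y=0$ and $\ell_Y\leq 2$ and is disposed of by a short direct check of \eqref{basic}. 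This produces the explicit constant $C=1/(4\chi_{\ba})$.

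Your route through $\kappa_Y>0$ instead requires an a priori bound on $\ell_Y$, which you obtain from a bound on the number of components of $X$. The claim ``Hassett-semi-stability bounds the number of components'' is false as stated: Hassett-semi-stable curves may carry arbitrarily long chains of rational bridges. What your own case analysis actually shows in the slope-\emph{stable} situation is $\kappa_{X_\alpha}>0$ for every $\alpha$, i.e.\ $(X,\bx,\ba)$ is Hassett-\emph{stable}; then $\sum_\alpha\kappa_{X_\alpha}=2\chi_{\ba}(X)$ together with a uniform positive lower bound on each $\kappa_{X_\alpha}$ (positive rationals with denominators controlled by $\ba$) does bound the number of components, hence the number of nodes, hence $\ell_Y$. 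With this correction your argument is valid. Note that in the paper the component and node bounds appear as Corollary~\ref{bdd-link}, a \emph{consequence} of the present lemma, so your logical ordering is inverted relative to the paper though not circular.
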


\begin{proof}
We continue to denote $\ell_Y=|Y\cap Y^\complement|$. Let $g_Y$ be the arithmetic genus of $Y$. Suppose $2g_Y+\ell_Y\geq 3$, then
since $a_i\geq 0$, the 
inequality \eqref{basic} implies
$$\frac{\deg Y+ \sum_{x_{j}\in Y}a_{j}/2}{g_Y -1+\sum_{x_{j}\in Y}a_j/2+\ell_Y/2}\geq
\frac{\deg X+\sum_{j=1}^{n}a_j/2}{g-1+\sum_{j=1}^{n}a_j/2}
-\frac{\ell_Y}{2(g_Y -1)+\sum_{x_{j}\in Y}a_j+\ell_Y}
$$
$$\qquad \geq\frac{\deg X+\sum_{j=1}^{n}a_j/2}{g-1+\sum_{j=1}^{n}a_j/2}-3.
$$
Denoting
$\chi_\ba=\chi_\ba(X)$, this inequality implies
\begin{eqnarray*}
\deg Y\geq\Bl\frac{\deg X}{2\chi_{\ba}}-6\Br-\frac{n}{2}.
\end{eqnarray*}
Therefore, if we choose $M'=4\chi_\ba\cdot(6+n/2)$, choose $C'=1/4\chi_\ba$ and require $\deg X\geq M'$, we obtain
$$\deg Y\geq \deg X/4\chi=C'\cdot\deg X.
$$

Now suppose $2g_Y+\ell_Y\leq 2$. Since $Y\sub X$ is a proper connected subcurve, $\ell_Y\geq 1$ and $g_Y\geq 0$.
Thus $g_Y=0$ and $\ell_Y\leq 2 $. In this case, \eqref{basic} becomes
\beq
\Big\vert \deg Y+\sum_{x_{j}\in Y}\frac{a_{j}}{2}-\frac{\deg
X+\sum_{j=1}^{n}a_j/2}{g-1+\sum_{j=1}^{n}a_j/2}\cdot \bigl( -1
+\sum_{x_{j}\in Y}a_j/2+\ell_Y/2\bigr)
\Big\vert 
\leq1.
\eeq
Let $A:= -1+\sum_{x_{j}\in Y}a_j/2+\ell_Y/2$. In case $A\leq 0$, then we have
$\deg Y=1$, $\sum_{x_i\in Y}a_i=0$ and $\ell_Y=2$. This is precisely the second case in the statement of the Lemma.

In case
$A>0$. then
$$\deg Y>\frac{\deg
X+\sum_{j=1}^{n}a_j/2}{\chi_\ba}\cdot 2A-1- \sum_{x_{j}\in Y}\frac{a_j}{2}>\frac{\deg X}{\chi_\ba}A,
$$
provided $\deg X>(2+n)\chi_{\ba}/2A $. To obtain a universal constant, we intorduce
$$C_k:=\min_{I\sub \{1,\cdots,n\}}\{\sum_{i\in I}a_i/2+k/2\mid \sum_{i\in I}a_i/2+k/2>0\},
$$
and define $ C_{\min}:=\min_{k\geq0}\{C_k\}$. Clearly, $C_{\text{min}}>0$.
By our construction, $A\geq C_{\min}$ when $A>0$.
We choose $C\dpri=\min\{C_{\min}/\chi_{\ba}, 1/2\chi_{\ba}\}$, and choose
$$M\dpri:=\max\{6g+n/2-6,\  \chi_\ba(2+n)/2C_{\min}\}.
$$
Our discussion shows that the statement (1) in the Lemma holds in the case under study with this
choices of $M\dpri$ and $C\dpri$. Finally, we let $C=\min\{ C',C\dpri\}$ and $M=\max\{ M',M\dpri\}$.
The Lemma holds with this choice of $C$ and $M$ in all cases. This completes the proof of the Lemma.
\end{proof}


\begin{coro}\label{bdd-link} Let $M$ be as in  Lemma \ref{deg-X-a} and suppose $\deg X\geq M$. Then
the number of irreducible components of $X$ is at most $9g+4n-5$;  the number of nodes of $X$ is
at most $10g+4n-5$.
\end{coro}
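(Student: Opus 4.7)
The plan is to combine Lemma \ref{deg-X-a} with a dual-graph estimate. Let $\Gamma$ be the dual graph of $X$ (vertices $=$ irreducible components, edges $=$ nodes, self-nodes as loops), and write $r$ for the number of components and $|E|$ for the number of nodes. Since $X$ is connected, $h^1(\Gamma) = |E| - r + 1$, and the arithmetic-genus formula $g = \sum_\alpha g(\widetilde{X_\alpha}) + h^1(\Gamma)$ yields $|E| \le r + g - 1$. Thus the node bound $10g+4n-5$ follows from the component bound $9g+4n-5$, and it suffices to bound $r$.

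To bound $r$, I would apply Lemma \ref{deg-X-a} to each irreducible component $X_\alpha$, which for $r \ge 2$ is a connected proper subcurve. The lemma produces $\deg X_\alpha \ge C \deg X$ except, in the semi-stable case, when $X_\alpha$ is an \emph{exceptional line}: a copy of $\PP^1$ of degree one with $\ell_\alpha = 2$ and $X_\alpha \cap \bx = \emptyset$. Summing degrees, the number of non-exceptional (``substantial'') components satisfies $r_{\mathrm{sub}} \le 1/C$. Tracking the two sub-cases in the proof of Lemma \ref{deg-X-a}, components falling under the $C'$-bound (those with $2g_\alpha+\ell_\alpha \ge 3$) number at most $4\chi_{\ba} \le 4(g-1+n)$ by the $a_j \le 1$ constraint, whereas components falling under the $C''$-bound ($g_\alpha=0$, $\ell_\alpha\le 2$, with positive residual $A>0$) necessarily carry marked points and are therefore bounded linearly in $n$ by counting against the $n$ markings.

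It remains to control exceptional components. The key observation is that any connected union $Y$ of exceptional components is itself a rational subcurve with $g_Y=0$, $\ell_Y \le 2$, and $Y \cap \bx = \emptyset$, so $\deg_Y \omega_X(\ba\cdot\bx)/\deg \omega_X(\ba\cdot\bx) \le 0$. Applying Proposition \ref{main-c} to such $Y$ then forces $\deg Y \le 1$, so each maximal chain of exceptional components contains exactly one component. Consequently, in the contraction of $\Gamma$ obtained by removing exceptional vertices, the exceptional components become distinct bridging edges/loops, and their count is bounded by the number of edges of the contracted graph, which is at most $r_{\mathrm{sub}} + g - 1$. Assembling the estimates gives $r \le 2 r_{\mathrm{sub}} + g - 1 \le 9g+4n-5$, and then $|E| \le r+g-1 \le 10g+4n-5$.

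The main obstacle is the bookkeeping needed to make the coefficient of $n$ come out to exactly $4$: precisely the $C''$-case components in Lemma \ref{deg-X-a} are the ones whose $C$-bound genuinely depends on $\ba$ (through $C_{\min}$), and the argument above only works because such components always contain marked points and can therefore be counted separately \emph{against $n$} rather than bounded through $1/C$. The remainder of the proof is routine summation together with the dual-graph inequality $|E| \le r+g-1$.
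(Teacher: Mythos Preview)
Your approach mirrors the paper's: use Lemma~\ref{deg-X-a} to bound the ``substantial'' components, treat the exceptional lines (degree $1$, unmarked, $\ell=2$) separately, and derive the node bound from the component bound via the dual-graph inequality $|E|\le r+g-1$. The paper organizes the component count slightly differently---splitting into \emph{non-lines}, \emph{lines with marks}, and \emph{lines without marks}, and for the last set using a removal argument ($B_1$ maximal removable while keeping connectivity, so $|B_1|\le g$; the remaining bridges $B_2$ bounded by one less than the number of pieces they separate)---whereas you contract the exceptional vertices and bound their number by the edge count of the quotient graph. Both routes rest on the same key fact, that no two exceptional components are adjacent; you deduce this from Proposition~\ref{main-c}, and the paper's ``by Lemma~\ref{deg-X-a}'' is the same observation in disguise.

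One caveat: your final assembly $2r_{\mathrm{sub}}+g-1\le 9g+4n-5$ does not follow from the intermediate bound $r_{\mathrm{sub}}\le 4\chi_\ba+n$ (with $\chi_\ba\le g-1+n$ you get $9g+10n-9$). The paper's own arithmetic $8\chi_\ba+n+g-1\le 9g+4n-5$ suffers the same defect, so the displayed constants should be read as ``some explicit function of $g$ and $n$''; for every later use of this Corollary that is all that is required.
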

\begin{proof}
We divide the irreducible components of $X$ into three categories: the first (resp. second; resp. third)
consists of irreducible components of $X\in\PP W$ that are
not lines (resp. are lines that contains marked points; resp. are lines in $\PP W$ that contains no marked points).

Applying the previous Lemma, for $X\lalp$ in the first category,
$\deg X\lalp >\deg X/4\chi_\ba$; thus the first category contains no more than $4\chi_\ba$ elements.
Since every component in the second category contains at least one marked point, there are at most
$n$ elements in this category.

We now bound the element in the third category. We let $B_1$ be a maximal subcollection of the third category
so that $X-\cup_{\alpha \in B_1} X\lalp$ remains connected. We let $Y=\overline{X-\cup_{\alpha \in B_1} X\lalp}$.
Then $g(Y)=g-|B_1|\ge 0$ since $Y$ is connected. We let $B_2$ be the
complement of $B_1$ in the third category. By Lemma \ref{deg-X-a}, $X\lalp$ and $X_{\alpha'}$ are disjoint
for $\alpha\ne\alpha'\in B_2$. Thus if we let $Y_1,\cdots, Y_k$ be the connected components
of $\overline{Y-\cup_{\alpha\in B_2} X\lalp}$, then $|B_2|\le k-1$.
On the other hand, applying Lemma \ref{deg-X-a} we know each $Y_i$ has degree at least $\deg X/4\chi_\ba$.
Thus $k\leq 4\chi_\ba$.

Combined, the total number $r$ of irreducible components of $X$ is bounded by
$$r\, \le \, 4\chi_\ba+n+(g+4\chi_\ba-1)\leq 9g+4n-5.
$$

Next we bound the total nodes of $X$. We first pick a maximal set $A_1\sub X_{\text{node}}$
so that $X-A_1$ is connected; then $|A_1|\leq g$. We let $A_2=X_{\text{node}}-A_1$.
Then $|A_2|$ is less than the number of irreducible components of $X$. By the bound we just derived,
we obtain $|X_{\text{node}}|\leq g+(9g+4n-5)$.
This proves the Corollary.
\end{proof}
We need the next consequence in our proof of Theorem \ref{main} to replace inequality \eqref{basic} by \eqref{basic1}.
\begin{coro}\label{basic=basic1}
Let $X\subset \PP H^0(\sO_X(1))\dual$ be a connected 
nodal curve of arithmetic genus $g>0$. Suppose $\chi_\ba(X)>0$, then there is an $M$ depending on $g$ such that whenever $\deg X\geq M$,
$X$ satisfies \eqref{basic} for any subcurve $Y\sub X$ if and only if it
satisfies \eqref{basic1} for any subcurve $Y\sub X$ ,
\end{coro}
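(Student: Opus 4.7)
The plan is to exploit the antisymmetric structure of \eqref{basic} and to match it to \eqref{basic1} via Riemann--Roch. Set
$$f(Y):=\bl\deg Y+\sum_{x_j\in Y}\tfrac{a_j}{2}\br-\tfrac{\deg_Y\omega_X(\bax)}{\deg\omega_X(\bax)}\bl\deg X+\sum_j\tfrac{a_j}{2}\br.$$
Since $\omega_X$-degrees are additive on $X=Y\cup Y\comp$ and each marked point lies on exactly one of $Y$, $Y\comp$, we have $\deg_Y\omega_X+\deg_{Y\comp}\omega_X=2g-2$, which gives $f(Y)+f(Y\comp)=0$. Combined with $\ell_Y=\ell_{Y\comp}$, the inequality \eqref{basic} for $Y$ is equivalent to the simultaneous one-sided inequalities $f(Y)<\ell_Y/2$ and $f(Y\comp)<\ell_{Y\comp}/2$.

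The second step is to establish, for $\deg X$ large enough depending only on $g$, the uniform vanishing $h^1(\sO_X(1)|_Y)=0$ and $H^1(\sI_Y(1))=0$ for every proper subcurve $Y\sub X$. The first gives $h^0(\sO_X(1)|_Y)=\deg Y+1-g_Y$ via Riemann--Roch, while the second makes the restriction $H^0(\sO_X(1))\to H^0(\sO_X(1)|_Y)$ surjective, so that ``$h^0(\sO_X(1)|_Y)<h^0(\sO_X(1))$'' becomes equivalent to ``$Y$ does not span $\PP W$''. Substituting $h^0(\sO_X(1)|_Y)=\deg Y+1-g_Y$ and $h^0(\sO_X(1))=\deg X+1-g$ into \eqref{basic1}, and expanding $f(Y)<\ell_Y/2$ via $\deg_Y\omega_X(\bax)=2g_Y-2+\ell_Y+\sum_{x_j\in Y}a_j$ and $\deg\omega_X(\bax)=2g-2+\sum_j a_j$, a direct algebraic check shows that both reduce to the identical polynomial inequality in the data $(\deg X,\deg Y,g,g_Y,\ell_Y,\sum_{x_j\in Y}a_j,\sum_j a_j)$. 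Hence $f(Y)<\ell_Y/2$ is equivalent to \eqref{basic1}(Y), and applying it to both $Y$ and $Y\comp$ recovers the full \eqref{basic}(Y).

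It remains to reconcile the side condition $h^0(\sO_X(1)|_Y)<h^0(\sO_X(1))$ under which \eqref{basic} is imposed. The key point is that for $\deg X\geq 2g+1$, $Y$ and $Y\comp$ cannot simultaneously span $\PP W$: if $Y$ spans, then injectivity of restriction combined with Riemann--Roch yields $\deg Y\comp\leq g-g_Y\leq g$, and symmetrically $\deg Y\leq g$ if $Y\comp$ spans, so both spanning would force $\deg X\leq 2g$. Therefore for every proper $Y$ at least one of $Y$, $Y\comp$ satisfies the side condition and thus triggers \eqref{basic}; by the antisymmetry of the first paragraph this recovers both one-sided inequalities, hence \eqref{basic1}(Y). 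This gives the equivalence in both directions.

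The main obstacle is the uniform vanishing $h^1(\sO_X(1)|_Y)=H^1(\sI_Y(1))=0$ for all subcurves $Y\sub X$, i.e., a Castelnuovo--Mumford type regularity estimate for the embedding $\sO_X(1)$ depending only on $g$. The component-wise vanishing follows from very ampleness of $\sO_X(1)|_{Y'}$ together with the genus bound $g_{Y'}\leq g$, but passing to reducible $Y$ requires a careful Mayer--Vietoris argument at the linking nodes to guarantee the vanishing holds uniformly once $\deg X$ exceeds an explicit threshold determined by $g$.
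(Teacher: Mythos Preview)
Your overall architecture is right, and the antisymmetry $f(Y)+f(Y\comp)=0$ together with the Riemann--Roch substitution is exactly how the paper reduces the problem. But the step you flag as the ``main obstacle'' is in fact a genuine gap, not a technicality: the uniform vanishing $h^1(\sO_X(1)|_Y)=0$ for all subcurves $Y$ cannot be obtained from $\deg X\gg_g 0$ alone. A counterexample is a non-hyperelliptic genus-$3$ component $Y$ on which $\sO_X(1)|_Y\cong\omega_Y$; this is very ample of degree $4$ yet has $h^1=1$, and nothing about $\deg X$ prevents it. Your sentence ``component-wise vanishing follows from very ampleness of $\sO_X(1)|_{Y'}$ together with the genus bound'' is precisely where the argument breaks.

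The paper's fix is to use the stability hypothesis itself to force $\deg Y$ large, treating the two directions separately. Assuming \eqref{basic}, Lemma~\ref{deg-X-a} gives $\deg Y\ge C\deg X$ for every connected subcurve (apart from the exceptional line case), so $\deg Y>2g_Y-2$ and $h^1$ vanishes. Assuming \eqref{basic1}, the paper cites Gieseker's \cite[Prop.~1.0.7]{Gie} for the same conclusion. Once this bootstrapped vanishing is in hand, the algebraic identification you outline (and which the paper attributes to \cite[Prop.~3.1]{Cap} and \cite[Prop.~1.0.7]{Gie}) goes through. So the missing idea is that the degree lower bound on subcurves is a \emph{consequence} of slope (semi)stability, not of the ambient embedding; you need to invoke it rather than hope for a Castelnuovo--Mumford estimate that does not exist in this generality.
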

\begin{proof}
First, we claim that there is an $M$ depending only on $g,n$ such that for $\deg X\geq M$, we have $h^1(\sO_X(1)|_Y)=0$
for any subcurve $Y\sub X$ (not necessary proper)
provided $X$ satisfies either \eqref{basic} or \eqref{basic1} for any proper subcurve $Y\subsetneq X$.
To to that, let $Y\sub X$ be any subcurve with  arithmetic genus $g_Y$. Suppose $h^1(\sO_X(1)|_Y)\geq 1$, then by the vanishing theorem we must have $g_Y>1$ and  $2<\deg Y<2g_Y-1\leq 2g-1$.

Suppose $X$ satisfies \eqref{basic}, then the claim is
an easy consequence of Lemma \ref{deg-X-a} by letting $M=\max\{(2g+\nu)/C,2g+\nu\}$ with $C$ being chosen in Lemma \ref{deg-X-a}
and $\nu$ being the total number of nodes of $X$. Suppose
$X$ satisfies \eqref{basic1}, then the claim was proved in \cite[Proposition 1.0.7]{Gie}.

Finally, when $h^1(\sO_X(1)|_Y)=0$ for any subcurve $Y\sub X$ the
equivalence of \eqref{basic} and \eqref{basic1} follows from
an argument parallel to the one given in \cite[Proposition 3.1]{Cap} and \cite[Proposition 1.0.7]{Gie}.
So we omit it.
\end{proof}

Before we state our key estimate of this section, we first notice that  $\lam$ being a staircase 1-PS (cf. Definition \ref{staircase}) implies
that $\bigcup_{\alpha=1}^r\II\lalp=\{0,\cdots,m\}$, where $\II\lalp$ is the index set of the component $X\lalp$ defined in \eqref{I-lalp}.
We define the shifted weights $\{\hat \rho_i\}$ by
\beq\label{hat-rho}
\hat{\rho}_i:=\min_{\alpha}\{\rho_i-\rho_{\hbar\lalp}\mid i\in \II\lalp\}\geq 0.
\eeq
($\hat\rho_i$ may not be monotone. And $\hat \rho_i$ are only defined for staircase 1-PS.) We state out main estimate.

\begin{prop}\label{new-bdd}
Suppose $(X,\sO_X(1),\bx,\ba)$ is slope stable (cf. \eqref{basic1}), and $\omega_{X}(\bax)$ is ample.
Then for any $0<\epsilon<1$ there exists an $M$  depending only on $\chi_\ba(X)$ (cf. Theorem \ref{main}) and $\epsilon$ such that whenever $\deg X>M$, then for any staircase 1-PS $\lam$  we have
\beq\label{E_X-new}
\frac{E_X(\lam,\rho)}{2}\leq\sum_{i=0}^m \rho_{i}-\!\!\!\sum_{q\in \ti S_{\reg}}\frac{\hat\rho_{i_0(q)}}{2}
+\sum_{\alpha=1}^r\Bl \deg X_{\alpha}+\frac{\ell\lalp}{2}
-m\lalp-1\Br\cdot\rho_{\hbar\lalp}+\frac{2C\upmo\epsilon}{ m+1}\sum_{i=0}^m\hat{\rho}_{i}\ .
\eeq
where
$\ti S_{\reg}:=\bigcup_{\alpha=1}^r (\pi\upmo(\bx)\cap \ti X\lalp\cap\ti\Lam)$
is the support of weighted points and $C>0$ is the constant fixed in Lemma \ref{deg-X-a}.
\end{prop}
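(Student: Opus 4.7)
The plan is to derive the bound by first applying Theorem \ref{main-est} componentwise and then carefully combining the component bounds into a global one. The exponent-counting and the bookkeeping of the base-index weights $\rho_{\hbar\lalp}$ will be the heart of the argument.

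First, I would use Lemma \ref{deg-X-a} to ensure that whenever $\deg X$ exceeds a large enough $M$, every irreducible component $X\lalp$ satisfies $\deg X\lalp\ge C\deg X$, except possibly the exceptional line-type components on which the second clause of Theorem \ref{main-est} applies directly. Corollary \ref{bdd-link} ensures only boundedly many components appear, so a single threshold $M=M(\epsilon,g,n,\ba)$ works. Applied to each $\alpha$, Theorem \ref{main-est} yields $e(\ti\sI\lalp(\lam))\leq E\lalp(\rho)$, so summing,
\[
E_X(\lam,\rho)=\sum_{\alpha=1}^r E\lalp(\rho)\ \ge\ \sum_{\alpha=1}^r e(\ti\sI\lalp(\lam)).
\]

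Next I would expand each $E\lalp(\rho)$ using $\bar\rho_i=\rho_i-\rho_{\hbar\lalp}$ and exchange the order of summation. Dropping the $\epsilon$-terms momentarily, this rewrites
\[
\tfrac12\sum_\alpha E\lalp(\rho)=\sum_i\rho_i\Bigl(\sum_{\alpha:i\in\II\lalp\prim}\!\!\delta\lalp(\ti s_i)\Bigr)-\sum_\alpha w\lalp\prim\rho_{\hbar\lalp}-\tfrac12\!\!\sum_{q\in\ti S\cap\ti\Lam}\!\!\bar\rho_{i_0(q)}+\sum_\alpha\deg X\lalp\,\rho_{\hbar\lalp}.
\]
The crucial combinatorial input is that for any non-base index $i\in\II\lalp\prim$, Proposition \ref{vir-ind} forces $\delta(\ti s_i,p)=1$ and $\ind_p(i)=0$ at each increment point, and its ``further'' clause restricts $\inc(\ti s_i)$ to at most two points sitting at opposite branches of a single node. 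Consequently $\sum_{\alpha:i\in\II\lalp\prim}\delta\lalp(\ti s_i)=\delta(\ti s_i)\in\{0,1,2\}$, and summing over $i$ identifies $\sum_i \rho_i(\sum_\alpha\delta\lalp(\ti s_i))$ with $\sum_i\rho_i$ up to a defect supported on the secondary indices and on the linking nodes. Corollary \ref{bdd-tail} bounds $\deg X\lalp-w\prim\lalp\le 2(g\lalp+\ell\lalp+1)$, while Proposition \ref{vir-ind-2} locates all base indices at linking nodes, controlling the $\rho_{\hbar\lalp}$ contributions. Regrouping, the $\deg X\lalp$ from $E\lalp$, the $w\lalp\prim$ defect, the $\ell\lalp/2$ arising from each linking node being counted once per incident branch in the $\ti L$-sum, and the $-(m\lalp+1)$ from the count $|\II\lalp|$ of primary steps combine exactly into $(\deg X\lalp+\ell\lalp/2-m\lalp-1)\rho_{\hbar\lalp}$.

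The special-point correction $-\tfrac12\sum_{q\in\ti S\cap\ti\Lam}\bar\rho_{i_0(q)}$ is then split as $\ti S=\ti S_{\reg}\cup\ti L$. The weighted-point half yields $-\tfrac12\sum_{q\in\ti S_{\reg}}\bar\rho_{i_0(q)}$, which is majorized by $-\tfrac12\sum_{q\in\ti S_{\reg}}\hat\rho_{i_0(q)}$ since by definition $\hat\rho_i\le\rho_i-\rho_{\hbar\lalp}$ whenever $i\in\II\lalp$. The linking-node half combines with the previous paragraph's $\ell\lalp/2$ bookkeeping. Finally the $\epsilon$-corrections $(2\epsilon/\deg X\lalp)$ are converted via Lemma \ref{deg-X-a} to a uniform $(2C\upmo\epsilon/\deg X)$ factor, and since $\deg X\ge M$ is large relative to $m+1$ (which is bounded above by $\deg X+1-g$), one reorganizes the surviving bulk into $\frac{2C\upmo\epsilon}{m+1}\sum_i\hat\rho_i$ by comparing $\sum_{i\in\II\lalp\prim}\delta\lalp(\ti s_i)\rho_i$ with $(m+1)^{-1}\sum_i\hat\rho_i$.

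The main obstacle is the bookkeeping at Step two: showing that the sum $\sum_\alpha\delta\lalp(\ti s_i)\bar\rho_i$ really separates into a clean $\sum_i\rho_i$ plus an error in the $\rho_{\hbar\lalp}$ directions, with the exact coefficient $\deg X\lalp+\ell\lalp/2-m\lalp-1$. This requires a case analysis of whether an index $i$ is base/non-base, primary/secondary, and whether its increment concentrates at a linking node, smoothly exploiting both clauses of Proposition \ref{vir-ind} and Proposition \ref{vir-ind-2}. The remaining ingredients (the $\epsilon$-correction and the bound on the number of components) are then routine consequences of Lemma \ref{deg-X-a} and Corollary \ref{bdd-link}.
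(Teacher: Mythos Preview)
Your plan diverges from the paper's proof in a way that leaves a genuine gap.

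First, the opening paragraph is a red herring: Theorem \ref{main-est} bounds $e(\ti\sI\lalp)\le E\lalp(\rho)$, but Proposition \ref{new-bdd} asks for an upper bound on $E_X(\lam,\rho)$ itself, in terms of explicit data of $\lam$. Invoking $e\le E$ contributes nothing here.

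More seriously, your ``direct expansion and regrouping'' strategy does not establish the key combinatorial identity you assert. You claim that $\sum_i\rho_i\bigl(\sum_{\alpha:i\in\II\lalp\prim}\delta\lalp(\ti s_i)\bigr)$ agrees with $\sum_i\rho_i$ up to corrections living purely in the $\rho_{\hbar\lalp}$ directions, and that these corrections combine exactly into the coefficient $\deg X\lalp+\ell\lalp/2-m\lalp-1$. But Proposition \ref{vir-ind} controls $\delta(\ti s_i)$ only when $i$ is primary for \emph{every} component touching $\inc(\ti s_i)$; when $i$ is primary for $X\lalp$ but secondary for a neighboring $X\lbe$, the structure is looser (see Proposition \ref{vir-ind-2}), and your case analysis does not cover this. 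Likewise, the quantities $|\II\lalp|=m\lalp+1$, $w\lalp\prim$, and $|\ti S\lalp\cap\ti\Lam\lalp|$ do not recombine into $\ell\lalp/2-m\lalp-1$ by any identity you have written down. Finally, the $\hat\rho_i$ on the right-hand side are minima over components and hence only piecewise linear in $\rho$; your coefficient-matching argument treats them as if they were linear.

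The paper avoids all of this by a linear-programming reduction: since both sides are (piecewise) linear in $\rho$, it suffices to check the inequality on the edges $\rho=(1,\ldots,1,0,\ldots,0)$. For such $\rho$ the components split into $Y=\bigcup_{\rho_{\hbar\lalp}=0}X\lalp$ and $Y\comp$, the quantities $\hat\rho_i$, $m\lalp$, and $\ell\lalp$ acquire concrete interpretations via $\dim W_{Y\comp}$ and $\ell_Y$, and the delicate index bookkeeping is packaged into an inductive Lemma (\ref{dim-ineq}) comparing the primary increments against $\dim W_Y\cap W_k$. That lemma, not the expansion you propose, is where Propositions \ref{vir-ind} and \ref{vir-ind-2} actually do their work. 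Without either that reduction or an equivalent replacement for Lemma \ref{dim-ineq}, your outline does not close.
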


\begin{proof}
By   the definition of $E\lalp(\rho)$ (cf.\eqref{E-alp}), $E_X(\lam,\rho)=\sum_{\alpha=1}^r E\lalp(\rho)$
is linear in $\rho=(\rho_i)$. By linear programming,  \eqref{E_X-new} holds on
$$ \RR^{m+1}_+:=\{(\rho_0,\cdots,\rho_m)\in \RR^{m+1}\mid \rho_0\geq\rho_1\geq\cdots\geq\rho_m=0\,\}
$$
if and only if it holds on every edge of $\RR^{m+1}_+$; that is on
\beq\label{10}
\rho=(\overset{m_0}{\overbrace{1,\cdots,1}},0,\cdots,0)
\eeq
for every $0< m_0< m$.

We now fix a $0< m_0<m$. By  possibly reindexing the irreducible components of $X$, we can assume that for an
$\bar r\leq r$, $\hbar_1\le\cdots\le\hbar_{\bar r}< m_0\le \hbar_{\bar r+1}\le\cdots\le \hbar_r$. In other words,
\beq\label{rrr}
 \rho_{\hbar_1}=\cdots=\rho_{\hbar_{\bar r}}=1,\and \rho_{\hbar_{\bar r+1}}=\cdots=\rho_{\hbar_r}=0.
\eeq
We let $Y:=\displaystyle\bigcup_{\alpha>\bar r} X\lalp$; let its complement
$Y^{\complement}=\displaystyle\bigcup_{\alpha\le \bar r} X\lalp$.

We claim that $Y^\complement$ is the maximal subcurve
of $X$ contained in the linear subspace $\PP W_{m_0}$ (cf. \eqref{W-i}). By definition, for any $\alpha$, $\hbar\lalp$ is the largest index
$0<i\leq m$ of which $s_i|_{X\lalp}\ne 0$. On the other hand, because $\PP W_{m_0}=\{s_{m_0}=\cdots=s_m=0\}$,
$X\lalp\sub\PP W_{m_0}$ if
and only if $s_{i}|_{X\lalp}=0$ for all $i\ge m_0$, which is equivalent to $\hbar\lalp< m_0$. This proves the claim.

Let $X\lalp$ be a component in $Y^{\complement}$. Since
$\rho_{\hbar\lalp}=1$, $\rho_i=1$ for $i\in \II\lalp$. Using the explicit expression of
$E\lalp(\lam,\rho)$, we obtain $ E\lalp(\lam,\rho)=2\deg X\lalp$.
Thus
$$\sum_{\alpha\le \bar{r}} E\lalp(\lam,\rho) =\sum_{\alpha\leq \bar r} 2\deg X\lalp= 2\deg Y^{\complement}\ .
$$

We now look at $Y$. Following \eqref{link-node} and \eqref{N-Y}, $\ell_Y:=|Y\cap Y^\complement|$, and 
$\ti L_Y:=\pi^{-1}(Y\cap Y^{\complement})\cap \ti Y$. 
We claim that
$\ti L_Y\sub \ti \Lambda_Y:=\bigcup_{\alpha>\bar r} \ti \Lam\lalp$.
Indeed, for any
$\alpha> \bar r$, there is an $i\ge m_0$ so that $s_i|_{X\lalp}\ne 0$. However for any $\beta\le \bar r$,
$i\ge m_0$ implies $s_i|_{X\lbe}=0$. Thus $s_i|_{X\lalp\cap X_\beta}=0$, and consequently,
$\pi\upmo(X\lalp\cap X\lbe)\cap \ti X\lalp\sub \ti \Lam\lalp$.
Summing over all $\alpha>\bar r$ and $\beta\le \bar r$, we obtain $\ti L_Y\sub \ti\Lam_Y$.
As a consequence,
\beq\label{sum-L}
\sum_{p\in  \ti L_Y}\rho_{i_0(p)}=\ell_Y\ .
\eeq

To simplify the notation, in the remaining part of this section, we will abbreviate
$$\sum_{p\in A}\rho_{i_0(p)}:=\sum_{p\in A\cap \ti\Lam}\rho_{i_0(p)},
$$
with the understanding that for any $A\sub \ti X$, the
summation $\sum_{p\in A}$ only sums over $p\in A\cap \ti \Lam$.

\begin{subl}\label{E-Y}
Let the notation be as before. Then
$$
\sum_{\alpha>\bar r}\frac{ E\lalp(\lam,\rho)}{2}-\frac{\ell_Y}{2}
\leq \Bl1+\frac{\epsilon}{\deg X}\Br\Bl 
\sum_{\substack{ \alpha>\bar r\\
i\in \II\lalp^{\basic}}} \delta\lalp(\ti s_i)\rho_{i}
- \sum_{p\in  \ti L_Y}\rho_{i_0(p)}-
\sum_{p\in \ti N_Y\setminus \ti L_Y} \frac{{\rho}_{i_0(p)}}{2}\Br
-\sum_{\pi(p)\in \bx \cap Y}\frac{{\rho}_{i_0(p)}}{2}
$$
\end{subl}

\begin{proof}
We let $X\lalp\sub Y$ be an irreducible component, then
$\alpha >\bar{r}$ and $\rho_{\hbar\lalp}=0$.
Since $(X,\sO_X(1),\bx,\ba)$ is slope semi-stable, by Corollary \ref{deg-X-a},  there are positive
constants $C$ and $N$ such that whenever $\deg X\geq N$, either
$\deg X\lalp >C\deg X$ or $\deg X\lalp=1$.
If $\deg X\lalp >C\deg X$, from the definition of $E\lalp(\rho)$ (cf. \eqref{E-alp} ) and $\rho_{\hbar\lalp}=0$,
we have
\beq\label{E-1}
\frac{ E\lalp(\lam,\rho))}{2} \,\le\, \Bl1+\frac{C^{-1}\cdot\epsilon
}{\deg X}\Br\sum_{i\in \II\lalp^{\basic}}\delta\lalp(\ti s_i)
\rho_{i}
-\Bl\frac{1}{2}+\frac{C^{-1}\cdot\epsilon}{\deg X}\Br\sum_{p\in  \ti S\lalp} {\rho}_{i_0(p)}.
\eeq
If $\deg X\lalp=1$, \eqref{E-1} remain holds since by \eqref{E-alp-0} and Definition \ref{prim-ind}, we have
\begin{eqnarray*}
\frac{ E\lalp(\lam,\rho))}{2} \,
&=&\delta\lalp(\ti s_{\ind\lalp\upmo(0)})\cdot\frac{\bar{\rho}_{\ind\lalp\upmo(0)}}{2}+ \rho_{\hbar\lalp}
=\sum_{i\in \II\lalp^{\basic}}\delta\lalp(\ti s_i)\cdot\frac{\rho_i}{2}\\
&\leq&\, \Bl1+\frac{C^{-1}\cdot\epsilon
}{\deg X}\Br\sum_{i\in \II\lalp^{\basic}}\delta\lalp(\ti s_i)
\rho_{i}
-\Bl\frac{1}{2}+\frac{C^{-1}\cdot\epsilon}{\deg X}\Br\sum_{p\in  \ti S\lalp} {\rho}_{i_0(p)}.
\end{eqnarray*}

Next we split
$$
-\sum_{p\in  \ti S\lalp}{\rho}_{i_0(p)}=
- \sum_{p\in \pi\upmo(\bx)\cap \ti X\lalp} {\rho}_{i_0(p)}
-\sum_{p\in \ti N\lalp\setminus\ti L_Y} {\rho}_{i_0(p)}-\sum_{p\in\ti L_Y\cap\ti X\lalp}{\rho}_{i_0(p)}.
$$
Then using $\rho_i\geq 0$, we get
\begin{eqnarray*}
-\Bigl(\frac{1}{2}+\frac{C^{-1}\cdot\epsilon}{\deg X}\Bigr)
\sum_{p\in  \ti S\lalp }{\rho}_{i_0(p)}
&\leq& -\!\!\!\!\! \sum_{p\in \pi\upmo(\bx)\cap \ti X\lalp}\!\!\!\!\! \frac{{\rho}_{i_0(p)}}{2}
-\Bigl(\frac 1 2+\frac{C^{-1}\cdot\epsilon}{\deg X}\Bigr)
\sum_{p\in \ti N\lalp\setminus\ti L_Y} {\rho}_{i_0(p)}-\\
&&- \Bigl(1+\frac{C^{-1}\cdot\epsilon}{\deg X}\Bigr) \sum_{p\in \ti L_Y\cap\ti X\lalp} {\rho}_{i_0(p)}
+\sum_{p\in \ti L_Y\cap\ti X\lalp}\frac{ {\rho}_{i_0(p)}}{2}\\
&\leq& -\!\!\!\!\! \sum_{p\in \pi\upmo(\bx)\cap \ti X\lalp}\!\!\!\!\! \frac{{\rho}_{i_0(p)}}{2}
-\Bigl(1+\frac{C^{-1}\cdot\epsilon}{\deg X}\Bigr)
\sum_{p\in \ti N\lalp\setminus\ti L_Y}\frac{\rho_{i_0(p)}}{2}-\\
&&- \Bigl(1+\frac{C^{-1}\cdot\epsilon}{\deg X}\Bigr) \sum_{p\in \ti L_Y\cap\ti X\lalp} {\rho}_{i_0(p)}
+\sum_{p\in \ti L_Y\cap\ti X\lalp}\frac{ {\rho}_{i_0(p)}}{2}\ .
\end{eqnarray*}
Putting together, we obtain
\begin{eqnarray*}
\frac{ E\lalp(\lam,\rho))}{2}
&\leq&
\Bigl(1+\frac{C^{-1}\cdot \epsilon}{\deg X}\Bigr)\Bigl(\sum_{i\in \II\lalp^{\basic}} \delta\lalp(\ti s_i)\rho_{i}
-\sum_{p\in \ti L_Y\cap \ti X\lalp}\rho_{i_0(p)}-\\
&&-\sum_{p\in \ti N\lalp\setminus \ti L_Y}\frac{{\rho}_{i_0(p)}}{2}\Bigr)
+\sum_{p\in \ti L_Y\cap \ti X\lalp}\frac{\rho_{i_0(p)}}{2}-\sum_{p\in \pi\upmo(\bx)\cap\ti X\lalp}\frac{{\rho}_{i_0(p)}}{2}\ .
\end{eqnarray*}
Summing over $\alpha$ and applying \eqref{sum-L} prove the Lemma.
\end{proof}

%

The following inequality is crucial for the proof of the Proposition. 

\begin{lemm}\label{dim-ineq}
For $1\leq k\leq m_0$, we have 
$$
 \sum_{\substack{\alpha> \bar{r}\\ i\in \II\lalp^{\basic}\cap[0,k)}}\delta\lalp(\ti s_i)\rho_{i}
- \sum_{\substack{ p\in \ti L_Y\\ i_0(p)<k }} \rho_{i_0(p)}- \sum_{\substack{p\in \ti N_Y\setminus \ti L_Y\\ i_0(p)<k } }
\frac{{\rho}_{i_0(p)}}{2}\
\leq \ \dim W_{Y}\cap W_{k}-\dim W_{Y\cap Y^{\complement}}\cap W_k,
$$
where $W_{Y\cap Y^{\complement}}$ is the linear subspace in $W$ spanned by $Y\cap \Ycomp$.
\end{lemm}

\begin{proof}
We prove the Lemma by induction on $k$. When $k=0$, then both sides of
the inequality are zero, and the inequality follows. Suppose the Lemma holds for
a $0\leq k < m_0$. Then the Lemma holds for $k+1$ if for the expressions
$$A_{k,1}:= \sum_{\substack{\alpha> \bar r, \\ k\in \II\lalp\prim}} \delta\lalp(\ti s_k) \rho_k,\quad
A_{k,2}:=\sum_{ \substack{p\in \ti L_Y,\\ i_0(p)=k}} \rho_{i_0(p)},\quad
A_{k,3}:=\sum_{\substack{p\in \ti N_Y\setminus \ti L_Y,\\ i_0(p)=k}} \frac{\rho_{i_0(p)}}{2}
$$
and
$$B_{k,1}:=\dim W_Y\cap W_{k+1}-\dim W_Y\cap W_k,\quad
B_{k,2}:=\dim W_{Y\cap Y^\complement}\cap W_{k+1}-\dim W_{Y\cap Y^\complement}\cap W_k,
$$
the following inequality
holds
\beq\label{A-B}
A_{k,1}-A_{k,2}-A_{k,3}\leq B_{k,1}-B_{k,2}.
\eeq

To study the left hand side of \eqref{A-B}, we introduce the set
\beq\label{R_k}
R_k=\{ p\in\ti Y\mid k\in \II_p\prim\}.
\eeq
By Proposition \ref{vir-ind} and \ref{vir-ind-2}, $R_k$ can take three possibilities
according to
\beq\label{value}
\sum_{\alpha>\bar r,\, k\in\II\lalp\prim} \delta\lalp(\ti s_k)
\eeq
taking values $0$, $1$ or $\ge 2$. Notice that if $A_{k,1}=0$, then $A_{k,1}-A_{k,2}-A_{k,3}\leq 0$. The Lemma holds
trivially in this case since the right hand side of \eqref{A-B} is non-negative.
So from now on, we will assume that $A_{k,1}\geq 1$, in particular, \eqref{value} is positive.

We first observe that since $\dim W_{k+1}-\dim W_{k}=1$, both $B_{k,1}$ and $B_{k,2}$ can only take values $0$ or $1$.
We now investigate the case when $B_{k,2}=1$.
\begin{clai}\label{B}
Suppose \eqref{value} is positive and  $B_{k,2}=1$. Then there is a $p\in R_k$ (cf.\eqref{R_k})such that
$i_0(p)=k$ and
\beq\label{q-1}
q=\pi(p)\in Y\cap \Ycomp\cap (\PP W_{k+1}-\PP W_k).
\eeq
\end{clai}
\begin{proof} 
Suppose \eqref{value} is positive then there is a $p\in\inc(\ti s_{k})\cap \ti X\lalp$ with $\alpha>\bar{r}$ and
$k\in \II\prim\lalp$. Let $Z_k$ be the subscheme defined in \eqref{defZ} and $W_{Z_k+q}\supsetneq W_k$  be defined in \eqref{W-sigma}.
Then $W_{k+1}=W_{Z_k+q}$, since $\dim W_{k+1}=\dim W_k+1$.
Suppose $q=\pi(p)\not\in Y\cap Y\comp$ and $k\in \II\prim\lalp$ then by applying the argument parallel to Proposition \ref{vir-ind} and
\ref{vir-ind-2}, we deduce
\beq\label{W-B}
W_{Z_k+q}+W_{Y\cap Y\comp}\supsetneq W_{Z_k}+W_{Y\cap Y\comp}\ .
\eeq
 On the other hand, $B_{k,2}=1$ implies that
\begin{eqnarray*}
 \dim (W_{k}+W_{Y\cap Y\comp})
 &=&\dim W_k+\dim W_{Y\cap Y\comp}-\dim W_{k}\cap W_{Y\cap Y\comp}\\
 &=&\dim W_{k+1}+\dim W_{Y\cap Y\comp}-\dim W_{k+1}\cap W_{Y\cap Y\comp}\\
 &=&\dim (W_{k+1}+W_{Y\cap Y\comp})\ ,
\end{eqnarray*}
which means
$W_{k}+W_{Y\cap Y\comp}=W_{k+1}+W_{Y\cap Y\comp}$  contradicting to \eqref{W-B}. So we must have
$q\in Y\cap Y\comp$.

By definition, $q\in \PP W_{k+1}$ (cf. \eqref{W-i}) implies that $s_i(q)=0$ for $i\geq k+1$; $q\not\in\PP W_k$ implies
that not all $s_i(q)$, $k\le i\le m$, are zero. Combined, we have $s_k(q)\ne 0$. This implies $i_0(q)=k$.
As an easy consequence, this shows that $B_{k,2}=1$ forces $W_Y\cap W_{k+1}\ne W_Y\cap W_{k}$,
and hence $B_{k,1}=1$. In particular, the right hand side of \eqref{A-B} is non-negative. This proves the Claim.
\end{proof}

We complete our proof of Lemma \eqref{dim-ineq}.
When \eqref{value} takes value $1$, then $R_k$ consists of a single point, say $p\in\ti Y$.
In case $\pi(p)\in Y$ is a smooth point of $X$, $A_{k,1}=1$ and $A_{k,2}=A_{k,3}=0$.
We claim that $B_{k,1}=1$ and $B_{k,2}=0$.
Indeed, if $B_{k,1}=0$, then $\PP W_Y\cap \PP W_{k+1}=\PP W_Y\cap \PP W_k$, which is the same as
$Y\cap (s_k=\cdots=s_m=0)=Y\cap (s_{k+1}=\cdots=s_m=0)$ as subschemes of $Y$. But this contradicts to
$\sum_{\alpha> \bar r,\, k\in\II\lalp\prim} \delta\lalp(\ti s_k)=1$. Thus $B_{k,1}=1$. On the other hand,
if $B_{k,2}=1$, then Claim \ref{B} shows that $R_k\cap \ti Y$ contains an element in $\ti L_Y$,
contracting to our assumption that $R_k=\{p\}$ lies over a smooth point of $X$.

In case $p\in \ti L_Y$, then the previous paragraph shows that $A_{k,1}=B_{k,1}=1$, $A_{k,3}=0$.
For the values of $A_{k,2}$ and $B_{k,2}$, when $i_0(p)=k$,
then both $A_{k,2}=B_{k,2}=1$; when $i_0(p)\ne k$, then both $A_{k,2}=B_{k,2}=0$. Therefore,
\eqref{A-B} holds.

The last case is when $p\in \ti N_Y-\ti L_Y$. In this case,
since the point $p'$ in $\ti Y\cap \pi\upmo(\pi(p))$ other than $p$ is not contained in $R_k$,
either $i_0(p)\ne k$ or $i_0(p)=i_0(p')=k$ and $k\not\in \II_{p'}\prim$. In both cases, $A_{k,1}=B_{k,1}=1$, and $B_{k,2}=0$;
the inequality \eqref{A-B} holds.

Lastly, when \eqref{value} is bigger than $1$, by Proposition \ref{vir-ind} and \ref{vir-ind-2},
either $R_k=\{p_-,p_+\}$ such that $\pi(p_-)=\pi(p_+)$ is a node of $Y$, i.e. $p_\pm\in \ti N_Y$,
and $i_0(p_-)=i_0(p_+)=k$, or
$R_k=\{p_1,\cdots,p_l\}$ 
so that $i_0(p_i)=k$ and $\{\pi(p_i)\}_{1\le i\le s}$ are distinct nodes of $X$.
In case $R_k=\{p_-,p_+\}$; since $p_\pm\in \ti N_Y\setminus\ti L_Y$, $A_{k,1}=2$, $A_{k,2}=B_{k,2}=0$, and $A_{k,3}=B_{k,1}=1$.
The inequality \eqref{A-B} holds in this case.

The other case is when $R_k=\{p_1,\cdots,p_l\}$. By reindexing, we may
assume $p_1,\cdots, p_{l_1}$ are in $\ti N_Y\setminus \ti L_Y$
 and $p_{l_1+1},\cdots,p_l$ are in $\ti L_Y$.
We let $p_i'\in \ti Y$ be such that $\pi\upmo(\pi(p_i))=\{p_i,p_i'\}$ for $i\le l_1$. Then $i_0(p_i')=k$ as well,
but $k\not\in \II_{p'_i}\prim$ because of Proposition \ref{vir-ind} and \ref{vir-ind-2}.
This in particular implies that the interior linking nodes $\ti N_Y\setminus \ti L_Y$ contributes once in $A_{k,1}$ but
twice in $A_{k,3}$ (, e.g. only $\rho_{i_0(p_i)}$ appears in $A_{k,1}$, but both $\rho_{i_0(p_i)}$ and $\rho_{i_0(p'_i)}$
appear in $A_{k,3}$). Therefore, $A_{k,1}=l$; $A_{k,2}=l-l_1$, and $A_{k,3}=2l_1/2=l_1$. Hence the left hand side of \eqref{A-B} is $0$.
This proves \eqref{A-B} in this case; hence for all cases. This proves the Lemma.
\end{proof}

We continue our proof of Proposition \ref{new-bdd}.
We  apply Lemma \ref{E-Y} and  Lemma \ref{dim-ineq} with $k=m_0$. Noticing $\rho_{i_0(p)}=0$ for $i_0(p)>m_0$, we obtain
\begin{eqnarray}\label{E_Y}
&&\frac{ E_{Y}(\lam,\rho)}{2}-\frac{\ell_Y}{2}=\sum_{\alpha=\bar{r}+1}^r\frac{E\lalp(\lam,\rho)}{2}-\frac{\ell_Y}{2}\, \leq\\
&\leq &\Bl1+\frac{C\upmo\epsilon}{\deg X}\Br\Bl\dim W_{Y}\cap W_{m_0}-\ell_Y\Br
-\frac{1}{2}\sum_{\pi(p)\in \bx\cap\pi(\ti\Lam)\cap{Y}}\!\!\rho_{i_0(p)}.\nonumber\\
&\leq&\Bl1+\frac{C\upmo\epsilon}{\deg X}\Br\Bl\dim W_{Y}\cap W_{m_0}-\ell_Y\Br
-\frac{1}{2}\sum_{p\in \ti S_{\reg}}\hat\rho_{i_0(p)}.\nonumber
\end{eqnarray}
{Here we used that for all $p'\in \pi(\ti S_{\text {reg}})-X\cap \pi(\ti \Lam)\cap Y$, $\rho_{i_0}(p')=0$.
And the last inequality holds since by the Definition of $\ti S_{\reg}$ and $\hat\rho_i$ (cf. \eqref{hat-rho}), we have
$\sum_{q\in \ti S_{\reg}}\hat{\rho}_{i_0(q)}\leq \sum_{\pi(p)\in \bx\cap\pi(\ti\Lam)\cap{Y}}\rho_{i_0(p)}$.}

Using $E_{Y\comp}(\lam,\rho)=2 \deg Y\comp$ and $\deg X-g=m$, we obtain
\begin{eqnarray*}
&& \frac{E_{X}(\lam,\rho)}{2}= \Bl\deg Y\comp +\frac{\ell_Y}{2}\Br+
\Bl \frac{E_{Y}(\lam,\rho)}{2} -\frac{\ell_Y}{2}\Br\nonumber\\
&\leq & \Bl\deg Y\comp +\frac{\ell_Y}{2}\Br+\Bl1+\frac{2C\upmo\epsilon}{m+1}\Br
\Bl m_0+1-\dim W_Y\Br-
\frac{1}{2}\sum_{p\in \ti S_{\reg}}\!\!\rho_{i_0(p)}\ ,\nonumber
\end{eqnarray*}
Here the last inequality follows from
\begin{eqnarray*}
\dim W_{m_0}
&\geq&
\dim(W_{m_0}\cap W_Y+W_{m_0}\cap W_{Y\comp})\\
&=&\dim W_{m_0}\cap W_Y+\dim W_{m_0}\cap W_{Y\comp}-\dim W_{m_0}\cap W_Y\cap W_{Y\comp}\\
&=&\dim W_{m_0}\cap W_Y+\dim W_{Y\comp}-\ell_Y\ .
\end{eqnarray*}

Now we consider the right hand side of \eqref{E_X-new} for $\rho$ chosen as in \eqref{10}, which gives
$\sum_{i=0}^m \rho_{i}=m_0+1$. Since by our assumption, the  embedding $X\sub \PP W $ is given by a complete linear
system of a very ample line bundle $\sO_X(1)$, using our choice of weights $\rho_i$ (cf. \eqref{rrr}),
$$\sum_{\alpha=1}^r\Bl \deg X_{\alpha}+\frac{\ell\lalp}{2}
-m\lalp-1\Br\cdot\rho_{\hbar\lalp}
=\deg Y\comp+\frac{\ell_Y}{2}-\dim W_{Y\comp}.$$

We claim that
$\sum_{i=0}^m\hat{\rho}_{i}=m_0+1-\dim W_{Y\comp}\ .$
Indeed, from our choice of $\rho$ and the definition of $\hat\rho$ (cf.  \eqref{hat-rho}), for any $0\leq i\leq m$,
$\hat \rho_i=1$ or $0$, and it is $0$ if and only if either $i>m_0$ of
there is an $X\lalp$ with $i\in \II\lalp$ (cf. \eqref{I-lalp}) such that $\rho_{\hbar\lalp}=1$,  that is,
$i\in \II_{Y\comp}=\bigcup_{X\lalp\sub Y\comp}\II\lalp$.
This proves
$$\sum_{i=0}^m\hat{\rho}_{i}=m_0+1-|\II_{Y\comp}|.
$$
Our claim will follow if once we prove
 $|\II_{Y\comp}|=\dim W_{Y\comp}$; but this follows from the criteria
\beq\label{jump1}
i\in \II_{Y\comp}\quad \text{\sl if and only if}\quad 
 \dim W_{i+1}\cap W_{X\lalp}-\dim W_i\cap W_{X\lalp}=1 \text{ for some }
X\lalp\sub Y\comp.
\eeq
To justify this criteria, we notice that  $ \dim W_{i+1}\cap W_{X\lalp}=\dim W_i\cap W_{X\lalp}$  for all
$X\lalp\sub Y\comp$ is equivalent to
$Y\comp\cap\{s_i=\cdots=s_m=0\}=Y\comp\cap\{s_{i+1}=\cdots=s_m=0\}$ as subschemes of $Y\comp$;
that is, $\inc(s_i)\cap Y\comp=\emptyset$.  Since $\lam$ is a staircase
$$i\not\in \II\lalp\text{ for all }X\lalp\sub Y\comp
\quad\text{if and only if}\quad 
\inc(s_i)\cap Y\comp=\emptyset \text{  (cf. (\ref{I-lalp})) }.
$$
This proves \eqref{jump1}.

With those in hand, we obtain
\begin{eqnarray*}
&& \frac{E_{X}(\lam,\rho)}{2}= \Bl\deg Y\comp +\frac{\ell_Y}{2}\Br+
\Bl \frac{E_{Y}(\lam,\rho)}{2} -\frac{\ell_Y}{2}\Br\nonumber\\
&\leq & \Bl\deg Y\comp +\frac{\ell_Y}{2}\Br+\Bl1+\frac{2C\upmo\epsilon}{m+1}\Br
\Bl m_0+1-\dim W_{Y\comp}\Br-\sum_{q\in \ti S_{\reg}}\frac{\hat{\rho}_{i_0(q)}}{2}\\
&\leq & m_0+1-\sum_{q\in \ti S_{\reg}}\frac{\hat{\rho}_{i_0(q)}}{2}+\Bl\deg Y\comp +\frac{\ell_Y}{2}-
\dim W_{Y\comp}\Br+\frac{2C\upmo\epsilon}{m+1}\Bl m_0+1-\dim W_{Y\comp}\Br\\
&=&\sum_{i=0}^m \rho_{i}-\!\!\!\sum_{q\in \ti S_{\reg}}\frac{\hat\rho_{i_0(q)}}{2}
+\sum_{\alpha=1}^r\Bl \deg X\lalp+\frac{\ell\lalp}{2}
-m\lalp-1\Br\cdot\rho_{\hbar\lalp}+\frac{2C\upmo\epsilon}{ m+1}\sum_{i=0}^m\hat{\rho}_{i}\ .
\end{eqnarray*}
So the proof of Proposition is completed.
\end{proof}

We state and prove the main result of this section.
We introduce
\begin{equation}\label{T+m}
\hat\omega(\lam,\rho):=\frac{2\deg X}{m+1}\sum_{i=0}^m\rho_i-E_X(\lam,\rho)\and
\hat\omega_\ba(\lam)=\hat\omega(\lam)+\mu_\ba(\lam).
\eeq
where $E_X(\lam,\rho):=\sum_{\alpha=1}^r E\lalp(\rho)$.
By Theorem \ref{main-est}, we have
$\omega(\lam)\geq\hat\omega(\lam)$.

\begin{theo}\label{lower-bdd}
Let $(X,\sO_X(1),\bx,\ba)$ be a connected weighted pointed nodal curve that is slope stable.
Suppose $\omega_{X}(\bax)$ is ample. We let $1>\epsilon>0$ be such that
$$2(2C\upmo+1)\epsilon<\deg\omega_X(\bax)
$$
with $C$ given in Lemma \ref{deg-X-a}.
Then there exists an $M$ depending only on $\chi_\ba(X)$ 
and $\epsilon$ such that whenever $\deg X>M$, then for any staircase 1-PS $\lam$   we have
\beq\label{hat-rho-bdd}
\omega_\ba(\lam)=\omega(\lam)+\mu_\ba(\lam)
\geq \hat\omega(\lam)+\mu_\ba(\lam) 
\geq \frac{2\cdot \epsilon}{m+1}\sum_{i=0}^m\hat\rho_i\ .
\eeq
\end{theo}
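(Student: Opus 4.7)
The plan is to verify the two inequalities of \eqref{hat-rho-bdd} separately. The first, $\omega(\lam) \geq \hat\omega(\lam)$, is a direct consequence of Theorem \ref{main-est}; the second, $\hat\omega(\lam) + \mu_\ba(\lam) \geq \frac{2\epsilon}{m+1}\sum\hat\rho_i$, is the core of the argument and will be reduced to the slope stability condition \eqref{basic1} via linear programming.

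For the first inequality, Lemma \ref{deg-X-a} gives a constant $C > 0$ (depending only on $g, n, \ba$) such that, once $\deg X$ is large enough, every irreducible component $X\lalp \subset X$ satisfies $\deg X\lalp \geq C \deg X$. Enlarging $M$ if necessary so that $CM$ exceeds the constant from Theorem \ref{main-est} for the chosen $\epsilon$, Theorem \ref{main-est} applies to every $\alpha$ and gives $e(\ti\sI\lalp(\lam)) \leq E\lalp(\rho)$. Summing over $\alpha$ and invoking Lemma \ref{sum-q} and Proposition \ref{lift} yields $e(\sI(\lam)) \leq E_X(\lam,\rho)$, which together with Proposition \ref{chow-wt} and \eqref{T+m} gives the first inequality.

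For the second inequality, $\hat\omega(\lam) + \mu_\ba(\lam) - \frac{2\epsilon}{m+1}\sum\hat\rho_i$ is linear in $\rho = (\rho_0,\ldots,\rho_m)$ on the polyhedral cone $\{\rho_0 \geq \cdots \geq \rho_m = 0\}$ (for fixed staircase data $\II\lalp$ and $\hbar\lalp$). By linear programming, it suffices to verify the inequality on each extremal ray $\rho^{(m_0)} := (\underbrace{1,\ldots,1}_{m_0+1},0,\ldots,0)$ for $0 < m_0 < m$, exactly as in the reduction in the proof of Proposition \ref{new-bdd}. Fix such $m_0$ and let $Y\comp := \bigcup_{\hbar\lalp \leq m_0} X\lalp$ be the maximal subcurve of $X$ contained in $\PP W_{m_0+1}$ and $Y := \overline{X \setminus Y\comp}$; the proof of Proposition \ref{new-bdd} gives $\sum_i \hat\rho_i^{(m_0)} = m_0 + 1 - \dim W_{Y\comp}$ and an upper bound on $E_X(\lam,\rho^{(m_0)})$ in terms of $Y\comp$ and $\ell_Y$.

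On this ray, substituting the bound of Proposition \ref{new-bdd} into \eqref{T+m} (using $\deg X = m + g$, so $\frac{2\deg X}{m+1} - 2 = \frac{2(g-1)}{m+1}$) and expanding $\mu_\ba(\lam)$ via \eqref{M_A} with the telescoping identity $\sum_{i=0}^{m-1}(\rho_{i+1}-\rho_i)\dim(W_{x_j}\cap W_{i+1}) = -\rho_{k^*(x_j)}$ (where $k^*(x_j)$ is the largest index with $s_{k^*(x_j)}(x_j) \neq 0$), the marked-point contributions from $\mu_\ba$, which concentrate on $x_j \in Y\comp$ since $x_j \in \PP W_{m_0+1}$ iff $x_j \in Y\comp$, should cancel against the $-\frac{1}{2}\sum_{q \in \ti S_\reg}\hat\rho_{i_0(q)}$ terms from Proposition \ref{new-bdd}. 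The surviving inequality, modulo the two $\epsilon$-errors, rearranges (after multiplication by $(m+1)/2$) to
\begin{equation*}
(m+1)\Bl \deg Y\comp + \frac{\ell_Y}{2} + \sum_{x_j \in Y\comp}\frac{a_j}{2} \Br \leq \dim W_{Y\comp}\Bl \deg X + \sum_j\frac{a_j}{2} \Br,
\end{equation*}
which is exactly the slope stability condition \eqref{basic1} applied to $Y\comp$, using $h^0(\sO_X(1)|_{Y\comp}) = \dim W_{Y\comp}$ for large $\deg X$ as in Corollary \ref{basic=basic1}. The strict inequality provides positive slack that absorbs both the $\frac{4C^{-1}\epsilon}{m+1}\sum\hat\rho_i$ error from Proposition \ref{new-bdd} and the target $\frac{2\epsilon}{m+1}\sum\hat\rho_i$, precisely under the hypothesis $2(2C^{-1}+1)\epsilon < \deg\omega_X(\bax)$.

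The main obstacle will be the bookkeeping in this final computation: verifying the exact cancellation of marked-point terms between \eqref{M_A} and Proposition \ref{new-bdd}, and identifying the residual combination as the slope stability inequality for $Y\comp$ scaled by $(m+1)/2$. Secondary issues are the components $X\lalp$ of degree one (which use the alternative definition \eqref{E-alp-0} of $E\lalp$) — ruled out for sufficiently large $\deg X$ by Lemma \ref{deg-X-a} — and the trivial edge cases $Y\comp = \emptyset$ or $Y\comp = X$, which are handled directly by separate elementary computation.
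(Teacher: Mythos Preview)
Your overall strategy matches the paper's proof closely: reduce the second inequality via linear programming to the extremal rays $\rho^{(m_0)}$, invoke Proposition \ref{new-bdd}, and recognize the surviving inequality as the slope condition \eqref{basic1} for $Y\comp$, with the hypothesis $2(2C^{-1}+1)\epsilon<\deg\omega_X(\bax)$ absorbing the two $\epsilon$-errors. That part is fine.

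There is one genuine error in your bookkeeping of the marked-point terms. You assert that ``$x_j \in \PP W_{m_0+1}$ iff $x_j \in Y\comp$'', and use this to claim the $\mu_\ba$ contribution concentrates on $Y\comp$. This is false: $Y\comp$ is only the maximal \emph{subcurve} of $X$ contained in $\PP W_{m_0+1}$, and a marked point $x_j \in Y$ may perfectly well lie in $\PP W_{m_0+1}$ (namely whenever $s_k(x_j)=0$ for all $k>m_0$, which happens whenever $x_j\in Y\cap\pi(\ti\Lam)\cap\PP W_{m_0+1}$). So after expanding $\mu_\ba(\lam,\rho^{(m_0)})$ you get \emph{two} sums, $\sum_{x_j\in Y\cap\PP W_{m_0+1}}a_j$ and $\sum_{x_j\in Y\comp}a_j$, not just the second. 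The $Y\comp$-sum is exactly what enters the slope inequality for $Y\comp$; the $Y$-sum must be handled separately. The paper's mechanism (inequality \eqref{ti-s}) is that the term $\sum_{q\in\ti S_\reg}\frac{\hat\rho_{i_0(q)}}{2}$ from Proposition \ref{new-bdd} equals $\frac{1}{2}|\bx\cap Y\cap\PP W_{m_0+1}|$, which dominates $\frac{1}{2}\sum_{x_j\in Y\cap\PP W_{m_0+1}}a_j$ \emph{because of the weight constraint} $\sum_{x_i=p}a_i\leq 1$ at each point. You gesture at a cancellation with $\ti S_\reg$ but give the wrong reason; without invoking the total-weight-at-a-point bound, the cancellation does not go through.
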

\begin{proof}
By Proposition \ref{new-bdd}, it suffices to prove
\begin{eqnarray}\label{coro}
&&\sum_{i=0}^m \rho_{i}-\!\!\!\sum_{q\in \ti S_{\reg}}\frac{\hat\rho_{i_0(q)}}{2}
+\sum_{\alpha=1}^r\Bl \deg X\lalp+\frac{\ell\lalp}{2}
-m\lalp-1\Br\cdot\rho_{\hbar\lalp}+\frac{(2C\upmo+1)\epsilon}{ m+1}\sum_{i=0}^m\hat{\rho}_{i}\leq \\
&
\quad &\leq \frac{\deg X}{m+1}\sum_{i=0}^m\rho_i+\frac{\mu_{\ba}(\lam,\rho)}{2}\ .\nonumber
\end{eqnarray}
By linear programming, we only need to prove the above estimate for $\rho$ of the form \eqref{10}. We will break the verification
into several inequalities. First, we have
\beq\label{mu-ba}
\mu_{\ba}(\lam,\rho)=
\sum_{j=1}^n a_j\frac{m_0+1}{m+1}-\sum_{x_j\in Y\cap\PP W_{m_0}} a_j-\sum_{x_j\in Y\comp\cap\PP W_{m_0}} a_j.
\eeq
Here $x_j$ runs through all marked points of the curve. We claim  that
\beq\label{ti-s}
\sum_{q\in \ti S_{\reg}}\frac{\hat{\rho}_{i_0(q)}}{2}=\frac{|\bx\cap\pi(\ti\Lam)\cap{Y}\cap \PP W_{m_0}|}{2}\geq
\sum_{x_j\in Y\cap \PP W_{m_0} }\frac{a_j}{2}.
\eeq
To this purpose,
we first show that
\beq\label{supp-bx}
\bx\cap\pi(\ti \Lam)\cap Y\cap\PP W_{m_0}=\bx\cap Y\cap\PP W_{m_0}.
\eeq
Indeed, for any $x_i$ in $\bx$ that lies in
$Y\cap \PP W_{m_0}$, $s_{k}(x_j)=0$ for $k\geq m_0$. On the other hand, let $x_j\in X\lalp\sub Y$; since $\Ycomp$ is
the largest subcurve of $X$ contained in $\PP W_{m_0}$, for some $k\ge m_0$, $s_k|_{X\lalp}\ne 0$. Combined with $s_k(x_j)=0$, we conclude
$x_j\in \pi(\ti\Lam)$ (cf. Definition \ref{index}). In particular $\bx\cap Y\cap \PP W_{m_0}\sub \pi(\ti\Lam)$. This proves \eqref{supp-bx}.

Applying \eqref{supp-bx}, and using that 
for any colliding subset $\{ x_{i_{1}},\cdots ,x_{i_{s}}\}$ (i.e. $x_{i_1}=\cdots=x_{i_s}$)
necessarily $a_{i_{1}}+\cdots +a_{i_{s}}\leq 1$, we obtain
\beq
\sum_{x_j\in Y\cap\PP W_{m_0}}\frac{ a_j}{2}-\frac{|\bx\cap\pi(\ti\Lam)\cap{Y}\cap \PP W_{m_0}|}{2}=
\sum_{x_j\in Y\cap\PP W_{m_0}}\frac{ a_j}{2}-\frac{|\bx\cap{Y}\cap \PP W_{m_0}|}{2}\,\leq 0,
\eeq
hence \eqref{ti-s}.

By putting \eqref{mu-ba} and \eqref{ti-s} together, we obtain
\beq\label{mu-rho}
-\!\!\!\sum_{q\in \ti S_{\reg}}\frac{\hat\rho_{i_0(q)}}{2}-\frac{\mu_{\ba}(\lam,\rho)}{2}
\leq-\frac{m_0+1}{m+1}\sum_{j=1}^n \frac{a_j}{2}+\sum_{x_j\in Y\comp\cap\PP W_{m_0}} \frac{a_j}{2} \ .
\eeq
On the other hand, for $\rho$ of the form in \eqref{10}, we have
\begin{eqnarray}\label{rho-only}
&&\sum_{i=0}^m \rho_{i}
+\sum_{\alpha=1}^r\bl \deg X\lalp+\frac{\ell\lalp}{2}
-m\lalp-1\br\cdot\rho_{\hbar\lalp}+\frac{(2C\upmo+1)\epsilon}{ m+1}\sum_{i=0}^m\hat{\rho}_{i}\\
&= & m_0+1+\bl\deg Y\comp +\frac{\ell_Y}{2}-\dim W_{Y\comp}\br+\frac{(2C\upmo+1)\epsilon}{m+1}\bl
m_0+1-\dim W_{Y\comp}\br\ .\nonumber
\end{eqnarray}
Pluging \eqref{rho-only} and \eqref{mu-rho} into \eqref{coro}, and using the slope condition
$$
\deg Y^{\complement} +\frac{\ell_Y}{2}+ \sum_{x_j\in Y\comp\cap\PP W_{m_0}}
\frac{a_j}{2}\leq  \frac{\deg X+\sum_{j=1}^n \frac{a_j}{2}}{m+1}\dim W_{Y\comp},
$$
we obtain
\begin{eqnarray*}
&&\!\!\! -\frac{\mu_{\ba}(\lam,\rho)}{2}+\frac{E_X(\lam,\rho)}{2}+ \frac{((2C\upmo+1)\epsilon}{m+1}\sum_{i=0}^m\hat\rho_i\\
&\leq&\!\!\! m_0+1+\Bl\deg Y\comp +\frac{\ell_Y}{2}+\!\!\!\!\!\!\!\sum_{x_j\in Y\comp\cap \PP W_{m_0} }\!\!\!\!\!\frac{a_j}{2}-
\dim W_{Y\comp}\Br-\frac{m_0+1}{m+1}\sum_{i=1}^n \frac{a_i}{2}+\nonumber\\
&&\!\!\!+\frac{(2C\upmo+1)\epsilon}{m+1}\Bl m_0+1-\dim W_{Y\comp}\Br\nonumber \\
&=&\!\!\! \frac{\deg Y\comp +\frac{\ell_Y}{2}+\sum_{x_j\in Y\comp\cap \PP W_{m_0} }\frac{a_j}{2}}{\dim W_{Y\comp}}\dim W_{Y\comp}-\frac{m_0+1}{m+1}\sum_{i=1}^n \frac{a_i}{2}+\nonumber\\
&&\!\!\!+\Bl1+\frac{(2C\upmo+1)\epsilon}{m+1}\Br\Bl m_0+1-\dim W_{Y\comp}\Br \nonumber\\
&\leq &\!\!\! \frac{\deg X+\sum_{j=1}^n\frac{a_j}{2}}{m+1}\dim W_{Y\comp}\!+\!\Bl1+\!\!\frac{(2C\upmo+1)\epsilon}{m+1}\Br\!\Bl m_0+1-\dim W_{Y\comp}\Br\!\!-\!\!\frac{m_0+1}{m+1}\sum_{i=1}^n \frac{a_i}{2}\nonumber\\
&\leq &\!\!\! \frac{\deg X+\sum_{j=1}^n\frac{a_j}{2}}{m+1}\Bl\dim W_{Y\comp}+ m_0+1-\dim W_{Y\comp}\Br-
\frac{m_0+1}{m+1}\sum_{i=1}^n \frac{a_i}{2}\nonumber\\
&=&\!\!\! \frac{\deg X}{m+1}\cdot (m_0+1)= \frac{\deg X}{m+1}\sum_{i=0}^m\rho_i\ ,\nonumber\\
\nonumber
\end{eqnarray*}
where we have use the assumption
$2(2C\upmo+1)\epsilon<\deg\omega_X(\bax)$
to conclude
$$ \frac{\deg X+\sum_{j=1}^n\frac{a_j}{2}}{m+1}>1+\frac{(2C\upmo+1)\epsilon}{m+1}$$ in the fourth inequality.
This completes the proof.
\end{proof}

\begin{proof}[Proof of Theorem \ref{main}]
Since $\hat{\rho_i}\geq 0$, the sufficiency follows from Theorem \ref{lower-bdd}.
We now prove the other direction.
Let $Y\sub X$ be any proper subcurve; let $W_Y\sub W$ be the linear subspace spanned by $Y$,
and let $m_0+1=\dim W_Y$.
We choose a 1-PS
$\lam=\mathrm{diag}[ t^{\rho _{0}},\cdots ,t^{\rho_{m}}] \cdot t^{-\rho_{\text{ave}}}$
such that the corresponding filtration $\{W_i\}_{i=0}^m$ satisfies $W_{m_0+1}=W_Y$; we choose
the weights $\{\rho_i\}$ be as in \eqref{10}.
Then
$$\mu_\ba=\sum_{j=1}^{n}a_{j}\Bl \frac{m_0+1}{m+1}\Br-\sum_{x_j\in \PP W_Y}^{n}a_{j}.
$$
Thus by Corollary \ref{sum-Delta} (cf. \cite[Prop 5.5]{Mum}),
$e(\ti \sI)/2=\deg Y+\ell_Y/2$; hence
\begin{eqnarray*}
0\leq\frac{\hat{\omega}+\mu_\ba}{2}
&=&\frac{m_0+1}{m+1}\cdot\deg X-(\deg Y+\frac{\ell_Y}{2})
+ \frac{m_0+1}{m+1}\sum_{j=1}^{n}\frac{a_{j}}{2}-\sum_{x_j\in \PP W_Y}^{n}\frac{a_{j}}{2}\\
&=&(m_0+1)\Bl \frac{\deg X+\sum_{j=1}^{n}\frac{a_{j}}{2}}{m+1}-
\frac{\deg Y+\frac{\ell_Y}{2}+\sum_{x_j\in Y}\frac{a_{j}}{2}}{m_0+1}\Br,
\end{eqnarray*}
which is \eqref{basic1}. This completes the proof of the Theorem.
\end{proof}

\section{Re-construction of the moduli of weighted pointed curves}\label{hilb-to-chow}
\def\Pm{{\PP^m}}
\def\Gamx{\Gamma_{\!X}}
\def\fB{\mathfrak B}
\def\bcM{\overline{\cM}}
\def\bcK{\overline{\cK}}
\def\bcP{\overline{\cP}}
\def\quotient{\overline{\cK}\!\sslash \!G}
\def\bcK{\overline{\cK}}
\def\q{\mathcal{Q}}
\def\st{\text{st}}
\def\moduli{\bcM_{g,\ba}}
\def\cX{{\mathcal X}}
\let\para=\parallel

In this section, we use GIT quotient of Hilbert scheme to construct the moduli of  weighted pointed stable curves, first introduced
and constructed by Hassett \cite{Hass} using different method.

\begin{defi}\label{weight-ss}
A {weighted pointed semi-stable} curve is a weighted pointed curve $(X,\bx,\ba)$ such that
\begin{enumerate}
\item $\omega_X(\bax)$ is numerically non-negative;
\item the total degree $2\chi_{\ba}(X)=\deg \omega_X(\bax)$ is positive;
\item \label{E-line} for any smooth  subcurve $E\sub X$ such that $\deg\omega_X(\bax)|_E=0$,
necessarily $E\cap \bx=\emptyset$ and $E\cong \PP^1$.
\end{enumerate}

We call $E\sub X$ satisfying (3) {exceptional components}. We say $(X,\bx, \ba)$ is {weighted pointed stable} if
it does not contain exceptional components.
\end{defi}

We fix integers $n$, $g$ and weights $\ba\in \QQ_+^n$ satisfying $\chi_{\ba}(X)>0$; for a large integer $k$
such that $k\cdot a_i\in\ZZ$ for all $i$, we let $d=(|\ba| +2g-2)\cdot k$, and form
\beq\label{spe}
P(t)=d\cdot t+1-g\in \ZZ[t], \quad\text{and set}\ \, m=P(1).
\eeq
We denote by
$\Hilb_{\PP^m}^{P}$  the Hilbert scheme of subschemes of $\PP^m$ of Hilbert polynomial $P$;
we define $\cH$ 
be the fine moduli scheme of families of data
$$
\cH=_{\text{set}}\{(X,\iota, \bx)\mid [\iota: X\to \Pm]\in \Hilb_\Pm^P, \ \bx=(x_1,\cdots, x_n)\in X^n\}.
$$
Using that Hilbert schemes are projective, we see that $\cH$ exists and is projective. We denote by
\beq\label{univ}
(\pi_\cH, \varphi): \sX\lra \cH\times\Pm,\quad \mathfrak x_i: \cH\to \sX
\eeq
the universal family of $\cH$.


We introduce a parallel space for the Chow variety. We let $\Chow_\Pm^d$ be the Chow variety of degree $d$
dimension one effective cycles in $\PP^m$. For any such cycle $Z$, we denote by
$\Chow(Z)\in \mathrm{Div}^{d,d}[(\PP^m)\dual\times (\PP^m)\dual]$
its associated Chow point (cf. Section \ref{intro}).  We define
$$
\cC:=\{(Z,\bx) \in \Chow_\Pm^d\times (\PP^m)^n
\mid  \bx=(x_1,\cdots,x_n) \in \text{supp}(Z)^n\}.
$$
By Chow Theorem, $\cC$ is projective. Using the Chow coordinate, we obtain an injective morphism
\beq\label{cH}
\cC \mapright{\sub}\mathrm{Div}^{d,d}[(\PP^m)\dual\times (\PP^m)\dual  ]\times (\PP^m)^n\ .
\eeq
Like before (cf. Section \ref{intro}), we endow it with an ample $\QQ$-line bundle
$\sO_{\cC}(1,\ba)$ (depending on the weights $\ba$); 
the line bundle is canonically linearized by the diagonal action of
$$G\defeq SL(m+1)
$$
on \eqref{cH}.
We let $\cC^{ss}\sub\cC$
be the (open) set of {\em semi-stable} points with respect to the ${G}$ linearization on $\sO_\cC(1,\ba)$.

For any one-dimensional subscheme $X\sub \PP^m$, we denote by $[X]$ its associated one-dimensional cycle.
By sending $(X,\iota,\bx) \in \cH$ to $([X], \bx)\in\cC$,
we obtain the ${G}$-linear  {\em Hilbert-Chow} morphism (cf. \cite[Section 5.4]{MFK})
$$\Phi:\cH\longrightarrow\cC.
$$

\begin{lemm}\label{weight-nodal}
For fixed $g$, $n$ and $\ba$ satisfying $\chi_{\ba}(X)>0$ ,
there is an integer $M$ depending on $g$, $n$ and $\ba$ so that for $d\ge M$,  $\Phi\upmo( \cC^{ss})$ consists exactly of those $(X,\iota,\bx)\in\cH$ so that
the associated data $(X, \iota^*\sO_{\PP^m} (1), \bx, \ba)$ is a slope semi-stable weighted nodal curve.
\end{lemm}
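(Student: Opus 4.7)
The core of the lemma is Theorem \ref{main}, which equates Chow and slope semi-stability in the setting of a complete linear system embedding. The plan is first to reduce the Hilbert scheme setting to that of a complete linear system embedding, and then to invoke Theorem \ref{main}, using two explicit destabilizing 1-PS to rule out degenerate embeddings and non-nodal singularities.

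The preliminary step is to choose $M$ so large that every nodal $X\sub\PP^m$ with Hilbert polynomial $P$ has $h^1(\sO_X(1))=0$; such a uniform $M$ exists by standard Castelnuovo--Mumford regularity bounds for the Hilbert scheme. Then $h^0(\sO_X(1))=P(1)=m+1$, so whenever $\iota$ is non-degenerate the restriction map $H^0(\sO_{\PP^m}(1))\to H^0(\sO_X(1))$ is an isomorphism and $\iota$ is the embedding by the complete linear system of $\sO_X(1)$. In this case $\PP W=\PP H^0(\sO_X(1))\dual$ is naturally identified with $\PP^m$, the ambient space $\Xi$ of Section \ref{intro} coincides with the ambient space of $\cC$ with matching $G$-linearized ample line bundles, and the two GIT-semi-stability notions agree.

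For the direction $(\Leftarrow)$, suppose $(X,\iota\sta\sO_{\PP^m}(1),\bx,\ba)$ is slope semi-stable nodal. Nodality and $d\geq M$ give $h^0(\sO_X(1))=m+1$, and a dimension count forces $\iota$ non-degenerate. The preliminary step applies and Theorem \ref{main} delivers $\Phi(X,\iota,\bx)\in\cC^{ss}$. For the direction $(\Rightarrow)$, I verify successively that $\iota$ is non-degenerate and $X$ is nodal and reduced, each via an explicit destabilizing 1-PS. If $\iota(X)\sub\{s_m=0\}$ for some coordinate $s_m$, the 1-PS with weights $\rho_0=\cdots=\rho_{m-1}=1$, $\rho_m=0$ gives $\sI(\lam)=t\cdot\sO_{X\times\Ao}(1)$, hence $e(\sI(\lam))=2d$ and
$$
\omega_\ba(\lam)=\omega(\lam)+\mu_\ba(\lam)=-\frac{2d+\sum_j a_j}{m+1}<0.
$$
If $X$ has a singularity worse than a simple node (or a generically non-reduced component) at a point $p\in X$, pick a generic basis $\bs=\{s_0,\ldots,s_m\}$ of $H^0(\sO_{\PP^m}(1))\dual$ with $\{s_1=\cdots=s_m=0\}=\{p\}$ scheme-theoretically in $\PP^m$, and apply the 1-PS of Example \ref{weighted-pt}. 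A local Newton polygon computation from Section \ref{newton-poly} yields $e(\sI(\lam))\geq 3$, hence
$$
\omega_\ba(\lam)\leq\frac{2d+\sum_j a_j}{m+1}-3-\sum_{x_j=p}a_j,
$$
which is strictly negative for $d$ large since $(2d+\sum_j a_j)/(m+1)\to 2$. After these exclusions $X$ is nodal and $\iota$ is non-degenerate, so the preliminary step and the $(\Rightarrow)$ implication of Theorem \ref{main} yield slope semi-stability of the associated data.

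The main obstacle is establishing the uniform lower bound $e(\sI(\lam))\geq 3$ at every singularity worse than a simple node across the possible local types (cusp, tacnode, ordinary triple point, higher-multiplicity or generically non-reduced component). This requires a finite case analysis of Newton polygons via the tools of Section \ref{newton-poly}, or equivalently a Hilbert--Samuel multiplicity bound for non-nodal Gorenstein curve germs; ensuring that the resulting bound is uniform in $d$ and dominates the weight contribution $\mu_\ba(\lam)$ is the technical heart of the argument.
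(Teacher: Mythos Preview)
Your overall architecture matches the paper's: reduce to the complete-linear-system setting and then invoke Theorem \ref{main}, with the nodality and non-degeneracy of $X$ established first (the paper does this by citing \cite[Prop.~5.5]{Mum}). However, your explicit destabilization of non-nodal singularities contains a genuine error.

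For the 1-PS of Example \ref{weighted-pt} with weights $(1,0,\ldots,0)$ and $\{s_1=\cdots=s_m=0\}=\{p\}$, the ideal $\sI(\lam)$ localized at $(p,0)$ is the maximal ideal of $\sO_{X\times\Ao,(p,0)}$, so $e(\sI(\lam))$ equals the Hilbert--Samuel multiplicity of that local ring, which is the multiplicity of $X$ at $p$. For any planar double point --- cusp $y^2=x^3$, tacnode $y^2=x^4$, indeed every $A_n$ singularity --- this multiplicity is $2$, not $\geq 3$. (You cannot appeal to the Newton-polygon formula on the normalization here: Proposition \ref{lift} is proved only for nodal $X$.) Hence $\omega(\lam)=\frac{2d}{m+1}-2$, which is \emph{positive} for $g\geq 2$ and large $d$, and your 1-PS fails to destabilize a cusp. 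Mumford's actual argument in \cite[Prop.~5.5]{Mum} uses a different 1-PS adapted to the singularity --- for a cusp one takes weights $(2,1,0,\ldots,0)$ with $s_1$ cutting out the tangent line --- and this is what you need to reproduce or cite.

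There is a second, smaller gap. To conclude that $(X,\bx,\ba)$ is a \emph{weighted pointed} nodal curve in the sense of Definition \ref{weight-pt-curve}, you must also check that the marked points $x_j$ lie in the smooth locus and that $\sum_{x_j=p}a_j\leq 1$ at every point. The paper does this (once nodality is known) with the very 1-PS of Example \ref{weighted-pt}: for $p$ a node one gets $\omega_\ba(\lam)=\frac{2d+\sum a_j}{m+1}-2-\sum_{x_j=p}a_j$, which forces $\sum_{x_j=p}a_j=0$ for $d$ large; for $p$ smooth one gets $\omega_\ba(\lam)=\frac{2d+\sum a_j}{m+1}-1-\sum_{x_j=p}a_j$, forcing $\sum_{x_j=p}a_j\leq 1$. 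You should add this step.
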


\begin{proof}
By an argument parallel  to  \cite[Prop. 5.5]{Mum}, one proves that there is an $M$ depending only on
$\chi_\ba(X)$ such that for $d\geq M$, $\Chow(X,\bx)\in \cC^{ss}$ implies that $X$ is a nodal
curve and the inclusion $\iota: X\rightarrow \PP^m$  is given by a complete linear system.

We now show that any $(X,\bx,\ba)\in \Phi\upmo(\cC^{ss})$ is  a weighted pointed nodal curve as defined in the beginning of the paper.
We first check that the weighted points are away from the nodes, and
the total weight at any point is no more than one.

Let $p\in X$; we choose the 1-PS $\lam$ as in the Example \ref{weighted-pt}; the $\lam$-weight for
$\Chow(X,\bx)$  is (cf.\eqref{omega-mu})
$$
\omega(\lam)+\mu_\ba(\lam)
=\frac{2\deg X}{m+1}-\epsilon_p+ \frac{1}{m+1}\sum_{j=1}^na_j-\sum_{x_j=p}a_j=
2-\epsilon_p+\frac{2\chi_\ba(X)}{m+1}-\sum_{x_j=p}a_j,
$$
where $\epsilon_p=2$ if $p$ is a node and $1$ otherwise.  Since $\Chow(X,\bx)$ is semistable,
we must have $0\le \omega(\lam)+\mu_\ba(\lam)$.
Now we choose $M'$ so that $m+1=M'+1-g(X)>\frac{2\chi_\ba(X)}{\min_{a_j>0}\{a_i\}}$;
then
$0\le \omega(\lam)+\mu_\ba(\lam)$
implies that the weighted points are away from the nodes, and the total weight of marked points at $p$
does not exceed one.

Finally, the slope semi-stability of $(X,\bx,\ba)$ follows from the necessity part of Theorem \ref{main}; the condition
\eqref{E-line} of Definition \ref{weight-ss} follows from the inequality \eqref{basic} and Lemma \ref{deg-X-a}.
This proves that all $(X,\bx,\ba)\in \Phi\upmo(\cC^{xx})$ are slope-semi-stable weighted nodal curves. The other
direction is straightforward, and will be omitted.
\end{proof}

We define
$$\cH^{ss}=\Phi\upmo(\cC^{ss})\sub\cH.
$$

\begin{coro}
For $d\ge M$ specified in Lemma \ref{weight-nodal}, the restriction
$$\Phi^{ss}\defeq \Phi|_{\cH^{ss}}: {\cH^{ss}}\to \cC^{ss}
$$
is an isomorphism
\end{coro}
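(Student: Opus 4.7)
The plan is to prove the Corollary by showing $\Phi^{ss}$ is proper, bijective on points, and admits a scheme-theoretic inverse.

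First, since $\cH$ is projective and $\cC$ is separated of finite type, the Hilbert--Chow morphism $\Phi\colon \cH \to \cC$ is proper, hence its base change $\Phi^{ss}$ over the open subscheme $\cC^{ss} \subset \cC$ is also proper. This handles separatedness-style properties for free.

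Next, I would establish bijectivity on closed points. By Lemma \ref{weight-nodal}, every point of $\cH^{ss}$ parametrizes a reduced nodal curve $X \subset \PP^m$ embedded by a complete linear system, equipped with $n$ marked points. Reduced nodal curves are Cohen--Macaulay of pure dimension one with no embedded components, so the scheme structure on $X$ is uniquely recovered from its underlying $1$-cycle $[X]$ (all multiplicities equal one, and the only CM structure is the reduced one). This gives injectivity of $\Phi^{ss}$. For surjectivity, the Mumford-type Chow stability analysis cited in the proof of Lemma \ref{weight-nodal} ensures every $(Z,\bx) \in \cC^{ss}$ has $Z$ a reduced nodal multiplicity-one cycle, and its reduced scheme structure $X = Z_{\red} \subset \PP^m$ has Hilbert polynomial $P$, producing an element of $\cH^{ss}$ mapping to $(Z,\bx)$.

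To upgrade from a proper bijection to a scheme-theoretic isomorphism, I would construct the inverse $\Psi \colon \cC^{ss} \to \cH^{ss}$ functorially. Given a $T$-point of $\cC^{ss}$, i.e.\ a family of semistable Chow data over $T$, each fiber's cycle is reduced nodal of multiplicity one, so the fiberwise reduced structures assemble into a closed subscheme $\cZ \subset \PP^m \times T$ whose fibers have constant Hilbert polynomial $P$. By Grothendieck's flatness criterion, $\cZ \to T$ is flat, inducing a morphism $T \to \Hilb^P_{\PP^m}$; together with the universal marked sections this defines $\Psi$. The compositions $\Phi^{ss} \circ \Psi$ and $\Psi \circ \Phi^{ss}$ agree with the identity on $T$-points of the respective spaces, hence $\Phi^{ss}$ is an isomorphism.

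The main obstacle lies in the last step: justifying that the fiberwise reduced scheme structures on a Chow family glue to a \emph{flat} closed subscheme of $\PP^m \times T$. In general the Chow functor only records cycles rather than subschemes, and the passage cycles-to-subschemes in families can fail to be flat. Here it works because, as shown above, points of $\cC^{ss}$ correspond to reduced nodal curves of fixed Hilbert polynomial $P = dt + 1-g$, so constancy of the Hilbert polynomial together with Cohen--Macaulayness of the reduced fibers forces flatness.
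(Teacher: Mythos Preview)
Your bijectivity argument is correct and essentially matches the paper's: nodal curves are reduced with no embedded points, so the subscheme is recovered from its cycle, and properness of $\Phi$ gives surjectivity.

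The divergence is in the upgrade to a scheme-theoretic isomorphism. The paper does not build a functorial inverse; it simply observes that both $\cH^{ss}$ and $\cC^{ss}$ are smooth, and a proper bijective morphism between smooth varieties over $\CC$ is an isomorphism (proper plus quasi-finite gives finite; finite bijective onto a normal target is an isomorphism by Zariski's Main Theorem). This is a one-line finish once smoothness is known.

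Your route via a functorial inverse $\Psi$ has a genuine gap, and it is not quite the flatness issue you flag. The Chow variety $\cC$ is not a fine moduli space: it does not carry a universal flat family of cycles, and a morphism $T\to\cC^{ss}$ is \emph{not} the same datum as ``a family of semistable Chow cycles over $T$''. So the sentence ``the fiberwise reduced structures assemble into a closed subscheme $\cZ\subset\PP^m\times T$'' has no content as written: there is nothing over $T$ to take fiberwise reduced structures of. You only have, for each closed point $t\in T$, a cycle in $\PP^m$; gluing these set-theoretic fibers into a closed subscheme of $\PP^m\times T$ is precisely the problem of inverting the Hilbert--Chow map, which is what you are trying to prove. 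The flatness criterion you invoke presupposes that $\cZ$ already exists as a closed subscheme, which is the missing step.

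If you want to avoid appealing to smoothness of $\cC^{ss}$, an alternative is to show $\Phi^{ss}$ is a proper monomorphism of schemes (not merely injective on closed points) and hence a closed immersion; combined with surjectivity this gives an isomorphism. But checking the monomorphism property still requires controlling $T$-points for non-reduced $T$, which runs into the same difficulty. The paper's smoothness argument is the clean way through.
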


\begin{proof}
First, since $\cH$ is proper, $\Phi^{ss}$ is surjective.
For the injectivity of $\Phi^{ss}$, suppose there are $(X,\iota,\bx)$
and $(X',\iota',\bx')\in \cH^{ss}$ such that $\Phi(X,\iota, \bx)=\Phi(X',\iota',\bx')\in \CC^{ss}$,
then by Lemma \ref{weight-nodal}, both $X$ and $X'$ are nodal subcurves of $\PP^m$.
Since $\Phi(X,\iota, \bx)=\Phi(X',\iota',\bx')\in \CC^{ss}$, the cycles $[X]=[X']$ and $\bx=\bx'\sub \PP^m$;
since both $X$ and $X'$ are nodal, we have $X=X'$.
This proves $(X,\iota,\bx)=(X',\iota',\bx')$; thus $\Phi^{ss}$ is bijective.
Finally, $\Phi^{ss}$ is an isomorphism since both $\cH^{ss}$ and $\cC^{ss}$ are smooth.
\end{proof}

\def\Gamx{\Gamma_{\!X}}
\def\fB{\mathfrak B}
\def\bcM{\overline{\cM}}
\def\bq{\mathbf q}

To construct the moduli of weighted pointed curves, taking the $k$ specified before \eqref{spe}, we form
$$\cK=\{(X,\iota,\bx)\in\cH\mid X\ \text{smooth weighted pointed curves}, \iota\sta\sO_{\PP^m}(1)\cong\omega_X(\ba\cdot\bx)^{\otimes k}\}.
$$
It is locally closed, and is smooth. Since $X$ in $(X, \iota,\bx)\in \cK$ are smooth, applying Theorem \ref{main}, we conclude that
$\Phi(\cK)\sub \cC^{ss}$, thus $\cK\sub \cH^{ss}$.
Let $\overline{\cK}\sub\bH^{ss}$  be the closure of $\cK$ in $\bH^{ss}$.
Because $\Phi^{ss}$ is an isomorphism, and $\cC$ is projective, the GIT quotient $\cH^{ss}/\!\!/ G\cong \cC^{ss}/\!\!/ G$
exists and is projective. Because $\overline\cK$ is closed in $\cH^{ss}$, the GIT quotient
\beq\label{bq}
\bq:\overline{\cK}\longrightarrow\quotient
\eeq
exits and is projective.

\begin{theo}\label{moduli}
The coarse moduli space $\bcM_{g,\ba}$ of stable genus $g$, $\ba$-weighted nodal curves constructed by Hassett
is canonically isomorphic to the GIT quotient $\overline{\cK}\sslash G$.
\end{theo}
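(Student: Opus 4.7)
The plan is to construct a canonical $G$-invariant morphism $\overline{\cK}\to \bcM_{g,\ba}$ via fiberwise contraction of exceptional components, descend it to a map $\Psi:\overline{\cK}\sslash G\to \bcM_{g,\ba}$, and prove that $\Psi$ is an isomorphism of projective schemes. Since both sides are projective and normal, it will suffice to check bijectivity on closed points together with étaleness on the open locus of Hassett-stable curves.

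First, I would identify the objects parametrized by $\overline{\cK}$. By Lemma \ref{weight-nodal}, every point of $\cH^{ss}$, and in particular of $\overline{\cK}\subset\cH^{ss}$, corresponds to a slope semistable weighted pointed nodal curve $(X,\iota,\bx,\ba)$. Since the condition $\iota^{\ast}\sO_{\PP^m}(1)\cong\omega_X(\bax)^{\otimes k}$ holds on the dense open $\cK$, and the numerical class of a polarization is locally constant in flat families, the polarization on each $(X,\iota,\bx)\in\overline{\cK}$ is numerically equivalent to $\omega_X(\bax)^{\otimes k}$. In particular $\omega_X(\bax)$ is nef, and an irreducible component $E\subset X$ has $\deg\omega_X(\bax)|_E=0$ precisely when $E$ is an exceptional component in the sense of Definition \ref{weight-ss}(3). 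Contracting all such $E$ gives a Hassett-stable weighted pointed curve $(X^{\mathrm{st}},\bx^{\mathrm{st}},\ba)$.

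To construct $\Psi$, I would consider the restriction of the universal family $\sX\to\cH$ to $\overline{\cK}$ together with the relative line bundle $\omega_{\sX/\overline{\cK}}(\ba\cdot\mathfrak{x})$. For $N$ large this is relatively semiample; its relative Proj contracts the exceptional components in each fiber and yields a flat family of Hassett-stable curves over $\overline{\cK}$. The $G$-action on $\sX$ changes only the embedding into $\PP^m$ and leaves this contracted family $G$-invariant, so the induced morphism $\overline{\cK}\to\bcM_{g,\ba}$ (coming from the universal property of the coarse moduli) descends to $\Psi$. For surjectivity on closed points: a Hassett-stable $(X_0,\bx_0)$ has $\omega_{X_0}(\ba\cdot\bx_0)^{\otimes k}$ very ample, the complete linear system embeds it in $\PP^m$ producing a point of $\cH^{ss}$ by Theorem \ref{main}, and this point lies in $\overline{\cK}$ by the density of smooth weighted pointed curves among Hassett-stable ones together with properness of $\overline{\cK}$.

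The principal obstacle is injectivity, which requires matching GIT S-equivalence with Hassett's contraction of exceptional components. Concretely, given any $(X,\iota,\bx)\in\overline{\cK}$ carrying an exceptional $\PP^1$-component $E$, I would exhibit a one-parameter subgroup $\lambda$ of $G$ whose limit $\lim_{t\to 0}\lambda(t)\cdot(X,\iota,\bx)$ lies in $\overline{\cK}$ and replaces $E$ by its image in $X^{\mathrm{st}}$; iterating shows that every closed $G$-orbit in $\overline{\cK}$ is represented by a canonically embedded Hassett-stable curve. Injectivity of $\Psi$ then follows because two canonically embedded Hassett-stable curves with the same isomorphism class differ by an element of $\mathrm{PGL}(m+1)$, which lifts to $G$. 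That $\Psi$ is a scheme-theoretic isomorphism (rather than merely a bijection) follows from normality of both sides combined with Luna's étale slice theorem applied on the Hassett-stable locus, where automorphism groups are finite and $\Psi$ is easily seen to be étale.
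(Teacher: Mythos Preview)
Your proposal contains a genuine error in the first substantive paragraph that undermines the rest of the argument. You claim that since $\iota^\ast\sO_{\PP^m}(1)\cong\omega_X(\bax)^{\otimes k}$ on the dense open $\cK$ and ``the numerical class of a polarization is locally constant in flat families,'' every point of $\overline{\cK}$ carries a polarization numerically equivalent to $\omega_X(\bax)^{\otimes k}$. This is false: when an irreducible curve degenerates to a reducible one, only the \emph{total} degree of a line bundle is locally constant; the multi-degree on the components can land anywhere in a coset of the lattice $\Gamma_X$ generated by the vectors $\vec\ell_\alpha$ (see the discussion surrounding Proposition~\ref{6.6}). In fact the paper observes explicitly (in the introduction and in Section~\ref{hilb-to-chow}) that $\overline{\cK}\setminus\cK$ contains strictly slope-semistable, \emph{non}-canonically polarized curves. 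Concretely, a point of $\overline{\cK}$ can be a blow-up $(\bar X,\bar\bx,\ba)$ of a Hassett-stable curve with exceptional $\PP^1$'s inserted at nodes, and on such a curve $\omega_{\bar X}(\bax)^{\otimes k}$ has degree zero on the exceptional components and is therefore not even ample---so the actual polarization \emph{must} differ from it numerically.

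This misconception propagates into your injectivity argument. Your plan to drive an arbitrary point of $\overline{\cK}$ to the canonically embedded Hassett-stable curve via successive 1-PS limits presupposes that you already know what the closed $G$-orbits in a fiber of $\overline\Psi$ look like; but with the correct picture of $\overline{\cK}$ (various blow-ups of a fixed Hassett-stable curve, each with many possible twisted polarizations $\omega(\bax)^{\otimes k}(\sum b_\alpha\bar X_\alpha)$), identifying the unique polystable orbit is exactly the content to be proved, not something you can assume. The paper sidesteps this by a softer argument: it shows (Lemma~\ref{inj}) that the set of isomorphism classes $(\bar X,\sL,\bar\bx)$ arising in a fiber is \emph{discrete}---because blow-ups form a discrete set and for each blow-up the admissible twisting integers $\{b_\alpha\}$ are discrete---hence the fiber in $\overline{\cK}\sslash G$ is zero-dimensional, and Zariski's Main Theorem finishes. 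Your surjectivity sketch (embed a Hassett-stable curve canonically and smooth it) is essentially correct and in fact simpler than what the paper writes, but the paper's more elaborate twisting construction (Proposition~\ref{6.6}) is needed later for the coarse-moduli property (Proposition~\ref{coarse}).
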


The main technical part of the proof is to analyze the closed points of $\quotient$. We have the following
preliminary results.

For any $(X,\iota,\bx)\in \bcK$, since the associated weighted pointed curve $(X,\bx,\ba)$ is semistable, we can form a
new weighted pointed curve by contracting all of its exceptional components 
(cf. Definition \ref{weight-ss}). We denote the resulting curve by
\beq\label{stabilization}
(X^\st,\bx^\st,\ba),
\eeq
and call it the {\em stablization of} $(X,\bx,\ba)$.
Since the marked points never lie on the contracted components, the stabilization
produces a weighted pointed nodal curve of the same genus.
Further, the stabilization also applies to families of semistable weighted curves. Thus applying this to
the restriction to $\bcK$ of the universal family of $\cH$, we obtain a family of
weighted pointed stable curves on $\bcK$.
Since $\bcM_{g,\ba}$ is the coarse moduli space of stable weighted pointed nodal curve, we obtain a morphism
\beq\label{Psi}
\overline \Psi: \bcK \lra \bcM_{g,\ba}.
\eeq
As this morphism is $G$-equivariant with $G$ acting trivially on $\bcM_{g,\ba}$, it descends to a morphism
\beq\label{st-Psi}
\psi: \quotient \lra \bcM_{g,\ba}.
\eeq
We will prove Theorem \ref{moduli} by proving that $\psi$ is an isomorphism.

%

\subsection{Surjectivity}

Let $(X,\bx, \ba)$ be a weighted pointed stable curve. We endow it the
polarization $\sO_X(1)=\omega_X(\ba\cdot\bx)^{\otimes k}$
together with the embedding $\iota: X\to \PP H^0(\sO_X(1))\dual$.
When $X$ is smooth, $(X,\iota, \bx,\ba)$ lies in $\cK$; when $X$ is singular,
this may not necessarily hold.
Our solution is to replace $\omega_X(\ba\cdot\bx)^{\otimes k}$ by its twist, to be
defined momentarily.

Given $(X,\bx)$, we choose a smoothing
$\pi: \cX\to T$
over a pointed curve $0\in T$ such that
$\cX$ is smooth and $\cX_0=\cX\times_T 0\cong X$.
By shrinking $T$ if necessary, we can extend the $n$-marked points of $X$ to sections $\fx_i: T\to \cX$
so that, denoting $\fx=(\fx_1,\ldots, \fx_n)$,
$(\cX, \fx, \ba)$ form a flat family of  weighted pointed stable curves. Let $X_1,\ldots, X_r$ be the
irreducible components of $X$. The following Proposition gives the surjectivity of $\psi$.

\begin{prop}\label{6.6}
Let $(X,\bx, \ba)$ be a weighted pointed stable curve, and let $(\cX,\fx,\ba)$ be the $T$-family
constructed.
Then there exist non-negative integers $\{b\lalp\}_{\alpha=1}^r$ so that after letting
\beq\label{eq6.6}
\sO_{\cX}(1)=\omega_{\cX/T}(\ba\cdot\bs)^{\otimes k}\otimes_{\sO_{\cX}}\sO_{\cX}(\sum b\lalp X\lalp),
\eeq
$(\cX, \sO_\cX(1), \bs)$ forms a family of slope semistable weighted pointed nodal curves.
\end{prop}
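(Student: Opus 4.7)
The plan is to reduce Proposition \ref{6.6} to a finite system of linear inequalities for the integers $b_\alpha$ indexed by the components of $X$, then solve it by a Caporaso-type argument \cite{Cap} on the dual graph of $X$.

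\textbf{Reductions.} Since $\cX_t$ is smooth for $t\neq 0$, it has no proper subcurves, so slope semistability is automatic on nearby fibers. After shrinking $T$ to be affine, $\sO_\cX(\cX_0)\cong\sO_\cX$, so shifting all $b_\alpha$ by the same integer does not change $\sO_\cX(1)$; in particular it suffices to produce any integer tuple $(b_\alpha)$ satisfying the desired conditions on the central fiber and then normalize by $b_\alpha\mapsto b_\alpha-\min_\alpha b_\alpha$ to achieve $b_\alpha\geq 0$.

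\textbf{The inequalities.} Using $\sO_\cX(X_\alpha)\cdot X_\beta=\ell_{\alpha\beta}$ with the convention $\ell_{\alpha\alpha}=-\ell_\alpha$ (since $\sum_\alpha X_\alpha$ has zero intersection with every component), one has for every proper subcurve $Y=\bigcup_{X_\beta\sub Y}X_\beta\subsetneq X$,
\[
\deg_Y\sO_\cX(1)=k\cdot\deg_Y\omega_X(\bax)+\sum_\alpha b_\alpha\,\ell_{\alpha,Y},\qquad\ell_{\alpha,Y}:=\sum_{X_\beta\sub Y}\ell_{\alpha\beta}.
\]
Plugging into criterion \eqref{basic} of Proposition \ref{main-c} and using $\deg X=2k\chi_\ba(X)$, the $k$-dependent terms cancel because the ratio $\theta_Y:=\deg_Y\omega_X(\bax)/\deg\omega_X(\bax)$ is intrinsic to $\omega_X(\bax)$, and the criterion reduces to
\[
\Bigl|\sum_\alpha b_\alpha\,\ell_{\alpha,Y}+c_Y\Bigr|\leq\frac{\ell_Y}{2},\qquad c_Y:=\sum_{x_j\in Y}\frac{a_j}{2}-\theta_Y\sum_{j=1}^n\frac{a_j}{2},
\]
a finite system depending only on the combinatorics of $(X,\bx,\ba)$, independent of $k$.

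\textbf{Solving the system.} The matrix $(\ell_{\alpha\beta})$ is the negative combinatorial Laplacian of the dual graph of $X$; it has rank $r-1$ with kernel spanned by $(1,\ldots,1)$, which is why the normalization in the first paragraph is available. I would produce the integer solution by a Caporaso-type iterative procedure: starting from $b_\alpha\equiv 0$, at each step identify a subcurve $Y$ violating the upper (respectively lower) bound and increment $b_\beta$ by one for $\beta\in Y^\complement$ (respectively $Y$), which shifts $\sum_\alpha b_\alpha\,\ell_{\alpha,Y}$ toward the admissible range by the integer $\ell_Y$. The main obstacle will be verifying termination in the weighted setting, since the corrections $c_Y$ are not purely topological but involve the rational weights $\ba$; one must exhibit an integer-valued potential that strictly decreases at each step, which is possible because the $c_Y$ are uniformly bounded in terms of $n$ and $|\ba|$. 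Ampleness of the resulting $\sO_\cX(1)$ on $X$ is then immediate for $k$ large, since weighted-pointed stability of $(X,\bx,\ba)$ ensures $\deg_{X_\alpha}\omega_X(\bax)>0$ on every component, dominating the bounded Laplacian correction $\sum_\alpha b_\alpha\,\ell_{\alpha,X_\beta}$.
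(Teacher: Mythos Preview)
Your proposal is correct and follows essentially the paper's approach: the paper recasts the same problem in the language of the degree class group $\ZZ^{\oplus r}/\Gamma_X$ (your Laplacian lattice) and the admissible set $\fB^d_{X,\bx,\ba}$, then invokes its Proposition~\ref{exist-d}---whose proof it declares ``parallel to \cite[Prop.~4.1]{Cap}'' and omits---for the existence of the $b_\alpha$, and uses Lemma~\ref{d>0} for ampleness, which plays the same role as your observation that the $b_\alpha$ are bounded independently of $k$. One cosmetic slip: in your iterative step the directions are reversed, since incrementing $b_\gamma$ for $X_\gamma\subset Y^\complement$ changes $\sum_\alpha b_\alpha\ell_{\alpha,Y}$ by $\sum_{X_\gamma\subset Y^\complement}|X_\gamma\cap Y|=\ell_Y>0$, so it \emph{increases} the quantity; swap $Y$ and $Y^\complement$ in your prescription.
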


The Proposition was essentially proved by Caporaso in \cite{Cap}. Since we need to use the
same technique to prove the injectivity, we recall the notation used to prove this Proposition.
The remainder part of this subsection essentially follows \cite{Cap}.

For any line bundle $\sL$ on $X$, we denote $\delta\lalp(\sL)=\deg\sL|_{X\lalp}$.
We define the associated lattice point of $\sL$ be
$$\vec\delta(\sL)\defeq (\delta_1(\sL),\ldots,\delta_r(\sL))\in \ZZ^{\oplus r}.
$$
We call $\vec\delta(\sL)$ the numerical class of $\sL$.

We next introduce a subgroup $\Gamma_X\sub \ZZ^{\oplus r}$. 
We let
\beq\label{ell_ab}
\ell_{\alpha,\beta}=\ell_{\alpha,\beta}(X)=|X\lalp\cap X\lbe|\quad \text{if}\ \alpha\ne \beta;
\and \ell_{\alpha,\alpha}=\ell_{\alpha,\alpha}(X)= -|X\lalp\cap\ X\lalp\comp|.
\eeq
We define
$\vec\ell_\alpha=\vec\ell_\alpha(X)=(\ell_{\alpha,1}(X),\ell_{\alpha,2}(X),\cdots,\ell_{\alpha,r}(X))$.
We define $\Gamma_X\sub \ZZ^{\oplus r}$ be the subgroup generated by $\vec\ell_1,\ldots,\vec\ell_r$.

\begin{rema}
Let $\sL=\omega_X(\ba\cdot\bx)^{\otimes k}$. Since
$\cX$ is smooth, for
the invertible sheaf $\sO_{\cX}(1)$ stated in \eqref{eq6.6} 
depending on the integers $b_1,\ldots, b_r$,
we have
$$\vec\delta(\sO_{\cX}(1)|_X)=\vec\delta(\sL)+\sum_{\alpha=1}^r b\lalp \vec\ell\lalp.
$$
\end{rema}

This says that any two choices of $\sO_{\cX}(1)$ restricted to the central fiber have equivalent
numerical classes modulo $\Gamma_X$. This motivates the definition

\begin{defi} We define the {\em degree class group} of $X$ be the quotient $\ZZ^{\oplus r}/\Gamx$.
\end{defi}

We introduce one more notation. For any vector $\vec v=(v_1,\ldots,v_r)\in \ZZ^{\oplus r}$ and any
subcurve $Y\sub X$, mimicimg the notion of degree, we define
$$\deg_Y \vec v=\sum_{X\lalp\sub Y} v\lalp.
$$
In particular, $\deg_X \vec v=\sum v\lalp$.

Let $(X, \bx, \ba)$ be a weighted pointed nodal curve and $\sL$ a line bundle on $X$ of total degree $d$.
For any subcurve $Y\sub X$, we
introduce the {\em extremes} of $Y$ (depending on $d$) be
\beq\label{m_Y}
\bM^\pm_Y:=\frac{\deg_Y\omega_X(\ba\cdot\bx)}{\deg\omega_X(\ba\cdot\bx)}\Bl d
+\sum_{j=1}^n\frac{a_{j}}{2}\Br-\sum_{x_{j}\in Y}\frac{a_{j}}{2}\pm \frac{\ell_Y}{2}.
\eeq

Proposition \ref{main-c} is reformulated as

\begin{lemm} \label{basic-m1}
Suppose $\chi_{\ba}(X)>0$, and let $d>M$, where $M$ is defined in Corollary \ref{basic=basic1}. Then
a weighted pointed nodal curve $(X,\bx, \ba)$ with a numerical effective line bundle $\sL$ on $X$ of $\deg\sL=d$
is slope semi-stable (cf. Proposition \ref{main-c}) if and only if
\beq\label{basic-m}
\deg_Y\sL  \in[\bM^-_Y,\bM^+_Y] \ \text { for any subcurve } Y\sub X.
\eeq
\end{lemm}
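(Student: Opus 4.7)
The plan is to recognize Lemma \ref{basic-m1} as an algebraic rewriting of Proposition \ref{main-c} combined with the cohomology-vanishing step of Corollary \ref{basic=basic1}, extended from very ample polarizations to numerically effective ones.

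First I would unpack the definition \eqref{m_Y}: a direct substitution shows that the two-sided bound $\bM^-_Y \le \deg_Y \sL \le \bM^+_Y$ is equivalent to
$$
\Bigl| \bigl( \deg_Y \sL + \textstyle\sum_{x_j \in Y} \tfrac{a_j}{2} \bigr) - \tfrac{\deg_Y \omega_X(\bax)}{\deg \omega_X(\bax)}\bigl( d + \textstyle\sum_{j=1}^n \tfrac{a_j}{2} \bigr) \Bigr| \le \tfrac{\ell_Y}{2},
$$
which is exactly the semi-stable form of inequality \eqref{basic}, reading $\deg Y$ as $\deg_Y \sL$. Thus \eqref{basic-m} is a purely algebraic transcription of \eqref{basic}, and there is no content in this step beyond unpacking notation.

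Next I would deduce that slope semi-stability, defined by \eqref{basic1}, is equivalent to \eqref{basic} holding for every proper subcurve $Y \subsetneq X$. The vehicle is Proposition \ref{main-c}, which gives this equivalence under the restriction $h^0(\sL|_Y) < h^0(\sL)$. To remove this restriction and accommodate $\sL$ that is merely numerically effective, I would reuse the argument of Corollary \ref{basic=basic1}: for $d > M$, the hypothesis that \eqref{basic-m} (equivalently \eqref{basic}) holds on proper subcurves implies, via Lemma \ref{deg-X-a}, that $\deg_Y \sL$ grows linearly in $d$ on every subcurve, forcing $h^1(\sL|_Y) = 0$. Riemann-Roch then turns $h^0(\sL|_Y) = \deg_Y \sL - g_Y + 1$ into a linear function of $\deg_Y \sL$, after which the passage from \eqref{basic1} to \eqref{basic} runs verbatim as in \cite[Proposition 3.1]{Cap} and \cite[Proposition 1.0.7]{Gie}.

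Finally, I would take care of the full range of $Y$ in \eqref{basic-m}. The case $Y = X$ is automatic since $\ell_X = 0$ and $\bM^-_X = \bM^+_X = d = \deg_X \sL$. For proper subcurves, the previous step supplies the equivalence. The only non-algebraic ingredient---and thus the principal technical obstacle---is the $h^1$-vanishing for a merely numerically effective line bundle; this is precisely what is absorbed by taking $d > M$ with the $M$ furnished by Corollary \ref{basic=basic1}, so no new argument is required beyond transporting that proof to the present numerical setting.
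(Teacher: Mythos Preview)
Your proposal is correct and matches the paper's treatment: the paper introduces this lemma with the phrase ``Proposition \ref{main-c} is reformulated as'' and gives no separate proof, treating it as an immediate algebraic transcription of \eqref{basic} into the interval language of \eqref{m_Y}. Your first paragraph is exactly this transcription; your additional care about the $h^1$-vanishing needed to pass between \eqref{basic1} and \eqref{basic} for a merely nef $\sL$, and your treatment of $Y=X$, are more than the paper spells out but entirely consistent with what the paper intends by the cross-reference to Corollary \ref{basic=basic1}.
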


We quote the basic properties of extremes.

\begin{lemm}\label{m-M}Let $Y$, $Y_1$ and $Y_2$ be surcurves of $X$. We have\\
(1) $\bM^+_Y-\bM^-_Y=\ell_Y$, and $ \bM^-_{Y\comp}+\bM^+_Y=v_{Y\comp}+v_Y=d$;\\
(2) suppose $E\sub X$ is an {\em exceptional} component such that $|E \cap Y|=1$, then
$\bM^\pm_{E\cup Y}=\bM^\pm_Y$;\\
(3) suppose $Y_1$ and $Y_2$ have no common component, then
$\bM^\pm_{Y_1\cup Y_2}\pm |Y_1\cap Y_2|=\bM^\pm_{Y_1}+\bM^\pm_{Y_2}$.
\end{lemm}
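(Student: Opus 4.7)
The plan is to verify each of the three identities by direct computation from the definition \eqref{m_Y}, with the bulk of the work going into keeping track of how the quantities $\deg_Y\omega_X(\bax)$, $\sum_{x_j\in Y}a_j$, and $\ell_Y$ transform under taking complements, adjoining exceptional components, and taking unions.

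For (1), the first assertion $\bM^+_Y-\bM^-_Y=\ell_Y$ is immediate from the definition since the $\pm\ell_Y/2$ term is the only one that changes. For the second assertion, one adds $\bM^-_{Y\comp}$ and $\bM^+_Y$ and uses (i) $\ell_Y=\ell_{Y\comp}$ because these count the same set of nodes, so the signed $\ell$-contributions cancel; (ii) $\deg_Y\omega_X(\bax)+\deg_{Y\comp}\omega_X(\bax)=\deg\omega_X(\bax)$, so the fractional coefficient collapses to $1$; and (iii) every marked point lies in exactly one of $Y,Y\comp$ (since marked points are smooth points of $X$ and hence on a unique component), so $\sum_{x_j\in Y}a_j/2+\sum_{x_j\in Y\comp}a_j/2=\sum_{j=1}^n a_j/2$. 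The total is then $d$.

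For (2), I would first use the structure of an exceptional component $E\cong\PP^1$: $E\cap\bx=\emptyset$ and $\deg\omega_X(\bax)|_E=0$, from which $\deg\omega_X|_E=-2+|E\cap E\comp|=0$ forces $|E\cap E\comp|=2$. Since $|E\cap Y|=1$, the other node of $E$ lies in $Y\comp\setminus E$. Then three things happen when replacing $Y$ by $E\cup Y$: the node $E\cap Y$ leaves the linking set while the node $E\cap(Y\comp\setminus E)$ joins it, giving $\ell_{E\cup Y}=\ell_Y$; the degree contribution from $E$ is zero; and no new marked points are collected. Plugging these three facts into the definition yields $\bM^\pm_{E\cup Y}=\bM^\pm_Y$.

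For (3), the fact that $Y_1,Y_2$ share no irreducible components makes the degree and marked-point contributions additive, since marked points are smooth and so cannot lie in $Y_1\cap Y_2$. The main bookkeeping step is the node count: partitioning $Y_i\cap Y_i\comp$ according to whether the node lies in $Y_1\cap Y_2$ or in $Y_i\cap(Y_1\cup Y_2)\comp$ gives
\[
\ell_{Y_1}+\ell_{Y_2}=2|Y_1\cap Y_2|+\ell_{Y_1\cup Y_2}.
\]
Assembling the three pieces into the definition of $\bM^\pm_{Y_1}+\bM^\pm_{Y_2}$ and comparing with $\bM^\pm_{Y_1\cup Y_2}$, the linear and weighted terms match while the $\ell/2$-terms differ by $\pm|Y_1\cap Y_2|$, which is precisely the claim. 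The only mildly delicate point across all three parts is the node accounting of how $\ell_Y$ behaves under these operations; everything else is formal substitution into \eqref{m_Y}.
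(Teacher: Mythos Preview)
Your proof is correct and is exactly the direct verification the paper has in mind; the paper itself omits the argument, simply stating that it is a direct check of the definition \eqref{m_Y}.
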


\begin{proof}
The proof is a direct check, and will be omitted.
\end{proof}

Let $\ZZ_{\ge 0}^{\oplus r}$ be those $\vec v=(v_i)\in \ZZ^{\oplus r}$ so that $v_i\ge 0$. We define
$$
\fB^d_{X,\bx,\ba}=\{\vec v\in \ZZ_{\ge 0}^{\oplus r}\ \big|\ \deg_X\vec v=d,\  \vec v  \text{ satisfies \eqref{basic-m}
with $\deg_Y\sL$ replaced by $ \deg_Y \vec v$}\}. 
$$

\begin{prop} \label{exist-d}
Let $(X,\bx,\ba)$ be a weighted pointed semi-stable (cf. Definition \ref{weight-ss})
curve and $d$ sufficiently large.
Then for any $\vec v\in\ZZ^{\oplus r}$ satisfying
$ \deg_X \vec v=d$, we have
$$(\vec v+\Gamx)\cap \fB^d_{X,\bx,\ba}\ne\emptyset.
$$
\end{prop}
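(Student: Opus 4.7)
The strategy is purely combinatorial: starting from an arbitrary integer vector $\vec v$ of total degree $d$, I plan to iteratively twist it by the generators $\vec\ell_\alpha$ of $\Gamx$ until it lands inside $\fB^d_{X,\bx,\ba}$. The first step is to record the effect of a twist on the multidegree of a subcurve: for any $Z\subseteq X$,
\[
\deg_Z\vec\ell_\alpha \;=\; \begin{cases} -|X_\alpha \cap Z^c|, & X_\alpha \subseteq Z,\\ \phantom{-}|X_\alpha \cap Z|, & X_\alpha \subseteq Z^c,\end{cases}
\]
from which $\deg_X\vec\ell_\alpha=0$, so every coset $\vec v+\Gamx$ stays inside the hyperplane $\{\deg_X\vec u=d\}$. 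In particular, adding $\vec\ell_\alpha$ for $X_\alpha\subseteq Y$ incident to $Y^c$ is the canonical way to strictly decrease $\deg_Y\vec u$ without altering the total degree.

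Now suppose $\vec u\in \vec v+\Gamx$ is not in $\fB^d_{X,\bx,\ba}$. By the duality $\bM^-_{Y^c}+\bM^+_Y=d$ from Lemma~\ref{m-M}(1), we may assume there is a subcurve $Y\subsetneq X$ with $\deg_Y\vec u>\bM^+_Y$. I would pick such a $Y$ maximal under inclusion, and an irreducible component $X_\alpha\subseteq Y$ with $X_\alpha\cap Y^c\ne\emptyset$; such an $\alpha$ exists because $X$ is connected and $Y$ proper. Replacing $\vec u$ by $\vec u':=\vec u+\vec\ell_\alpha$, the value $\deg_Y$ drops strictly by $|X_\alpha\cap Y^c|\ge 1$, while for every $Z\supsetneq Y$ (for which $X_\alpha\subseteq Z$ automatically holds) one has $\deg_Z\vec u'\le\deg_Z\vec u\le\bM^+_Z$ by maximality of $Y$. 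The nontrivial bookkeeping consists in showing, using Lemma~\ref{m-M}(2),(3) to decompose extremes across $Y$ and $Y^c$, that the twist does not generate any new violation on a strictly larger subcurve.

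To secure termination, I would introduce the nonnegative integer potential
\[
\Psi(\vec u) \;:=\; \sum_{Y\subsetneq X}\Bl\max\{0,\deg_Y\vec u-\bM^+_Y\}+\max\{0,\bM^-_Y-\deg_Y\vec u\}\Br,
\]
so that $\vec u\in\fB^d_{X,\bx,\ba}$ iff $\Psi(\vec u)=0$, and argue that the twist above strictly decreases $\Psi$. Since $\Psi\in\ZZ_{\ge 0}$ and $X$ has only finitely many subcurves, after finitely many iterations $\Psi$ vanishes and the current vector lies in $\fB^d_{X,\bx,\ba}$. A subsidiary step is to carry the iteration inside $\ZZ_{\ge 0}^{\oplus r}$: starting from a representative given by a sufficiently high multiple $\omega_X(\bax)^{\otimes k}$ of the dualizing sheaf (whose deviation from the balanced multidegree is bounded independently of $k$) and using the uniform bounds on the number of components and nodes from Corollary~\ref{bdd-link}, one can confine the process to a bounded region of $\ZZ_{\ge 0}^{\oplus r}$.

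\textbf{Main obstacle.} The delicate point is verifying that $\Psi$ truly decreases at each twist. The worrisome subcurves are those $Z$ incomparable with $Y$: the twist by $\vec\ell_\alpha$ may shift $\deg_Z\vec u$ by up to $|X_\alpha\cap Z|$ in either direction, potentially creating a fresh violation for $Z$. Handling this requires either a finer choice of $Y$---among all maximal violators, picking one that minimizes incidental damage, or lexicographically smallest---or a reweighting of $\Psi$ so that the guaranteed unit-sized gain at $Y$ absorbs any collateral losses, exploiting that $|X_\alpha\cap X_\alpha^c|$ is uniformly bounded in terms of $g$ and $n$ (Corollary~\ref{bdd-link}). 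Once this extremal-subcurve bookkeeping is settled, the rest is routine.
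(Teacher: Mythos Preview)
The paper omits this proof, deferring to \cite[Prop.~4.1]{Cap}; your iterative-twisting strategy is indeed Caporaso's approach in outline, but the proposal has a genuine gap at precisely the point you label the ``main obstacle''.

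Two issues. First, $\Psi(\vec u)$ is not integer-valued: the extremes $\bM^\pm_Y$ in \eqref{m_Y} involve the rationals $a_j/2$ and the ratio $\deg_Y\omega_X(\bax)/\deg\omega_X(\bax)$, so ``$\Psi\in\ZZ_{\ge 0}$ and strictly drops'' is not a termination argument without a uniform positive lower bound on the decrement---which is exactly the step you have not supplied. Second, and more seriously, twisting by a \emph{single} component $\vec\ell_\alpha$ makes the collateral bookkeeping hard to control: for $Z$ incomparable with $Y$ the shift $\deg_Z\vec\ell_\alpha$ can equal or exceed the gain at $Y$, and neither a lex-minimal choice of $Y$ nor an unspecified reweighting of $\Psi$ settles this. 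Caporaso instead twists by the \emph{entire} violating subcurve, $\vec\ell_Y:=\sum_{X_\alpha\subseteq Y}\vec\ell_\alpha$, so that $\deg_Y$ jumps by exactly the interval width $\ell_Y=\bM^+_Y-\bM^-_Y$; combined with an extremal choice of $Y$ and the additivity relations in Lemma~\ref{m-M}, this allows an induction on the set of violating subcurves rather than a descent on a real-valued potential. That is the missing ingredient. (Aside: Corollary~\ref{bdd-link} applies to slope-semistable \emph{polarized} curves, not to the curves of Definition~\ref{weight-ss}, which may carry arbitrarily many exceptional components; since $X$ is fixed here this is harmless, but your appeal to it for a uniform bound is misplaced.)
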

\begin{proof}
The proof is parallel to that of \cite[Prop. 4.1]{Cap}, and will be omitted.
\end{proof}

\begin{lemm}\label{d>0}
Let $(X,\bx,\ba)$ be a weighted pointed nodal curve satisfying $\chi_\ba(X)>0$,
then there is a constant $K$ depends only on the genus $g$, $\chi_\ba(X)$ and $\ba$
such that if $d\geq K $, then $\bM^-_Y>0$ for any connected subcurve $Y\sub X$ satisfying
$\deg_Y\omega_X(\ba\cdot\bx)>0$.
\end{lemm}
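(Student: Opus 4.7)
The plan is to unpack $\bM^-_Y > 0$ into an explicit numerical inequality, perform a short case analysis on the arithmetic genus $g_Y$ of $Y$, and exploit that the $a_j$ are rational with a common denominator to absorb all the pathological cases into a single constant.

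First I would substitute the adjunction identity $\deg_Y\omega_X(\bax) = (2g_Y-2) + \ell_Y + \sum_{x_j\in Y}a_j$ into the definition \eqref{m_Y} of $\bM^-_Y$. Writing $D = 2g_Y-2$, $B = \sum_{x_j\in Y}a_j$ and $C=\ell_Y$, the condition $\bM^-_Y>0$ becomes
\[
(D+B+C)\Bl d+\tfrac{1}{2}\textstyle\sum_j a_j\Br \;>\; \chi_\ba(X)\,(B+C).
\]
Next I would fix a positive integer $N$ with $Na_j\in\ZZ$ for every $j$; then every quantity of the form $mB+kC$ (with $m,k\in\ZZ$) lies in $\tfrac{1}{N}\ZZ$, so any positive such quantity is at least $1/N$. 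This is the only place where dependence on $\ba$ enters the bound.

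I would then split according to $g_Y$. If $g_Y\ge 1$ then $D\ge 0$, and the hypothesis $\deg_Y\omega_X(\bax)>0$ forces $B+C>0$; in this range it suffices to require $d+\tfrac12\sum a_j>\chi_\ba(X)$, that is, $d>\chi_\ba(X)-\tfrac12\sum a_j$. The only delicate case is $g_Y=0$, where $D=-2$ and the hypothesis reads $B+C>2$; since $B+C-2$ lies in $\tfrac{1}{N}\ZZ$ and is positive, it is $\ge 1/N$, whence
\[
\frac{B+C}{B+C-2}\;=\;1+\frac{2}{B+C-2}\;\le\; 1+2N.
\]
The inequality above then becomes $d+\tfrac12\sum a_j>\chi_\ba(X)(1+2N)$.

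The main (in fact only) obstacle is the $g_Y=0$ case, where the ratio $(B+C)/(B+C-2)$ is a priori unbounded as $B+C\downarrow 2$; the rationality of $\ba$ and the resulting lower bound $B+C-2\ge 1/N$ is precisely what makes the ratio bounded uniformly. Taking
\[
K \;=\; \chi_\ba(X)\,(1+2N)
\]
(which dominates $\chi_\ba(X)-\tfrac12\sum a_j$) gives a constant depending only on $g$, $\chi_\ba(X)$ and $\ba$ such that $d\ge K$ forces $\bM^-_Y>0$ for every connected subcurve $Y$ with $\deg_Y\omega_X(\bax)>0$, completing the proof.
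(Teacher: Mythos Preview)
Your proof is correct, modulo one harmless slip: in the case $g_Y\ge 1$ you assert that the hypothesis $\deg_Y\omega_X(\bax)>0$ forces $B+C>0$, but in fact one can have $B+C=0$ (for instance $Y=X$ with $g\ge 2$ and no marked points). This does not matter, since when $B+C=0$ the right-hand side $\chi_\ba(X)(B+C)$ vanishes while the left-hand side is $D\bl d+\tfrac12\sum a_j\br>0$; simply record this trivial subcase.

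Your route differs from the paper's. The paper first argues that, since $\bM^-_Y$ depends only on the numerical data attached to $L_Y$, one may reduce to $X$ consisting of two smooth irreducible components, thereby obtaining $\ell_Y\le g+1$; it then combines this upper bound on $\ell_Y$ with a uniform positive lower bound $\kappa$ on $\deg_Y\omega_X(\bax)$ (coming, as in your argument, from the rationality of the $a_j$) to make the leading term dominate. You instead manipulate the inequality $\bM^-_Y>0$ directly and, in the critical $g_Y=0$ case, bound the ratio $(B+C)/(B+C-2)\le 1+2N$ rather than bounding $B$ and $C$ individually. The payoff is that your argument never needs an a priori bound on $\ell_Y$, so it sidesteps the paper's reduction step entirely and works uniformly regardless of how many linking nodes $Y$ has. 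Both proofs ultimately rest on the same arithmetic observation that positive elements of $\tfrac{1}{N}\ZZ$ are at least $1/N$.
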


\begin{proof}
Let $Y\sub X$ be a connected subcurve such that $\deg_Y\omega_X(\ba\cdot\bx)>0$.
Since the expression $\bM^-_Y$ only involves the nodes $L_Y=Y\cap Y\comp$, without lose of
generality we can assume that $X$ consists of two smooth irreducible components $Y$ and $Y\comp$.
Then $\ell_Y\le g+1$. Thus to prove that for large $d$ we have $\bM_Y^->0$ whenever $\deg_Y \omega_X(\ba\cdot\bx)>0$,
it suffices to show that
\beq\label{inf-Y}
\inf\{ \deg_Y \omega_X(\ba\cdot\bx)\mid \deg_Y \omega_X(\ba\cdot\bx)>0\}
\eeq
is bounded below by a positive constant depending only on $\ba$ and $g$. But this is true because
\eqref{inf-Y} is bounded below by
$$\kappa=\inf\Bigl\{\sum_{i\in I}a_i-l\, \Big|\, l\in \ZZ,\ I\sub\{1,\cdots,n\},\ \sum_{i\in I}a_i-l>0\Bigr\}.
$$
Since $\ba$ is fixed, $\kappa$ is positive.
Since $d=k\cdot \deg \omega_X(\ba\cdot\bx)$, for large $k$, which is the same as for large $d$, we have
$\bM_Y^->0$ whenever $\deg_Y \omega_X(\ba\cdot\bx)>0$.
This proves the Lemma.
\end{proof}

\begin{proof}[Proof of Proposition \ref{6.6}]
By Proposition \ref{exist-d}, there are $\{b\lalp\}$'s such that for the
$\sO_\cX(1)$ given in \eqref{eq6.6} and $\sL=\sO_{\cX}(1))|_{\cX_0}$,
$\vec\delta(\sL)$ satisfies \eqref{basic-m}.
To show that $(X,\sL,\bx,\ba)$ is a polarized slope semi-stable curve,
we need to show that $\sL$ is ample. Since $\vec\delta(\sL)$
satisfies \eqref{basic-m}, $\sL$ is ample if  $\bM_{X\lalp}^->0$ for any
component $X\lalp\sub X$; but this is precisely Lemma \ref{d>0} because $(X,\bx,\ba)$ is weighted pointed
implies that $\omega_X(\bax)$ is positive.
\end{proof}

\subsection{Injectivity}\label{uni-b}
In this subsection, we use the separatedness of $\quotient$ to prove that $\psi$ in \eqref{st-Psi} is injective.

\begin{defi}\label{blow-up}
Let $(X,\bx,\ba)$ be a weighted pointed nodal curve (cf. Definition \ref{weight-pt-curve}).
We say a weighted pointed nodal curve $(\bar X,\bar \bx, \ba)$ is a {\em blow-up} of $(X,\bx,\ba)$
if there is a morphism $\pi: \bar X\rightarrow X$ that is derived by contracting some of the exceptional components of
$(\bar X,\bar \bx, \ba)$.
\end{defi}

(Recall that exceptional component is defined in Definition \eqref{weight-ss}.) Suppose
$(\bar X,\bar \bx, \ba)$ is a blow-up of $(X,\bx,\ba)$, then
 $(\bar X^\st,\bar\bx^\st,\ba)=(X^\st,\bx^\st,\ba)$ (cf. \eqref{stabilization}).

Since the restriction of $\psi$ to $\cK\!\sslash\!{G}$ is an isomorphism, $\psi$ is a birational  morphism.
By Zariski's Main theorem and the properness of $\quotient$, the injectivity follows from

\begin{lemm}\label{inj}
$\psi\upmo(\psi(\xi))$ is zero dimensional for each $\xi\in\quotient$. 
\end{lemm}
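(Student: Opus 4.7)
The plan is to enumerate the closed points of $\psi^{-1}(\psi(\xi))$ and to verify the total is finite. Fix $(X^\st,\bx^\st,\ba) \in \bcM_{g,\ba}$ representing $\psi(\xi)$. A point of $\psi^{-1}(\psi(\xi))$ is represented by a closed $G$-orbit of some polystable $(X,\iota,\bx) \in \bcK$ whose stabilization is $(X^\st,\bx^\st,\ba)$; equivalently, $(X,\bx,\ba)$ is a weighted pointed semistable blow-up of $(X^\st,\bx^\st,\ba)$ in the sense of Definition \ref{blow-up}, equipped with a slope-semistable polarization $\sL = \iota^*\sO_{\PP^m}(1)$ of degree $d$.

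First I would bound the combinatorial type of $X$: an exceptional $\PP^1$-component $E$ of $X$ satisfies $\deg_E \omega_X(\ba\cdot\bx)=0$, $E\cap\bx=\emptyset$, and $\ell_E=2$. Applying \eqref{basic-m} of Lemma \ref{basic-m1} to $Y = E$ gives $\deg_E\sL \in [-1,1]$, while the very-ampleness of $\sL$ (the embedding $\iota$ is by a complete linear system, by Lemma \ref{weight-nodal} together with \cite[Prop.~5.5]{Mum}) forces $\deg_E\sL \geq 1$. Hence $\deg_E\sL = 1$ on every exceptional component; as $\deg X = d$ is fixed and the number of nodes of $X^\st$ is controlled by $g$ and its component count, only finitely many combinatorial types of $X$ can arise. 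Next, for each such $X$, any $(X,\iota,\bx)\in\bcK$ arises as a limit of points of $\cK$, so by Proposition \ref{6.6} the polarization $\sL$ has the explicit form $\omega_X(\ba\cdot\bx)^{\otimes k}\otimes \sO_X\bigl(\sum_\alpha b_\alpha X_\alpha\bigr)$ for some integers $\{b_\alpha\}$; slope-semistability restricts the numerical class $\vec\delta(\sL)$ to the finite set $\fB^d_{X,\bx,\ba}$, and the ansatz pins $\sL$ down to a discrete subset of $\mathrm{Pic}(X)$, so only finitely many $\sL$ can occur. Finally, for each isomorphism class of $(X,\sL,\bx)$ the embeddings by the complete linear system form a single $GL(m+1)$-orbit, contributing one point to $\quotient$; combining the three finiteness statements yields the claim.

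The delicate point is ensuring that one does not pick up a continuous $\mathrm{Pic}^0(X)$-family of admissible polarizations (which would force the fiber to be positive-dimensional). This is resolved precisely by the explicit ansatz coming from Proposition \ref{6.6}: the $\sL$ inherited from a smoothing of $(X,\bx,\ba)$ lies in a discrete coset of a subgroup of $\mathrm{Pic}(X)$, and its intersection with the finite set $\fB^d_{X,\bx,\ba}$ is finite, eliminating any $\mathrm{Pic}^0$ ambiguity.
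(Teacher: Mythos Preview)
Your argument follows essentially the same route as the paper's: restrict to the preimage in $\bcK$, observe that the underlying pointed curve must be a semistable blow-up of the fixed stable curve, pin down the polarization as a twist of $\omega_X(\ba\cdot\bx)^{\otimes k}$ by a vertical divisor, and conclude that the data form a discrete set of $G$-orbits. Your explicit bound on the combinatorial type of the blow-up (via $\deg_E\sL=1$ on each exceptional line) is a nice addition that the paper leaves implicit under the word ``discrete''.

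One correction, however: your appeal to Proposition~\ref{6.6} points in the wrong direction. That proposition is an \emph{existence} statement---given a stable curve, one can \emph{choose} integers $\{b_\alpha\}$ making the twisted polarization slope semistable. It does not assert that every polarization on a point of $\bcK$ already has this shape. What actually forces the ansatz is the argument the paper gives directly: since $\bcK$ is the closure of $\cK$ in $\cH^{ss}$, any $(\bar X,\iota,\bar\bx)\in\bcK$ sits as the central fiber of a one-parameter family over a smooth curve $T$ with smooth total space $\cX$ and with generic fiber in $\cK$; on the generic fiber $\sL\cong\omega(\ba\cdot\fx)^{\otimes k}$, so the difference $\sL\otimes\omega_{\cX/T}(\ba\cdot\fx)^{-k}$ is trivial away from the central fiber and hence is $\sO_\cX(\sum b_\alpha \bar X_\alpha)$. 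You clearly have this mechanism in mind (you say ``arises as a limit of points of $\cK$''), but the citation should be replaced by this smoothing argument rather than Proposition~\ref{6.6}. With that fix your proof matches the paper's.
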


\begin{proof}Let $\xi\in\quotient\setminus( \cK\!\sslash\!{G})$, and let $\psi(\xi)=(X,\bx,\ba)\in \overline{\cM}_{g,\ba}$ be
the associated weighted pointed stable curve. We describe the set
$\Theta_\xi=\bq\upmo\bl\psi\upmo(\psi(\xi))\br\sub \bcK$, where $\bq$ is defined in \eqref{bq}.

For any $\eta=(\bar X,\iota,\bar\bx)\in\Theta_{\xi}\sub \bcK$, there is a smooth affine curve $\phi: 0\in T\rightarrow \bcK$
so that the pull back of the universal family of $\overline \cK$, say $\pi:(\cX,\sL,\bs)\rightarrow T$,
contains $(\bar X,\iota^* \sO_{\PP^m}(1),\bar \bx)$
as its central fiber and $\phi(T\setminus\{0\})\sub \cK$, and that the total space $\cX$ is smooth.

By Lemma \ref{weight-nodal}, the central fiber $(\bar X,\bar\bx,\ba)$
is weighted pointed semi-stable (cf. Definition \ref{weight-ss}) and is a blow-up of $(X,\bx,\ba)$ (cf. Definition \ref{blow-up}).
Since $\cX$ is smooth, there are integers $\{b_{\alpha}\}$ indexed by the irreducible components $\bar X\lalp$ of $\bar X$,
such that if we view $\bar X\lalp$ as divisor in $\cX$, then
$$\iota^*\sO_{\PP^m}(1)=\omega_{\bar X/T}(\ba\cdot\bx)^{\otimes k}(\Sigma_{\alpha=1}^{\bar r} b\lalp \bar X\lalp).
$$

Since the collection of blow-ups of $X$ coupled with integers $\{b\lalp\}_{\alpha=1}^{\bar r}$
is a discrete set, the choices of $(\bar X,\sL,\bar \bx)$ are discrete.
Thus $\{(\bar X, \iota\sta\sO_{\PP^m}(1),\bar \bx)\mid (\bar X, \iota, \bar \bx)\in\Theta_\xi\}$
is discrete.
Finally, any two $(\bar X,\iota, \bar\bx)$ with isomorphic $(\bar X,  \iota\sta\sO_{\PP^m}(1),\bar \bx)$
lie in the same $G$-orbit. Thus
$\Theta_\xi$ consists of a discrete collection of $G$-orbits. Hence $\psi\upmo(\psi(\xi))$ is discrete.
\end{proof}


We remark that this proof uses the existence of the coarse moduli space $\bcM_{g,\ba}$ constructed by Hassett.

\subsection{The coarse moduli space}
\def\cP{\mathcal P}
\def\bcP{\widetilde{\cP}}
\def\sm{{\text{sm}}}
\def\bGamma{\overline\Gamma}
\def\bn{{\mathbf n}}

We prove that $\quotient$ is a coarse moduli space of weighted pointed stable curves, thus proving that
$\psi$ is an isomorphism.

\begin{prop}\label{coarse}
Let $T$ be any scheme and $(\cX,\fx,\ba)$ be a $T$-family of  weighted pointed stable  curves. Then there is a
unique morphism $f: T\to \quotient$, canonical under base changes, such that
for any closed point $c\in T$, the image $\psi(f(c))\in\overline\cM_{g,\ba}$ is the closed point
associated to the  weighted pointed stable curve $(\cX,\fx, \ba)|_c$.
\end{prop}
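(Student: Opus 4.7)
The strategy is to construct $f$ étale-locally on $T$ from the relative canonical polarization, verify Chow semistability fiberwise via our main theorem, and then glue across trivializations using the invariance under $G$.

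Since $k$ in \eqref{spe} is large, $\omega_{\cX/T}(\ba\cdot\fx)^{\otimes k}$ is $\pi$-very ample and $\pi_*\omega_{\cX/T}(\ba\cdot\fx)^{\otimes k}$ is locally free of rank $m+1$ by cohomology and base change. I would choose an étale cover $\{T_i\to T\}$ on which this push-forward is free; each trivialization yields a closed embedding $\cX\times_TT_i\hookrightarrow T_i\times\PP^m$ together with the sections $\fx$, and hence a morphism $\phi_i:T_i\to\cH$. Every geometric fiber is a canonically polarized weighted pointed stable curve, slope semi-stable by Proposition \ref{main-c} (a direct computation for the canonical polarization), so $\phi_i(T_i)\subset\cH^{ss}$ by Lemma \ref{weight-nodal}. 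To strengthen this to $\phi_i(T_i)\subset\overline\cK$, I would invoke smoothability of weighted pointed stable curves: every such curve arises as a specialization of smooth ones in a one-parameter family, and since $\overline\cK$ is closed in $\cH^{ss}$, the image of $\phi_i$ must lie in $\overline\cK$.

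On any overlap $T_{ij}=T_i\times_TT_j$, the two trivializations of $\pi_*\omega_{\cX/T}(\ba\cdot\fx)^{\otimes k}$ differ by a section of $GL(m+1)$, and the induced embeddings into $\PP^m$ differ by the associated $PGL(m+1)$-action. Since the quotient morphism $\bq:\overline\cK\to\quotient$ is $G$-invariant and the $G$-action on $\PP^m$ factors through $PGL(m+1)$, the compositions $\bq\circ\phi_i$ and $\bq\circ\phi_j$ coincide on $T_{ij}$. By étale descent we obtain a morphism $f:T\to\quotient$, which is canonical under base change by construction. For each closed point $c\in T$, $\phi_i(c)$ represents the canonically embedded stable curve $(\cX_c,\fx_c)$, so $\psi(f(c))$ is the point of $\overline\cM_{g,\ba}$ represented by $(\cX_c,\fx_c,\ba)$.

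For uniqueness, the surjectivity of $\psi$ established via Proposition \ref{6.6} together with the zero-dimensionality of its fibers (Lemma \ref{inj}) makes $\psi$ a finite birational morphism onto the normal projective variety $\overline\cM_{g,\ba}$; by Zariski's Main Theorem $\psi$ is then an isomorphism, so $f=\psi\upmo\circ(\psi\circ f)$ is determined by the moduli map $T\to\overline\cM_{g,\ba}$ associated to the family. The main technical obstacle is the containment $\phi_i(T_i)\subset\overline\cK$: verifying that canonically polarized singular stable fibers lie in the closure of $\cK$ rather than merely in $\cH^{ss}$ is where the argument most crucially relies on smoothability, a nontrivial input from the theory of weighted pointed stable curves.
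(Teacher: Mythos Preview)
Your argument has a genuine gap at its acknowledged weak point: the claim that canonically polarized weighted pointed stable curves are slope semistable is \emph{false} in general once marked points are present.  A direct computation with $\sO_X(1)=\omega_X(\bax)^{\otimes k}$ in the criterion of Proposition~\ref{main-c} gives
\[
\Bigl\vert \deg Y+\sum_{x_j\in Y}\tfrac{a_j}{2}-\tfrac{\deg_Y\omega_X(\bax)}{\deg\omega_X(\bax)}\bigl(\deg X+\sum_j\tfrac{a_j}{2}\bigr)\Bigr\vert
=\Bigl\vert \sum_{x_j\in Y}\tfrac{a_j}{2}-\tfrac{\deg_Y\omega_X(\bax)}{\deg\omega_X(\bax)}\sum_j\tfrac{a_j}{2}\Bigr\vert,
\]
which is independent of $k$ and need not be bounded by $\ell_Y/2$.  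For instance, take $X=Y\cup Y^{\complement}$ meeting in a single node, $Y\cong\PP^1$ carrying three distinct marked points of weight~$1$, and $Y^{\complement}$ of genus~$2$ with no marked points; then the left side equals $9/10>1/2=\ell_Y/2$.  Thus $\phi_i(T_i)\not\subset\cH^{ss}$ in general, and a fortiori $\phi_i(T_i)\not\subset\bcK$; your composition $\bq\circ\phi_i$ is not even defined.  (The introduction already flags this phenomenon: the boundary $\bcK\setminus\cK$ consists of semistable curves that are \emph{not} canonically polarized.)

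The paper circumvents exactly this obstacle.  It introduces the locally closed locus $\bcP\subset\cH$ of \emph{all} canonically polarized weighted pointed stable curves, making no semistability claim, and instead extends $F\colon\cP\to\quotient$ to $\widetilde F\colon\bcP\to\quotient$ by a graph-closure argument: letting $\bGamma\subset\bcP\times\quotient$ be the closure of the graph of $F$, one shows the projection $p\colon\bGamma\to\bcP$ is bijective (surjectivity via smoothings, injectivity via Lemma~\ref{inj}) and then an isomorphism since $\bcP$ is smooth.  Your local maps $\phi_i$ naturally land in $\bcP$, so composing with $\widetilde F$ (rather than with $\bq$) is what makes the construction go through.
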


We define a subscheme $\bcP\sub \cH$:
$$\bcP=\bigl\{(X,\iota, \bx)\in \cH\mid (X, \bx, \ba) \text{ weighted pointed stable curves},\
\omega_X(\bax)^{\otimes k} \cong \iota\sta\sO_\Pm(1)\bigr\}
$$
A direct check shows that $\bcP$ is a smooth, locally closed, and $G$-invariant subscheme of $\cH$.
We let $\cP\sub \bcP$ be the open subset of
$(X,\iota,\bx)$ such that $X$ are smooth. By definition, we have $\cP=\cK$.

\begin{lemm}
The composition $F: \cP\to \bcK\to \quotient$ extends to a unique morphism $\widetilde F: \bcP\to\quotient$.
\end{lemm}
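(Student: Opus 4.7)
The plan is to extend $F$ using the valuative criterion for the projective scheme $\quotient$, with well-definedness secured by the injectivity of $\psi$ established in Lemma \ref{inj}. First I would observe that $\cP = \cK$ is dense in $\bcP$: given any $(X,\iota,\bx)\in\bcP$, the irreducibility of Hassett's moduli $\bcM_{g,\ba}$ combined with the very ampleness of $\omega_X(\bax)^{\otimes k}$ produces a smoothing $\phi\colon T\to\bcP$ over a pointed smooth curve $0\in T$ with $\phi(0)=(X,\iota,\bx)$ and $\phi(T\setminus\{0\})\sub \cP$. Since $\quotient$ is projective and in particular separated, this density immediately yields uniqueness of any extension, so it suffices to construct $\widetilde F$.

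For existence, I would first define $\widetilde F$ pointwise. For each $p = (X, \iota, \bx) \in \bcP$, pick a smoothing $\phi$ as above, compose with $F$, and apply the valuative criterion of properness to the projective $\quotient$ to get a unique extension $T\to \quotient$; set $\widetilde F(p)$ to be the image of $0$. The well-definedness of $\widetilde F(p)$ is the critical step: one must show that the value is independent of the smoothing $\phi$. To see this, observe that $\psi\circ \widetilde F\circ \phi \colon T\to \bcM_{g,\ba}$ agrees on the dense open $T\setminus\{0\}$ with the classifying map associated to the underlying family of weighted pointed stable curves $(\cX, \fx)\to T$ supplied by $\phi$, and by the coarse moduli property of $\bcM_{g,\ba}$ this classifying map is canonical. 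Consequently $\psi(\widetilde F(p))=[X,\bx]\in \bcM_{g,\ba}$ depends only on $p$, and by the injectivity of $\psi$ (Lemma \ref{inj}) the same holds for $\widetilde F(p)$.

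To upgrade this set-theoretic map to a morphism, I would consider the closure $\Gamma$ of the graph of $F$ inside $\bcP\times \quotient$ and study the first projection $\pi_1\colon \Gamma\to \bcP$. It is proper because $\quotient$ is projective, and birational by the density just noted. The well-definedness argument above shows that every closed fiber of $\pi_1$ is a singleton set-theoretically, whence $\pi_1$ is quasi-finite, and being proper it is finite. Since $\bcP$ is smooth and in particular normal, Zariski's Main Theorem implies $\pi_1$ is an isomorphism; setting $\widetilde F := \pi_2 \circ \pi_1^{-1}$ produces the required morphism. The principal obstacle is the well-definedness step, which pivotally depends on Lemma \ref{inj}; a direct construction of a $G$-equivariant lift $\bcP \to \bcK$ along which $F$ could be composed appears much harder, since a generic $(X,\iota,\bx)\in \bcP$ with $X$ singular does not itself lie in $\bcK$ and only corresponds to it after a blow-up together with a non-canonical twist of the polarization, as in Proposition \ref{6.6}.
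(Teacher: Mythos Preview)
Your proposal is correct and follows essentially the same route as the paper: both take the closure $\Gamma$ of the graph of $F$ in $\bcP\times\quotient$, show the first projection is a bijection (surjectivity via a smoothing as in Proposition \ref{6.6}, injectivity via the argument of Lemma \ref{inj}), and conclude it is an isomorphism using that $\bcP$ is smooth. The only cosmetic differences are that you phrase the last step as ``proper $+$ quasi-finite $\Rightarrow$ finite, then Zariski's Main Theorem over a normal base,'' whereas the paper argues ``bijective $+$ birational onto smooth $\Rightarrow$ \'etale $\Rightarrow$ isomorphism,'' and that you invoke the injectivity of $\psi$ directly rather than re-running the proof of Lemma \ref{inj}.
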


\begin{proof}
Applying deformation theory of nodal curves, we know that $\cP$ is dense in $\bcP$. Let
$\Gamma\sub \cP\times\quotient$ be the graph of the morphism $F$ stated in the Lemma;
we let
$$\bGamma\sub \bcP\times\quotient
$$
be the closure of $\Gamma$. Let $p: \bGamma\to \bcP$ be the projection. We claim that $p$ is bijective.
Indeed, given $\xi=(X,\iota,\bx)\in \bar \cP$, 
we let $(\cX, \sO_{\cX}(1), \fx)$ be the family given by Proposition \ref{6.6}, which shows that
$\xi\in p(\overline\Gamma)$. This proves that $p$ is surjective. On the other hand, repeating the
proof of Lemma \ref{inj}, we see that $p$ is one-to-one. This proves that $p$ is bijective.

Next, we claim that
$p$ is an isomorphism. Since $\bcP$ is smooth, $\cP\sub \bcP$ is dense, and $\Gamma$ is isomorphic
to $\cP$, we conclude that $\bGamma$ is reduced.
Then since $p:\bGamma\to\bcP$ is birational, a diffeomorphism and $\bcP$ is smooth,
$p$ must be \'etale. Thus $p$ is an isomorphim. Finally, by composing the isomorphism $p\upmo$ with the projection to the second factor of $\bcP\to\quotient$, we obtain the desired extension $\widetilde F$ of $F$.
\end{proof}

\begin{proof}[Proof of Proposition \ref{coarse}]
We cover $T$ by a collection of  affine open $\{T_a\}_{a\in A}$. Let $\pi_a: \cX_a\to T_a$ with sections $\fx_{a, i}: T_a\to\cX_a$ be the
restriction of $\fx_{i}$ to $T_a$ of the family on $T$. By fixing a trivialization
$\pi_{a\ast}\omega_{\cX_a/T_a}(\ba\cdot \fx_a)^{\otimes k}\cong \sO_{T_a}^{\oplus m+1}$, we obtain
morphisms $f_a: T_a\to \bcP$. Composed with the morphism $\widetilde F$ constructed in the previous Lemma, we obtain
$\widetilde F\circ f_a: T_a\to \quotient$.

Since the choice of picking the trivializations does not alter the morphism $\widetilde F\circ f_a$, this collection
$\{\widetilde F\circ f_a\}_{a\in A}$ patches to a
morphism $T\to \quotient$. This proves the first part of Proposition \ref{coarse}.

Finally, that $\psi(f(c))$ is the point associated to the weighted pointed curve $(\cX,\fx,\ba)|_c$ follows from the construction.
\end{proof}

\begin{proof}[Proof of Theorem \ref{moduli}] It follows from Proposition \ref{6.6}, \ref{coarse}, and Lemma \ref{inj}.
\end{proof}

\vsp

For completeness, we describe without proof the geometry of poly-stable\footnote{Recall a point $\xi\in \cC^{ss}$ is
poly-stable if the $G$-orbit $G\cdot \xi$ is closed in $\cC^{ss}$.}
points in $\cC^{ss}$.

\begin{defi}[\cite{Cap} when $\bx=\emptyset$]
We call $(X,\sO_X(1),\bx,\ba)$ {\em extremal} if for any proper subcurve $Y\sub X$
satisfying $\vec\delta_Y(\sO_X(1))=\bM^-_Y $ (cf. \eqref{basic-m}),
we have $L_Y=Y\cap Y\comp\sub E_X$, where $E_X$ is the union of degree one rational curves in $X\sub \PP W$.
\end{defi}

Let $(X,\sO_X(1),\bx,\ba)$ be a slope semi-stable weighted pointed nodal curve  such that $\deg X>M$ with
$M$ given in Theorem \ref{main}.  Then the Chow point of $(X,\sO_X(1),\bx)$ is poly-stable with
respect to the polarization $\sO_{\Xi}(1,\ba)$ if and only if  it is extremal. (This in case $\bx=\emptyset$ was
proves by Caporaso in \cite{Cap}.)

\section{$K$-stability of nodal curve}\label{K-stable}
\def\fS{\mathfrak S}
\def\lbalp{_{\bar\alpha}}

In this section, we give another application of Theorem \ref{main},  which was motivated by a question of Yuji Odaka on studying the $K$-stability of a polarized nodal curve. 

\begin{theo} \label{K-stab}
A polarized connected nodal curve $(X,\sO_X(1))$ is $K$-stable if and only if $\sO_X(1)$ is numerically
equivalent to a multiple of  $\omega_X$.
\end{theo}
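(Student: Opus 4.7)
The plan is to reduce the $K$-stability of $(X, \sO_X(1))$ to the slope stability condition of Proposition \ref{main-c}, specialized to the empty-marking case $\bx = \emptyset$, $\ba = 0$. The bridge, as alluded to in the introduction, is that for a test configuration $(\mathcal{X}, \mathcal{L}) \to \Ao$ of $(X, \sO_X(1))$, the Donaldson--Futaki invariant can be read off as the leading coefficient in $k$ of the normalized Chow weight $\omega(\lam_k)/(m_k+1)$, where $\lam_k$ is the 1-PS in $SL(H^0(\sO_X(k))\dual)$ induced by the test configuration under the embedding $|\sO_X(k)|$, and $m_k + 1 = h^0(\sO_X(k))$. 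Concretely, expanding
\[
\dim H^0(\sO_X(k)) = a_0 k + a_1, \qquad w(\lam_k) = b_0 k^2 + b_1 k + O(1),
\]
the Donaldson--Futaki invariant equals $(a_1 b_0 - a_0 b_1)/a_0$, and by Mumford's formula (Prop.\,\ref{chow-wt}) this is, up to a universal positive multiple, the $k\to\infty$ limit of $\omega(\lam_k)/(m_k+1)$. Thus positivity of the DF invariant on every nontrivial test configuration is equivalent to asymptotic Chow (semi)stability of $(X, \sO_X(1))$.

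Given this translation, applying Theorem \ref{main} with $\bx = \emptyset$ shows that $(X, \sO_X(1))$ is $K$-stable if and only if it is slope stable, and by Proposition \ref{main-c} the latter is equivalent to
\beq\label{K-slope}
\Bigl| \deg Y - \tfrac{\deg_Y \omega_X}{\deg \omega_X} \deg X \Bigr| < \tfrac{\ell_Y}{2}
\eeq
for every proper subcurve $Y \subsetneq X$ with $h^0(\sO_X(1)|_Y) < h^0(\sO_X(1))$.

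For the ``if'' direction, suppose $\sO_X(1) \equiv c\, \omega_X$ numerically. Then for every subcurve $Y$, $\deg_Y \sO_X(1) = c \deg_Y \omega_X$ and $\deg \sO_X(1) = c \deg \omega_X$, so the left side of \eqref{K-slope} vanishes identically. Since $Y$ is a proper connected subcurve of the connected nodal curve $X$, we have $\ell_Y \geq 1 > 0$, so \eqref{K-slope} holds strictly; therefore $(X, \sO_X(1))$ is slope stable, hence $K$-stable.

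For the ``only if'' direction, we argue by contraposition. Suppose $\sO_X(1)$ is not numerically proportional to $\omega_X$. Then the ratio $\deg_Y \sO_X(1)/\deg_Y \omega_X$ is not constant as $Y$ ranges over subcurves, so we can find a proper subcurve $Y_0$ (e.g.\ a suitable irreducible component) on which $\deg_{Y_0} \sO_X(1)/\deg \sO_X(1) \neq \deg_{Y_0}\omega_X/\deg \omega_X$. By scaling $\sO_X(1)$ to high power (which does not affect $K$-stability), we can make the left side of \eqref{K-slope} for $Y_0$ as large as we wish, in particular larger than $\ell_{Y_0}/2$, so slope stability fails for $Y_0$. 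By Theorem \ref{main}, $(X, \sO_X(1))$ is not asymptotically Chow semistable, so by the limit interpretation above there exists a test configuration with negative Donaldson--Futaki invariant, namely a deformation-to-the-normal-cone style test configuration along $Y_0$ (Ross--Thomas), contradicting $K$-stability.

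The main obstacle is making the first paragraph fully rigorous in the singular nodal setting: one needs to verify that the leading asymptotics of $\omega(\lam_k)$ computed via Prop.\,\ref{chow-wt} and the Hilbert--Samuel/Newton-polygon machinery of Sections 2--4 do indeed match the $a_0, a_1, b_0, b_1$ coefficients appearing in the definition of the Donaldson--Futaki invariant of an arbitrary test configuration whose central fiber may not be irreducible. Once this asymptotic identification is established, the equivalence reduces cleanly to the slope criterion already proved in the earlier sections.
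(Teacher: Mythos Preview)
Your proposal has a genuine gap in the ``if'' direction. You claim that the identification of the Donaldson--Futaki invariant with the limit $\lim_{l\to\infty} l^{-1}\omega(\lam_l)$ (Lemma~\ref{asymp}) reduces $K$-stability to asymptotic Chow stability, and then you invoke Theorem~\ref{main}. But Theorem~\ref{main} only tells you that for each fixed large $l$, slope stability of $(X,\sO_X(l))$ implies $\omega(\lam_l)>0$. Positivity of each term of a sequence does \emph{not} imply positivity of the limit $\lim l^{-1}\omega(\lam_l)$: a priori this limit could be zero, yielding only $K$-semistability. The paper closes this gap by appealing not to Theorem~\ref{main} but to its quantitative refinement Theorem~\ref{lower-bdd}, which gives the explicit lower bound
\[
\omega(\lam_l)\ \ge\ \frac{2\epsilon}{m_l+1}\sum_{i=0}^{m_l}\hat\varrho_{l,i},
\]
and then proves the separate technical Lemma~\ref{l2} establishing $\lim_{l\to\infty} l^{-2}\sum_i\hat\varrho_{l,i}>0$ whenever $e(\sI(\lam))>0$. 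This lemma is not automatic: it requires locating a component $\ti X_\beta$ and a point $q$ with $|\Delta_q(\lam)|-\rho_{\hbar_\beta}w(\ti\sI,q)>0$ and comparing the staircase $\lam_l'$ back to $\lam_l$. The degenerate case $e(\sI(\lam))=0$ is handled separately by quoting Stoppa. None of this is captured by your reduction.

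The ``main obstacle'' you flag at the end (matching the Hilbert--Samuel asymptotics to the DF coefficients) is in fact routine and is exactly the content of Lemma~\ref{asymp}; the real obstacle is the strict positivity of the limit, which you have not addressed.

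Your ``only if'' direction is essentially correct and close in spirit to the paper's: the paper also exhibits a destabilizing subcurve $Y$, builds the corresponding two-weight 1-PS, and computes $\mathrm{DF}$ explicitly (equation~\eqref{DF-2wt}) rather than passing through asymptotic Chow instability. Your scaling argument to violate \eqref{basic} is fine, but note that you then need to actually produce a test configuration with the wrong sign of $\mathrm{DF}$; merely knowing Chow instability at one level $l$ is not the same thing. The paper's direct computation avoids this issue. (Also, be careful with signs: in the paper's convention, Definition~\ref{DF}, $K$-stable means $\mathrm{DF}<0$, and $\omega(\lam_l)>0$ corresponds to $\mathrm{DF}<0$ via Lemma~\ref{asymp}; your first paragraph has this reversed.)
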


We comment that Odaka has proved the $K$-stability for nodal curve $X$ polarized by $\sO_X(1)=\omega_X^{\otimes k}$ for some
$k\in \NN$ \cite{Od}. His proof uses birational geometry and a weight formula proved by himself and by the
second named author independently.  He also informed us that he was able to generalize his method to
prove the above theorem.

\vsp
We recall the notion of $K$-stability of polarized varieties.

\begin{defi}[{\cite[Sect. 3]{RT}}]\label{test}
A {\em test configuration} for a polarized scheme $(X,\sO_X(1))$ consists of
a $\CC\sta$-equivariant flat projective morphism $\pi:\cX\to \Ao$,
where $\CC\sta$ acts on $\Ao$ via the usual action, and a $\CC\sta$-linearized
relative very ample line bundle $\sL$ of $\pi$,
such that for any $t\ne 0\in\Ao$, $(\cX_t,\sL_t)\cong (X,\sO_X(1))$. (Here $\sL_t=\sL|_{\cX_t}$.)
We call such test configuration $(\cX,\sL)$ a product test configuration if $\cX\cong X\times \Ao$;
we call it a trivial test configuration if in addition to that it is a product test configuration,
the line bundle $\sL$ is a pull back from $X$ and the $\CC\sta$-action is the product action that acts trivially on $X$.
\end{defi}

For notational simplicity, from now on we restrict ourselves to when $(X,\sO_X(1))$ is a polarized
nodal curve. Given a text configuration $(\cX,\sL)$, we
$w(l)$ be the weight of the induced $\CC\sta$-action on $\pi_*\sL^{\otimes l}|_{0}$;
$w(l)=a_2 l^2+a_1 l+a_0$ is a degree $2$ ($=\dim X+1$) polynomial in $l$.
We then form the quotient
$$\frac{w(l)}{l\cdot\chi(\sO_X(l))}=e_0+e_{-1}l^{-1}+\ldots.
$$

\begin{defi} \label{DF}
We define the Donaldson-Futaki invariant of a test configuration $(\cX,\sL)$ of $(X,\sO_X(1))$ be
$$\mathrm{DF}(\cX,\sL)=e_{-1}=-\frac{a_{n+1}b_{n-1}-a_n\cdot b_n}{b_n^2};
$$
the polarized nodal curve $(X,\sO_X(1))$ is $K$-stable if $\mathrm{DF}(\cX,\sL)<0$ for any nontrivial test configuration
$(\cX,\sL)$ of $(X,\sO_X(1))$.
\end{defi}

For $(X,\sO_X(1))$, and letting $W\dual =H^0(\sO_X(l))$ with $X\sub \PP W$ the tautological embedding,
then given any 1-PS subgroup $\lam$ of $\Aut \PP W$, the
$\CC\sta$-orbit of $X$ in $\PP W\times\Ao$ via the diagonal $\CC\sta$ action produces
a test configuration of $(X, \sO_X(1))$; we denote such test configuration by $(\cX_\lam,\sL_\lam)$.

Conversely, given $(X,\sO_X(1))$, any test configuration of
$(X,\sO_X(1))$ can be derived from a 1-PS of $\Aut\PP W$ (cf.  \cite[Prop. 3.7]{RT}).
Thus to prove the $K$-stability of $(X,\sO_X(1))$, it suffices to show that the Donaldson-Futaki invariant
$\text{DF}(\cX_\lam,\sL_\lam)<0$ for all 1-PS $\lam$ of $\Aut\PP W$.

\vsp

Our starting point is to relate $\mathrm{DF}(\cX_\lam,\sL_\lam)$ with the Chow weights of $(X,\sO_X(l))$.
We pick a $\lam$-diagonalizing basis
$\bolds=\left\{ s_0,\cdots,s_m\right\}$ of $W\dual$;
namely, under its dual bases the action $\lambda$ is given by the following (1-PS of $GL(W\dual)$):
\beq
\lambda(t):=
\mathrm{diag}[t^{\rho _0},\cdots,t^{\rho_m}], 
\quad
\rho_0\geq\rho_1\ge\cdots\ge \rho_m=0,\quad \rho_i\in \ZZ.
\eeq
Here by shifting all $\rho_i$ by a common integer $a$, 
we can choose $\rho_m=0$.

Our next step is to construct a 1-PS of $W_l\dual=H^0(\sO_X(l))$ that is most close to $\lam$.
First, since $(X,\sO_X(1))$ is $K$-stable is equivalent to that of $(X,\sO_X(k))$, $k>0$. THus without
lose of generality, we can assume $\sO_X(1)$ is sufficiently ample so that
\beq\label{phi-l}
\phi_l: S^l W\dual\lra W_l\dual=H^0(\sO_X(l)).
\eeq
For convenience, for multi-indices $I=(i_0,\cdots, i_m)$, we denote $s^I=s_1^{i_0}\cdots s_m^{i_m}$;
then $s^I$ has total degree $|I|=\sum i_j$. For the weights $\rho=(\rho_0,\cdots,\rho_m)$, we
denote $\rho(I)=\sum \rho_j \cdot i_j$, which is the weight of $s^I$ under the induced $\lam$ action on
$S^l W\dual$.

We let $\fS_l$ be the set of monomials in $S^l W\dual$.
We order $\fS_l$ as follows:
{\sl $s^I\succ s^{I'}$ when either $\rho(I)<\rho(I')$, or
when $\rho(I)=\rho(I')$ and there is a $0\leq j_0\leq m$
such that $i_j=i'_j$ for all $j>j_0$ and $i_{j_0}>i'_{j_0}$.}
Thus, $s_m^l$ (resp. $s_0^l$) is the largest (resp. least) element in $\fS_l$. Further,
$s^I\succ s^{I'}$ if and only if $s^J\cdot s^I\succ s^J\cdot s^{I'}$ for any non-trivial monomial $s^J$,
and vice versa.

We pick a basis of $W_l\dual$, which will be a diagonalizing basis of the $\lam_l$ we will construct momentarily.
Let $m_l+1=\dim W_l\dual$. We set $s_{l,m_l}=s_m^l$, with weight $\varrho_{l,m_l}=\rho_m^l$. Suppose for
an integer $0\le k<m_l$, we have picked $s_{l,k+1},\cdots,s_{l,m_l}$ and their weights $\varrho_{l,j}$, we let
$s_{l,k}=s^{I_k}$ be the largest element in
$$\bigl\{ s^I\in \fS_l\mid \phi_l(s^I)\not\in \phi_l(\Theta_{l,k+1})\bigr\},
$$
where $\Theta_{l,k+1}=\phi_l\bl\CC \{s_{l,k+1},\cdots,s_{l,m_l}\}\br$, and $\CC\{\cdot\}$
is the $\CC$-linear span of elements in $\{\cdot\}$.
We let $\rho(I_k)$, which is the weight of $s^{I_k}$ in $S^l W\dual$ under $\lam^{\otimes l}$.

We let $s_{l,k}=\phi_l(s^{I_k})$. Then $s_{l,0},\cdots, s_{l,m_l}$ form a basis $W_l\dual$.
By setting
\beq\label{varrho}
\lam_l(\sigma)\cdot s_{l,k}=\sigma^{\varrho_{l,k}} s_{l,k}
\eeq
we obtain a 1-PS of $\Aut \PP W_l$ of diagonalizing basis $\{s_{l,0},\cdots,s_{l,m_l}\}$.

Following the discussion in Section \ref{newton-poly}, we define, (for $q\in \ti X$
and $\ti s_{l,k}$ the lift of $s_{l,k}$ to the normalization $\ti X$ of $X$, )
\beq\label{hbar-alp-l}
\hbar\lalp(\lam_l)=\min\{i\mid \ti s_{l,i+1}|_{\ti X\lalp}=0\}
,\quad  \hbar(\lam_l,q)=\max\{i\mid v(\ti s_{l,i},q)\ne \infty\},
\eeq
and
\beq\label{lam-l}
\ti\Lam\lalp(\lam_l)=\{p\in\ti X\lalp\mid \ti s_{l,\hbar\lalp(\lam_l)}(p)=0\}, \quad  \ti\Lam(\lam_l)=\cup_{\alpha=1}^r \ti\Lam\lalp(\lam_l).
\eeq
(Here $r$ is the number of irreducible components of $X$.)

\begin{lemm}\label{w-l} For the 1-PS $\lam_l$ constructed, we have
$\ti\Lam(\lam_l)=\ti\Lam(\lam)$. Further, for each $q\in \ti\Lam(\lam)$, and for $w(\ti\sI,q)$ defined in \text{\eqref{hbar}}, we have
$v(\ti s_{l,\hbar(\lam,q)},q)=w(\ti\sI^l,q)= l\cdot w(\ti\sI,q)= l\cdot v(\ti s_{\hbar(q)},q)$, and hence
$e( \sI(\lam_l))= \nlc\, \chi ( \sO_{X\times \Ao}(k)/\sI(\lam)^{kl})$.

Let $\lam_l'$ be
the staircase 1-PS obtained from $\lam_l$ by applying Proposition \ref{stair-case}, then
\begin{enumerate}
\item
the support  $\ti \Lam(\lam_l)$ is the same as $\ti\Lam(\lam_l')$; for each $q\in \ti\Lam(\lam_l)$,
$w(\ti\sI(\lam_l),q)=w(\ti\sI(\lam_l'),q)$;
\item
for each $q\in \ti\Lam(\lam_l)$, $\Delta_q(\lam_l)=l\cdot \Delta_q(\lam)\sub\Delta_q(\lam'_l)$.
\end{enumerate}
\end{lemm}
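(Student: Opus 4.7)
The linchpin is the ideal identity $\sI(\lam_l)=\sI(\lam)^l$ as subsheaves of $\sO_{X\times\Ao}(l)$. The inclusion $\sI(\lam_l)\sub\sI(\lam)^l$ is immediate, since each generator $t^{\varrho_{l,k}}s_{l,k}=t^{\rho(I_k)}\phi_l(s^{I_k})$ is a standard generator of $\sI(\lam)^l$. For the reverse, I would exploit the greedy selection of $\{s_{l,k}\}$. For any monomial $s^I\in\mathfrak{S}_l$, write $\phi_l(s^I)=\sum_j c_j s_{l,j}$ and let $j_0$ be the smallest index with $c_{j_0}\ne 0$; then $\phi_l(s^I)\in\phi_l(\Theta_{l,j_0})$. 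If one had $s^I\succ s^{I_{j_0}}$, the greedy choice of $s^{I_{j_0}}$ as the largest monomial whose image lies outside $\phi_l(\Theta_{l,j_0+1})$ would force $\phi_l(s^I)\in\phi_l(\Theta_{l,j_0+1})$, contradicting $c_{j_0}\ne 0$. Hence $s^{I_j}\succeq s^I$ whenever $c_j\ne 0$, yielding $\rho(I_j)\le\rho(I)$, so $t^{\rho(I)}\phi_l(s^I)=\sum_j c_j\,t^{\rho(I)-\rho(I_j)}\cdot t^{\rho(I_j)}s_{l,j}\in\sI(\lam_l)$.

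The identity $e(\sI(\lam_l))=\nlc\chi(\sO_{X\times\Ao}(k)/\sI(\lam)^{kl})$ now follows immediately, as $\sI(\lam_l)^k=\sI(\lam)^{lk}$. For $\ti\Lam(\lam_l)=\ti\Lam(\lam)$ and the width formula, the plan is to identify $\ti s_{l,\hbar\lalp(\lam_l)}$ explicitly: on $\ti X\lalp$ with $h=\hbar\lalp(\lam)$, the monomial $s_h^l$ is supported on $\{0,\ldots,h\}$ and has the largest weight $l\rho_h$ among such monomials; its $\phi_l$-image cannot lie in the span of any earlier (larger-index) pick because every such pick must involve some factor $s_j$ with $j>h$, vanishing identically on $\ti X\lalp$. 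Hence the greedy algorithm selects $s^{I_k}=s_h^l$ at $k=\hbar\lalp(\lam_l)$, so $\ti s_{l,\hbar\lalp(\lam_l)}=\ti s_h^l$, whose zero locus and vanishing orders on $\ti X\lalp$ are exactly those of $\ti s_h$ scaled by $l$.

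Assertion (2) of the further part is local at each $q\in\ti\Lam$: the localized ideal $\ti\sI(\lam_l)_q$ is the monomial ideal $\ti\sI(\lam)_q^l$, whose Newton polygon is the $l$-scaling of $\Delta_q(\lam)$, giving $\Delta_q(\lam_l)=l\cdot\Delta_q(\lam)$. The inclusion $\Delta_q(\lam_l)\sub\Delta_q(\lam_l')$ follows from $\sI(\lam_l')\sub\sI(\lam_l)$, provided by Proposition \ref{stair-case}, since shrinking the ideal enlarges the complementary polygon.

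The main obstacle is assertion (1): preservation of the support $\ti\Lam$ and the pointwise widths $w(\ti\sI,\cdot)$ under the passage $\lam_l\mapsto\lam_l'$. Since Proposition \ref{stair-case} performs basis modifications only at indices $i\ne\hbar\lalp(\lam_l)+1$ for any $\alpha$, the base indices---and hence $\hbar(\cdot,q)$---are preserved. The delicate step is that modifications at $i=\hbar\lalp(\lam_l)$ can occur: the triggering equality $\sE_{\hbar\lalp-1}(\lam_l)=\sE_{\hbar\lalp}(\lam_l)$ forces $\ti s_{l,\hbar\lalp-1}|_{\ti X\lalp}$ to be a regular multiple of $\ti s_{l,\hbar\lalp}|_{\ti X\lalp}$, so after modification the new section restricts to a regular-function multiple of $\ti s_{l,\hbar\lalp}|_{\ti X\lalp}$. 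Verifying that this multiplying factor is a unit at every $q\in\ti\Lam\lalp$---so that neither the support nor the widths are affected---is the technical core of the argument, requiring careful bookkeeping of how the iterated modifications interact with the decomposition $X=\bigcup X\lalp$ to rule out the introduction of spurious zeros on $\ti X\lalp$.
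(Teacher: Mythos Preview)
Your route is the paper's own: its proof consists of asserting the ideal identity $\sI(\lam_l)=\sI(\lam)^l$ and then pointing to Proposition~\ref{stair-case} and Lemma~\ref{newton}, nothing more. Your derivation of the identity via the greedy selection, of the first block of consequences (including the explicit identification $\ti s_{l,\hbar\lalp(\lam_l)}|_{\ti X\lalp}=(\ti s_{\hbar\lalp(\lam)}|_{\ti X\lalp})^l$), and of item~(2) via $\sI(\lam_l')\sub\sI(\lam_l)$ is correct and considerably more detailed than the paper's.

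For item~(1) there is a small error and an overestimate of the remaining work. The error: the constraint $i\ne\hbar\lalp+1$ does not prevent a modification at $i=\hbar\lalp$, and such a modification \emph{can} shift the base index down to $i-1$; base indices are not literally preserved. The simplification you are missing: your ``regular-function multiple'' is necessarily a \emph{global constant}. The triggering equality $\sE_{i-1}=\sE_i$ forces $\ti s_{i-1}|_{\ti X\lalp}=g_0\cdot\ti s_i|_{\ti X\lalp}$ with $g_0\in H^0(\sO_{\ti X\lalp})=\CC$, since $\ti X\lalp$ is a smooth projective connected curve. After the swap $s'_{i-1}=s_i$, $s'_i=s_{i-1}-c\,s_i$, one gets $\ti s'_i|_{\ti X\lalp}=(g_0-c)\,\ti s_i|_{\ti X\lalp}$: if $g_0\ne c$ the key section is a nonzero scalar multiple of the original; if $g_0=c$ then $\hbar\lalp$ drops to $i-1$, but the new key section is $\ti s'_{i-1}|_{\ti X\lalp}=\ti s_i|_{\ti X\lalp}$ verbatim. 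Either way the key section on $\ti X\lalp$ remains a nonzero constant multiple of the original, and this persists inductively through all iterations (each subsequent modification touching the current base index again has $\sE_{j-1}=\sE_j$, hence the analogous constant). The final genericity step replaces $s_i$ by $s_i+\sum_{j>i}c_{ij}s_j$, which on $\ti X\lalp$ adds zero for $i=\hbar\lalp$, so changes nothing. Thus $\ti\Lam(\lam_l')=\ti\Lam(\lam_l)$ and the widths agree, with no bookkeeping beyond this.
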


\begin{proof} Because of our choice of  $\lam_l$, we have the middle identity (the first the the third is by the definition)
$$
(s_0,\cdots, s_m)= \sI(\lam_l)=\sI (\lam)^l=(t^{\rho _0}s_0,\cdots,t^{\rho_m}s_m)^l \sub \sO_{X\times \Ao}(l) .
$$
This prove the first part of the Lemma. The second part,
follows from the construction of staircase in Proposition \ref{stair-case}, and from Lemma \ref{newton}.
\end{proof}

We have the following useful Lemma, relating $\text{DF}(\cX_\lam,\sL_\lam)$ to the weights $\omega(l)$ of
$\lam$.


\begin{lemm}[{\cite[Sect. 2.3]{Do}, \cite[Thm. 3.9]{RT}}]\label{asymp}
Let $X\sub \PP W\dual$ and $\lam$ a 1-PS be as before.  Then
$$
\lim_{l\to \infty}l\upmo\cdot \omega(\lam_l)
=-b_1\upmo\cdot \mathrm{DF}(\cX_\lam,\sL_\lam)<\infty\ .
$$
\end{lemm}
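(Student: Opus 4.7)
The plan is to combine Mumford's Chow-weight formula with the ideal-scaling behavior supplied by Lemma \ref{w-l}, and then extract the coefficient of $l^{-1}$ in the ratio $w(l)/(l\,\chi(\sO_X(l)))$.

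First I would apply Proposition \ref{chow-wt} to $(X,\sO_X(l))\subset \PP W_l$ and to the 1-PS $\lam_l$, obtaining
$$\omega(\lam_l)=\frac{2l\deg X}{m_l+1}\sum_{k=0}^{m_l}\varrho_{l,k}-e(\sI(\lam_l)).$$
By the construction of $\lam_l$ one has $\sI(\lam_l)=\sI(\lam)^l$ inside $\sO_{X\times\Ao}(l)$; hence Lemma \ref{w-l} combined with an extraction of the degree-two normalized leading coefficient of $\chi(\sO_{X\times\Ao}(lk)/\sI(\lam)^{lk})$ in $k$ gives $e(\sI(\lam_l))=l^{2}e(\sI(\lam))$. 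At the same time, the central fiber of the test configuration $(\cX_\lam,\sL_\lam)$ is, by definition, the flat limit of $\lam(t)\cdot X$, whose sections are canonically identified with the diagonalizing basis $\{s_{l,k}\}$; consequently the weight of the induced $\CC^\ast$-action on $\pi_\ast\sL_\lam^{\otimes l}|_0$ is $w(l)=\sum_k \varrho_{l,k}$.

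Substituting these identifications into Mumford's formula and dividing by $l$ gives
$$\frac{\omega(\lam_l)}{l}=\frac{2\deg X}{m_l+1}\,w(l)-l\,e(\sI(\lam)).$$
Writing the standard expansions $w(l)=a_2l^2+a_1l+a_0$ and $m_l+1=\chi(\sO_X(l))=b_1l+b_0$ (with $b_1=\deg X$ for $l$ large), a Laurent expansion in $l^{-1}$ produces
$$\frac{\omega(\lam_l)}{l}=\bigl(2a_2-e(\sI(\lam))\bigr)\,l\,+\,2\Bl a_1-\tfrac{a_2 b_0}{b_1}\Br\,+\,O(l^{-1}).$$
The finiteness of the limit forces the cancellation $e(\sI(\lam))=2a_2$, after which the limit equals a fixed rational multiple of $\mathrm{DF}(\cX_\lam,\sL_\lam)=(a_1b_1-a_2b_0)/b_1^2$, yielding the asserted identity up to the stated normalization $-b_1^{-1}$.

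The main obstacle is the cancellation $e(\sI(\lam))=2a_2$, which is essentially a second appearance of Mumford's identification of the leading Hilbert-Samuel coefficient of the ideal $\sI(\lam)$ with a weight sum: one side measures the two-dimensional volume of the Newton polygons $\Delta_q$ from Section \ref{newton-poly}, while the other measures the asymptotic growth of the sum of $\lam$-weights on $H^0(\sO_X(l))$. To prove this I would either (a) redo the Newton-polygon computation of Corollary \ref{sum-Delta} for $\sI(\lam_l)=\sI(\lam)^l$ and compare with the explicit count of lattice points $\{I\in\NN^{m+1}:|I|=l\}$ weighted by $\rho(I)$, or (b) invoke it as a direct consequence of Proposition \ref{chow-wt} applied with varying $l$ and matching the $O(l^2)$ term on both sides. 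A subsidiary point is verifying in Step 3 that the monomial-ordering construction of $\{s_{l,k}\}$ does produce a diagonalizing basis for the $\CC^\ast$-action on the central fiber; this is a standard Gröbner/initial-ideal argument which I would spell out to ensure $w(l)=\sum_k \varrho_{l,k}$.
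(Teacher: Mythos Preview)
The paper does not supply its own proof of this lemma; it is quoted from Donaldson \cite{Do} and Ross--Thomas \cite{RT} as a known result, so there is nothing in the paper to compare your argument against.

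That said, your approach is the standard one and is essentially correct. A few remarks. The ``main obstacle'' $e(\sI(\lam))=2a_2$ is not something you need to re-derive: it is exactly Mumford's \cite[Prop.~2.11]{Mum}, which the present paper already invokes at \eqref{rho-l}; so your option (b) is the right way to handle it. The genuine work left is your ``subsidiary point'': verifying that the monomial-ordering construction of the $s_{l,k}$ really computes the weight filtration on $H^0(\cX_0,\sL_\lam^{\otimes l})$, so that $w(l)=\sum_k\varrho_{l,k}$. This is the flat-degeneration/initial-ideal argument you allude to, and it does need to be spelled out. Finally, be careful with signs and constants in the last step: different sources normalize the Donaldson--Futaki invariant with opposite signs, which is why you end up hedging with ``up to the stated normalization''; to match the paper's Definition~\ref{DF} exactly you will have to track this through rather than leave it implicit.
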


Thus to prove $\text{DF}(\cX_\lam,\sL_\lam)<0$, it suffices to show that
\beq\label{req}
\lim_{l\to \infty}l\upmo\cdot \omega(\lam_l)>0.
\eeq


Let $\lam_l'$ be the staircase constructed from $\lam_l$ using Proposition \ref{stair-case},
of the same weights $\varrho_{l,i}$. We let $\hat\varrho_{l,i}$ be the shifted weights according to
the rule \eqref{hat-rho} applied to $\lam_l'$; namely,
$\hat{\varrho}_{l,i}=\min_{\alpha}\{\varrho_i-\varrho_{\hbar\lbe(\lam_l')}\mid i\in \II\lbe(\lam_l')\}$.

\begin{proof}[Proof of Theorem \ref{K-stab}]
Suppose $X$ is a stable (nodal) curve and
$\sO_X(1)$ is numerically proportional to $\omega_X$, then $(X,\sO_X(1))$ is slope stable.  We will show in this
case that for any 1-PS $\lam\sub SL(W)$ we have
$\text{DF}(\cX_\lam,\sL_\lam)<0$. 

The first case to study is when $e(\sI(\lam))=0$.
By Section \ref{newton-poly} we know that there is a $0<i_0<m$ such that $\varrho_{i_0}=0$ and
$\bigcap_{k\geq i_0}\{s_k=0\}=\emptyset$.
Stoppa proved that in this case either the test configuration $(\cX_{\lam},\sL)$ induced by $\lam$ is trivial
(cf. Definition \ref{test}) or $\mathrm{DF}(\lam)<0$ \cite[page 1405-1406]{St}. This settles this case.

The other case is when $e(\sI(\lam))>0$.
Applying Theorem \ref{lower-bdd} (since $(X,\sO_X(1))$ is slope stable), and applying Proposition \ref{stair-case},
we can find an $\epsilon>0$ so that
\beq\label{111}
l\upmo\cdot \omega(\lam_l)\geq l\upmo\cdot \omega(\lam_l')
\geq\frac{1}{\deg X+l\upmo(1-g_X)}\cdot \frac{\epsilon}{l^2}\cdot {\sum_{i=0}^{m_l}\hat{\varrho}_{l,i}}.
\eeq
Thus $\text{DF}(\cX_\lam,\sL_\lam)<0$ follows from Lemma \ref{l2}, which we will prove shortly.
This proves the {\em if} part, once we prove Lemma \ref{l2}.

For the other direction, suppose $(X,\sO_X(1))$ is $K$-stable, we show that
$(X,\sO_X(1))$ is slope stable.
Suppose $(X,\sO_X(1))$ is not slope stable, 
then there is a subcurve $Y\sub X$ destabilizing the polarized curve $(X, \sO_X(1))$, namely,
\beq\label{dest}
\frac{\deg_Y\omega _X}{\deg \omega_X}\cdot\deg X-\deg Y-\frac{\ell_Y}{2}\ge 0.
\eeq
Let $H^0(\sO_X(1)|_Y)\dual \sub W=H^0(\sO_X(1))\dual$, which is the linear subspace spanned by $Y$;
let $m_0+1=\dim H^0(\sO_X(1)|_Y)$.
We choose a two-weight 1-PS $\lam$ as in the proof of Theorem \ref{main} (at the end of Section \ref{proof})
so that $\lam$ acts of weight $1$ on $H^0(\sO_X(1)|_Y)\dual\sub H^0(\sO_X(1))\dual$ and acts
of weight $0$ on its linear complement; we form its associated test configuration $(\cX_\lam,\sL_\lam)$.
We now evaluate
$$\mathrm{DF}(\cX_\lam,\sL_\lam)=\lim_{l\to\infty} -\frac{\omega(\lam_l)}{l}=
\lim_{l\to\infty}\frac{1}{l}\cdot\Bl \frac{2l \deg X\sum_{i=0}^{m_l}\varrho_{l,i}}{l\deg X+1-g}- e(\sI(\lam_l))\Br.
$$

First, the central fiber $\cX_0$ is the union $Y\cup E\cup Y\comp$, where $E$ consists of
$\ell_Y$ number of lines that inserted in the linking nodes $L_Y$ of $X$;
the total space $H^0(\sL_\lam^{\otimes l}|_{\cX_0})$ has a decomposition
$$H^0(\sL_\lam^{\otimes l}|_{\cX_0})
\cong H^0(\sO_X(l)|_Y)\oplus H^0(\sO_E(l)(-Y\cap E))\oplus H^0(\sO_X(l)|_{Y\comp}(-E\cap Y\comp));
$$
elements in $H^0(\sO_X(l)|_Y)$ have weights $l$; elements in $H^0(\sO_E(l)(-Y\cap E))$ have weights
$l-1, \cdots, 0$ for each copy in $E$; elements in $H^0(\sO_X(l)|_{Y\comp}(-E\cap Y\comp))$ have weights $0$.
Thus
$$\sum\varrho_{l,i}=h^0(\sO_X(l)|_Y)\cdot l+\ell_Y\cdot \frac{l(l-1)}{2}=
 (\deg Y+\frac{\ell_Y}{2})\cdot l^2+(1-g_Y-\frac{\ell_Y}{2})\cdot l.
$$
Using that $\sum_{i=0}^{m_l}\varrho_{l,i}=e(\sI(\lam))\cdot \frac{l^2}{2}+\text{lower order term}$, and simplifying, we obtain
\beq\label{DF-2wt}
\mathrm{DF}(\cX_\lam,\sL_\lam)=\frac{g-1}{\deg X}\Bl\frac{\deg_Y\omega _X}{\deg \omega_X}\cdot\deg X-\deg Y-\frac{\ell_Y}{2}\Br.
\eeq
Since $Y\sub X$ is destabilizing, by \eqref{dest} we have $\mathrm{DF}(\cX_\lam,\sL_\lam)>0$, violating
that $(X,\sO_X(1))$ is $K$-stable.  This proves that $(X,\sO_X(1))$ is slope stable.

On the other hand, since $K$-stability
of $(X,\sO_X(l))$ is independent of $l>0$, $(X,\sO_X(1))$ is $K$-stable implies that
$(X,\sO_X(l))$ is $K$-stable for $l>0$, thus $(X,\sO_X(l))$ is slope stable.

Once we know that $(X,\sO_X(l))$ is slope stable for large $l$, an easy argument shows that
$\sO_X(l)$ satisfies \eqref{basic} for all large $l$,
which is possible
only when $\sO_X(1)$ is numerically proportional to $\omega_X$.
This proves the Theorem.
\end{proof}

\begin{lemm}\label{l2}
Let the notation be as stated. Suppose further that
$e(\sI(\lam))>0$. Then\,\footnote{Here by $l=l_k\to \infty$ we mean that by passing to a subsequence we assume that the limit $\lim_{l_k\to\infty}$
does exist, and is finite.}
$$\lim_{l=l_k\to\infty}\frac{1}{l^2}\cdot {\sum_{i=0}^{m_l}\hat{\varrho}_{l,i}}>0.
$$
\end{lemm}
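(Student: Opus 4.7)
The plan is to compute $\sum_{i=0}^{m_l}\hat{\varrho}_{l,i}$ asymptotically in $l$ and show the leading $l^2$-coefficient is strictly positive. I decompose
\[
\sum_{i} \hat{\varrho}_{l,i} \;=\; \sum_{i} \varrho_{l,i}\; -\; \sum_{i} B_i, \qquad B_i := \max\{\varrho_{l,\hbar_\alpha(\lam_l')} \mid i\in\II_\alpha(\lam_l')\}.
\]
By the greedy construction of $\{s_{l,k}\}$ preceding \eqref{varrho}, the minimum $\lam$-weight of a degree-$l$ monomial non-vanishing on $X_\alpha$ is attained by $s_{\hbar_\alpha}^l$, so $\varrho_{l,\hbar_\alpha(\lam_l')} = l\rho_{\hbar_\alpha}$ and $B_i = l\rho_{\hbar_{\alpha^*(i)}}$ for the argmax component $\alpha^*(i)$.

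For $\sum_i\varrho_{l,i}$, Mumford's formula (Prop.\,\ref{chow-wt}) combined with Lemma~\ref{w-l} (which gives $e(\sI(\lam_l)) = l^2 e(\sI(\lam))$) yields
\[
\sum_{i}\varrho_{l,i} \;=\; \frac{m_l+1}{2l\deg X}\bl \omega(\lam_l) + l^2 e(\sI(\lam))\br.
\]
Since $\sO_X(1)$ is numerically proportional to $\omega_X$, a direct computation shows that the slope condition \eqref{basic} reduces at every level to $0 < \ell_Y/2$ for each proper $Y \subsetneq X$; hence $(X,\sO_X(l))$ is slope stable, and by Theorem~\ref{main} (for $l\deg X$ large) Chow stable. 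So $\omega(\lam_l) > 0$ and $V_1 := \lim_l l^{-2}\sum_i\varrho_{l,i} \geq \tfrac{1}{2}e(\sI(\lam))$. For $\sum_i B_i$, the increments of the staircase $\lam_l'$ on each $X_\alpha$ total $l\deg X_\alpha$, so $|\II_\alpha(\lam_l')| = l\deg X_\alpha + O(1)$, and by Corollary~\ref{bdd-tail} together with Propositions~\ref{vir-ind} and~\ref{vir-ind-2} the number of indices shared between distinct components is $O(1)$; thus $\sum_i B_i = l^2 V_2 + O(l)$ with $V_2 := \sum_{\alpha}\rho_{\hbar_\alpha}\deg X_\alpha$.

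The strict inequality $V_2 < \tfrac{1}{2}e(\sI(\lam))$ follows from Corollary~\ref{sum-Delta} and the fact that every Newton polygon $\Delta_p(\lam)$ contains its base rectangle $[0,w(\ti\sI,p)]\times[0,\rho_{\hbar_\alpha}]$ for $p\in\ti X_\alpha$:
\[
\tfrac{1}{2}e(\sI(\lam)) \;=\; \sum_q|\Delta_q(\lam)| \;\geq\; \sum_\alpha\rho_{\hbar_\alpha}\deg X_\alpha \;=\; V_2.
\]
Equality would force every $\Delta_p(\lam)$ to equal its base rectangle, hence $\rho_i = \rho_{\hbar_\alpha}$ for every $i$ with $s_i|_{X_\alpha} \ne 0$ on each component; matching at any intersection point in the connected curve $X$ would force all $\rho_{\hbar_\alpha}$ to coincide, and then every $\rho_i$ would equal this common value, contradicting $\rho_0 > 0 = \rho_m$. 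Combining, $V_1 - V_2 \geq \tfrac{1}{2}e(\sI(\lam)) - V_2 > 0$, so $\liminf_{l} l^{-2}\sum_i\hat{\varrho}_{l,i} \geq V_1 - V_2 > 0$, proving the lemma. The main technical obstacle will be justifying the $O(l)$ error in $\sum_i B_i = l^2 V_2 + O(l)$: carefully tracking how the indices of $\lam_l'$ partition among the $\II_\alpha(\lam_l')$ and controlling the shared indices at scale $l$.
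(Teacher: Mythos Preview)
Your overall strategy—computing $\sum_i\hat\varrho_{l,i}$ as $\sum_i\varrho_{l,i}-\sum_iB_i$ and comparing the leading coefficients $V_1=\tfrac12 e(\sI(\lam))$ and $V_2=\sum_\alpha\rho_{\hbar_\alpha}\deg X_\alpha$—is sound and is a more global counterpart to the paper's local argument (which isolates a single pair $(\beta,q)$ with $|\Delta_q(\lam)|>\rho_{\hbar_\beta}\cdot w(\ti\sI,q)$ and bounds $\sum_i\hat\varrho_{l,i}$ from below by the ``excess area'' at that one point). Your strict inequality $V_2<\tfrac12 e(\sI(\lam))$ via the base-rectangle argument is exactly the paper's inequality \eqref{no-sq}, summed over all components.

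The gap is in the sentence ``so $\varrho_{l,\hbar_\alpha(\lam_l')}=l\rho_{\hbar_\alpha}$.'' The greedy construction gives this identity for $\lam_l$, but the shifted weights $\hat\varrho_{l,i}$ are defined using the \emph{staircase} $\lam_l'$, and passing to the staircase (Proposition~\ref{stair-case}) can shrink the filtration $\sE_i$ and thereby decrease $\hbar_\alpha$. Since the weights are monotone, this means $\varrho_{l,\hbar_\alpha(\lam_l')}\ge l\rho_{\hbar_\alpha}$ with possible strict inequality, so your $B_i$ and hence $\sum_iB_i$ could exceed $l^2V_2$ by an uncontrolled amount. The paper confronts exactly this point: its claim $A_\beta=0$ (equation~\eqref{eq-3}) asserts that $\varrho_{l,\hbar_\beta(\lam_l')}-\varrho_{l,\hbar_\beta(\lam_l)}=o(l)$, and its proof is not elementary—it uses that $\omega(\lam_l')\ge0$ (Chow stability of $(X,\sO_X(l))$) together with the finiteness of $\lim l^{-1}\omega(\lam_l)$ (Lemma~\ref{asymp}) to force the discrepancy $|\Delta_q(\lam_l')|-|\Delta_q(\lam_l)|$ to be $o(l^2)$. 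You already invoke Chow stability for a different purpose; if you import the paper's argument for \eqref{eq-3} (which gives $\varrho_{l,\hbar_\alpha(\lam_l')}=l\rho_{\hbar_\alpha}+o(l)$ for each $\alpha$), your identity $\sum_iB_i=l^2V_2+o(l^2)$ follows and your global approach goes through. The ``shared indices are $O(1)$'' issue you flag at the end is comparatively minor and does follow from Propositions~\ref{vir-ind}--\ref{vir-ind-2}; the real obstacle is the staircase drift of $\hbar_\alpha$.
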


\begin{proof} We comment that in case $X$ is irreducible, the positivity is immediate. Indeed,
applying \cite[Prop. 2.11 ]{Mum}, we have
\beq\label{rho-l}
\sum_{i=0}^{m_l}\varrho_{l,i}=e(\sI(\lam))\cdot \frac{l^2}{2}+a_1\cdot l+a_2, \quad a_i \text{ depending only on }\lam.
\eeq
Suppose $X$ is irreducible, we have $\varrho_{l,i}=\hat\varrho_{l,i}$. Therefore,
$$\lim_{l=l_k\to\infty}\frac{1}{l^2}\cdot {\sum_{i=0}^{m_l}\hat{\varrho}_{l,i}}=
\lim_{l=l_k\to\infty}\frac{1}{l^2}\cdot {\sum_{i=0}^{m_l}{\varrho}_{l,i}}=\frac{e(\sI(\lam))}{2}>0.
$$

We now prove the general case. We claim that there is a $1\le \beta\le r$ and $q\in \ti X\lbe$ so that
\beq\label{no-sq}
|\Delta_q(\lam)|-\rho_{\hbar\lbe(\lam)}\cdot w(\ti\sI(\lam),q)>0.
\eeq
Suppose for any $q\in\ti X\lbe$ the inequality \eqref{no-sq} does not hold. Since the $\ge$ always hold, we will have
that $\varrho_i=\varrho_{\hbar\lalp}$ for every $i\in \II\lalp$. Since $e(\sI(\lam))>0$,
we must have an $\alpha> 1$ such that  $\varrho_{\hbar_{\alpha}}>0$.
Since $X$ is connected, we can find a pair $\alpha\ne \beta$ so that $X\lalp\cap X\lbe\ne \emptyset$,
and $\rho_{\hbar\lalp(\lam)}> \rho_{\hbar\lbe(\lam)}=0$.

We next let $q\in\ti X\lbe$ be a lift of a node in $X\lalp\cap X\lbe$. We show that
the pair $(\beta,q)$ satisfies the inequality \eqref{no-sq}. Let $\pi: \ti X\to X$ be the projection. Since $\pi(q)\in X\lalp$,
we have $\ti s_{j}(q)=0$ for all $j>\hbar\lalp(\lam)$; since $\rho_{\hbar\lalp(\lam)}> \rho_{\hbar\lbe(\lam)}=0$, we have
$i_0(q)\le \hbar\lalp(\lam)$. Thus $\rho_{i_0(q)}\geq\rho_{\hbar\lalp(\lam)}>0$, and then $\Delta_q(\lam)$ is two dimensional.
Since $\rho_{\hbar\lbe(\lam)}=0$, this contradicts to initial assumption that \eqref{no-sq} never holds. This proves the claim.

Let $(\beta,q)$ be a pair satisfying \eqref{no-sq}. We next establish the following two inequalities
\beq\label{in-1}
\sum_{i}\hat{\varrho}_{l,i}
\geq
\frac{1}{2}\bl |\Delta_q(\lam_l')|-\varrho_{l,\hbar\lbe(\lam_l')}\cdot w(\ti\sI(\lam_l'),q)\br
\eeq
and
\beq\label{in-2}
\lim_{l=l_k\to \infty}l^{-2}\cdot \bl |\Delta_q(\lam_l')|-\varrho_{l,\hbar\lbe(\lam_l')}\cdot w(\ti \sI(\lam_l'),q)\br
\geq |\Delta_q(\lam)|-\varrho_{l,\hbar\lbe(\lam)}\cdot w(\ti\sI(\lam),q).
\eeq

%
%
%
%
%


We prove the inequality \eqref{in-1}.
Following the notation introduced in Section \ref{main-estimate}, we have
$$\sum_{i=0}^{m_{l}}\hat{\varrho}_{l,i}
\geq \sum_{i\in \II\lalp(\lam_l')}\hat{\varrho}_{l,i}
\geq \sum_{i\in \II_q\prim(\lam_l')}\hat{\varrho}_{l,i},
$$
where $\II\lbe(\lam_l')$ is the set of indices for $\ti X\lbe$, and $\II_q\prim(\lam_l')$ is the set of primary indices
for $q\in \ti X\lbe$, both with respect to the staircase $\lam_l'$.

By Proposition \ref{vir-ind} and \ref{vir-ind-2}, we know that for $i_0(q)\ne i\in \II_q\prim(\lam_l')$, we have
$\hat{\varrho}_{l,i}=\varrho_{l,i}-\varrho_{l,\hbar\lbe(\lam_l')}$. (Note $\hat{\varrho}_{l,i_0(q)}=\varrho_{l,i_0(q)}-
\varrho_{l,\hbar_{\alpha'}(\lam_l')}$ possibly for some $\alpha'\ne\beta$.)

By the proof of  Lemma \ref{trapezoid}, we have
\begin{eqnarray*}
\sum_{i\in \II_q\prim(\lam_l')}\hat{\varrho}_{l,i}-\hat{\varrho}_{l,i_0(p)}
&\geq&|\Delta_p\prim(\lam_l')\cap([1,w\prim(p,\lam_l')]\times \RR)|-\varrho_{l,\hbar\lbe(\lam_l')}\cdot (w\prim(p,\lam_l')-1).
\end{eqnarray*}
Following \eqref{Wp}, we continue to denote
$\bar\jmath_p(\lam_l')=\max\{i\in \II_p\prim(\lam_l')\}$ and
$w\prim(p,\lam_l')=w(\ti\sE_{\bar\jmath_p(\lam_l')+1}(\lam_l'),p)$.
By the boundness result from Corollary \ref{bdd-tail}, for sufficiently large $l$,
the effects to the shape of $\Delta_q(\lam_l')$ from the {\em secondary indices} $\II_q(\lam_l')\setminus\II_q\prim(\lam_l')$
is marginal, thus for large $l$ we have
$$\big|\Delta_q\prim(\lam_l')\cap([1,w\prim(q,\lam_l')]\times \RR)\big|-\varrho_{l,\hbar\lbe(\lam_l')}\cdot \bl w\prim(q,\lam_l')-1\br
\geq
\frac{1}{2}\Bl |\Delta_q(\lam_l')|-\varrho_{l,\hbar\lbe(\lam_l')}\cdot w(\ti\sI(\lam_l'),q)\Br.
$$
Combined, and adding $\hat{\varrho}_{i_0(p)}>0$, we obtain
$$
\sum_{i\in \II_q\prim(\lam_l')}\hat{\varrho}_{l,i}
\geq
\frac{1}{2}\Bl |\Delta_q(\lam_l')|-\varrho_{l,\hbar\lbe(\lam_l')}\cdot w(\ti\sI(\lam_l'),q)\Br.
$$
This proves \eqref{in-1}.


Before we move to \eqref{in-2}, we claim that
\beq\label{eq-3}
A\lbe:=\lim_{l=l_k\to \infty}\frac{\varrho_{l,\hbar\lbe(\lam_l')}-\varrho_{l,\hbar\lbe(\lam_l)}}{l}= 0.
\eeq
Suppose not, say $A\lbe>0$, (it is non-negative,) then for
$l=l_k$ large,
$$\varrho_{l,\hbar\lbe(\lam_{l}')}-\varrho_{l,\hbar\lbe(\lam_{l}')}\geq  \frac{1}{2}\cdot l\cdot A\lbe;
$$
by examining the geometry of $\Delta_q(\lam_{l})\sub \Delta_q(\lam_{l}')$, we obtain
$$|\Delta_q(\lam_{l}')|-|\Delta_q(\lam_{l})|\geq \frac{A\lbe}{2} \cdot \frac{l\cdot w(\ti \sI(\lam),q)}{h_{\Delta_q(\lam)}-\varrho_{l,\hbar\lbe(\lam)}}\cdot l
\defeq C\cdot l^2 >0,
$$
where $h_{\Delta_q(\lam)}$ is the height of  $\Delta_q(\lam)$.
This implies
$$l\upmo\cdot \omega(\lam_l)=l\upmo\cdot \omega(\lam_l')+l\upmo\cdot \bl e(\sI(\lam_{l}')-e(\sI(\lam_{l})\br
\geq l\upmo\cdot \bl e(\sI(\lam_{l}')-e(\sI(\lam_{l})\br,
$$
where we have used Theorem \ref{main} to deduce $\omega(\lam_l')\geq 0$.

By Corollary \ref{sum-Delta} and our construction of staircase using Proposition \ref{stair-case}, we deduce
$$
l\upmo\cdot  (e(\sI(\lam_{l}')-e(\sI(\lam_{l}))
\geq l\upmo\cdot  (|\Delta_q(\lam_{l}')|-|\Delta_q(\lam_{l})|)>C\cdot l\ .
$$
This is impossible since Lemma \ref{asymp} implies that the left-hand-side remains  bounded as $l=l_k\to \infty$.
So we must have  $A\lbe=0$. This proves the claim.

We prove inequality \eqref{in-2}. Because $A\lbe=0$, $|\Delta_q(\lam_l')|\ge |\Delta_q(\lam_l)|$, and by Lemma \ref{w-l}, we obtain
\begin{eqnarray*}
&&|\Delta_q(\lam_l')|-\varrho_{l,\hbar\lbe(\lam_l')}\cdot w(\ti \sI(\lam_l'),q)\\
&=&|\Delta_q(\lam_l')|-\varrho_{l,\hbar\lbe(\lam_l)}\cdot w(\ti \sI(\lam_l),q)+\varrho_{l,\hbar\lbe(\lam_l)}\cdot w(\ti \sI(\lam_l),q)-\varrho_{l,\hbar\lbe(\lam_l')}
\cdot w(\ti \sI(\lam_l'),q)\\
&\geq & |\Delta_q(\lam_l)|-\varrho_{l,\hbar\lbe(\lam_l)}\cdot w(\ti \sI(\lam_l),q)+(\varrho_{l,\hbar\lbe(\lam_l)}-\varrho_{l,\hbar\lbe(\lam_l')})\cdot
w(\ti \sI(\lam_l),q)\\
&=& l^2\!\!\cdot(|\Delta_q(\lam)|-\varrho_{l,\hbar\lbe(\lam)}\!\cdot\! w(\ti\sI(\lam),q))+l^2\cdot \frac{\varrho_{l,\hbar\lbe(\lam_l)}-
\varrho_{l,\hbar\lbe(\lam_l')}}{l}\cdot w(\ti \sI(\lam),q).
\end{eqnarray*}
Taking limit as $l=l_k\to\infty$, and using $A\lbe=0$, we obtain \eqref{in-2}.

Finally, by \eqref{rho-l}, and that $0\le\hat\varrho_{l,i}\le \varrho_{l,i}$, we
conclude that the limit in the Lemma
is finite; thus the limit is finite and positive; this proves the Lemma.
\end{proof}


\begin{rema}
Following \cite[Sect. 3]{Mum}, (or the recent work of Odaka \cite{Od},) we know that a $K$-stable polarized curve
has at worst nodal singularity. Thus results like Theorem \ref{K-stab}
show that $K$-stability compactifies the moduli of smooth curves (of $g\ge 2$).
As $K$-stability is an analytic version of GIT stable via a CM-line bundle defined by Paul and Tian \cite{PT},
which is  a multiple of
$\lam^{\otimes 12}\otimes \delta^{-1}$  for moduli of curves  (cf. \cite[Thm. 5.10]{Mum}),
Theorem \ref{K-stab} can be viewed as comparing compactifications via two versions of GIT stability, one via finite dimensional
embedding and one via analysis. Generalizing this to high dimensional canonically polarized varieties
remains a challenge. Lately, Yuji Odaka (cf. \cite{Od}) has made important progress along this direction. 
\end{rema}

\end{document}